\documentclass[a4paper,reqno,oneside]{amsart}
\usepackage{geometry}
\usepackage{tgpagella}
\usepackage{aligned-overset}

\usepackage[normalem]{ulem}
\usepackage{amssymb,amsmath}
\usepackage{amsthm}

\usepackage{amsfonts}

\usepackage{subfigure}
\usepackage{graphicx}
\usepackage{epsfig}

\usepackage{esvect} 
\usepackage{enumitem}

\usepackage{dsfont}
\usepackage{mathtools}
\usepackage[hidelinks]{hyperref}
\usepackage{bm}

\usepackage{color}
\usepackage{todonotes}

\usepackage{latexsym}
\usepackage{amsmath}
\usepackage{amsfonts}
\usepackage{amssymb}
\usepackage{cleveref}

\usepackage{url}
\usepackage{algorithm}
\usepackage{algorithmic}
\usepackage{cancel}

\usepackage[most]{tcolorbox}

\usepackage{enumitem}

\newcommand{\ReLU}{{\rm ReLU}}
\newcommand{\RePU}{{\rm RePU}}
\DeclareMathOperator{\size}{size}
\DeclareMathOperator{\depth}{depth}
\DeclareMathOperator{\width}{width}
\DeclareMathOperator{\mpar}{mpar}
\newcommand{\sizedelta}{\size^{\delta}}
\newcommand{\depthdelta}{\depth^{\delta}}
\newcommand{\widthdelta}{\width^{\delta}}

\newcommand{\bsb}{{\bm{b}}}
\newcommand{\bsc}{{\bm{c}}}

\newcommand{\bse}{{\bm{e}}}

\newcommand{\bsg}{{\bm{g}}}

\newcommand{\bsv}{{\bm{v}}}
\newcommand{\bsw}{{\bm{w}}}
\newcommand{\bsx}{{\bm{x}}}
\newcommand{\bsy}{{\bm{y}}}
\newcommand{\bsz}{{\bm{z}}}

\newcommand{\bsZ}{{\bm{Z}}}

\newcommand{\bszero}{{\boldsymbol{0}}} 
\newcommand{\bsone}{{\boldsymbol{1}}}  

\newcommand{\bsgamma}{{\boldsymbol{\gamma}}}
\newcommand{\bsdelta}{{\boldsymbol{\delta}}}

\newcommand{\bszeta}{{\boldsymbol{\zeta}}}
\newcommand{\bseta}{{\boldsymbol{\eta}}}

\newcommand{\bslambda}{{\boldsymbol{\lambda}}}
\newcommand{\bsmu}{{\boldsymbol{\mu}}}
\newcommand{\bsnu}{{\boldsymbol{\nu}}}
\newcommand{\bsxi}{{\boldsymbol{\xi}}}

\newcommand{\bsvarrho}{{\boldsymbol{\varrho}}}

\newcommand{\bsphi}{{\boldsymbol{\phi}}}

\newcommand{\bbC}{{\mathbb{C}}}

\newcommand{\bbN}{{\mathbb{N}}}

\newcommand{\bbR}{{\mathbb{R}}}

\newcommand{\bbT}{{\mathbb{T}}}

\newcommand{\bbZ}{{\mathbb{Z}}}
\newcommand{\C}{{\mathbb{C}}} 
\newcommand{\N}{{\mathbb{N}}} 
\newcommand{\R}{{\mathbb{R}}} 
\newcommand{\NN}{{\mathbb{N}}} 

\DeclareSymbolFont{bbold}{U}{bbold}{m}{n}
\DeclareSymbolFontAlphabet{\mathbbold}{bbold}

\newcommand{\calB}{{\mathcal{B}}}
\newcommand{\calC}{{\mathcal{C}}}
\newcommand{\calD}{{\mathcal{D}}}
\newcommand{\calE}{{\mathcal{E}}}
\newcommand{\calF}{{\mathcal{F}}}
\newcommand{\calG}{{\mathcal{G}}}

\newcommand{\calJ}{{\mathcal{J}}}
\newcommand{\calK}{{\mathcal{K}}}
\newcommand{\calL}{{\mathcal{L}}}

\newcommand{\calN}{{\mathcal{N}}}

\newcommand{\calR}{{\mathcal{R}}}
\newcommand{\calS}{{\mathcal{S}}}

\newcommand{\calX}{{\mathcal{X}}}
\newcommand{\calY}{{\mathcal{Y}}}
\newcommand{\calZ}{{\mathcal{Z}}}

\newcommand{\ceil}[1]{\left\lceil #1 \right\rceil}    

\newcommand{\ii}{{\mathrm i}} 

\newcommand{\eps}{{\varepsilon}}

\newcommand{\KL}{{Karhunen-Lo\`{e}ve }} 

\newcommand{\bsnul}{{\boldsymbol{0}}}

\newcommand{\norm}[2][]{\|#2\|_{#1}}

\newcommand{\normlr}[2][]{\left\|#2\right\|_{#1}}
\newcommand{\set}[2]{\{#1 : #2\}}

\newcommand{\dup}[2]{\langle #1,#2 \rangle}

\DeclareMathOperator{\supp}{supp}

\DeclareMathOperator*{\Id}{Id}
\DeclareMathOperator*{\esssup}{ess\,sup}

\newcommand{\dd}{\;\mathrm{d}}

\newcommand{\be}{\begin{equation}}
\newcommand{\ee}{\end{equation}}

\newtheorem{theorem}{Theorem}[section]
\newtheorem{lemma}[theorem]{Lemma}
\newtheorem{proposition}[theorem]{Proposition}
\newtheorem{corollary}[theorem]{Corollary}
\newtheorem{assumption}[theorem]{Assumption}
\newtheorem{remark}[theorem]{Remark}
\newtheorem{example}[theorem]{Example}
\newtheorem{definition}[theorem]{Definition}

\newcommand{\rF}{\mathrm {F}}
\newcommand{\rS}{\mathrm {S}}

\newcommand{\cG}{\mathcal{G}}
 
\newcommand{\cX}{{\mathcal X}} 
\newcommand{\cY}{{\mathcal Y}}

\newcommand{\bmg}{\boldsymbol{g}}

\begin{document}
	\title{Neural and Spectral Operator Surrogates on Gaussian Spaces}
	\author{Carlo Marcati \and Mario Mari\'{c} \and Christoph Schwab \and Jakob Zech}
	\date{\today}
	\thanks{CM and JZ acknowledge the support of the National Group for Scientific Computing (GNCS - INDAM) through the Visiting Professors program. MM and JZ acknowledge support from the German Research Foundation (DFG) within the Priority Programme SPP 2298, \textit{Theoretical Foundations of Deep Learning} (project number 543965776).}
	 
	\newcommand{\Lip}{{\rm Lip}} 
	\newcommand{\HS}{{\rm HS}} 
	\maketitle
	
	\begin{abstract}
          We prove expression rate bounds of finite-parametric,
			spectral and neural surrogates for holomorphic maps between separable Hilbert spaces.
			The surrogates have an encoder-approximator-decoder architecture, 
                        with Karhunen-Lo\'eve encoders and frame decoders. 
			We prove expression rate bounds for two classes of 
			finite-parametric surrogates: i) spectral surrogates obtained by $N$-term
			truncations of Wiener polynomial chaos expansions and ii) neural surrogates
			obtained by approximation of parametric maps with deep feedforward
			neural networks, ReLU and RePU activation functions and uniformly bounded weights.
			We work under
      an algebraic decay assumption on the eigenvalues of the covariance 
      of the Gaussian measure on the input space.
      We obtain convergence rates for mean-square errors, 
      and additionally in first-order Gaussian Sobolev spaces,
      to account for errors in the approximation of gradients.
	\end{abstract}
	\tableofcontents
	\section{Introduction}
	\label{sec:Intro}
		Neural operators are finite-parametric approximations $\calG_N$ 
		of in general nonlinear maps $\calG:\calX\to\calY$
		between (subsets of) infinite-dimensional, 
		separable function spaces $\calX$ and $\calY$.
		In recent years, several classes of such \emph{surrogate maps} 
		have been proposed and developed. 
		See, e.g., the surveys 
		\cite{li2020fourier,kovachki2024operatorlearningalgorithmsanalysis}
		and references there.
		Starting with the foundational work \cite{ChenChen1993},
		to date 
		a large range of finite-parametric maps have been indentified 
		with mathematically ensured \emph{universal approximation} properties. 
		
		Surrogate families with \emph{expression rate bounds} are less
		well investigated:
		due to the infinite-dimensionality of the input- and output-spaces,
		viability of finite-parametric surrogate maps will incur 
		(and have to overcome, under conditions) 
		the so-called ``curse of dimension'' (CoD).
		
		One possible architecture of neural surrogate operators 
		(considered, e.g., in \cite{LMK21_949,HSZ24})
		takes the generic form
		$\calG_N = \calD \circ g_N \circ \calE$.
		Here, $\calE$ and $\calD$ are en- and decoders on the input and output spaces 
		$\calX$ and $\calY$ respectively.
		The maps $g_N$ depend on $N$ parameters, and 
		leverage approximation properties of suitable approximators.
		The proof of \emph{approximation rate bounds},
		i.e. estimates on the approximation error $\calG - \calG_N$ 
		in terms of $N$, on suitable sets of inputs in $\calX$, 
		involves first selecting en- and decoders,
                and then constructing parametric approximators $g_N$.
		Due to the infinite dimension of $\calX$ and $\calY$, 
		establishing approximation rate bounds for the approximators $g_N$ 
		requires overcoming the CoD.
    
    In this paper,
    we consider the same architecture as in \cite{HSZ24}, 
    and investigate \KL encoder and frame decoder systems, denoted as $\calE$ and $\calD$, respectively.  
    We admit $N$-parametric approximator maps $g_N$ of either \emph{polynomial chaos} or of
    \emph{neural network type}.  
    We relax the condition in \cite{HSZ24} that the parameter ranges of
       sequences resulting from input encoding be bounded
       (following suitable scaling).  Admitting unbounded
       parameter ranges obstructs ``worst-case''
       error bounds, as considered in \cite{HSZ24}.
       Accordingly, 
       we adopt a mean-square error analysis which also
       arises naturally in statistical regression and
       learning.  This setting requires the introduction of a
       \emph{Gaussian Measure} (GM) 
       $\gamma$ on the input space $\calX$ 
       with the sigma-algebra $\calB(\calX)$ of Borel sets.
       This GM will in general only be supported on a 
       (small, in a suitable sense) subset of $\calX$ of admissible inputs 
       (cf. Remark \ref{remark:CMSpc}).

\subsection{Existing Results}\label{sec:ExRes}
Recent years have seen brisk development of analytic and computational
approaches for operator learning.
The fundamental \emph{universal approximation property},
refers to the collection $\{\calG_N \}_{N\in\N}$ being (in principle)
capable of attaining any prescribed approximation error bound $\eps>0$
in a suitable sense, provided that $N$ is sufficiently large
(depending in an unspecific way on $\eps$ similar to the classical
Stone-Weierstrass theorem for polynomial approximation). This has been
verified for various architectures, see,
e.g., \cite{kovachki2024operatorlearningalgorithmsanalysis} and the
references there.  We mention here the pioneering \cite{ChenChen1993}
which established universal approximation of continuous functionals 
on compact subsets $\calK\subset \calX$ in a `worst-case' setting, 
for a so-called ``branch-and-trunk architecture''.
While universality of the
collection $\{\calG_N \}_{N\in\N}$ can be established under rather
mild conditions on the set of admissible inputs, \emph{expression rate
  bounds} require, as a rule, restrictions on the input sets
$\calX^s \subset \calX$ and the operators $\calG$ to be approximated.
Standard assumptions include, for example, the requirement 
that $\calG$ is a Lipschitz map from $\calX^s$ into $\calY$.
	
As it has been shown recently \cite{SLanthPCA,adcock2024learninglipschitzoperatorsrespect}, 
however, merely imposing Lipschitzianity
can imply a ``curse of parametric complexity'', for operator
approximation of Lipschitz maps $\calG$ between separable Hilbert
spaces $\calX$ and $\calY$ over $\bbR$, 
when $\calX$ is endowed with a Gaussian Measure,
using standard (Wiener-Hermite polynomial chaos, or feedforward neural network) 
approximation architectures.
In these references, 
upper and lower rate bounds were established for
Wiener-Hermite polynomial chaos emulations of $\calG$ with encoding
via the PCA of the covariance operator $Q$ of the centered GM on the input space $\calX$. 
The lower bounds suggest
that the emulation rates are, essentially, non-improvable, 
\emph{for the given approximation architecture consisting of $N$-term
  truncated Gauss-Hermite polynomial chaos expansions}, and under the
\emph{Lipschitz assumptions} in
\cite{adcock2024learninglipschitzoperatorsrespect}.  
We note in passing that these results on emulation rates of Lipschitz maps
$\calG$ appear, to some extent, as consequences of restrictive
(polynomial) approximation architectures used to build $g_N$:
\emph{neural approximator architectures} $g_N$ with $N$ parameters and
higher expressivity than polynomials can allow for higher approximation rates
\cite{SSZ23_3044} (albeit possibly at expense of stability) due to
so-called ``super-expressive'', non-polynomial activations in the
approximators $g_N$ similar to constructions in the Kolmogoroff
superposition theorem as explained, e.g., in
\cite{SCHMIDTHIEBER2021119}.  
Mathematical approximation rate results
of this type encounter, upon realization in 
floating-point arithmetic, potential instabilities associated with
rounding of network weights to finite precision. Encoding with PCA of GM on input space with upper and lower
bounds for expression rates is also investigated in \cite{SLanthPCA}.

We remark that operator approximation based also on derivatives, 
has recently been investigated computationally as so-called
``derivative-informed'' operator learning
\cite{luo2025dimensionreductionderivativeinformedoperator,yao2025derivativeinformedfourierneuraloperator,OLEARYROSEBERRY2022114199,OLEARYROSEBERRY2024112555}. In particular in 
\cite{yao2025derivativeinformedfourierneuraloperator}, 
universal approximation
in the Gaussian space $W^{1,2}_{\mu}(\calX,\calY)$ was proved
for a class of operator surrogates being 
``derivative-informed'' versions of the well-established 
Fourier Neural Operators.

\subsection{Contributions}\label{sec:Contr}
In the present paper, we develop a framework for approximation rate
bounds for \emph{holomorphic maps} $\calG$ in
$L_\gamma^2(\calX,\calY)$, where $\calX$, $\calY$ are separable Hilbert
spaces, and $\gamma$ is a Gaussian measure on $\calX$. Specifically:

\begin{itemize}
\item We prove {\bf algebraic convergence of spectral operator surrogates}  
  obtained from finitely truncated Hermite polynomial chaos expansions 
  (cf.~Corollary \ref{cor:convergence-expansion}). 
  Up to logarithmic terms, they emulate the operator
  $\calG$ to order $N^{-\min(s,t)}$ in $L^2_{\gamma}(\calX,\calY)$.
  Here $N$ denotes the number of parameters (polynomial coefficients)
  required for the computation of $\calG_N$. Moreover, the parameters
  $s>0$ and $t > 0$ correspond to the decay rate of eigenvalues of the
  covariance operator of the GM $\gamma$ and to extra regularity in
  the operator output, respectively.
\item We consider {\bf spectral surrogates of the Fr\'{e}chet derivative}
  $X\mapsto D\calG(X) \in \calL(\calX,\calY)$ restricted to suitable
  subspaces $\calX^r \subset \calX$; the parameter $r>0$
  relates to the size of a subspace $\calX^r\subseteq
  \calX$ in which we can compute
  directional derivatives of $\calG$.
  Up to logarithmic terms,
  we show the convergence order $N^{-\min(r,s,t)}$
  for the approximation of $D\calG(X)|_{\calX^r}$
  in $L^2_{\gamma}(\calX,{\rm HS}(\calX^r,\calY))$,
  where ${\rm HS}(\calX^r,\calY)$ denotes the 
  Hilbert space of Hilbert-Schmidt operators from $\calX^r$ to $\calY$ 
  (cf. Corollary \ref{cor:convergence-expansion}).

\item Our spectral operator surrogates imply {\bf the same algebraic
    convergence rates for deep neural operators}. We consider NNs
  with $\ReLU$ and $\RePU$ activation functions and uniformly bounded weights, 
  and discuss convergence
  of the operator and its derivatives in terms of the number of
  trainable network parameters in Theorems \ref{thm:main} and \ref{thm:main2}.
\end{itemize}

\subsection{Layout}
\label{sec:Strct}
In Section \ref{sec:Notat}
we introduce some basic notation and terminology.

Section~\ref{sec:Preliminaries} provides preliminary material and 
definitions for several key concepts used throughout the paper. 
In particular, we recall Gaussian measures on separable Hilbert spaces, 
Hermite polynomials, frames and Riesz bases, as well as weighted Gaussian Sobolev spaces. 
Moreover, our results require the formalization of a scale of subspaces of higher regularity, 
which is introduced in Subsection \ref{sec:SpcHighReg} via weighted summability of frame coefficients.

In Section \ref{sec:Architecture} 
we recapitulate the FrameNet architecture from 
\cite{HSZ24,reinhardt2024statisticallearningtheoryneural},
which will be used in our subsequent analysis.

Section \ref{sec:MainRes} presents our main network based approximation results, 
stated separately for the mean-squared and Sobolev cases.
	
Section \ref{sec:HolDom} follows with the full proofs of said bounds. 
Upon precising the notion of quantified holomorphy in Subsection \ref{sec:DomHol}, 
we address the verification of the quantified, parametric holomorphy on which the main results are based, in an abstract setting. 
In particular, quantified parametric holomorphy is shown to imply weighted summability of 
Hermite expansion coefficients 
of the countably-parametric maps in the discrete-continuous equivalence established in 
Subsection \ref{sec:Architecture}. 
Subsection \ref{sec:TrcH} implies construction of spectral approximations of operator surrogates 
via truncated Hermite polynomial chaos expansions. 
Subsection \ref{sec:PfThm} contains the proofs of the neural approximation rate bounds. 
	
In Section \ref{sec:diffusion}, 
we verify all assumptions for the coefficient-to-solution map of a linear, elliptic divergence form PDE.
	
Section \ref{sec:Concl} provides a brief summary of the main results, 
and indicates several direct generalizations of these.
	
Two appendices, Sec.~\ref{app:NNTheory} and \ref{app:DNNEmulationHermite},
review from \cite{SZ21_982} results on ReLU NN emulation rate bounds of Hermite polynomials, 
in Gaussian Sobolev spaces, and generalize to ReLU and RePU NNs with
uniformly bounded weights.

\subsection{Notation and Terminology}\label{sec:Notat}
        
{\bf Numbers.} 
We write $\N = \{1,2,\dots\}$ and denote by $\R$ and $\C$ the real and complex numbers, respectively. 
We denote by $\ii = +\sqrt{-1}$ the imaginary unit. 
For $z = x + \ii y \in \C$, we denote real and imaginary part of $z$ by $x = \Re(z)$ and $y = \Im(z)$.
Throughout the manuscript, $\log = \log_2$, but the results hold with any
base (larger than $1$) upon potentially adjusting the constants.

{\bf Gaussian Measures.}                
Centered Gaussian measures on separable Hilbert spaces $\calX$
shall be denoted by $\bsgamma$. 
We also use $\gamma_s$ with a parameter $s>0$ to indicate specific spectral decay properties of the covariance; see Section \ref{sec:GM} for a precise definition.
We let $\mu$ be the standard Gaussian measure on $\R$.
Moreover, $\bsmu$ denotes the infinite product Gaussian measure $\otimes_{j\in\N}\mu$ on $\R^\N$ 
equipped with the product $\sigma$-algebra.
	
{\bf Function Spaces.} 
For two separable real Hilbert space $\calX$, $\calY$ 
we write $C(\calX,\calY)$ for the space of continuous maps,
with the subspace $\Lip(\calX,\calY) \subset C(\calX,\calY)$ of Lipschitz continuous maps. 
We denote by $\calL(\calX,\calY) \subset C(\calX,\calY)$ the space of bounded, 
linear operators, by $\calL_{\rm is}(\calX,\calY)$ the subset of boundedly invertible linear operators, and by $\HS(\calX,\calY)\subset \calL(\calX,\calY)$ the (separable) Hilbert space of Hilbert-Schmidt operators. 
When $\calX = \calY$, we write in the preceding definitions $C(\calX)$ etc.

On a Hilbert space $\calX$ we denote by $\calB(\calX)$ the sigma-algebra of Borel sets in $\calX$. 
The space of strongly measurable, $\gamma$-square-integrable functions $\calG$ 
from $\calX$ to $\calY$ is denoted as $L^2_\gamma(\calX, \calY)$. 
Here, $\gamma$ is a Gaussian measure on $\calX$, so that $\calG \in L^2_\gamma(\calX,\calY)$ if
	\begin{equation*}
	 \| \calG \|^2_{L^2_\gamma(\calX,\calY)} 
         := 	\int_{\calX} \| \calG(X) \|_\calY^2 \dd\gamma(X) 
         < \infty.
         \end{equation*}
If $\calY=\R$, we usually write $L^2_\gamma(\calX)$ instead.               

{\bf Sequences.}  We denote sequences $(x_i)_{i\in I}\subset
\bbR$ with (finite or countably infinite) index set $I$ by $\bsx$.
Sequences $(Z_i)_{i\in I}$ in a Hilbert space $\calZ$ are denoted by
$\bsZ$.  The symbols $\bsnul = (0,0,..)$ and $\bsone = (1,1,...)$
denote the sequences with, respectively, zero and one entries
over any index set $I\subseteq \bbN$.  Operations on sequences $\bsx$
are understood element-wise, i.e.  $\bsx^2 = (x_i^2)_{i\in I}$,
$\exp(\bsx) = (\exp(x_i))_{i\in I}$, etc.  Relations between
sequences, such as $\bsx\leq \bsy$ are likewise understood
elementwise, i.e. as $x_i\leq y_i$ for $i\in I$. For $i\in I$, the
symbol $\bsdelta_i = (\delta_{ij})_{j\in I}$ denotes the
Kronecker sequence.

For a countable index set $I$, we use the classical sequence spaces
$\ell^p(I)$, of sequences of real-valued numbers, with summability
index $0<p\leq \infty$.

{\bf Multiindices.} By $\bbN_0^\bbN$, we denote the set of (infinite)
multi-indices, i.e.  of all sequences $\bsnu = (\nu_j)_{j\in\N}$ with
$\nu_j\in \bbN_0$.  For $\bsnu\in \bbN_0^\bbN$, we define the support
of $\bsnu$ as ${\rm supp}(\bsnu) := \{i\in \bbN | \nu_i \ne 0 \}$. 
We will also write, for a set of multi-indices 
$\Lambda\subset\bbN_0^\bbN$, 
$\supp\Lambda = \cup_{\bsnu\in\Lambda}\supp\bsnu$.
By
$\calF\subset\bbN_0^\bbN$ we denote the \emph{countable subset} of
\emph{finitely supported multi-indices}, i.e.
\begin{equation}\label{eq:calF} 
\calF = \{ \bsnu \in \bbN_0^\bbN : |{\rm supp}(\bsnu) | < \infty \} \;.
\end{equation}
For $\bsnu,\bsnu'\in \calF$, we write 
$\delta_{\bsnu,\bsnu'} := \prod_{i\in \bbN} \delta_{\nu_i,\nu'_i} $.  
The index set $\calF$
is endowed with the half-ordering ``$\le$'' introduced above.
For $\bsnu \in \calF$, define $|\bsnu| \coloneqq \sum_{j\in\N} \nu_j < \infty$ and $|\bsnu|_0 \coloneqq |\supp \bsnu|$.

For algorithmic purposes (e.g. to render spectral collocation 
	well-defined and feasible) we recall the
	notion of \emph{``downward closed''}\footnote{Also
		referred to as `lower' sets in some references.} 
	index sets $\Lambda \subset \calF$. 
A multi-index set $\Lambda\subseteq\calF$ is called downward closed 
if
$\bsnu\in\Lambda$ and $\bsmu\le\bsnu$, meaning $\mu_i\le\nu_i$ for all $i\in\N$, 
implies $\bsmu\in\Lambda$. 

	\section{Preliminaries}
	\label{sec:Preliminaries}

        \subsection{Gaussian Measures on Hilbert Spaces}\label{sec:GM}
        We start by recalling the definition of a Gaussian measure in a Hilbert space, e.g., \cite[Def.\ 2.2.1]{Bogachev1998}.
        \begin{definition}\label{def:gamma}
          Let $\calX$ be a separable Hilbert space, and let $\gamma$ be a measure on $(\calX,\calB(\calX))$. We say that $\gamma$ is a (centered) Gaussian measure on $\calX$ iff for every $\varphi\in \calX'$ it holds that $\gamma\circ\varphi^{-1}$ defines a (centered) Gaussian measure on $(\R,\calB(\R))$.
        \end{definition}

        For a Gaussian measure $\gamma$ as in the Definition, its mean $m$ and covariance operator $Q$ are defined as
        \begin{equation*}
          m:=\int_{\calX} X \dd\gamma(X)\in \calX
        \end{equation*}
        and
        \begin{equation*}
          Q:\calX\to \calX\qquad\text{via}\qquad
          \langle Q X,Y\rangle = \int_{\calX} \langle Z-m,X\rangle\langle Z-m,Y\rangle\dd\gamma(Z)
        \end{equation*}
        for all $X$, $Y\in\calX$. One can show that $Q$ is a nonnegative, 
        self-adjoint trace-class operator \cite[Theorem 2.3.1]{Bogachev1998}; 
        in particular $Q$ is a compact operator. If $\gamma$ is centered then $m=0\in\calX$.

        For Gaussian measures $\gamma$,  $m$ and $Q$ uniquely determine
        $\gamma$, and conversely every $( m, Q )$ couple as above has a
        corresponding Gaussian measure, see again \cite[Theorem 2.3.1]{Bogachev1998}.
        Typically $m$ and $Q$ are derived from the measure. 
For a succinct presentation of our results it will be convenient however to 
introduce a sequence of centered Gaussian measures defined by their covariance operator. 
To do so, we introduce two weight sequences $\bsw, \bsv  \in (0,1]^\N$ that will be fixed throughout. 
As explained in the following,
$\bsw$ will be associated with the input space $\calX$, 
and $\bsv$ with the output space $\calY$.
In order to simplify the presentation, we make the following assumptions on the weight sequences.

\begin{assumption}\label{ass:vw}
Let $\bsw$, $\bsv\subset (0,1]^{\N}$ be sequences such that
\begin{enumerate}[label=(\roman*)]
	\item $\bsw, \bsv$ are monotonically decreasing and belong to $\ell^1(\N)$
	\item there exist $C, \, r_0 > 0$ 
              such that $w_j \geq C j^{-r_0}$ for all $j \in \N$.
\end{enumerate} 
\end{assumption}

\begin{remark}
For all $N\in\N$ it holds that $Nw_N\le\sum_{j=1}^N w_j$ so that $w_N\le \norm[\ell^1(\N)]{\bsw}/N$.
\end{remark}

We can then characterize the measures that we consider.
\begin{definition}\label{def:GM} 
  Let $\calX$ be a separable Hilbert space,
  $(\phi_j)_{j\in\N}$ an orthonormal basis of $\calX$
  and let $\bsw$ be as in Assumption \ref{ass:vw}.
  For every $s\ge 0$, set
  \begin{equation*}
    \lambda_j(s):=w_j^{2s+1}\qquad\forall j\in\N.
  \end{equation*}
  Let $\gamma_s$ be the centered Gaussian measure whose covariance
  $Q_s\in\calL(\calX)$ has eigenvalues and eigenvectors given as
  $(\lambda_j(s),\phi_j)$, i.e., for all $X\in\calX$,
  \begin{equation}\label{eq:Qs}
    Q_s X = \sum_{j\in\N} \lambda_j(s)\langle \phi_j,X\rangle \phi_j.
  \end{equation}
  For $s=0$ we also write $\gamma:=\gamma_0$.
  \end{definition}

  Since $(\phi_j)_{j\in\N}$ is an ONB, and $\lambda_j(s)\leq w_j$ defines an $\ell^1$ sequence for each $s\ge 0$,
  $Q_s$ in \eqref{eq:Qs} is non-negative and self-adjoint with trace ${\rm tr}(Q_s)=\sum_{j\in\N}\lambda_j(s)<\infty$. Thus the Gaussian measure $\gamma_s$ is well-defined for all $s\ge 0$. 
We also note that
  \begin{equation}\label{eq:seqlambd}
    \bslambda(s) := (\lambda_j(s))_{j\in\N} \in \ell^{1/(2s+1)}(\N).
  \end{equation}
The parameter $s > 0$ controls the spectral decay properties of $\gamma_s$ 
and will appear prominently in our convergence rate bounds later on.
In particular, a larger value of $s$ implies a faster decrease of the
eigenvalues $\lambda_j(s)$ and a smaller support of $\gamma_s$ (see Remark \ref{remark:CMSpc}). 
We formulate \eqref{eq:seqlambd} in this way as this will simplify the notation in later estimates.

  \begin{remark}[Karhunen-Lo\'eve Decomposition]
  \label{rmk:KL}
    The measure $\gamma_s$ in Definition~\ref{def:GM}
    is the pushforward of $\bsmu$ (cf.~Section \ref{sec:Notat}) 
    under the map
    \begin{equation*}
      \begin{cases}
        \R^\N\to\calX\\
        (\xi_j)_{j\in\N}\mapsto \sum_{j\in\N}\xi_j\sqrt{\lambda_j(s)}\phi_j.
      \end{cases}
    \end{equation*}
    In particular,
    given a sequence of real-valued i.i.d.\ standard Gaussian random variables $(\xi_j)_{j\in\N}$,
    the term $\sum_{j\in\N}\xi_j\sqrt{\lambda_j(s)} \phi_j$ 
    defines a random variable with distribution $\gamma_s$, see, e.g., \cite[Theorem 3.5.1]{Bogachev1998}.
  \end{remark}

        \subsection{Hermite Polynomials}\label{sec:HermitePoly}
        \subsubsection{Univariate Hermite Polynomials}
	For $n\in \bbN_0$, the (probabilists') Hermite
	polynomial of degree $n$ is given by 
	\begin{equation}\label{eq:Hermite}
	H_n: \bbR\to \bbR: x\mapsto \frac{(-1)^n}{\sqrt{n!}} \exp(x^2/2) \frac{d^n}{dx^n} \exp(-x^2/2) 
	\;.
	\end{equation}
	With this normalization, $H_0 \equiv 1$ and $\{ H_n\}_{n\in
          \bbN_0}$ is an ONB of $L^2_{\mu}(\bbR)$
        (e.g., \cite[Prop.~9.4]{DPIntro06}), where $\mu$ is the
        standard Gaussian measure on $\R$.

        \subsubsection{Multivariate Hermite Polynomials}
        Tensorized Hermite
        polynomials will take a central role in our construction of
        finite-parametric operator surrogates.  For $\bsnu\in \calF$,
        we set
	\begin{equation}\label{eq:HermiTP}
	H_\bsnu(\bsx) := \prod_{j\geq 1} H_{\nu_j}(x_j) \;, \quad \bsx = (x_1,x_2,...)\in \bbR^\bbN \;.
	\end{equation}
	Due to $H_0 \equiv 1$, the products \eqref{eq:HermiTP} have
        only finitely many nontrivial factors, for each
        $\bsnu\in \calF$.
        The countable set
        $\{ H_\bsnu: \bsnu \in \calF \}$ forms an ONB of $L^2_{\bsmu}(\R^\N)$
        \cite[Thm.~9.7]{DPIntro06}.

\subsubsection{Hermite Polynomials on Hilbert Spaces}
\label{sec:HerPolHS}
Let $\gamma_s$ be a centered Gaussian measure on $(\calX, \calB(\calX))$ as in
Definition \ref{def:GM} and let $(\lambda_j(s) ,\phi_j)_{j\in\N}$ be the eigenvalues and eigenfunctions of its covariance operator. 
Then, we set
	\begin{equation}\label{eq:HermX}
	H_{\bsnu,\bslambda(s)} (X) := \prod_{j\in\N} H_{\nu_j}(\lambda_j(s)^{-1/2} \langle X,\phi_j\rangle)\;,
	\quad 
	\bsnu\in \calF, \; X\in \calX \;.
	\end{equation}
	The collection $\{ H_{\bsnu,{\bslambda(s)}} : \bsnu \in \calF \}$ 
	is an ONB of $L^2_{\gamma_s}(\calX)$ (see, e.g., \cite[Thm.~9.7]{DPIntro06}).
	The following proposition follows then from the isometric isomorphism
	$L^2_{\gamma_s}(\calX,\calY) \simeq L^2_{\gamma_s}(\calX)\otimes \calY$ 
        (with $\otimes$ indicating the Hilbertian tensor product).
	\begin{proposition} \label{prop:WPCrep}
		Under the assumptions in Section~\ref{sec:GM},
		every $\calG \in L^2_{{\gamma_s}}(\calX,\calY)$ 
		admits the 
		\emph{polynomial chaos (pc) representation}
                \begin{equation}\label{eq:cGpc}
		\calG(X) = \sum_{\bsnu\in \calF} g_{\bsnu,{\bslambda(s)}} H_{\bsnu,\bslambda(s)}(X) \;, 
\end{equation}
		with equality in the sense of $L^2_{{\gamma_s}}(\calX,\calY)$, 
		and
		with the pc coefficients $g_{\bsnu,{\bslambda(s)}}\in \calY$ 
		given by 
    \begin{equation}\label{eq:gbsnu}
		g_{\bsnu,{\bslambda(s)}} 
		:= \int_\calX \calG(X) H_{\bsnu,\bslambda(s)}(X) \, {\rm d}\gamma_s(X) \;, \;\;
		\bsnu\in \calF\;.
                \end{equation}
		Furthermore, Parseval's equality holds:
		\begin{equation}\label{eq:Parsev}
			\| \calG \|_{L^2_{\gamma_s}(\calX,\calY)}^2
			=
			\sum_{\bsnu\in \calF} \| g_{\bsnu,\bslambda(s)} \|^2_{\calY}
			\;.
		\end{equation}
	\end{proposition}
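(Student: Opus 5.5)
The plan is to derive Proposition~\ref{prop:WPCrep} from the scalar fact recalled above, namely that $\{H_{\bsnu,\bslambda(s)}:\bsnu\in\calF\}$ is an ONB of $L^2_{\gamma_s}(\calX)$, combined with the isometric isomorphism $L^2_{\gamma_s}(\calX)\otimes\calY\simeq L^2_{\gamma_s}(\calX,\calY)$, $f\otimes y\mapsto f(\cdot)y$. Two standard facts about Hilbertian tensor products do the work. First, if $(e_k)_k$ is an ONB of $H_1$ and $(\psi_m)_m$ is an ONB of $H_2$, then $(e_k\otimes\psi_m)_{k,m}$ is an ONB of $H_1\otimes H_2$. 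Second, every $u\in H_1\otimes H_2$ can be written as $u=\sum_k e_k\otimes u_k$ with $u_k\in H_2$ and $\|u\|^2=\sum_k\|u_k\|^2$. The coefficients are obtained by grouping the double expansion: $u_k=\sum_m\langle u,e_k\otimes\psi_m\rangle\psi_m$.

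The key steps, in order, are as follows.
\begin{enumerate}
\item \emph{Well-definedness of \eqref{eq:gbsnu}.} Since $\calG$ is strongly measurable and $H_{\bsnu,\bslambda(s)}\in L^2_{\gamma_s}(\calX)$, Cauchy--Schwarz gives $\int\|\calG\|_\calY|H_{\bsnu,\bslambda(s)}|\dd\gamma_s\le\|\calG\|_{L^2_{\gamma_s}(\calX,\calY)}<\infty$. Hence $g_{\bsnu,\bslambda(s)}$ exists as a Bochner integral in $\calY$.
\item \emph{Identification of coefficients.} Fix an ONB $(\psi_m)_m$ of $\calY$. Bochner integrals commute with bounded functionals, so $\langle g_{\bsnu,\bslambda(s)},\psi_m\rangle_\calY=\int\langle\calG(X),\psi_m\rangle H_{\bsnu,\bslambda(s)}(X)\dd\gamma_s(X)$. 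The right-hand side equals the $L^2_{\gamma_s}(\calX,\calY)$ inner product of $\calG$ with $H_{\bsnu,\bslambda(s)}\psi_m$, which is the image of $H_{\bsnu,\bslambda(s)}\otimes\psi_m$.
\item \emph{Expansion.} By the first tensor fact, $\calG=\sum_{\bsnu,m}\langle g_{\bsnu,\bslambda(s)},\psi_m\rangle H_{\bsnu,\bslambda(s)}\psi_m$ in $L^2_{\gamma_s}(\calX,\calY)$, and $\|\calG\|^2=\sum_{\bsnu,m}|\langle g_{\bsnu,\bslambda(s)},\psi_m\rangle|^2$. Summing first over $m$ gives $\sum_{\bsnu}\|g_{\bsnu,\bslambda(s)}\|_\calY^2$, which is \eqref{eq:Parsev}.
\item \emph{Convergence of the grouped series \eqref{eq:cGpc}.} The functions $g_{\bsnu,\bslambda(s)}H_{\bsnu,\bslambda(s)}$ are mutually orthogonal in $L^2_{\gamma_s}(\calX,\calY)$, since $\int\langle aH_{\bsnu},bH_{\bsnu'}\rangle\dd\gamma_s=\langle a,b\rangle\delta_{\bsnu,\bsnu'}$. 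Together with the square-summability from step 3, this makes the series unconditionally convergent. Its limit has the same coefficients against every $H_{\bsnu,\bslambda(s)}\psi_m$ as $\calG$, and these vectors form an ONB, so the limit equals $\calG$.
\end{enumerate}

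The main obstacle, though mild, is justifying the isomorphism and the completeness of $\{H_{\bsnu}\psi_m\}$ in $L^2_{\gamma_s}(\calX,\calY)$. If an independent argument is needed, I would show that any $\calF_0\in L^2_{\gamma_s}(\calX,\calY)$ orthogonal to all $H_{\bsnu}\psi_m$ vanishes. For each $m$, the scalar function $\langle\calF_0(\cdot),\psi_m\rangle$ lies in $L^2_{\gamma_s}(\calX)$ and is orthogonal to the scalar ONB, so it vanishes $\gamma_s$-a.e. Since there are countably many $m$ and $\calY$ is separable, $\calF_0=0$ a.e. Separability of $\calY$ is precisely what ensures strongly measurable maps reduce to countably many scalar components here.
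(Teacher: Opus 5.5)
Your proposal is correct and takes the same route as the paper. The paper simply invokes the scalar ONB property of $\{H_{\bsnu,\bslambda(s)}\}_{\bsnu\in\calF}$ in $L^2_{\gamma_s}(\calX)$ together with the isometric isomorphism $L^2_{\gamma_s}(\calX,\calY)\simeq L^2_{\gamma_s}(\calX)\otimes\calY$, while you additionally spell out the details it leaves implicit: the Bochner well-definedness of the coefficients, the grouping of the double expansion, and a direct completeness check of $\{H_{\bsnu,\bslambda(s)}\psi_m\}$ using separability of $\calY$.
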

        \subsection{Frames}\label{sec:ParaMap}
	We recap the notion of frames (see also \cite{HSZ18_2635,reinhardt2024statisticallearningtheoryneural}).
	We refer to \cite{Christensen,Heil} for general terminology and 
	basic properties of frames in Hilbert spaces. 
    Riesz bases and orthonormal bases such as wavelet, Fourier, and 
      Karhunen-Lo\`{e}ve bases (described in Remark~\ref{rmk:KL})
      are particular cases of frames.
      Operator surrogates whose decoders are based on these systems are covered by our 
        approximation rate results.
	\begin{definition}
        \label{def:frame}
		A collection $\bseta = \set{\eta_j}{j\in\N}\subset \calY$ 
		is called a \emph{frame for $\calY$}, 
                if the \emph{analysis operator}
		$$
		\rF:
    \begin{cases}
    \calY \to \ell^2(\N)\\ v\mapsto (\dup{v}{\eta_j})_{j\in\N}
    \end{cases}
		$$
		is boundedly invertible between $\calY$ and ${\rm range}(\rF)\subset \ell^2(\N)$.
	The adjoint $\rF'$ of the analysis operator is called the \emph{synthesis operator}.
	It is given by
	\begin{equation}\label{eq:calFp}
		\rF':
    \begin{cases}
    \ell^2(\N) \to \calY\\ \bsv \mapsto \bsv^\top\bseta \;.
    \end{cases}
	\end{equation}
	\end{definition}
	The \emph{numerical stability of frames}
	is quantified by the \emph{frame constants}
	\begin{equation} \label{eq:FrBd}
	\underline{\kappa} \coloneqq \underline{\kappa}(\bseta) := \inf_{0\ne v \in \calY} \frac{\| \rF v \|_{\ell^2}}{\| v \|_\calY} \;,
	\quad
	\overline{\kappa} \coloneqq \overline{\kappa}(\bseta) := \| \rF \|_{\calY\to \ell^2} = \sup_{0\ne v \in \calY}  \frac{\| \rF v \|_{\ell^2}}{\| v \|_\calY} \;.
	\end{equation}
	\emph{Parseval frames} are frames $\bseta$ with ideal conditioning
	$\underline{\kappa} = \overline{\kappa} = 1$.
	\begin{remark}\label{rmk:FrBd}
		Since $\|\rF'\|_{\ell^2\to\calY}=\|\rF\|_{\calY\to\ell^2}$,
		\eqref{eq:FrBd} implies that 
		for all $\bsv\in\ell^2(\N)$
		\begin{equation}\label{eq:rmk:FrBd}
			\normlr[\calY]{\sum_{j\in\N}v_j\eta_j}^2
			=\norm[\calY]{\rF'\bsv}^2
			\le \overline{\kappa}^2\sum_{j\in\N}v_j^2 
			= \overline{\kappa}^2 \norm[\ell^2]{\bsv}^2.
		\end{equation}
        \end{remark}
\begin{remark}\label{rmk:Riesz}
          When the function system 
        $\bseta$ in Definition~\ref{def:frame} is also a basis of $\calY$,
        then it is a Riesz basis. In this case
        the analysis operator $\rF: \calY \to \ell^2(\N)$ is boundedly invertible,
        as is the synthesis operator 
        $\rF': \ell^2(\N) \to\calY$ in \eqref{eq:calFp}. 
        The frame constants $\underline{\kappa}, \overline{\kappa}$ in \eqref{eq:FrBd} are then
        referred to as Riesz constants.
        A particular case of Riesz bases are orthonormal bases, where
        $\rF$ and $\rF'$ are isometries, and $\underline{\kappa} = \overline{\kappa} = 1$.
	\end{remark}

	The \emph{frame operator} $\rS := \rF'\rF: \calY \to \calY$ 
	is
	boundedly invertible, self-adjoint and positive
	(e.g. \cite{Christensen,Heil})
	with
	$\| \rF' \rF \|_{\calY\to \calY} = \overline{\kappa}^2$
	and
	$\| (\rF' \rF)^{-1} \|_{\calY\to \calY} = \underline{\kappa}^{-2}$,
	\cite[Lemma 5.1.5]{Christensen}.
	With the pseudoinverse $(\rF')^\dagger = \rF(\rF'\rF)^{-1}$
	of the synthesis operator, given by
	$$
	(\rF')^\dagger:
  \begin{cases}
  \calY \to \ell^2(\N)\\ f \mapsto \{ \langle f, \rS^{-1} \eta_j \rangle \}_{j\in \N},
  \end{cases}
	$$
	the \emph{frame decomposition theorem} asserts that
	every $f\in \calY$ can be uniquely and stably reconstructed
	from a corresponding sequence $\{  \langle f,\rS^{-1}\eta_j\rangle \}_{j\in\N}$ 
	of frame coefficients via
	$$
	f = \rF'(\rF')^\dagger f 
	= \sum_{j\in \N} \langle f,\rS^{-1}\eta_j \rangle \eta_j
	= \sum_{j\in \N} \langle f,\eta_j\rangle \rS^{-1} \eta_j 
	\;.
	$$
	The collection $\tilde{\bseta}:= \rS^{-1}\bseta$
	is a frame for $\calY$ which is referred to
	as the \emph{canonical dual frame} of $\bseta$,
	with analysis operator 
	$\tilde{\rF} := (\rF')^\dagger =  \rF (\rF'\rF)^{-1}$,
	and frame constants \eqref{eq:FrBd} equal to
	$\overline{\kappa}(\tilde \bseta) = \underline{\kappa}(\bseta)^{-1}$ 
	and 
	$\underline{\kappa}(\tilde \bseta) = \overline{\kappa}(\bseta)^{-1}$, 
	respectively.
	
	The frame decomposition theorem takes the form (e.g. \cite{Christensen,Heil})
	\begin{equation}\label{eq:compid}
		\rF' \tilde{\rF} = I\qquad\text{on}\qquad \calY.
	\end{equation}
	Hence, every $v\in \calY$ has a representation
	$v = \bsv^\top\bseta$ with $\bsv = \tilde\rF(v)\in \ell^2(\N)$,
	and
	\begin{equation}\label{eq:FrStab}
	\overline{\kappa}(\bseta)^{-1} \leq \frac{\| \bsv
		\|_{\ell^2}}{\|v\|_\calY} \leq \underline{\kappa}(\bseta)^{-1} \;.
	\end{equation}
	Property \eqref{eq:FrStab} is equivalent to $\bseta$
	being a frame for $\calY$ (see, e.g., \cite[Thm.~8.29 (b)]{Heil}).

\subsection{Scales of Hilbert Spaces}\label{sec:SpcHighReg}
We now introduce subspaces of $\cX$ and $\cY$ characterized by 
expansion coefficient decay of their elements; 
this can be interpreted as a form of sparsity or regularity. 
To quantify this, 
let in the following the sequences $\bsw$, $\bsv$ be as in Assumption \ref{ass:vw}.

\subsubsection{Input Space $\calX$}
\label{sec:InpSpc}

Let ${\gamma_s}$ be a Gaussian measure on $\calX$ as in Definition \ref{def:GM}. 
In particular $(w_j^{2s+1} ,\phi_j)_{j\in\N}$ 
are the eigenvalues and eigenfunctions of the covariance operator $Q$, 
and $(\phi_j)_{j\in\N}$ forms an orthonormal basis of the separable Hilbert space $\calX$. 
For $r\ge 0$ we let
\begin{equation}\label{eq:Xs}
  \calX^r \coloneqq \set{X \in \calX}{\norm[\calX^r]{X}  < \infty }\qquad\text{where}\qquad
  \norm[\calX^r]{X}^2:=\sum_{j\in\N} w_j^{-2r - 1}  |\langle X, \phi_j \rangle_\calX|^2.
\end{equation}
	The subspace $\calX^r \subset \calX$ is a Hilbert space with norm induced by the inner product
\begin{equation}\label{eq:Xsinner}
		\langle X, Z \rangle_{\calX^r} \coloneqq \sum_{j\in\N} w_j^{-2r - 1} \langle X, \phi_j \rangle_\calX \, \langle Z, \phi_j \rangle_\calX \qquad \forall \, X, Z \in \calX^r,
\end{equation}
              see \cite[Lemma 1]{HSZ24}.
              Note that $\calX^r\supseteq\calX^{r'}$ iff $r\le r'$, and
              $(w_j^{r+1/2} \phi_j)_{j\in\N}$ forms an ONB of $\calX^r$.
\begin{remark}
  \label{remark:CMSpc}
Let $r, s \geq 0$, fix $\gamma_s$ as in Definition \ref{def:GM} 
and let the subspace $\calX^{r} \subset \calX$ be as in \eqref{eq:Xs}. 
We have (see Remark \ref{rmk:KL})
		\begin{equation*}
			\gamma_s(\calX^{r}) 
			=
			\gamma_s(\{ X \in \calX : \sum_{j\in\N}w_j^{-2r-1}\langle X, \phi_j \rangle_\calX^2 < \infty \})
			=
			\bsmu(\{ \bsxi \in \R^\N : \sum_{j\in\N} w_j^{-2r-1} \lambda_j(s) \xi_j^2 < \infty \}).
		\end{equation*}
		Now, from \cite[Example 2.3.6]{Bogachev1998} it follows that $\gamma_s(\calX^{r}) = 1$ if 
		\[
			\sum_{j\in\N} w_j^{-2r-1} \lambda_j(s) = \sum_{j\in\N} w_j^{-2r-1} w_j^{2s+1} < \infty.
		\] 
		This holds true for all $r \geq 0$ such that  $r \leq s-\frac{1}{2}$. Note that $\bsmu$ has unit variances.
	\end{remark}

	\begin{remark}\label{rmk:CMSpc2}
          The Cameron-Martin space of the Gaussian measure $\gamma_s$ (cf.~Definition \ref{def:GM}) can
          be characterized as $Q_s^{1/2}\calX=\set{\sum_{j\in\N}w_j^{1/2+s}\phi_j\xi_j}{(\xi_j)_{j\in\N}\in\ell^2}$, 
          e.g., \cite[Remark 2.3.3]{Bogachev1998}. 
          This corresponds exactly to $\calX^s$ in \eqref{eq:Xs}.
          It follows (see \cite[Prop.~1.27]{DPIntro06}) that 
          $\gamma_s(\calX^s) = 0$. Since the subspaces $\calX^r$ are measurable 
          and nested in the sense that $\calX \supset \calX^r \supset \calX^{r'}$ for $0\leq r \leq r'$
          we obtain $\gamma_s(\calX^r) = 0$ for all $r \geq s$.		
	\end{remark}
        \begin{example}[{\cite[Example 2.3.4]{Bogachev1998}}]
          Let $D\subseteq\R^d$ be a bounded domain. Then
          there exists a GM on $L^2(D)$ with CM space $H^s(D)$ if $s>d/2$. 
          The measure is concentrated on $H^r(D)$  for any $r < s-d/2$.          
        \end{example}

        \subsubsection{Output Space $\calY$}\label{sec:outputspace}
	As for the input space, we require a characterization of the coefficient decay of the output. 
        While for the input space
        we used the eigenbasis of the covariance of $\gamma_s$ (from which the operator inputs will stem),
        the choice of representation system is less critical on the output side. For this reason
        we allow for arbitrary frames. Throughout, we work under the following assumption.

        \begin{assumption}\label{ass:eta}
          Let $(\eta_j)_{j\in\N}$ be a frame of $\cY$ such that $(\tilde\eta_j)_{j\in\N}$ is its canonical dual frame.
        \end{assumption}

        With $\bsv$ as in Assumption \ref{ass:vw}, i.e., $\bsv$ is a monotonically decrasing
        strictly positive $\ell^1$-sequence, we define, similarly as in \eqref{eq:Xs} but with a different scaling,
        for any $t\ge 0$,
\begin{equation}\label{eq:Yt}
  \calY^t \coloneqq \set{Y \in \calY}{\norm[\calY^t]{Y}  < \infty }\qquad\text{where}\qquad
  \norm[\calY^t]{Y}^2:=\sum_{j\in\N} v_j^{-2t}  |\langle Y, \tilde\eta_j \rangle_\calY|^2.
\end{equation}
Similar to \eqref{eq:Xsinner},
this defines a Hilbert scale 
with inner product given by weighted $\ell^2$ inner products of the coefficients.

        \begin{remark}\label{rmk:wgtBesov}
        Weighted summability of expansion coefficients describes membership in
        classical function spaces such as
        Sobolev and Besov spaces with bounded $t$-th weak derivative; see, e.g.,
        \cite{HSZ24}.
      \end{remark}

\subsection{Operator Derivatives}\label{sec:GaussSobolevMap}
Let $\calG:\calX\to\calY$. 
We are interested in the  approximation of $\calG$ in the sense of $L_{\gamma_s}^2(\calX,\calY)$; 
additionally we want to approximate its derivatives. 
While Malliavin calculus as presented, e.g., in \cite{NualartNualart2018,DaPratoIntrMalliavon02}, 
provides the standard mathematical framework for Gaussian-Sobolev spaces and weak derivatives, 
we restrict our attention here to (complex) Fr\'echet differentiable operators. 
This allows us to work with classical derivatives and avoid certain 
technicalities of the full Malliavin framework. 
For a recent treatment focusing on Lipschitz continuous operators, 
we refer to \cite{adcock2024learninglipschitzoperatorsrespect},
and for a universal approximation result for Fourier Neural Operator type
surrogates in first order spaces, to \cite{yao2025derivativeinformedfourierneuraloperator}.

We denote by $D\cG(X)\in\calL(\cX,\cY)$ the Fr\'echet derivative 
of $\calG$ at $X\in\cX$. 
For $r\ge 0$ and $\calX^r$ as defined in \eqref{eq:Xs}, 
it holds that $\calX^r\hookrightarrow\calX$ so that
\begin{equation*}
  D_{\calX^r}\cG(X):=D\cG(X)|_{\calX^r}\in\calL(\calX^r,\calY)
\end{equation*}
is well-defined. 
We will later see that under suitable assumptions and for sufficiently large $r$, 
$D_{\calX^r}\cG(X)$ is a Hilbert-Schmidt operator, 
i.e., it belongs to ${\rm HS}(\calX^r,\calY)$. 
We will consider approximation of $\cG$ w.r.t.~the norm
	\begin{equation}\label{eq:SobolevNorm}
		\|\calG\|_{W^{1,2}_{\gamma_s,r}(\calX,\calY)}^2 \coloneqq
		\int_\calX \|\calG(X)\|_\calY^2 \, {\rm d}\gamma_s(X) 
		+
		\int_\calX \|D_{\calX^r}\calG(X)\|_{{\rm HS}(\calX^r, \calY)}^2 \, {\rm d}\gamma_s(X).
    \end{equation}
We adopt the notation $W^{1,2}_{\gamma_s,r}(\calX,\calY)$ 
to reflect consistency with canonical Gaussian-Sobolev spaces.
They are typically constructed via certain closures of differential
operators \cite{MichalikGoldys2001,DPIntro06,NualartNualart2018}.

\begin{remark}\label{rmk:sizeW12r}
 The $W^{1,2}_{\gamma_s,r}(\calX,\calY)$-norm defined in \eqref{eq:SobolevNorm} 
 becomes weaker as $r$ increases, since the derivative is then restricted to a smaller set.
\end{remark}
  The principal reason for dealing with the Hilbert-Schmidt norm {\eqref{eq:SobolevNorm}}, 
  is that it allows for a simple characterization in terms of weighted coefficient sequences. 
  We recall such a result in the following proposition, 
  see also \cite[Proposition C.7]{adcock2024learninglipschitzoperatorsrespect}. 
  The proof can be found in Appendix \ref{app:AuxiliaryResults/Definitions}.

  \begin{proposition}\label{prop:weightedParseval}
    Let $r,s\ge 0$, and let $\gamma_s$ be as in Definition \ref{def:GM}.
    Let $\calG:\calX\to\calY$ be Fr\'echet differentiable and satisfy $\|\calG\|_{W_{\gamma_s,r}^{1,2}(\calX,\calY)} < \infty$.
	Then 
	\begin{equation}\label{eq:weightedParseval}
    	\norm[W_{\gamma_s,r}^{1,2}(\calX,\calY)]{\calG}^2
		=
		\sum_{\bsnu \in \calF} \Gamma_\bsnu(r,s) \|g_{\bsnu,\bslambda(s)}\|_\calY^2
	\end{equation}
	
	with  $g_{\bsnu,\bslambda(s)}$ defined in \eqref{eq:gbsnu}, 
        and
	\begin{equation}\label{eq:Gamma}
		\Gamma_\bsnu(r,s) \coloneqq 1 + \sum_{i=1}^{\infty} \nu_i w_i^{2r+1} \lambda_i(s)^{-1}=1 + \sum_{i=1}^{\infty} \nu_i w_i^{2(r-s)}.
	\end{equation}
  \end{proposition}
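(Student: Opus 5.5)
The plan is to reduce everything to the Karhunen–Lo\`eve coordinates of Remark~\ref{rmk:KL} and then use the standard Gaussian integration-by-parts identity for Hermite polynomials. Write $X=\sum_j \sqrt{\lambda_j(s)}\,y_j\phi_j$ with $\bsy\sim\bsmu$, and set $u(\bsy):=\calG(X(\bsy))$. Then $g_{\bsnu,\bslambda(s)}=\int u H_\bsnu\dd\bsmu$. The $L^2$ part of \eqref{eq:weightedParseval}, namely the summand $1$ in $\Gamma_\bsnu$, is Parseval \eqref{eq:Parsev}. It therefore remains to identify the derivative term.

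\textbf{Step 1 (rewrite the HS norm in coordinates).} Use the ONB $(w_i^{r+1/2}\phi_i)_{i\in\N}$ of $\calX^r$ to write
\[
\|D_{\calX^r}\calG(X)\|_{\HS(\calX^r,\calY)}^2=\sum_i w_i^{2r+1}\|D\calG(X)\phi_i\|_\calY^2 .
\]
By the chain rule, $\partial_{y_i}u(\bsy)=\sqrt{\lambda_i(s)}\,D\calG(X)\phi_i$. Hence the derivative term equals
\[
\sum_i w_i^{2r+1}\lambda_i(s)^{-1}\int\|\partial_{y_i}u\|_\calY^2\dd\bsmu .
\]

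\textbf{Step 2 (Hermite coefficients of partial derivatives).} Fix $i$ and show that
\[
\int \partial_{y_i}u\, H_\bsnu\dd\bsmu=\sqrt{\nu_i+1}\;g_{\bsnu+\bsdelta_i,\bslambda(s)} .
\]
This follows from the one-dimensional integration by parts $\int f'(x)H_n(x)\dd\mu(x)=\int f(x)\,\bigl(xH_n-H_n'\bigr)\dd\mu(x)$ together with the normalization $xH_n-H_n'=\sqrt{n+1}\,H_{n+1}$ of \eqref{eq:Hermite}. The other coordinates are handled by Fubini. Parseval applied to $\partial_{y_i}u\in L^2_{\bsmu}$, which holds by the finiteness assumption and Step 1, then gives
\[
\int\|\partial_{y_i}u\|^2\dd\bsmu=\sum_{\bsnu}(\nu_i+1)\|g_{\bsnu+\bsdelta_i}\|^2=\sum_{\bsnu}\nu_i\|g_\bsnu\|^2 .
\]
Summing over $i$ with the weights from Step 1 and exchanging sums by Tonelli (all terms are nonnegative) yields \eqref{eq:Gamma}. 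The second expression for $\Gamma_\bsnu$ is just $\lambda_i(s)=w_i^{2s+1}$.

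\textbf{Main obstacle.} The delicate point is justifying the integration by parts in Step 2, in the $\bsmu$-almost-everywhere sense and with only $L^2$ information. $\calG$ is only Fr\'echet differentiable, so in the variable $y_i$ the map $u$ is $C^1$ on every line. The plan is to apply the one-dimensional identity on $[-R,R]$ for fixed other coordinates. The boundary terms $u(\pm R)H_n(\pm R)e^{-R^2/2}$ must vanish along a sequence $R_k\to\infty$. This holds because $\int_\R\|u\|\,|H_n|\,e^{-x^2/2}\dd x<\infty$ for a.e.\ choice of the other coordinates, by Fubini and Cauchy--Schwarz, since $u\in L^2_\bsmu$; a similar bound applies to $uH_n'$. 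Dominated convergence is then justified by $\partial_{y_i}u\in L^2_\bsmu$, which follows from the assumed finiteness of the norm. If this becomes cumbersome, the fallback is to approximate $u$ by finite-dimensional truncations in $\bsy$ (cylindrical functions), prove the identity there, and pass to the limit.
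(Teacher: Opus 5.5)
Your proposal is correct and is essentially the paper's proof. The paper likewise computes the Hermite coefficients of the directional derivatives $\partial_{\phi_i^{(r)}}\calG$ by Gaussian integration by parts (Lemmas~\ref{lemma:finiteIBP}--\ref{lemma:infiniteIBP}, which pass to Karhunen--Lo\`eve coordinates) and the Hermite recurrence, then applies Parseval for each $i$ and sums over $i$. It differs only in indexing by $\bsnu^{(i)}$ instead of your $\bsnu+\bsdelta_i$, and in reducing to scalars via functionals $\ell\in\calY^*$.

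Your handling of the boundary terms is more careful than the paper's. One correction: Fr\'echet differentiability does not make $u$ $C^1$ along lines. Instead, invoke the fundamental theorem of calculus for everywhere-differentiable functions with locally integrable derivative, applied to $\ell\circ u$ times the smooth weight; your argument then goes through unchanged.
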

In the sequel we often simplify the notation and write $\Gamma_\bsnu \coloneqq \Gamma_\bsnu(r,s)$.
       
	\section{FrameNet Architecture} \label{sec:Architecture}

        We recall the \emph{FrameNet} architecture as presented in
        \cite{HSZ24,reinhardt2024statisticallearningtheoryneural}.
        It is an encoder-decoder type architecture, with a
        feedforward neural network in the middle: For an encoder
        $\calE_m:\calX\to\R^{m}$,
        a neural network
        $g_N:\R^m\to\R^n$,
        and a decoder
        $\calD_n:\R^n\to\calY$,
        operators $\calG:\calX\to\calY$ are approximated via

        \begin{equation}\label{eq:Defg0}
          \calG\simeq \calD_n\circ g_N\circ \calE_m.
        \end{equation}

        Here $N \in \N$ relates to the size of the neural network (see Sec. \ref{sec:NN}). 
        If the size of the network is not important for a particular statement or definition we omit the index $N$.
        We next discuss each component in more detail.

        \subsection{Encoder}\label{sec:encoder}

        The input of the operator $\calG$ is assumed to be distributed
        according to the Gaussian measure $\gamma_s$ in Definition
        \ref{def:GM}. Consequently, it is natural to work with the
        eigenfunctions of the covariance operator as a representation
        system. With $(\phi_j)_{j\in\N}$ (an ONB) as in Definition \ref{def:GM}
        and for $m\in\N$ we thus let
	\begin{equation}\label{eq:encoder}
          \calE_m(X):=(\dup{X}{\phi_j}_{\calX})_{j=1}^m\qquad\forall X\in\calX.
	\end{equation}
        The finite encoding dimension $m\in \N$ is a parameter to be chosen in the sequel. 

        \begin{remark}[Restriction, Padding]\label{rmk:Pad}
        When $m = \infty$ we denote $\calE_\infty: \calX \to \ell^2(\N)$,
        which is an isometric isomorphism (cf. Remark~\ref{rmk:Riesz}).
        Then, $\calE_m = \calR_m \circ \calE_\infty$
        with the ``\emph{dimension-truncation}'' restriction $\calR_m: \ell^2(\N) \to \R^m$.
        Its adjoint, the ``\emph{zero-padding}'' prolongation $\calR_m^*: \R^m \to \ell^2(\N)$
        completes a vector in $\R^m$ by zero extension to a sequence in $\ell^2(\N)$.
The finite-parametric neural operator approximations in \eqref{eq:Defg0} may 
        then be formally expressed as 

$
        \calD_\infty \circ \calR_n^* \circ g_N \circ \calR_m  \circ \calE_\infty : \calX \to \calY \;.
$

        \end{remark}

        \begin{remark}[Basis Encoder Property] \label{rmk:EncGRF}
          Suppose $X$ is a random variable with law $\gamma_s$ as in Definition \ref{def:GM}.
          Then $\calE_m(X)$ are $m$ \emph{independent} 
          real-valued Gaussian random variables (with differing variances
          depending on $\lambda_1(s), \dots, \lambda_m(s)$).

          While Gaussian measures can be expanded in 
          \emph{any Parseval frame of the Cameron-Martin space},
          cf.~\cite{LPFrame09}, a \emph{general Parseval frame based encoder}
          would not give independent random variables. As this plays a
          crucial role in our analysis, we restrict ourselves to
          orthogonal basis encoders \eqref{eq:encoder}.
        \end{remark}

        \subsection{Decoder}\label{sec:decoder}

        For the representation of the operator output, we 
        allow for an arbitrary frame in $\calY$. 
        This will be the same frame
	$\bseta = (\eta_j)_{j\in\N}$ introduced in Assumption \ref{ass:eta} and used
        to define the spaces $\calY^t$ in Section \ref{sec:outputspace},
        so that membership in $\calY^t$ is implied by weighted summability of frame coefficients.
        For arbitrary, finite $n\in \N$, 
            the decoder $\calD_n = \calD_\infty\circ \calR_n^*$ is given by
	\begin{equation}\label{eq:decoder}
\calD_n :
          \begin{cases}
            \R^n \to \calY \\
            \bsc\mapsto \sum_{j=1}^n c_j \eta_j \;.
          \end{cases}
	\end{equation}
        \begin{remark}\label{rmk:FrmBPX}
        A frame decoder can be realized on regular, unstructured simplicial partitions of polyhedra 
        via the BPX multi-level iteration (see, e.g., \cite{HSS08}, the orginal
	construction due to P.~Oswald in space dimension $d=2$ \cite{OswaldBPXP12d},
	and subsequently in polyhedra in \cite{OswaldML94};
  see also the references there).
      \end{remark}
    \begin{remark}\label{rmk:EncDec}
    	To lighten notation, if the parametric dimensions 
        $m$ and $n$ in \eqref{eq:encoder}, \eqref{eq:decoder} are clear
    	from context or not relevant for a statement we suppress these subscripts
    	and simply write $\calE$ and $\calD$. 
    \end{remark}

	\subsection{Neural Networks}
	\label{sec:NN}
	
	\begin{definition}{{(Feedforward Neural Network)}} 
		\label{def:FFNN}
		For a depth $L\in \N$, 
		let $\{ p_l \}_{l=0}^{L+1}\in \N^{L+2}$ be a sequence of NN widths. 
		A function $f: \R^{p_0} \to \R^{p_{L+1}} : \bsx \mapsto f(\bsx)$ 
		is called 
		\emph{feedforward NN} (FFNN) of depth $L=\depth(f)$ 
			and 
			width $\width(f) \coloneqq \max_{0 \leq l \leq L+1} p_l$ 
		if there
		exists an \emph{activation function} $\sigma:\R \to \R$ and 
		real-valued weights $w^l_{ij}$ and biases $b^l_{j}$ such that 
		for all $\bsx = (x_i)_{i=1}^{p_0} \in \R^{p_0}$ holds
		\begin{equation}\label{eq:NN1}
			\begin{array}{rcl} 
				z^1_{j} & = & \displaystyle 
				\sigma\left( \sum_{i=1}^{p_0} w^1_{ij} x_i + b^1_j \right) , \quad j=1,...,p_1, 
				\\
				z^{l+1}_j & = & \displaystyle 
				\sigma\left( \sum_{i=1}^{p_l} w^{l+1}_{ij} z^l_i + b^{l+1}_j \right) , \quad l=1,..,L-1, \; j=1,...,p_{l+1},  
				\\
				f(\bsx) & = & \displaystyle 
				(z^{L+1}_j)_{j=1}^{p_{L+1}}  
				=  
				\left( \sum_{i=1}^{p_{L}} w^{L+1}_{ij} z^L_i + b^{L+1}_j \right)_{j=1}^{p_{L+1}} \quad .
			\end{array}
		\end{equation}
	\end{definition}
	In order to be consistent with the assigned labels $m$ and $n$ for the output and input dimensions of the encoder $\calE_m$ and decoder $\calD_n$, we write for a FFNN $m \coloneqq p_0$ and $n \coloneqq p_{L+1}$.
	For a FFNN $f:\R^{m} \to \R^{n}$ as in Definition~\ref{def:FFNN},
	we define its
	\begin{itemize}
		\item $\size(f)$ as the number of nonzero parameters, 
		\[
		\size(f) := | \{ (i,j,l): w^l_{ij} \ne 0 \}| + | \{ (j,l) : b^l_j \ne 0 \}|
		\]
		\item $\mpar(f)$, the \emph{maximum parameter} value, 
		as
		\[
		\mpar(f) := \max\{ |w^l_{ij}|, |b^l_j| : i,j,l \}  \in (0,\infty) \;,
		\]
	\end{itemize}
	We will consider for $q\in \N$ the $\RePU$ activation function
	\begin{equation}\label{eq:AcReQ}
		\sigma_q(x) = x_+^q = (\max\{x,0\})^q \;, \quad x\in \R \;.
	\end{equation}
	For $q=1$, $\sigma_1$ is the classical ReLU activation.

        \subsection{FrameNet}
        The
        parametric surrogate maps in the FrameNet are selected from
        the following set: given an activation function
        $\sigma:\R \to \R$, integers $L,P,S \in \N$ and a real number $B\in \R$,
	\begin{equation}\label{eq:Defg1}
		\begin{array}{rcl}
			\bmg_{FN}(\sigma,L,P,S,B) 
			&:=& \displaystyle  
			\left\{ g:\R^{m}\to\R^{n} \;\mbox{is a NN with activation}\; \sigma \;\mbox{s.t.} \right.
			\\
			& & \displaystyle 
			\qquad 
			\depth(g) \leq L, \; \width(g) \leq P,\; \size(g) \leq S, 
			\\
			& & \displaystyle
			\left.
			\qquad  
			\mpar (g) \leq B 
			\right\}.
		\end{array}
              \end{equation}
	Note in particular that $\max\{n,m\} \leq P$.
	With the class $\bmg_{FN}$ at hand, 
	in the setting of {Sec.~\ref{sec:Architecture},}
	the FrameNet class of operator surrogates is given by
	\begin{equation}\label{eq:GFN}
		\left\{ \calD_n \circ g \circ \calE_m : g \in \bmg_{FN}(\sigma,L,P,S,B) \right\}.
	\end{equation}
        Here, the NN classes depend implicitly also on $n,m \in \NN$, the
            number of input- and output-channels.
        To lighten the notation, we do not write those explicitly.

        Throughout the rest of this paper we introduce
          \begin{equation}\label{eq:delta}
            \delta>0
          \end{equation}
          which is intended to be arbitrarily small, fixed.
	For $q \in \N $ and $N \in \N$, 
        in Theorem \ref{thm:main} and \ref{thm:main2} the \emph{FrameNet class} 
	is
	\begin{equation}\label{eq:FrNsp3}
		\{\calD_N \circ \tilde g_N \circ \calE_N : 
                \tilde g_N \in \bsg_{FN}(\sigma_q, \depthdelta_N, \widthdelta_N, \sizedelta_N, B)\}
	\end{equation}
	with 
	\begin{equation}\label{eq:sizeDepthN}
		\sizedelta_N \coloneq C_S \, N^{1+\delta}, \quad \depthdelta_N \coloneq C_L \, N^{\delta}, \quad 
		\widthdelta_N \coloneq \ceil{C_P N^{{1+\delta}}},
	\end{equation}	
        and where, throughout the rest of this manuscript, $C_S$, $C_L$, and $C_P$ are constants
        depending on $\delta$, $q$ and $B$ as introduced above; our theorems below will
        postulate existence of such constants such that certain convergence rates are satisfied
        as the number of network parameters $N\to\infty$ tends to infinity. 
       Crucially these constants will never depend on $N$. 
       We do not mention this anymore in the following when referring to \eqref{eq:sizeDepthN}.
	
	\section{Main Results}
	\label{sec:MainRes}
	We are now ready to state our main results.
	We consider holomorphic operators $\calG$. 
        To make the condition precise,
        we first introduce a complex "strip" of width $\theta > 0$ 
	\[
		\calS_\theta^{\calX} \coloneqq \{ x+\ii y \;:\; x,y \in \calX, \, \|y\|_\calX < \theta \} \subset \calX_\C
              \]
              in the (canonical, see \cite{MujiCplxAnBSpc}) complexification $\calX_\C$ of the Hilbert space $\calX$. 
        The {next} assumption is an adaptation of {\cite[Assumption 5.1]{MSZ25_1135}}.

\begin{assumption}\label{ass:G}
  There exist $C$, $\alpha$, $\theta > 0$, $\tau\in [0,2)$ and $t>0$ such that
  \begin{enumerate}
  \item \label{item:Ghol} $\calG:\calS_{\theta}^\calX\to \calY_{\bbC}$ is holomorphic, and
  \item \label{item:G-bound}$\norm[\calY_{\bbC}^t]{\calG(a)}\le  C\exp(\alpha\norm[\calX_{\bbC}]{a}^\tau)$ for all $a\in\calS_{\theta}^\calX$.
  \end{enumerate}
\end{assumption}
\begin{remark}
  The two items in Assumption \ref{ass:G} imply that
  $\calG:\calS_\theta^\calX\to \calY^t_{\C}$ is holomorphic. To see this, remark
  that
  \begin{enumerate}
  \item $\calY^t_\C\hookrightarrow \calY_{\C}$ by definition,
  \item $\calG:\calS^\calX_\theta \to \calY_\C^t$ is continuous by item
    \ref{item:G-bound} of Assumption \ref{ass:G} and using \cite[Lemma 5.3]{MSZ25_1135}.
  \end{enumerate}
  Then, the holomorphy of $\calG$ as an operator $\calS_\theta^\calX\to
  \calY^t_{\C}$ follows from \cite[Remark 5.2]{MSZ25_1135}. Note that the space $Z$ in that
  remark is the $\calY_\C$ of this paper, and $\calY_\C$ there is $\calY^t_\C$ here.
\end{remark}

	\subsection{Mean-Squared Approximation Rates}
	\label{sec:MainResL2}
The following result states that a holomorphic operator $\calG$ as in Assumption \ref{ass:G} can be approximated by FrameNet in a mean-squared sense with respect to a Gaussian measure $\gamma_s$ as in Definition \ref{def:GM}. 
The FrameNet architecture uses the eigenfunctions of the covariance of $\gamma_s$ to encode the input, and an arbitrary fixed frame to represent the operator output. The expression rates depend critically on the parameters $s > 0$ and $t > 0$: $s$ describes the spectral decay properties of the measure $\gamma_s$, and $t$ encodes smoothness in the output, in the sense of fast coefficient decay w.r.t. the chosen frame.
	
\begin{theorem}\label{thm:main}
         Let $\calX$, $\calY$ be two separable Hilbert spaces,
         and let $\calG:\calX\to\calY$ be such that for some $s>0$, $t>0$:
         \begin{itemize}
         \item $\gamma_s$ is a Gaussian measure as in Definition \ref{def:GM}, in particular the covariance eigenvalues belong to $\ell^{1/(2s+1)}(\N)$,
         \item $\calG$ satisfies Assumption \ref{ass:G}, in particular it is holomorphic 
         (w.r.t.\ the topology on $\calY_\C$) and maps to $\calY_\C^t$.
         \end{itemize}

          Then for any fixed $q\in\N$ and any $\delta>0$, there
          exists a sequence $\{ \Phi_N \}_{N\geq 1}$ such that for each $N\in \N$,
          $\Phi_N \in \bmg_{FN}(\sigma_q,\depthdelta_N, \widthdelta_N , \sizedelta_N ,B_q)$ with $\sizedelta_N$,
          $\depthdelta_N$ and $\widthdelta_N$ 
          defined in \eqref{eq:sizeDepthN}, 
          $B_q > 0$ a constant only depending on $q\in\N$, and such
            that $\calG$ can be approximated with the error bound
		\begin{equation}\label{eq:OpEmRate}
			\norm[L_{\gamma_s}^2(\calX,\calY)]{\calG-\calD_N\circ\Phi_N\circ\calE_N} = O(N^{-\min\{s, t\}})
                      \end{equation}
                      as $N\to \infty$.
                      Here $\calE_N$, $\calD_N$ are as in Sections \ref{sec:encoder}-\ref{sec:decoder}. 
         Furthermore, we have $B_1 = 1$, i.e., 
         for $q = 1$ (ReLU) the network weights are bounded by $1$.
	\end{theorem}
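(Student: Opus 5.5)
We reduce the operator approximation problem to a countably-parametric one and then approximate it by truncated Hermite expansions, which are in turn emulated by networks. Via the Karhunen--Lo\`eve representation of Remark~\ref{rmk:KL}, set $u(\bsy):=\calG(\sum_j y_j\sqrt{\lambda_j(s)}\phi_j)$ for $\bsy\in\R^\N$. Expanding its output in the dual frame gives scalar maps $u_k(\bsy)=\langle u(\bsy),\tilde\eta_k\rangle_\calY$, and $u(\bsy)=\sum_k u_k(\bsy)\eta_k$ by \eqref{eq:compid}. Then $\calG=\calD_\infty\circ(u_k)_k\circ(\lambda_j(s)^{-1/2}\calE_\infty(\cdot)_j)_j$, and $\|\calG\|_{L^2_{\gamma_s}}=\|u\|_{L^2_{\bsmu}}$. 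The surrogate will be $\calD_N\circ\Phi_N\circ\calE_N$, where $\Phi_N$ first rescales the inputs by $\lambda_j(s)^{-1/2}$ and then emulates a truncated Hermite expansion $\sum_{k\le n}\sum_{\bsnu\in\Lambda_k}c_{\bsnu,k}H_\bsnu$.

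\emph{Step 1 (holomorphy $\Rightarrow$ weighted summability).} Using Assumption~\ref{ass:G}, the map $u$ extends holomorphically in each $y_j$ to strips whose width is of order $\theta/\sqrt{\lambda_j(s)}=\theta w_j^{-s-1/2}$, with the growth bound $C\exp(\alpha\|a\|^\tau)$ for $\tau<2$. This bound is integrable against Gaussians. Following the standard Hermite-coefficient estimates (cf.\ \cite{MSZ25_1135}), the $\calY^t$-valued coefficients $u_\bsnu$ satisfy $\sum_\bsnu \beta_\bsnu\|u_\bsnu\|^2_{\calY^t}<\infty$ with weights $\beta_\bsnu=\prod_j\max\{1,\rho_j^{2\nu_j}\}$ (or a polynomial-in-$\nu_j$ variant), where $\rho_j\sim w_j^{-s-1/2}$. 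Consequently $(\beta_\bsnu^{-1/2})\in\ell^{p}$ for $p$ just above $1/(s+1/2)$, giving $N$-term rates $\sim N^{-s}$ in $L^2$. On the output side, the $\calY^t$ bound gives $\sum_k v_k^{-2t}|u_{\bsnu,k}|^2\le\|u_\bsnu\|_{\calY^t}^2$.

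\emph{Step 2 (spectral truncation).} Choose a downward closed set of pairs $(\bsnu,k)$ of cardinality $\sim N$ by thresholding the combined weight $\beta_\bsnu v_k^{-2t}$. A Stechkin-type argument, with $v_k\lesssim k^{-1}$ from the Remark after Assumption~\ref{ass:vw}, bounds the truncation error by $O(N^{-\min\{s,t\}})$ up to logarithms; those logarithms are absorbed later by $\delta$. The errors are measured in $\calY$ via \eqref{eq:rmk:FrBd}, which yields the factor $\overline\kappa$. Because the indices with large weight involve only coordinates $j\lesssim N$, we may take $m,n\le N$. The tail left by input dimension truncation is part of the same expansion tail.

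\emph{Step 3 (network emulation).} This is the main obstacle. Hermite polynomials are unbounded and the inputs are Gaussian, so uniform sup-norm emulation is impossible. Instead we invoke the Appendix results: ReLU networks emulate $H_n$ in $L^2_\mu$ on a truncated domain $[-R,R]$ with $R\sim\sqrt{\log N}$, with bounded weights, while RePU networks ($q\ge2$) emulate products and polynomials exactly. Products of the at most $O(\log N)$ active univariate factors are handled by binary-tree product networks, and all polynomials share a common layer. The Gaussian tails outside $[-R,R]^{m}$ are controlled by Cauchy--Schwarz together with the $L^4$ integrability of $u$ and of the polynomials. Weights are kept bounded by $B_q$ (with $B_1=1$) by replacing large scalings, such as $\lambda_j^{-1/2}$ and the coefficients, with depth: repeated multiplication by constants $\le1$ and sums of many small weights. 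This costs only $O(\log N)$ extra depth, hence depth $\le C_LN^\delta$ and size $\le C_SN^{1+\delta}$. The constants must be tracked carefully so that the emulation accuracy is $N^{-\min\{s,t\}}$ with only a polylogarithmic overhead.
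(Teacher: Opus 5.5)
Your architecture matches the paper's: KL parametrization $u^s$, Hermite summability from \cite{MSZ25_1135}, downward closed truncation over pairs $(\bsnu,k)$ ordered by a combined weight, and emulation via the Appendix. However, Step 1 is stated incorrectly, and as a result the size and depth count in Step 3 is unsupported.

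\emph{Step 1.} The weights $\prod_j\max\{1,\rho_j^{2\nu_j}\}$ do not hold for Hermite coefficients of strip-holomorphic maps; strip analyticity in general gives only subexponential, $e^{-c\sqrt{\nu}}$-type decay. Nor can radii $\rho_j\sim w_j^{-s-1/2}$ be used in all coordinates at once. Admissibility in Definition~\ref{def:bdXHol} with $\chi=2$ is the joint constraint $\sum_j b_j^2\varrho_j^2\le\theta^2$ with $b_j=w_j^{s+1/2}$. This leads to $\varrho_j\sim w_j^{-s}$ as in \eqref{eq:varrho}, and that loss of $1/2$ is exactly why the rate is $s$. Your exponent $p\approx 1/(s+1/2)$ would in fact give $N^{-(s+1/2)}$, which contradicts your own conclusion. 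What is true is your parenthetical polynomial variant, $\sum_{\bsnu}\beta_{\bsnu}(M+1,\bsvarrho)\|u^s_{\bsnu}\|^2_{\calY^t_\C}<\infty$ with $\beta_\bsnu$ as in \eqref{eq:beta}, leading to $c_\bsnu=\prod_j\nu_j^M\max(1,Kw_j^{-2s})$ in \eqref{eq:cnudef}.

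\emph{Consequence for Step 3.} With polynomial weights, the retained index set contains degrees growing polynomially in $N$, of order $N^{2s/M}$. The Appendix bounds in Proposition~\ref{prop:reluhermite} scale, up to logarithms, like size $|\Lambda|\,m(\Lambda)^3d(\Lambda)^3$ and depth $m(\Lambda)^3d(\Lambda)^2$. So the overhead is not ``polylogarithmic''. The missing idea is to use the summability for \emph{every} $M\ge 2s+1$ and take $M$ so large that $2s/M\le\delta$. Combined with \cite[Lemma 4.3]{MSZ25_1135}, where Assumption~\ref{ass:vw}(ii) keeps the ordering threshold polynomial in $N$, this gives $m(\Lambda_1^N)=O(N^{\delta})$ and $d(\Lambda_1^N)=o(\log N)$, as in Proposition~\ref{prop:truncate}. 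You never bound $m(\Lambda)$ and only assert the $O(\log N)$ bound on active coordinates. The $N^{\delta}$ in \eqref{eq:sizeDepthN} is precisely the price of these polynomially growing degrees.

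\emph{Tail argument in Step 3.} This also fails as stated. Since $\int_{|x|>R}H_n^2\,d\mu\lesssim (2n)!!(3R)^{2n}e^{-R^2/2}$, a radius $R\sim\sqrt{\log N}$ is too small once $n$ grows; you need $R^2\gtrsim n\log n+\log(1/\eps)$, cf.\ \eqref{eq:M=M(n,eps)}. Controlling the tail of the \emph{truncated} expansion through an $L^4$ norm is also problematic. Integrability of $u$ gives no uniform control of its truncations, and hypercontractivity costs a factor up to $3^{m(\Lambda)/2}$.

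The paper avoids any global tail argument. Each $H_\bsnu$, $\bsnu\in\Lambda_1^N$, is emulated in $H^1(\R^\N;\bsmu)$ to accuracy $\eps=N^{-1/2-\min\{s,t\}}$, with the degree-dependent radius built into Proposition~\ref{prop:reluhermite}. Cauchy--Schwarz over the $N$ retained pairs then gives squared emulation error $\lesssim \eps^2 N\sum_{\bsnu,k}|\langle u^s_{\bsnu},\tilde\eta_k\rangle|^2\lesssim N^{-2\min\{s,t\}}$.

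\emph{Input rescaling.} Your explicit rescaling of $\calE_N(X)$ by $\lambda_j(s)^{-1/2}$ with bounded weights addresses a point the paper passes over in its final change of variables, and it is fine. The $O(\log N)$ depth cost there again uses Assumption~\ref{ass:vw}(ii), which gives $\lambda_j(s)^{-1/2}\le CN^{r_0(s+1/2)}$ for the active $j\le N$.
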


	The proof of this result will be given in Sec.~\ref{sec:PfThm}.
	To establish Thm.~\ref{thm:main}, we use from \cite{DNSZ23_2957} 
	a notion of parametric holomorphy of the coefficient maps.
	On way to establishing Thm.~\ref{thm:main}, 
	we introduce in Sec.~\ref{sec:TrcH} 
	the construction of a family of spectral operator networks
	obtained from $N$-term truncations $G_N$ of Hermite expansions. 
	
	Neural operators $\calG_N$ are subsequently obtained 
	by approximating coordinate maps $G_N$ in terms of Wiener-Hermite
	polynomial chaos expansions of the component functions of $G_N$.
	
	\subsection{Gauss-Sobolev Approximation Rates}
        \label{sec:GSSobRates}
	The next result yields expression rate bounds 
with respect to the Gauss-Sobolev norm $\|\cdot\|_{W^{1,2}_{\gamma_s,r}(\calX,\calY)}$ in \eqref{eq:SobolevNorm}. 
The expression rate now depends on the additional parameter $r > 0$, 
associated with the derivative norm, cf.~Section \ref{sec:GaussSobolevMap}; 
in particular, larger $r$ corresponds to a smaller norm. 
        The approximation rates worsen compared to the previous result in $L_{\gamma_s}^2(\calX,\calY)$ if $r < s$.

\begin{theorem}\label{thm:main2}
          Consider the setting of Theorem \ref{thm:main} and let $r > 0$.
          
		Then for any fixed $q \in \N$ and $\delta > 0$, 
there exists a sequence $\{\Phi_N\}_{N\in\N}$ such that for each $N \in \N$ it holds that 
$\Phi_N \in \bsg_{FN}(\sigma_q, \depthdelta_N, \widthdelta_N, \sizedelta_N, B_q)$ 
with $\sizedelta_N$, $\depthdelta_N$ and $\widthdelta_N$ defined in \eqref{eq:sizeDepthN}, 
$B_q > 0$ a constant only depending on $q\in\N$, 
and such that 
		\begin{equation}
			\|\calG - \calD_N \circ \Phi_N \circ \calE_N\|_{W_{\gamma_s,r}^{1,2}(\calX,\calY)} = O(N^{-\min(r, s, t)})
    \end{equation}
    as $N\to\infty$.
    Here $\calE_N$, $\calD_N$ are as in Sections \ref{sec:encoder}-\ref{sec:decoder}.  
Furthermore, we have $B_1 = 1$,  
i.e., for q = 1 (ReLU) the network weights are bounded by 1.
\end{theorem}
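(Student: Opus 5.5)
The proof has three stages: a discrete--continuous reduction, a weighted summability and truncation argument for the Hermite coefficients, and a neural emulation step. \emph{Reduction.} Via Remark~\ref{rmk:KL}, write $\calG(X) = G(\bsy)$ with $X=\sum_j y_j\sqrt{\lambda_j(s)}\phi_j$ and $\bsy\sim\bsmu$. Expand $G$ in the dual frame: $G(\bsy)=\sum_k G_k(\bsy)\eta_k$ with $G_k(\bsy)=\langle G(\bsy),\tilde\eta_k\rangle$. Proposition~\ref{prop:weightedParseval} gives
\[
\|\calG-\widetilde\calG\|_{W^{1,2}_{\gamma_s,r}}^2=\sum_{\bsnu\in\calF}\Gamma_\bsnu(r,s)\,\|g_\bsnu-\tilde g_\bsnu\|_\calY^2 ,
\]
with $\Gamma_\bsnu=1+\sum_i\nu_i w_i^{2(r-s)}$. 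So it suffices to (a) construct a truncated Hermite surrogate $G_{\Lambda,n}=\sum_{k\le n}\sum_{\bsnu\in\Lambda}c_{\bsnu,k}H_\bsnu\,\eta_k$ whose $\Gamma$-weighted coefficient error is $O(N^{-2\min(r,s,t)})$, and (b) emulate it by a network. Step (b) is taken in the Sobolev norm, so the network error must also be controlled in $W^{1,2}_{\gamma_s,r}$. The frame bound \eqref{eq:rmk:FrBd} transfers coefficient errors to $\calY$ up to the constant $\overline\kappa$.

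\emph{Weighted summability and truncation.} From Assumption~\ref{ass:G}, the parametric map $\bsy\mapsto G(\bsy)\in\calY^t_\C$ is holomorphic on products of strips whose widths $\rho_j$ may grow like $\rho_j\sim b_j^{-1}$, with $b_j=\sqrt{\lambda_j(s)}=w_j^{s+1/2}$. The $\exp(\alpha\|a\|^\tau)$ growth with $\tau<2$ is integrable against Gaussians. Following the $(\bsb,\xi,\delta)$-holomorphy framework of \cite{DNSZ23_2957}, this yields $\sum_\bsnu \beta_\bsnu(\bsrho)\|g_\bsnu\|_{\calY^t}^2<\infty$ with $\beta_\bsnu=\sum_{\|\bsmu\|_\infty\le R}\binom{\bsnu}{\bsmu}\bsrho^{2\bsmu}$, and hence $(\beta_\bsnu^{-1/2})\in\ell^{p}$ with $p$ tied to $b\in\ell^{1/(s+1/2)}$.

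The derivative weight must be absorbed: choosing $\rho_i\gtrsim w_i^{r-s}$ in addition makes $\Gamma_\bsnu\lesssim\beta_\bsnu$. The strength of the weights $\beta_\bsnu$ that remain after this absorption is what limits the rate to $r$ when $r<s$. I expect the bookkeeping here to be the main obstacle: the holomorphy widths must serve both the decay rate $s$ and the derivative weight $r-s$ simultaneously, while keeping $\Lambda$ downward closed with controlled $\max_{\bsnu\in\Lambda}|\bsnu|$ and $|\supp\Lambda|\le N$.

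Choose $\Lambda_N$ as the $N$ indices with largest weighted coefficient (a downward closed set, by monotone majorants). Take $n=N$ frame coefficients and use $\|\cdot\|_{\calY^t}$: the output truncation costs $v_N^{t}\lesssim N^{-t}$. A Stechkin-type argument then gives coefficient error $N^{-\min(r,s)}$. Combining the two errors yields $O(N^{-\min(r,s,t)})$ up to logs. Logarithmic factors are absorbed by the $N^\delta$ slack in \eqref{eq:sizeDepthN}, after renaming $N$.

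\emph{Neural emulation.} Apply the appendix results (\cite{SZ21_982}, generalized to bounded weights) to emulate all $H_\bsnu$, $\bsnu\in\Lambda_N$, simultaneously by one network taking inputs $\calE_N(X)$ rescaled by $\lambda_j(s)^{-1/2}$. The rescaling would violate the weight bound $B_q$, so I would implement it by repeated bounded-weight layers, at depth cost $O(\log N)$, since $\lambda_j^{-1/2}\le CN^{r_0(s+1/2)}$ by Assumption~\ref{ass:vw}(ii). For RePU, $q\ge2$, multiplications are realized exactly. For ReLU, use emulation with accuracy $N^{-K}$ in $W^{1,2}_{\bsmu}$ at depth $O(\log N)$. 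The unboundedness of Gaussian inputs is handled by emulating on $[-R,R]^{N}$ with $R\sim\sqrt{\log N}$ and bounding the tail via Gaussian concentration together with polynomial growth of the emulation. Derivative errors are multiplied by at most $w_i^{-2(s-r)}$ per coordinate, which is polynomial in $N$ and absorbed by choosing $K$ large. A final linear layer with coefficients $c_{\bsnu,k}$ produces the decoder input; coefficients exceeding $B_q$ are split into sums of bounded weights, which increases width by a polylogarithmic factor. The total size is $O(N^{1+\delta})$ and the depth is $O(N^\delta)$, and the error estimate follows by the triangle inequality.
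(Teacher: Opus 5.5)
Most of your outline matches the paper: the Karhunen--Lo\`eve reduction, absorbing $\Gamma_\bsnu$ into the holomorphy weights (this is Steps 3--4 of Theorem \ref{thm:bdXSum}), exact RePU emulation, and ReLU emulation with polynomially small accuracy. Your explicit bounded-weight rescaling of $\calE_N(X)$ by $\lambda_j(s)^{-1/2}$ is a point the paper passes over.

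\textbf{Main gap: the truncation structure.} You use a tensor-product set $\Lambda_N\times\{1,\dots,N\}$ with $|\Lambda_N|=N$ and $n=N$. Your surrogate therefore has $N^2$ coefficients $\langle u^s_\bsnu,\tilde\eta_k\rangle$, which are generically nonzero. The final linear layer $\R^{|\Lambda_N|}\to\R^N$ then carries about $N^2$ nonzero weights, so $\size(\Phi_N)\sim N^2$ and your closing claim of size $O(N^{1+\delta})$ is false.

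Rebalancing the tensor product does not rescue the rate. With $N$ total coefficients split as $|\Lambda|=N^a$ and $n=N^{1-a}$, the error is $N^{-a\min(r,s)}+N^{-(1-a)t}$. At best this is $N^{-\min(r,s)\,t/(\min(r,s)+t)}$, strictly worse than $N^{-\min(r,s,t)}$; for example it gives $N^{-s/2}$ when $r=s=t$.

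The missing idea is an anisotropic, joint selection over $\calF\times\N$. Enumerate pairs $(\bsnu,k)$ by increasing $c_\bsnu v_k^{-2t}$, where $c_\bsnu$ are the monotone product weights with $\sum_\bsnu\Gamma_\bsnu c_\bsnu\|u^s_\bsnu\|^2_{\calY^t}<\infty$. Since $(c_\bsnu^{-1})\in\ell^{1/(2\min(r,s))}(\calF)$ and $(v_k^{2t})\in\ell^{1/(2t)}(\N)$, the product sequence lies in $\ell^{1/(2\min(r,s,t))}(\calF\times\N)$. The first $N$ pairs then give nested, downward closed sets $\Lambda^N_1\supseteq\Lambda^N_2\supseteq\cdots$ with $\sum_k|\Lambda^N_k|=N$ and $\Gamma$-weighted tail $\lesssim N^{-2\min(r,s,t)}$ (Proposition \ref{prop:truncate}). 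Only the at most $N$ polynomials $H_\bsnu$, $\bsnu\in\Lambda^N_1$, need to be emulated, and the output layer has exactly $N$ nonzero weights.

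\textbf{Second gap: degree and dimension control.} The size and depth of any Hermite emulation, ReLU or RePU, grow polynomially in the maximal degree $m(\Lambda)$ and the effective dimension $d(\Lambda)$. You therefore need $m(\Lambda^N_1)=O(N^{\delta})$ and $d(\Lambda^N_1)=o(\log N)$. Without this, your claimed ReLU depth $O(\log N)$ is unjustified, because ReLU emulation of a degree-$m$ polynomial costs depth growing with $m$. You list this as an obstacle but give no mechanism.

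In the paper it is obtained as follows. The holomorphy yields $\sum_\bsnu\beta_\bsnu(M+1,\bsvarrho)\|u^s_\bsnu\|^2_{\calY^t}<\infty$ for every $M\ge 2s+1$, with $\varrho_j\sim w_j^{-s}$. This lets one build the factor $\prod_{j\in\supp\bsnu}\nu_j^M$, with $M$ arbitrarily large, into the weights $c_\bsnu$. A counting lemma from \cite{MSZ25_1135} then gives $m=O(N^{2s/M})$ and $d=o(\log N)$ for the selected index sets.
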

The proof of this result is also postponed to Section \ref{sec:HolDom} and follows a similar 
strategy as the proof of the aforementioned $L_{\gamma_s}^2$ result, 
but accounting for the additional weights occuring in \eqref{eq:Gamma}. 
	
	\section{Proofs of Expression Rates}
	\label{sec:HolDom}
        We shall prove expression rate bounds for holomorphic operators.
	As in \cite{DNSZ23_2957}, we adopt a notion of \emph{parametric holomorphy}. 
	\subsection{Domains of Parametric Holomorphy}
	\label{sec:DomHol}
	In this section we recall the notion of parametric 
        $(\bsb,\xi, \chi, {\epsilon}, Y)$-holomorphy from
	\cite[Definition 2.2]{MSZ25_1135}, and discuss the domains of
	holomorphy for (parametrized versions of) operators 
        satisfying Assumption \ref{ass:G}.
	To this end we denote, for 
	$ N \in \N$ and for $\bsvarrho\in (0,\infty)^N$, 
	\begin{equation*}
		\calS_\bsvarrho^\C 
		:=
		\set{\bsz\in\C^N}{|\Im(z_i)|<\varrho_i~\forall i} \subseteq \C^N \quad \text{and}\quad \calB(\bsvarrho) \coloneqq \{z\in \C^N: |z_i| < \varrho_i \; \forall i \}.
	\end{equation*}
  We omit the dependence on $N$ of these sets as it will be clear from the context.

	\begin{definition}[($\bsb,\xi,\chi, \epsilon, Y$)-Holomorphy]
		\label{def:bdXHol}
		Let $Y$ be a complex, separable Hilbert space, 
		let 
		$\bsb=(b_j)_{j\in\N} \in (0,\infty)^\infty$ be a sequence
		and 
		let $\xi>0$, $\epsilon > 0$ be parameters.
		
		For $N\in\N$ 
		and $\chi\in [1,\infty]$,
		the sequence $\bsvarrho\in (0,\infty)^N$ 
		is called \emph{$(\bsb,\xi,\chi)$-admissible} 
		if
		\begin{equation}\label{eq:adm}
				\sum_{j=1}^N b_j^{\chi} \varrho_j^{\chi} \leq \xi^{\chi} \;\;\mbox{if}\;\; \chi < \infty,
				\quad \max_{1\leq j \leq N} b_j\varrho_j \leq \xi \;\;\mbox{if}\;\; \chi = \infty.
		\end{equation}
		When $\chi = 1$, we refer to \emph{$(\bsb,\xi,1)$-admissible} sequences 
		simply as ``\emph{$(\bsb,\xi)$-admissible}''.

		A function $u\in L^2_\bsmu(\R^\N,Y)$ is called \emph{$(\bsb,\xi, \chi, \epsilon,Y)$}-holomorphic 
		if
		\begin{enumerate}
			\item\label{item:hol} for every $N\in\N$ there exists
			$u_N:\R^N\to Y$, which, for every $(\bsb,\xi, \chi)$-admissible
			$\bsvarrho\in (0,\infty)^N$, admits a holomorphic extension
			(denoted again by $u_N$) from $\calS_\bsvarrho^\C\to Y$.
			Furthermore,
			for all $N,K \in \N$ with $N<K$
			\begin{equation}\label{eq:un=um}
				u_N(y_1,\dots,y_N)={u_K}(y_1,\dots,y_N,0,\dots,0)\qquad\forall (y_j)_{j=1}^N\in\R^N,
			\end{equation}
			
			\item\label{item:varphi} for every $N\in\N$ there exists
			$\varphi_N:\R^N\to\R_+$ such that
			$\norm[L_{\bsmu_N}^2(\R^N)]{\varphi_N}\le \epsilon$ and
			\begin{equation*} \label{ineq[phi]}
				\sup_{\substack{\bsvarrho\in(0,\infty)^N\\
						\text{is $(\bsb,\xi,\chi)$-adm.}}}~\sup_{\bsz\in \calB(\bsvarrho)}
                                \norm[Y]{u_N(\bsy+\bsz)}\le \varphi_N(\bsy)\qquad\forall\bsy\in\R^N,
			\end{equation*}
			\item\label{item:vN} 
			with $\tilde u_N:\R^\N\to Y$ defined by
			$\tilde u_N(\bsy) :=u_N(y_1,\dots,y_N)$ for $\bsy = (y_j)_{j\geq 1} \in \R^\N$ 
			it holds
			\begin{equation*}
				\lim_{N\to\infty}\norm[L^2_\bsmu(\R^\N,Y)]{u-\tilde u_N}=0.
			\end{equation*}
		\end{enumerate}
	\end{definition}
		In \cite{DNSZ23_2957}, sufficient conditions were given 
		for coefficient-to-solution operators of linear, elliptic divergence-form PDEs 
                with log-Gaussian random field coefficients
		to be ($\bsb,\xi,1,\epsilon,Y$)-holomorphic.
		In \cite{MSZ25_1135} it has been shown that Assumption \ref{ass:G} 
                on an operator $\calG$ is a sufficient condition for ($\bsb,\xi,2,\epsilon,Y$)-holomorphy. 
    We recall this result here.
As in Sec.~\ref{sec:Architecture},
$\bsphi \coloneqq (\phi_j)_{j\in\N} \subset \calX$ 
is the eigenbasis of the covariance operator of $\gamma_s$.

	\begin{proposition}{{\cite[Proposition 5.2]{MSZ25_1135}}}
		\label{prop:holomorphy}
		Suppose that $\calG$ satisfies Assumption \ref{ass:G} 
		and let $\bsb\in\ell^2(\N)$ be a sequence of positive reals. 
		Then, the map $u:\R^\N \to \calY_{\C}^{t}$ 
                given by
		\begin{equation} \label{eq:ubsy}
			u(\bsy) := \calG\Big(\sum_{j\in\N}y_jb_j\phi_j\Big)
		\end{equation}
		is $(\bsb,\theta,2,\epsilon, \calY_{\C}^t)$-holomorphic 
                for some $\epsilon = \epsilon(\alpha,\bsb,\tau)$ 
                where $\alpha > 0, \tau \in [0,2)$ are the constants in Assumption \ref{ass:G}.
	\end{proposition}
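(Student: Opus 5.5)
The plan is to verify the three items of Definition~\ref{def:bdXHol} directly, with $\chi=2$ and $\xi=\theta$, for the truncations
\begin{equation*}
  u_N(\bsy):=\calG\Big(\sum_{j=1}^N y_jb_j\phi_j\Big),\qquad \bsy\in\R^N .
\end{equation*}
The consistency relation \eqref{eq:un=um} holds trivially by construction, since padding $\bsy$ with zeros does not change the argument of $\calG$.

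\emph{Item \ref{item:hol} (holomorphic extension).} Fix a $(\bsb,\theta,2)$-admissible $\bsvarrho$, i.e.\ $\sum_{j\le N}b_j^2\varrho_j^2\le\theta^2$. For $\bsz\in\calS^\C_\bsvarrho$, write $a(\bsz):=\sum_{j\le N}z_jb_j\phi_j\in\calX_\C$. Because $(\phi_j)$ is orthonormal,
\begin{equation*}
  \|\Im a(\bsz)\|_\calX=\Big(\sum_{j\le N} b_j^2|\Im z_j|^2\Big)^{1/2}<\theta .
\end{equation*}
Hence $a$ maps $\calS^\C_\bsvarrho$ into $\calS^\calX_\theta$. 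The map $a$ is affine and continuous, hence holomorphic, and $\calG:\calS_\theta^\calX\to\calY^t_\C$ is holomorphic by the remark following Assumption~\ref{ass:G}. Therefore the composition $u_N=\calG\circ a$ is holomorphic on $\calS_\bsvarrho^\C$ with values in $\calY^t_\C$.

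\emph{Item \ref{item:varphi} (majorant).} Let $\bsz\in\calB(\bsvarrho)$ and set $A_N(\bsy):=(\sum_{j\le N}b_j^2y_j^2)^{1/2}$. Then $\bsy+\bsz\in\calS^\C_\bsvarrho$ and
\begin{equation*}
  \|a(\bsy+\bsz)\|_{\calX_\C}\le A_N(\bsy)+\theta .
\end{equation*}
Assumption~\ref{ass:G}\eqref{item:G-bound} therefore yields
\begin{equation*}
  \sup\|u_N(\bsy+\bsz)\|_{\calY^t_\C}\le \varphi_N(\bsy):=C\exp\big(\alpha(A_N(\bsy)+\theta)^\tau\big),
\end{equation*}
where the supremum runs over all admissible $\bsvarrho$ and all $\bsz\in\calB(\bsvarrho)$. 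The key step, and the main obstacle, is to bound $\|\varphi_N\|_{L^2_{\bsmu_N}}$ uniformly in $N$. Here $\tau<2$ is used essentially. Young's inequality gives, for every $\kappa>0$, a constant $C_\kappa$ with $\alpha(A+\theta)^\tau\le\kappa A^2+C_\kappa$. Hence
\begin{equation*}
  \int\varphi_N^2\dd\bsmu_N\le C^2e^{2C_\kappa}\prod_{j\le N}\int_\R e^{2\kappa b_j^2y^2}\dd\mu(y)=C^2e^{2C_\kappa}\prod_{j\le N}(1-4\kappa b_j^2)^{-1/2}.
\end{equation*}
Choose $\kappa$ so small that $4\kappa\|\bsb\|_{\ell^\infty}^2\le 1/2$. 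Then $-\log(1-x)\le 2x$ on $[0,1/2]$ bounds the product by $\exp(4\kappa\|\bsb\|_{\ell^2}^2)$, independently of $N$. This defines $\epsilon=\epsilon(\alpha,\bsb,\tau)$, which also depends on $C$ and $\theta$ from Assumption~\ref{ass:G}. If $\tau=2$ were allowed, one would instead need a smallness condition relating $\alpha$ and $\|\bsb\|_{\ell^\infty}$, which is why the restriction $\tau<2$ matters.

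\emph{Item \ref{item:vN} (convergence).} Since $\bsb\in\ell^2$, the series $\sum_j y_jb_j\phi_j$ converges in $\calX$ for $\bsmu$-a.e.\ $\bsy$. Indeed, $\sum_j b_j^2y_j^2<\infty$ a.s.\ because its expectation is $\|\bsb\|_{\ell^2}^2$, and orthonormality turns this into convergence of the series. Continuity of $\calG$ then gives $\tilde u_N(\bsy)\to u(\bsy)$ in $\calY^t_\C$ a.s.; this also shows that $u$ is measurable. For domination, note $A_N\le A_\infty:=(\sum_j b_j^2y_j^2)^{1/2}$. Hence both $\|u\|_{\calY^t_\C}$ and $\|\tilde u_N\|_{\calY^t_\C}$ are bounded by $C\exp(\alpha A_\infty^\tau)$. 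Its square is $\bsmu$-integrable by the same product computation, now with $N=\infty$; in fact this limiting bound is exactly what makes the estimate in item~\ref{item:varphi} uniform. Dominated convergence then yields $\|u-\tilde u_N\|_{L^2_\bsmu(\R^\N,\calY^t_\C)}\to0$ and also $u\in L^2_\bsmu(\R^\N,\calY^t_\C)$, which completes the verification.
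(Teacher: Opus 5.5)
Your proof is correct. The $\chi=2$ admissibility condition gives $\|\Im a(\bsz)\|_\calX^2=\sum_{j\le N}b_j^2|\Im z_j|^2<\theta^2$, so the affine map $a$ sends $\calS^\C_\bsvarrho$ into $\calS^\calX_\theta$. For the majorant $C\exp\bigl(\alpha(A_N(\bsy)+\theta)^\tau\bigr)$, the condition $\tau<2$ together with the explicit Gaussian integral $\prod_j(1-4\kappa b_j^2)^{-1/2}\le\exp(4\kappa\|\bsb\|_{\ell^2}^2)$ gives an $N$-independent $L^2$ bound. For the convergence item, the envelope $C\exp(\alpha A_\infty^\tau)$ allows dominated convergence and also yields $u\in L^2_\bsmu(\R^\N,\calY^t_\C)$.

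The paper does not prove this proposition itself; it imports it from \cite[Proposition 5.2]{MSZ25_1135}, so there is no in-paper argument to compare against. Your product computation is an elementary, self-contained substitute for a Fernique-type exponential moment bound for the Gaussian measure on $\calX$ with covariance eigenvalues $b_j^2$, which is trace class precisely because $\bsb\in\ell^2$. Your observation that $\epsilon$ also depends on $C$ and $\theta$ is accurate.
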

		
	\subsection{Weighted Summability Results} \label{sec:WgtSum}
	
	We recall the definition of the Hermite coefficients $u_\bsnu$ 
	of 
	$u\in L_{\bsmu}^2(\R^\N, \calY)$ (see Prop.~\ref{prop:WPCrep}).
	For $\bsnu\in \calF$, they are given by
	\begin{equation}
		\label{eq:unu}
		u_\bsnu = \int_{\R^\N}u(\bsy)H_\bsnu(\bsy)\dd\bsmu(\bsy) \in \calY \;.
	\end{equation}
	As shown in \cite{DNSZ23_2957,MSZ25_1135},
	the Hermite coefficients of $(\bsb,\xi,\chi, \epsilon,Y)$-holomorphic
	functions are weighted summable.
	As in \cite{DNSZ23_2957}, 
	weighted summability of Hermite coefficients 
	will allow in Sec.~\ref{sec:TrcH} ahead to construct spectral operator surrogates.

    In the following we 
    let
  \begin{equation}\label{eq:us}
    u^s(\bsy) \coloneqq \calG\left(\sum_{j\in\N} y_j \sqrt{\lambda_j(s)} \phi_j\right)
    =
    \calG\left(\sum_{j\in\N} y_j w_j^{s+1/2} \phi_j\right).
  \end{equation}
  Furthermore, we denote the Hermite coefficients of $u^s$ by 
  \begin{equation}\label{eq:unus}
  u_\bsnu^s \coloneqq \int_{\R^\N} u^s(\bsy) H_\bsnu(\bsy) \, \dd \bsmu(\bsy) \quad \in \calY^t.
  \end{equation}
  According to Proposition \ref{prop:holomorphy}, 
  the function $u^s$ is well defined in $L_\bsmu^2(\R^\N, \calY^t)$ 
  and is 
  $(\bslambda^{1/2}(s), \theta, 2, \epsilon, \calY_\C^t)$-holomorphic.
  \par
  A collection of weights
  $(c_\bsnu)_{\bsnu\in\calF} \subset \R$ 
  is called 
  \emph{monotonically increasing}, 
  if
  \begin{equation}\label{eq:moninc}
  	\bsnu\le\bsmu\qquad\text{implies}
  	\qquad{c_\bsnu\le c_\bsmu}\qquad \text{for all }\bsnu,\bsmu\in\calF.
  \end{equation}
  A collection $(c_\bsnu)_{\bsnu\in\calF} \subset \R$ 
  is 
  \emph{monotonically decreasing} if $(-c_\bsnu)_{\bsnu\in\calF}$ 
  is monotonically increasing. 
  The following theorem is the basis for establishing approximation rates with respect to 
  the weighted Sobolev norm in \eqref{eq:SobolevNorm}.

  \begin{theorem}\label{thm:bdXSum}
    Let $r,s > 0$ and let $\epsilon > 0, M \in \N, M \geq 2s+1$,
    and let $u^s$ be as in \eqref{eq:us} with $\calG$ as in Assumption \ref{ass:G}. 

    Then there exists a constant $K = K(M,\bsw,r,s,\theta) > 0$ 
    such that the sequence 
  	\begin{equation}\label{eq:cnudef}
          c_\bsnu 
          \coloneqq \prod_{j\in\supp\bsnu} \nu_j^M \max(1,K
          w_j^{-2\min(s,r)}
          )
  	\end{equation}
  	is monotonically increasing in the sense of \eqref{eq:moninc}, satisfies 
  	\begin{equation}\label{eq:bdXsum1}
  		\sum_{\bsnu\in\calF} c_\bsnu^{-\frac{1}{2\min(r,s)}} < \infty,
  	\end{equation}
  	and, with $\Gamma_\bsnu = \Gamma_\bsnu(r,s)$ defined in \eqref{eq:Gamma},
  	\begin{equation}\label{eq:bdXsum2}
  		\sum_{\bsnu\in\calF} \Gamma_\bsnu c_\bsnu \|u_\bsnu^s\|_{\calY_\C^t}^2 < \infty.
  	\end{equation}
  	Moreover,
  	\begin{equation}\label{eq:bdXsum3}
  		u^s(\bsy) = \sum_{\bsnu\in\calF} u_\bsnu^s H_\bsnu(\bsy) \in L_\bsmu^2(\R^\N, \calY_\C^t).
  	\end{equation}
  \end{theorem}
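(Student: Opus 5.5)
The plan is to obtain \eqref{eq:bdXsum2} from the weighted $\ell^2$-summability of Hermite coefficients of $(\bsb,\xi,2,\epsilon,Y)$-holomorphic functions proved in \cite{DNSZ23_2957,MSZ25_1135}, applied to $u^s$. The other claims are then elementary.

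\textbf{Input from holomorphy.} By Proposition~\ref{prop:holomorphy}, $u^s$ is $(\bsb,\theta,2,\epsilon,\calY^t_\C)$-holomorphic with $b_j=w_j^{s+1/2}$. The result I will invoke from \cite{MSZ25_1135} states the following. For every $(\bsb,\xi,2)$-admissible $\bsvarrho$ with $\xi<\theta$, and every fixed $\widetilde M\in\N$,
\[
\sum_{\bsnu\in\calF}\beta_\bsnu(\bsvarrho)\|u^s_\bsnu\|^2_{\calY^t_\C}\le C(\widetilde M,\epsilon,\theta,\xi),
\qquad
\beta_\bsnu(\bsvarrho)=\prod_{j\in\supp\bsnu}\sum_{l=0}^{\min(\nu_j,\widetilde M)}\binom{\nu_j}{l}\varrho_j^{2l}.
\]
This holds uniformly in the truncation dimension, so after a limiting argument it holds for infinite sequences $\bsvarrho$ with $\sum_j b_j^2\varrho_j^2\le\xi^2$.

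\textbf{Choice of $\bsvarrho$ and $\widetilde M$.} I take $\widetilde M=M+1$ and
\[
\varrho_j^2=\kappa\, w_j^{-2s/(M+1)} .
\]
Then $\sum_j b_j^2\varrho_j^2=\kappa\sum_j w_j^{2s+1-2s/(M+1)}\le\kappa\|\bsw\|_{\ell^1}$, so admissibility holds for $\kappa$ small. The exponent $2s$ is chosen to pay for both weights at once:
\begin{itemize}
\item the factor $w_j^{-2\min(s,r)}$ in $c_\bsnu$;
\item the factor $w_j^{-2(s-r)_+}$ coming from $\Gamma_\bsnu$.
\end{itemize}
Indeed $2\min(s,r)+2(s-r)_+=2s$.

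\textbf{Comparing the weights.} Since $\Gamma_\bsnu\le\prod_{j\in\supp\bsnu}\bigl(1+\nu_jw_j^{-2(s-r)_+}\bigr)$, we get
\[
\Gamma_\bsnu c_\bsnu\le\prod_{j\in\supp\bsnu}\bigl(1+\nu_jw_j^{-2(s-r)_+}\bigr)\,\nu_j^M\max\bigl(1,Kw_j^{-2\min(s,r)}\bigr).
\]
The goal is to show that each factor is at most the $j$-th factor of $\beta_\bsnu(\bsvarrho)$, with constant exactly $1$ for all but finitely many $j$. Then $\Gamma_\bsnu c_\bsnu\lesssim\beta_\bsnu(\bsvarrho)$ uniformly in $\bsnu$, and \eqref{eq:bdXsum2} follows from the bound above.

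\textbf{Main obstacle.} I expect this factor-by-factor comparison to be the main difficulty, and I would split it into two cases.
\begin{itemize}
\item \emph{$\nu_j\ge M+1$.} Use the term $l=M+1$: $\binom{\nu_j}{M+1}\varrho_j^{2(M+1)}\gtrsim_M\nu_j^{M+1}\kappa^{M+1}w_j^{-2s}$.
\item \emph{$1\le\nu_j\le M$.} Use the term $l=\nu_j$, namely $\varrho_j^{2\nu_j}$, together with the boundedness of $\nu_j^{M+1}$.
\end{itemize}
Since $w_j\to0$, we have $\varrho_j\to\infty$, so for $j\ge j_0$ the $\beta$-factor exceeds the left-hand factor with constant $1$. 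This uses that $K$ is chosen small relative to $\kappa^{M+1}/M!$, so $K$ depends on $M,\bsw,r,s,\theta$. The finitely many $j<j_0$ contribute only a fixed constant. The delicate point is the case of small $\nu_j$ with $w_j$ tiny. If the term $\varrho_j^{2\nu_j}$ does not dominate $w_j^{-2s}$ there, I would instead fall back on taking $\widetilde M$ larger and using $\max(1,\cdot)$ only for the small-$w_j$ regime.

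\textbf{Remaining claims.}
\begin{itemize}
\item \emph{Monotonicity} of $c_\bsnu$ is immediate: each factor $\nu_j^M\max(1,Kw_j^{-2\min(s,r)})$ is nondecreasing in $\nu_j$ and at least $1$ when $\nu_j\ge1$.
\item \emph{Summability \eqref{eq:bdXsum1}.} Set $p=1/(2\min(r,s))$. Factorizing,
\[
\sum_{\bsnu\in\calF}c_\bsnu^{-p}=\prod_{j}\Bigl(1+\min\bigl(1,K^{-1}w_j^{2\min(s,r)}\bigr)^{p}\sum_{n\ge1}n^{-Mp}\Bigr).
\]
This is finite because $Mp>1$ (as $M\ge2s+1>2\min(r,s)$) and $\sum_j\min(1,K^{-p}w_j)<\infty$ since $\bsw\in\ell^1$.
\item \emph{Expansion \eqref{eq:bdXsum3}.} This is Proposition~\ref{prop:WPCrep} on $\R^\N$ with $\bsmu$, using $u^s\in L^2_\bsmu(\R^\N,\calY^t_\C)$ from Proposition~\ref{prop:holomorphy}.
\end{itemize}
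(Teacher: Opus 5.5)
There is a genuine gap in the factor-by-factor comparison, at exactly the point you flagged as delicate, and your fallback does not repair it. With $\varrho_j^2=\kappa w_j^{-2s/(M+1)}$, take $\bsnu=\bse_j$.

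\begin{itemize}
\item On the $\beta$ side, $\beta_{\bse_j}(\bsvarrho)=1+\kappa w_j^{-2s/(M+1)}$ for every truncation level $\widetilde M$, since only $l\in\{0,1\}$ contribute when $\nu_j=1$.
\item On the other side, for all $j$ with $Kw_j^{-2\min(s,r)}\ge 1$ (all but finitely many), $\Gamma_{\bse_j}c_{\bse_j}=(1+w_j^{2(r-s)})\,Kw_j^{-2\min(s,r)}\ge Kw_j^{-2s}$. This holds both for $r\ge s$ and for $r<s$.
\end{itemize}

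Since $w_j\to 0$, the ratio $\Gamma_{\bse_j}c_{\bse_j}/\beta_{\bse_j}(\bsvarrho)\gtrsim w_j^{-2sM/(M+1)}$ is unbounded. So no bound $\Gamma_\bsnu c_\bsnu\lesssim\beta_\bsnu(\bsvarrho)$ can hold. The same failure occurs for every $1\le\nu_j\le M$, where the $j$-th factor of $\beta$ is $(1+\varrho_j^2)^{\nu_j}=O(w_j^{-2s\nu_j/(M+1)})=o(w_j^{-2s})$. Raising $\widetilde M$ does not change these factors. The $\max(1,\cdot)$ in $c_\bsnu$ only acts for the finitely many $j$ with $w_j\ge K^{1/(2\min(s,r))}$, so it cannot absorb the loss either.

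The repair is to take $\bsvarrho$ as large as admissibility allows: $\varrho_j^2=\kappa w_j^{-2s}$. This is the paper's choice $\varrho_j=Cw_j^{-s}$ in \eqref{eq:varrho}. It is still $(\bsb,\xi,2)$-admissible because $b_j^2\varrho_j^2=\kappa w_j$ and $\bsw\in\ell^1(\N)$, so nothing forces you to spread $w_j^{-2s}$ over $M+1$ factors. With this choice:

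\begin{itemize}
\item the $l=1$ term gives $\nu_j\kappa w_j^{-2s}$, which covers $1\le\nu_j\le M+1$;
\item the $l=M+1$ term gives at least $(\nu_j/(M+1))^{M+1}\kappa^{M+1}w_j^{-2s}$, which covers $\nu_j\ge M+1$.
\end{itemize}

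For $K\lesssim_M\min(\kappa,\kappa^{M+1})$ your comparison then holds with constant $1$ whenever $Kw_j^{-2\min(s,r)}\ge 1$, and with a bounded constant for the finitely many remaining $j$.

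This is what the paper imports as $C_0\hat c_\bsnu\rho_\bsnu\le\beta_\bsnu(M+1,\bsvarrho)$ from \cite[Lemma 6.5]{DNSZ23_2957}. It combines that with $\Gamma_\bsnu\le\rho_\bsnu$ for $r\ge s$ and $\Gamma_\bsnu\le\rho_\bsnu\prod_{j\in\supp\bsnu}w_j^{2(r-s)}$ for $r<s$. Your single bound via $(s-r)_+$ neatly merges these two cases.

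The rest of your proposal is correct: the monotonicity argument, the explicit product factorization for \eqref{eq:bdXsum1} (a more elementary route than \cite[Lemma 6.6]{DNSZ23_2957}), and \eqref{eq:bdXsum3}.
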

  \begin{proof}
    The proof proceeds in four steps, and is largely an application of the analysis in \cite[Section 6]{DNSZ23_2957} and \cite{MSZ25_1135}.
    
    \textbf{Step 1.} We show all claims except for \eqref{eq:bdXsum2} (with $K>0$ in \eqref{eq:cnudef}
    assumed arbitrary but fixed for the moment).

    The monotonicity \eqref{eq:moninc} of $(c_\bsnu)_{\bsnu\in\calF}$ in \eqref{eq:cnudef}
    follows directly from its definition, which implies
    $\nu_j\mapsto c_\bsnu$ to be monotonically increasing in $\nu_j$
    for each $j$.

    Next, we show the summability result \eqref{eq:bdXsum1}. 
    Since $\bsw \in \ell^1(\N)$, it (trivially) holds that
  	\begin{equation*}
          ((w_j^{-\min(s,r)})^{-1})_{j\in\N}\in \ell^{\frac{1}{\min(r,s)}}(\N).
  	\end{equation*}
  	A direct application of \cite[Lemma 6.6]{DNSZ23_2957} yields 
  	\begin{equation*}
  		(c_\bsnu^{-1})_{\bsnu\in\calF} \in \ell^{\frac{1}{2\min(r,s)}}(\calF).
              \end{equation*}
    
    Finally, \eqref{eq:bdXsum3} holds because $\calY_\C^t$ is a separable Hilbert space, 
    $(H_\bsnu)_{\bsnu\in\calF}$ is an ONB of $L_\bsmu^2(\R^\N)$, 
    and $u^s$ belongs to $L_\bsmu^2(\R^\N,\calY_\C^t)$ by Proposition \ref{prop:holomorphy} 
    (see the discussion around \cite[Eqn. (4.5)]{DNSZ23_2957} for more details).
    
    \textbf{Step 2.} It remains to show \eqref{eq:bdXsum2}. 
    We start by recalling a weighted summability result from \cite{MSZ25_1135}.

    Due to $M \geq 2s+1$ and the
    $(\bsw^{s+1/2}, \theta, 2, \epsilon, \calY_\C^t)$-holomorphy of $u^s$
    (see Proposition \ref{prop:holomorphy}), according to
    \cite[Thm. 3.1]{MSZ25_1135}\footnote{With $X \coloneqq \calY_\C^t$ and
    $p \coloneqq 1/(s+1/2)$.} there exists a constant
    $C = C(\bsw, \theta, M) > 0$ such that the following holds:
    With
  	\begin{equation}\label{eq:varrho}
  		\bsvarrho = (\varrho_j)_{j\in\N}, \qquad \varrho_j \coloneqq C w_j^{-s}
  	\end{equation}
  	and
  	\begin{equation}\label{eq:beta}
  		\beta_\bsnu(M+1,\bsvarrho) \coloneqq 
  		\sum_{\|\bsnu'\|_{\ell^\infty} \leq M+1} \binom{\bsnu}{\bsnu'} \bsvarrho^{2\bsnu'}
  		=
  		\prod_{j\in\N} \left(\sum_{\ell = 0}^{M+1} \binom{\nu_j}{\ell} \varrho_j^{2\ell}\right), \qquad \bsnu \in \calF,
  	\end{equation}
        we have
  	\begin{equation}\label{eq:betabsnu}
  		\sum_{\bsnu\in\calF}\beta_\bsnu(M+1,\bsvarrho) \|u_\bsnu^s\|_{\calY_\C^t}^2 < \infty.
  	\end{equation}

  	\textbf{Step 3.} Assume $r\ge s$, i.e.\ $\min(r,s)=s$. We show that there exists
        $K>0$ in \eqref{eq:cnudef} such that \eqref{eq:bdXsum2} holds.

  	According to \cite[Lemma 6.5]{DNSZ23_2957},
        there exist constants $C_0$, $K>0$ depending on $\bsw$ and $s$,
        such that
  	\begin{equation}\label{eq:hatcbsnu}
          \hat c_\bsnu := \prod_{j\in\supp\bsnu} \nu_j^M \max(1, K w_j^{-2s}),
  	\end{equation}
        satisfies with $\rho_\bsnu:=\prod_{j\in\N}(1+\nu_j)$
        \begin{equation}\label{eq:L65}
          C_0 \hat c_\bsnu \rho_\bsnu\le\beta_\bsnu(M+1,\bsvarrho)\qquad\forall\bsnu\in\calF.
        \end{equation}
        It holds $0 < w_j^{2(r-s)} \leq 1$ for all $j\in\N$ since $\bsw \in (0,1]^\N$.
        Therefore
  	\begin{equation*}
          \Gamma_\bsnu(r,s) = 1 + \sum_{j\in\N} \nu_j w_j^{2(r-s)} \leq
          1 + \sum_{j\in\N} \nu_j\le 
          \prod_{j\in\supp\bsnu} (1 + \nu_j) = \rho_\bsnu.
  	\end{equation*}
  	Thus \eqref{eq:betabsnu} and \eqref{eq:L65} give
  	\begin{equation*}
          \sum_{\bsnu\in\calF} \Gamma_\bsnu \hat c_\bsnu \|u_\bsnu^s\|_{\calY_\C^t}^2
          \le
          \frac{1}{C_0} \sum_{\bsnu\in\calF} \beta_\bsnu(M+1,\bsvarrho) \|u_\bsnu^s\|_{\calY_\C^t}^2
          < \infty.
  	\end{equation*}
        Finally note that $\hat c_\bsnu$ in \eqref{eq:hatcbsnu} coincides
        with $c_\bsnu$ in \eqref{eq:cnudef} due to $r\ge s$. 
        Thus \eqref{eq:bdXsum2} holds for $r\ge s$.
        
  	\textbf{Step 4.} 
        Let $K$ and $\hat c_\bsnu$ be as in Step 3 (depending on $s$ but independent of $r$). 
        For the rest of this step assume $r<s$, i.e.\ $\min(r,s)=r$. 
        We show that \eqref{eq:bdXsum2} holds.

        Since $w_j\in (0,1]$ and $r<s$ it holds $w_j^{2(r-s)}\ge 1$. 
        Hence for $\bsnu\in\calF$
        \begin{align}\label{eq:Gbsnubound}
          \Gamma_\bsnu(r,s) &= 1 +\sum_{j\in\N}\nu_jw_j^{2(r-s)}
          \le \prod_{j\in\supp\bsnu}(1+\nu_jw_j^{2(r-s)})\nonumber\\
          &\le \prod_{j\in\supp\bsnu}(1+\nu_j)\prod_{j\in\supp\bsnu}w_j^{2(r-s)}=\rho_\bsnu\prod_{j\in\supp\bsnu}w_j^{2(r-s)},
        \end{align}
        where we let again $\rho_\bsnu=\prod_{j\in\supp\bsnu}(1+\nu_j)$.
        Moreover, since $(w_j)_{j\in\N}\in\ell^1(\N)$, it holds $w_j^{-2r}\to\infty$ as $j\to\infty$. 
        Thus there exists $J\in\N$ such that $\max(1,Kw_j^{-2r})=Kw_j^{-2r}$ for all $j\ge J$. 
        Then, using again $w_j^{2(r-s)}\ge 1$,
        \begin{equation*}
          \prod_{j\in\supp\bsnu}w_j^{2(r-s)}\max(1,Kw_j^{-2r})\le
          \prod_{j=1}^J w_j^{2(r-s)}
          \prod_{j\in\supp\bsnu}\max(1,Kw_j^{-2s})
          = \tilde C \prod_{j\in\supp\bsnu}\max(1,Kw_j^{-2s}),
        \end{equation*}
        where $\tilde C:=\prod_{j=1}^J w_j^{2(r-s)}$.

        Using \eqref{eq:Gbsnubound} we get
        \begin{equation*}
          \Gamma_\bsnu c_\bsnu \le \rho_\bsnu \prod_{j\in\supp\bsnu}w_j^{2(r-s)} \prod_{j\in\supp\bsnu}\nu_j^M \max(1,Kw_j^{-2r})
          \le \tilde C \rho_\bsnu \prod_{j\in\supp\bsnu}\nu_j^M \max(1,Kw_j^{-2s})=\tilde C \rho_\bsnu \hat c_\bsnu,
        \end{equation*}
        with $\hat c_\bsnu$ as in \eqref{eq:hatcbsnu}. As in Step 3,
        \eqref{eq:L65} and \eqref{eq:betabsnu} give
        \begin{equation*}
          \sum_{\bsnu\in\calF}\Gamma_\bsnu c_\bsnu \norm[\calY_\C^t]{u_\bsnu^s}^2
          \le \tilde C \sum_{\bsnu\in\calF} \rho_\bsnu \hat c_\bsnu \norm[\calY_\C^t]{u_\bsnu^s}^2<\infty,
        \end{equation*}
        and thus \eqref{eq:bdXsum2} in case $r<s$.
  \end{proof}
  
 The next corollary immediately follows from Theorem \ref{thm:bdXSum}. 
	\begin{corollary}\label{cor:bdXSum-L2}
		Consider the setting of Theorem \ref{thm:bdXSum}. 
                Then the weights $(c_\bsnu)_{\bsnu\in\calF}$ satisfy
		\[
			\sum_{\bsnu\in\calF} c_\bsnu \|u_\bsnu^s\|_{\calY_\C^t}^2 < \infty, \qquad 
			\sum_{\bsnu\in\calF} c_\bsnu^{-\frac{1}{2s}} < \infty,
		\]
		and 
		\[
			u(\bsy)^s = \sum_{\bsnu\in\calF} u_\bsnu^s H_\bsnu(\bsy) \in L_\bsmu^2(\R^\N,Z).
		\]
	\end{corollary}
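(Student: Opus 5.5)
The plan is to deduce Corollary \ref{cor:bdXSum-L2} directly from Theorem \ref{thm:bdXSum} by a judicious choice of the auxiliary parameter $r$. The corollary concerns only the $L^2$ setting, in which the weights $c_\bsnu$ should involve $w_j^{-2s}$ and the summability exponent should be $1/(2s)$. I will therefore apply Theorem \ref{thm:bdXSum} with $r:=s$ (any $r\ge s$ would do). Then $\min(r,s)=s$, so \eqref{eq:cnudef} becomes $c_\bsnu=\prod_{j\in\supp\bsnu}\nu_j^M\max(1,Kw_j^{-2s})$, with $K=K(M,\bsw,s,\theta)$. Read with $\min(r,s)=s$, \eqref{eq:bdXsum1} is exactly the second claimed summability $\sum_{\bsnu\in\calF}c_\bsnu^{-1/(2s)}<\infty$.

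For the first claim I will compare with the weighted bound \eqref{eq:bdXsum2}. By \eqref{eq:Gamma}, $\Gamma_\bsnu(r,s)=1+\sum_i\nu_iw_i^{2(r-s)}\ge 1$ for every $\bsnu\in\calF$, because all terms in the sum are nonnegative. Hence
\[
\sum_{\bsnu\in\calF}c_\bsnu\|u^s_\bsnu\|^2_{\calY^t_\C}\le\sum_{\bsnu\in\calF}\Gamma_\bsnu c_\bsnu\|u^s_\bsnu\|^2_{\calY^t_\C}<\infty .
\]
Finally, the expansion statement is verbatim \eqref{eq:bdXsum3}, with $Z=\calY^t_\C$.

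There is essentially no obstacle here. The only point needing care is that the constant $K$, and hence the weights $c_\bsnu$, depend on $r$ in general. By fixing $r=s$ (or noting from Step 3 of the proof that $K$ depends only on $\bsw$ and $s$ when $r\ge s$), the weights become independent of $r$, which is what makes the statement an $L^2$ statement.
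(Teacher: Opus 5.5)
Your proposal is correct and is essentially the paper's argument: specialize Theorem \ref{thm:bdXSum} to $r=s$ so that \eqref{eq:bdXsum1} gives $\sum_{\bsnu\in\calF}c_\bsnu^{-1/(2s)}<\infty$, and use $\Gamma_\bsnu(r,s)\ge 1$ to pass from \eqref{eq:bdXsum2} to the unweighted sum. The expansion is then read off from \eqref{eq:bdXsum3} with $Z=\calY_\C^t$, and your side remark that any $r\ge s$ works equally well (the weights then depend only on $s$) is accurate.
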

To see this, we observe that $r=s$ in Theorem \ref{thm:bdXSum} 
implies in \eqref{eq:bdXsum1} the second assertion. 
The weigthed square summability of $\|u_\bsnu^s\|_{\calY_\C^t}$ follows from \eqref{eq:bdXsum2} 
upon observing that in \eqref{eq:Gamma} it holds that $\Gamma_\bsnu(r,s) \geq 1$.

	\subsection{Spectral Operator Surrogates: Expression Rates}
	\label{sec:TrcH}

	We are now in position to introduce a first family $\{ \calG_N \}_{N\in \N}$ 
	of finite-parametric operator surrogates. 
        These will be obtained from $N$-term truncations
	of Hermite pc expansions. 
	 
	Using the weighted summability of the Hermite
	coefficients $u_\bsnu^s$ from Theorem \ref{thm:bdXSum} 
        and Corollary \ref{cor:bdXSum-L2}, 
        we can then deduce best
	$N$-term approximation rates in the usual way, 
	by truncating the Hermite expansion of $u^s$ (see again \eqref{eq:us} and \eqref{eq:bdXsum3}). 
	
	We will
	now show such a result, where we additionally expand $u^s(\bsy)$ 
	in the frame $\bseta = (\eta_j)_{j\geq 1} \subset \calY$. 
	That is, 
	with the Hermite coefficients $u_\bsnu^s$ from \eqref{eq:unus}
	of the parametric map $u^s(\bsy)$,
	we consider in the following the $\calY^t$-convergent 
	expansion
	\begin{equation}\label{eq:uexpansion}
		u^s(\bsy) = \sum_{j\in\N} \eta_j \sum_{\bsnu\in\calF}\dup{u_\bsnu^s}{\tilde\eta_j}_{\calY} H_\bsnu(\bsy).
	\end{equation}
	We build finite-parametric approximations of $\calG$ 
	by restricting the sum over $\bsnu\in \calF$ in \eqref{eq:uexpansion} 
	to finite index sets $\Lambda \subset \calF$. 
		\begin{definition}[maximal polynomial degree and effective dimension]
\label{def:dmLambda}
		For a finite index set $\Lambda\subseteq\calF$,
		we denote the maximal polynomial degree $m(\Lambda)$ 
		and the effective dimension $d(\Lambda)$ as
		\begin{equation} \label{eq:dmLambda}
			m(\Lambda):=\max_{\bsnu\in\Lambda}|\bsnu|,\qquad
			d(\Lambda):=\max_{\bsnu\in\Lambda}|\set{j}{\nu_j\neq 0}|.
		\end{equation}
	\end{definition}
	
	\begin{proposition}\label{prop:truncate}
		Let $r,s,t>0$, 
    let $\calG$ satisfy Assumption \ref{ass:G}, and let $\delta>0$ be given.
		Let $u_\bsnu^s$ be the Hermite coefficients of $u^s$ as in \eqref{eq:unus}.

		Then, there exists
		$C=C(r,s,t, \delta) < \infty$ such that for every $N\in\N$ there exist nested
		and downward closed multiindex sets\footnote{Only finitely many sets 
		in the sequence $\{ \Lambda_j^N \}_{j\geq 1}$ are nonempty 
                (see Remark \ref{remark:index-sets} below).}
		\begin{equation}\label{eq:mddef}
			\calF\supseteq\Lambda_1^N\supseteq\Lambda_2^N\supseteq\dots\qquad
			\text{s.t.}\qquad \sum_{j\in\N}|\Lambda_j^N|=N
		\end{equation}
		and such that 
		\begin{equation}\label{eq:tailsum}
			\sum_{j\in\N}\sum_{\bsnu\in\calF\backslash\Lambda_j^N} {\Gamma_\bsnu} |\dup{u_\bsnu^s}{\tilde\eta_j}_{\calY}|^2
			\le C N^{-2\min\{r,s,t\}}.
		\end{equation}
    In addition, as $N\to\infty$, 
		\begin{equation}\label{eq:md}
			m(\Lambda_1^N)= O(N^{\delta}),\quad d(\Lambda_1^N)=o(\log(N))
			\;.
		\end{equation}
	\end{proposition}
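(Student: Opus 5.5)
We propose a tensorized best $N$-term selection on the product index set $\N\times\calF$ (frame index $j$, Hermite index $\bsnu$), driven by a weight sequence that is summable with the right exponent and for which the weighted coefficient sum is finite. Define for $(j,\bsnu)\in\N\times\calF$ the weights
\begin{equation*}
  \omega_{j,\bsnu} := v_j^{-2t}\, c_\bsnu ,
\end{equation*}
with $c_\bsnu$ from \eqref{eq:cnudef} in Theorem \ref{thm:bdXSum}. We also use the weighted coefficients $a_{j,\bsnu}:=\Gamma_\bsnu |\dup{u^s_\bsnu}{\tilde\eta_j}_\calY|^2$. By the definition \eqref{eq:Yt} of $\norm[\calY^t]{\cdot}$ and \eqref{eq:bdXsum2},
\begin{equation*}
  \sum_{j,\bsnu}\omega_{j,\bsnu}a_{j,\bsnu}=\sum_\bsnu \Gamma_\bsnu c_\bsnu\norm[\calY^t]{u^s_\bsnu}^2<\infty .
\end{equation*}
Moreover, $(\omega_{j,\bsnu}^{-1})\in\ell^{p}$ for $p=1/(2\min\{r,s,t\})$. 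This follows since $\sum_j v_j^{2tp}\le\sum_j v_j<\infty$ when $2tp\ge1$ (using $\bsv\in\ell^1$ and $v_j\le1$), and since $\sum_\bsnu c_\bsnu^{-p}<\infty$ by \eqref{eq:bdXsum1}, because $p\ge 1/(2\min(r,s))$ and $c_\bsnu\ge1$. The product sum then factors.

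Next we take $\Lambda^N:=\{(j,\bsnu)\}$ to be the $N$ indices with the smallest $\omega_{j,\bsnu}$, with ties broken consistently. Setting $\Lambda_j^N:=\{\bsnu:(j,\bsnu)\in\Lambda^N\}$, several properties follow from monotonicity of the weights. Since $c_\bsnu$ is monotonically increasing in the sense of \eqref{eq:moninc}, each $\Lambda_j^N$ is downward closed. Since $v_j$ is decreasing, $\omega_{j,\bsnu}$ is increasing in $j$, so the sets are nested, $\Lambda_1^N\supseteq\Lambda_2^N\supseteq\cdots$. To make downward closedness and nestedness hold exactly, we break ties lexicographically by a fixed enumeration compatible with both orders. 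Finally, $\sum_j|\Lambda_j^N|=N$.

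The standard Stechkin-type argument then gives the tail bound. If $\omega^*$ denotes the $N$-th smallest weight, then $N(\omega^*)^{-p}\le\sum\omega^{-p}$, so $\omega^*\gtrsim N^{1/p}=N^{2\min\{r,s,t\}}$. Consequently
\begin{equation*}
  \sum_{(j,\bsnu)\notin\Lambda^N}a_{j,\bsnu}\le(\omega^*)^{-1}\sum\omega a\lesssim N^{-2\min\{r,s,t\}},
\end{equation*}
which is \eqref{eq:tailsum}.

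For \eqref{eq:md}, note that every $\bsnu\in\Lambda_1^N$ satisfies $c_\bsnu\le\omega_{1,\bsnu}v_1^{2t}\le C N^{1/p}$. The degree bound uses $M$: since $c_\bsnu\ge\prod_j\nu_j^M\ge (|\bsnu|/|\bsnu|_0)^{M}$, one gets $|\bsnu|\lesssim |\bsnu|_0 N^{1/(pM)}$. Choosing $M$ large, with $M\ge 2s+1$ allowed by Theorem \ref{thm:bdXSum}, makes this $O(N^{\delta/2})$ times $|\bsnu|_0$. For the support, $K w_j^{-2\min(r,s)}\to\infty$, so all but finitely many factors exceed any fixed $\kappa>1$. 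Because $w$ is decreasing, a $\bsnu$ with $k$ support indices has $c_\bsnu\ge\prod_{i\le k}\max(1,Kw_{i}^{-2\min})$, and this product grows superexponentially in $k$ since $w_i\lesssim 1/i$, giving roughly $(k!)^{2\min(r,s)}$. Hence $(k!)^{2\min}\lesssim N^{1/p}$ forces $k=O(\log N/\log\log N)=o(\log N)$, and then $|\bsnu|=O(N^{\delta})$.

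We expect the main obstacle to be bookkeeping rather than new analysis. The delicate points are the tie-breaking needed to ensure exact downward closedness and nestedness, and the superexponential growth estimate in $k$ used for $d(\Lambda_1^N)=o(\log N)$. The latter requires $w_i\le\norm[\ell^1]{\bsw}/i$ from the remark after Assumption \ref{ass:vw}, which gives $\prod_{i\le k}w_i^{-1}\ge k!/\norm[\ell^1]{\bsw}^k$.
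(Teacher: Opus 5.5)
Your construction for \eqref{eq:tailsum} matches Step~1 of the paper's proof exactly, and it is fine: the weights $\omega_{j,\bsnu}=v_j^{-2t}c_\bsnu$, the monotone enumeration of $\N\times\calF$ compatible with the product order, and the Stechkin estimate. The gap is in \eqref{eq:md}.

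You assert that every $\bsnu\in\Lambda_1^N$ satisfies $c_\bsnu=v_1^{2t}\omega_{1,\bsnu}\le CN^{1/p}$, i.e.\ that the $N$-th smallest weight $\omega^*$ is $O(N^{1/p})$. But $N(\omega^*)^{-p}\le\sum\omega^{-p}$ only gives the \emph{lower} bound $\omega^*\gtrsim N^{1/p}$. An upper bound on $\omega^*$ amounts to a lower bound on the counting function $\#\{(j,\bsnu):\omega_{j,\bsnu}\le X\}$, and no summability statement provides that.

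The claim is in fact false. Take $w_j=v_j=j^{-2}$, $r=s=t=1$ (so $1/p=2$) and $M\ge 4$. Then $(\omega_{j,\bsnu}^{-1})\in\ell^\sigma$ for every $\sigma>1/4$, so Stechkin with exponent $\sigma$ gives $\omega^*\gtrsim N^{4-\epsilon}$. Consequently $\Lambda_1^N$ contains some $\bsdelta_i$ with $c_{\bsdelta_i}\approx N^3\gg N^{1/p}$.

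Your argument also never uses Assumption~\ref{ass:vw}(ii), and this construction cannot work without some such input. From the weights alone, the pairs $(1,n\bsdelta_1)$ only give $\omega^*\lesssim N^M$, hence $\nu_1\lesssim N$, which is useless for the degree bound. Indeed, for $w_j=v_j=2^{-2^j}$ the weight-ordered $\Lambda_1^N$ contains $n\bsdelta_1$ for $n$ up to $N^{1-o(1)}$.

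The missing step is an upper bound $\omega^*\le C_M N^{A}$ with an exponent $A$ \emph{independent of $M$}, and this is where Assumption~\ref{ass:vw}(ii) enters. The $N$ pairs $(1,\bsdelta_i)$, $i\le N$, satisfy $\omega_{1,\bsdelta_i}=v_1^{-2t}\max(1,Kw_i^{-2\min(r,s)})\le v_1^{-2t}\max(1,KC^{-2\min(r,s)}N^{2r_0\min(r,s)})$. Hence every $\bsnu\in\Lambda_1^N$ satisfies $c_\bsnu\le v_1^{2t}\omega^*\le C_M N^{2r_0\min(r,s)}$, where $C_M$ depends on $M$ only through $K$. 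Then $\max_i\nu_i\le C_M^{1/M}N^{2r_0\min(r,s)/M}$, and choosing $M\ge\max\{2s+1,\,4r_0\min(r,s)/\delta\}$ gives your degree bound. Your factorial argument for $|\supp\bsnu|=O(\log N/\log\log N)$ goes through unchanged with $N^{2r_0\min(r,s)}$ in place of $N^{1/p}$.

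The paper handles this part differently: it embeds $\Lambda_1^N$ in the set of the first $N$ indices of the $\calF$-ordering and invokes \cite[Lemma 4.3]{MSZ25_1135}, whose hypothesis is supplied by Assumption~\ref{ass:vw}. Once repaired as above, your direct derivation of \eqref{eq:md} is a legitimate, self-contained alternative to that citation.
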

	\begin{proof}
		We proceed in two steps. 
                In the first we show the convergence rate bounds \eqref{eq:tailsum};
		in the second we show the bounds \eqref{eq:md} on the number of active coefficients.

		{\bf Step 1.} 
		We simplify notation and do not explicitly denote $s$-dependence of the eigenvalues by writing 
    $\bslambda \coloneqq \bslambda(s)$. 
    We do the same for the dependence on $r$ and $s$ of $\Gamma_\bsnu$ (compare \eqref{eq:Gamma}), 
    which is also suppressed in the notation here for convenience.

		According to Proposition \ref{prop:holomorphy},
		$u^s$ is $(\bslambda^{1/2},\theta,2, \epsilon , \calY_\C^{t})$-holomorphic. 
		By Theorem \ref{thm:bdXSum}, 
		for any integer $M \geq 2s +1 $ 
                it holds that
		\begin{equation*}
			\sum_{\bsnu\in\calF} 
                        \Gamma_\bsnu c_\bsnu \norm[\calY_\C^{t}]{u^s_\bsnu}^2 < \infty,
		\end{equation*}
		where the coefficients 
		$c_\bsnu = c_\bsnu(\bsw,r,s,M)$ are defined in \eqref{eq:cnudef}. 
		Thus
		\begin{equation}\label{eq:C0}
			C_0 :=  \sum_{\bsnu\in\calF}\sum_{j\in\N} 
                                 \Gamma_\bsnu v_j^{-2t}c_\bsnu |\dup{u_\bsnu^s}{\tilde\eta_j}_{\calY}|^2 < \infty.
		\end{equation}
		Let now $(\bsnu_k,i_k)_{k\in\N}$ be an enumeration of $\calF\times\N$
		such that
		\begin{enumerate}
			\item $(c_{\bsnu_k} v_{i_k}^{-2t})_{k\in\N}$ is a monotonically increasing sequence,
			\item the set $\Lambda_j^N:=\set{\bsnu_k}{k\le N,~i_k=j}$ is downward
			closed for all $j\in\N$, $N\in\N$,
			\item it holds that $\Lambda_{j}^N\supseteq\Lambda_{j+1}^N$ for all $j\in\N$, $N\in\N$.
		\end{enumerate}
		Such an enumeration exists
			because $(c_\bsnu)_{\bsnu\in\calF}$ is a monotonically
			increasing sequence (in the sense of \eqref{eq:moninc}), and
			$({v}_i^{-2t})_{i\in\N}$ is also a monotonically increasing sequence.

		Observe that also
		\begin{equation*}
			\sum_{j\in\N}|\Lambda_j^N|=\sum_{j\in\N}|\set{\bsnu_k}{k\le N,~i_k=j}|
			= 
                        |\set{(\bsnu_k,i_k)}{k\le N}|=N.
		\end{equation*}
		By Assumption \ref{ass:vw} on the weight sequence $\bsv$, 
		we have $({v}_j^{2t})_{j\in\N}\in\ell^{\frac{1}{2t}}(\N)$.
		
		Moreover, \eqref{eq:bdXsum1} gives
		\begin{equation*}
			(c_\bsnu^{-1})_{\bsnu\in\calF}\in\ell^{\frac{1}{2\min\{r,s\}}}(\calF).
		\end{equation*}
		Therefore
		\begin{equation*}
			({v}_j^{2t}c_\bsnu^{-1})_{\bsnu\in\calF,j\in\N}\in
			\ell^{\max \{\frac{1}{2r} ,\frac{1}{2s},\frac{1}{2t}\}}(\calF\times \N).
		\end{equation*}
		Define 
		$$
    q:=\max \left\{   \frac{1}{2r}, \frac{1}{2s},\frac{1}{2t}\right\} = \frac{1}{2 {\min}\{r,s,t\}}.
                $$
		Since $k\mapsto ({v}_{i_k}^{2t}c_{\bsnu_k}^{-1})^q$ is monotonically decreasing, 
                for any $k\in\N$
		\begin{equation*}
			k(v_{i_k}^{2t}c_{\bsnu_k}^{-1})^q 
                        \le \sum_{l=1}^k({v}_{i_l}^{2t}c_{\bsnu_l}^{-1})^q
			\le \sum_{l\in\N}({v}_{i_l}^{2t}c_{\bsnu_l}^{-1})^q=:C_1<\infty.
		\end{equation*}
		Therefore, for all $k\in\N$
		\begin{equation}\label{eq:C1}
			v_{i_k}^{2t}c_{\bsnu_k}^{-1}\le C_1^{\frac1q}k^{-\frac1q} 
			\;.
		\end{equation}
		This allows us to conclude that for any $N\in\N$
		\begin{align*}
			\sum_{j\in\N}\sum_{\bsnu\in\calF\backslash\Lambda_{j}^N}
                              {\Gamma_\bsnu}|\dup{u_\bsnu^s}{\tilde\eta_j}_{\calY}|^2
			&= \sum_{k>N}\Gamma_\bsnu |\dup{u_{\bsnu_k}^s}{\tilde\eta_{i_k}}_{\calY}|^2 
			c_{\bsnu_k}c_{\bsnu_k}^{-1}{v}_{i_k}^{-2t}{v}_{i_k}^{2t}\nonumber\\
			&\le \sup_{k>N}(c_{\bsnu_k}^{-1}{v}_{i_k}^{2t}) \sum_{j\in\N,\bsnu\in\calF} 
			\Gamma_\bsnu |\dup{u_{\bsnu}^s}{\tilde\eta_{j}}_\calY|^2c_{\bsnu}{v}_{j}^{-2t}.
		\end{align*}
		With \eqref{eq:C0} and \eqref{eq:C1} we obtain that 
    there exists $C = C(r,s,t, M) > 0$ such that for all $N\in \N$
		\begin{equation*}
			\sum_{j\in\N}\sum_{\bsnu\in\calF\setminus\Lambda_{j}^N} 
			\Gamma_\bsnu |\dup{u_\bsnu^s}{\tilde\eta_j}_\calY|^2
			\le 
			C N^{-2\min\{r,s,t\}}.
		\end{equation*}

		{\bf Step 2.} 
		It remains to show \eqref{eq:md}.
		We construct a sequence $(\hat c_\bsnu)_{\bsnu\in\calF}$ 
                similar to \eqref{eq:cnudef} based on \cite[Eqn. (6.13)]{DNSZ23_2957} 
                in which we now choose $k \coloneqq 1, r-\tau \coloneqq M$ and $\varrho_j \coloneqq w_j^{-2s}$:
		\begin{equation*}
			\hat c_{\bsnu} = \prod_{j\in\supp\bsnu}\max\{1,K w_j^{-2s} \}^2\nu_j^{M}, \qquad \bsnu\in\calF.
		\end{equation*}
		Let $(k_j)_{j\in\N}\subseteq\N$ be the monotonically increasing sequence satisfying
		\begin{equation*}
			(k_j)_{j\in\N} = \set{k_j}{i_{k_j}=1,~j\in\N}.
		\end{equation*}
		Then by definition of $(\bsnu_k,i_k)_{k\in\N}$ it is clear that
		$(\hat c_{\bsnu_{k_j}})_{j\in\N}$ is a monotonically increasing
		rearrangement of $(\hat c_\bsnu)_{\bsnu\in\calF}$.
		Moreover
		\begin{equation*}
			\Lambda_1^N = \set{\bsnu_{k_j}}{k_j\le N}
			\subseteq   \set{\bsnu_{k_j}}{j\le N}=: \tilde\Lambda_1^N.
		\end{equation*}
		For sets of type $\tilde\Lambda_1^N$, the quantities in \eqref{eq:dmLambda}
		were bounded in \cite[Lemma 4.3]{MSZ25_1135}. 
                Note that item $(i)$ of \cite[Lemma 4.3]{MSZ25_1135} is satisfied due to Assumption \ref{ass:vw}.
		Applying this result yields

		\begin{equation*}
			m(\Lambda_1^N)\le
			m(\tilde\Lambda_1^N)=O(N^{2s/M})\qquad\text{and}
			\qquad
			d(\Lambda_1^N)\le
			d(\tilde\Lambda_1^N)=o(\log(N))
		\end{equation*}
		as $N\to\infty$. 
		Since $M \geq 2s+1$ can be chosen arbitrary large, this concludes the proof.
	\end{proof}

		The following result can be shown analogously to Proposition \ref{prop:truncate} 
                by invoking Corollary \ref{cor:bdXSum-L2} instead of Theorem \ref{thm:bdXSum}.  

	\begin{corollary}\label{cor:tailSumBoundL2}
		Consider the same setting as in Proposition \ref{prop:truncate}. 
        Then, there exists $C = C(s,t, \delta ) < \infty$ such that for every $N \in \N$ 
        there exists nested and downward closed multiindex sets
		\[
			\calF\supseteq\Lambda_1^N\supseteq\Lambda_2^N\supseteq\dots\qquad
			\text{s.t.}\qquad \sum_{j\in\N}|\Lambda_j^N|=N
		\]
	and such that 
	\[
	\sum_{j\in\N} \sum_{\bsnu \in \calF\backslash\Lambda_j^N} |\langle u_\bsnu^s, \tilde{\eta}_j \rangle_\calY|^2 
        \leq C N^{-2\min(s,t)}.
	\]
	Furthermore,
        as $N \to \infty$,
	\[
		m(\Lambda_1^N) = O(N^\delta), \quad d(\Lambda_1^N) = o(\log(N)).
	\]
	\end{corollary}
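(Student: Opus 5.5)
The plan is to repeat the proof of Proposition \ref{prop:truncate} with the weight $\Gamma_\bsnu$ removed and with $r$ replaced by $s$. Corollary \ref{cor:bdXSum-L2} gives this as input: for every integer $M\ge 2s+1$, the weights $c_\bsnu$ from \eqref{eq:cnudef} with $\min(r,s)=s$ satisfy
\begin{equation*}
C_0:=\sum_{\bsnu\in\calF}\sum_{j\in\N} v_j^{-2t}c_\bsnu|\dup{u_\bsnu^s}{\tilde\eta_j}_\calY|^2<\infty .
\end{equation*}
Finiteness of $C_0$ uses $\sum_j v_j^{-2t}|\dup{u_\bsnu^s}{\tilde\eta_j}_\calY|^2=\norm[\calY^t]{u_\bsnu^s}^2$, which is bounded by the $\calY^t_\C$ norm. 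The same corollary gives $(c_\bsnu^{-1})_{\bsnu\in\calF}\in\ell^{1/(2s)}(\calF)$. Together with $(v_j^{2t})_{j\in\N}\in\ell^{1/(2t)}(\N)$, which follows from $\bsv\in\ell^1$ under Assumption \ref{ass:vw}, this gives $(v_j^{2t}c_\bsnu^{-1})_{(\bsnu,j)\in\calF\times\N}\in\ell^q(\calF\times\N)$ with $q=1/(2\min(s,t))$.

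Next I would take an enumeration $(\bsnu_k,i_k)_{k\in\N}$ of $\calF\times\N$ along which $c_{\bsnu_k}v_{i_k}^{-2t}$ is nondecreasing. Ties should be broken so that every set $\Lambda_j^N:=\{\bsnu_k: k\le N,\ i_k=j\}$ is downward closed and $\Lambda_j^N\supseteq\Lambda_{j+1}^N$. Such a tie-breaking exists because $c_\bsnu$ is monotone in the sense of \eqref{eq:moninc} and $v_j^{-2t}$ is increasing in $j$. By construction $\sum_j|\Lambda_j^N|=N$. Monotone decrease of the $\ell^q$-summable rearranged sequence gives $v_{i_k}^{2t}c_{\bsnu_k}^{-1}\le C_1^{1/q}k^{-1/q}$. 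Hence
\begin{equation*}
\sum_{j\in\N}\sum_{\bsnu\notin\Lambda_j^N}|\dup{u_\bsnu^s}{\tilde\eta_j}_\calY|^2\le \sup_{k>N}\bigl(v_{i_k}^{2t}c_{\bsnu_k}^{-1}\bigr)\,C_0\le C N^{-2\min(s,t)} .
\end{equation*}

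The bounds on $m(\Lambda_1^N)$ and $d(\Lambda_1^N)$ are proved exactly as in Step 2 of Proposition \ref{prop:truncate}. The set $\Lambda_1^N$ is contained in the first $N$ elements of a monotone rearrangement of a weight sequence of the form used in \cite[Lemma 4.3]{MSZ25_1135}. That lemma yields $m=O(N^{2s/M})$ and $d=o(\log N)$, and choosing $M\ge\max(2s+1,2s/\delta)$ gives $m=O(N^\delta)$. The constant depends on $M$, hence on $\delta$, which matches $C=C(s,t,\delta)$.

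The step that needs the most care is constructing the enumeration so that downward closedness and nestedness hold simultaneously. The plan is to handle it exactly as in Proposition \ref{prop:truncate}: order first by the value $c_\bsnu v_j^{-2t}$, and break ties by $|\bsnu|$ and then by $j$. Because the product weight is monotone in both $\bsnu$ and $j$, every predecessor $(\bsmu,j)$ with $\bsmu\le\bsnu$, and every $(\bsnu,j')$ with $j'<j$, then comes earlier in the enumeration. This gives both properties.
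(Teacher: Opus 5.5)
Your proposal is correct and is essentially the paper's own argument. The paper proves this corollary by rerunning the proof of Proposition \ref{prop:truncate} with $\Gamma_\bsnu$ dropped, with Corollary \ref{cor:bdXSum-L2} (i.e.\ $r=s$) supplying the weighted summability, and with the $m(\Lambda_1^N)$, $d(\Lambda_1^N)$ bounds taken from the same rearrangement argument, which is exactly your plan. Your explicit tie-breaking rule for the enumeration (by the value $c_\bsnu v_j^{-2t}$, then $|\bsnu|$, then $j$, with any remaining ties broken arbitrarily) is a useful detail that the paper only asserts exists.
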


  \begin{remark}
   \label{remark:index-sets} 
   For later reference, we collect
   properties of the index sets of Proposition
   \ref{prop:truncate} and Corollary \ref{cor:tailSumBoundL2}.
   By definition, these sets are nested, so that
   \begin{equation*}
     \supp\bigcup_{j=1}^N\Lambda^N_j = \supp\Lambda^N_1.
   \end{equation*}
   We also always have $\bszero \in \Lambda_1^N$ and, 
   whenever $i\in \supp\Lambda^N_1$, 
   then $\bsdelta_i \coloneqq (\delta_{ij})_{j\in\N}\in
   \Lambda^N_1$ by downward closedness. 
   It follows that
   \begin{equation*}
     \left|\supp\Lambda^N_1\right| + 1 \leq |\Lambda^N_1 | \leq N.
   \end{equation*}
   From the reasoning above and since the cardinalities of the index sets sum to $N$, one also obtains that
   \begin{equation*}
     \Lambda_j^N = \emptyset \quad \text{for all }j>N-\left|\supp\Lambda^N_1\right|.
   \end{equation*}
  \end{remark}

	Since the analysis in this section is carried out on mappings of the form 
        $u \colon \R^\N \to \calY$, 
        we define Gaussian-Sobolev norms for such mappings
        based on the definition of the norms for operators
        \eqref{eq:SobolevNorm}. 
Let $T_s \colon \R^\N \to \calX$ with $T_s(\bsy) \coloneqq \sum_{j\in\N}y_j \sqrt{\lambda_j(s)}\phi_j$. 
Observe that $\gamma_s = (T_s)_\# \bsmu$ (see Remark \ref{rmk:KL}). 
We set 
	\begin{equation*}
		\|u\|_{W_{\bsmu, r, s}^{1,2}(\R^\N,\calY)} \coloneqq \|u \circ T_s^{-1}\|_{W_{\gamma_s,r}^{1,2}(\calX,\calY)}.
	\end{equation*}
	The proof of the following lemma can be found in Appendix \ref{app:AuxiliaryResults/Definitions}.
	\begin{lemma}\label{lemma:uGaussSobolevNorm}
	Let $u \colon \R^\N \to \calY$ such that $\|u\|_{W_{\bsmu,r,s}^{1,2}(\R^\N,\calY)} < \infty$. 
        Let $r,s \geq 0$ and let $\bsw = (w_j)_{j\in\N} \in (0,1]^\N$ be as in Assumption \ref{ass:vw}. 

Then 
		\begin{equation*}
			\|u\|_{W_{\bsmu,r,s}^{1,2}(\R^\N,\calY)}^2 
			=
			\int_{\R^\N} \|u(\bsy)\|_\calY^2 \, \dd\bsmu(\bsy) 
			+
			\int_{\R^\N} \sum_{j=1}^{\infty} w_j^{2(r-s)} \left\|\partial_{y_j} u(\bsy)\right\|_\calY^2 \, \dd\bsmu(\bsy).
		\end{equation*}
		Furthermore,  
		\begin{equation*}
			\|u\|_{W_{\bsmu,r,s}^{1,2}(\R^\N,\calY)}^2 = \sum_{\bsnu\in\calF} \Gamma_\bsnu(r,s) \|u_\bsnu\|_\calY^2
		\end{equation*}
		with $u_\bsnu$ as in \eqref{eq:unu} and $\Gamma_\bsnu(r,s)$ as in \eqref{eq:Gamma}.
	\end{lemma}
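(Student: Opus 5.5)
The plan is to reduce everything to Proposition \ref{prop:weightedParseval} by transporting the problem from $\calX$ to $\R^\N$ via $T_s$. Since $\gamma_s=(T_s)_\#\bsmu$ and $T_s$ is a bijection between the set of sequences for which the series converges (a set of full $\bsmu$-measure) and a set of full $\gamma_s$-measure, the function $\calG:=u\circ T_s^{-1}$ is well defined $\gamma_s$-a.e. By definition $\|u\|_{W^{1,2}_{\bsmu,r,s}}=\|\calG\|_{W^{1,2}_{\gamma_s,r}}$. For the first identity, the $L^2$ part follows directly from the change of variables $\int_\calX\|\calG(X)\|_\calY^2\dd\gamma_s(X)=\int_{\R^\N}\|u(\bsy)\|_\calY^2\dd\bsmu(\bsy)$.

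For the derivative part, I use the ONB $(w_j^{r+1/2}\phi_j)_{j\in\N}$ of $\calX^r$ to expand the Hilbert--Schmidt norm:
\begin{equation*}
\|D_{\calX^r}\calG(X)\|_{\HS(\calX^r,\calY)}^2=\sum_{j\in\N}w_j^{2r+1}\|D\calG(X)\phi_j\|_\calY^2 .
\end{equation*}
Next, the chain rule applied to $u=\calG\circ T_s$ gives $\partial_{y_j}u(\bsy)=\sqrt{\lambda_j(s)}\,D\calG(T_s\bsy)\phi_j=w_j^{s+1/2}D\calG(T_s\bsy)\phi_j$. Hence
\begin{equation*}
w_j^{2r+1}\|D\calG(T_s\bsy)\phi_j\|_\calY^2=w_j^{2(r-s)}\|\partial_{y_j}u(\bsy)\|_\calY^2 .
\end{equation*}
Integrating against $\bsmu$, after the same change of variables, yields the first formula.

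For the second identity, note that the Hermite polynomials are compatible with $T_s$: $H_{\bsnu,\bslambda(s)}(T_s\bsy)=H_\bsnu(\bsy)$, because $\lambda_j(s)^{-1/2}\langle T_s\bsy,\phi_j\rangle=y_j$. Therefore $g_{\bsnu,\bslambda(s)}=u_\bsnu$ by \eqref{eq:gbsnu} and \eqref{eq:unu}. Proposition \ref{prop:weightedParseval}, applied to $\calG$, then gives the claim.

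An alternative route, which also proves Proposition \ref{prop:weightedParseval}, works directly on $\R^\N$. One uses $\partial_{y_j}H_\bsnu=\sqrt{\nu_j}H_{\bsnu-\bsdelta_j}$, which follows from $H_n'=\sqrt n H_{n-1}$ in the normalization \eqref{eq:Hermite}. Integration by parts against $\bsmu$ then shows that the Hermite coefficients of $\partial_{y_j}u$ are $\sqrt{\nu_j+1}\,u_{\bsnu+\bsdelta_j}$. Parseval gives $\int\|\partial_{y_j}u\|^2\dd\bsmu=\sum_\bsnu\nu_j\|u_\bsnu\|^2$. Summing over $j$ with weights $w_j^{2(r-s)}$ via Tonelli, and adding the $L^2$ term, produces exactly $\Gamma_\bsnu(r,s)$.

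The main obstacle is the rigorous justification that partial derivatives on $\R^\N$ relate to the Fréchet derivative, given that $T_s$ is only defined $\bsmu$-a.e. and $\calG$ need not be defined through $u$ off a null set. I would handle this by interpreting $\partial_{y_j}u$ pointwise along the lines $\bsy+h\bsdelta_j$. These lines stay in the full-measure set, since only one coordinate changes, and $T_s(\bsy+h\bsdelta_j)=T_s\bsy+h w_j^{s+1/2}\phi_j$, so differentiability in $h$ follows from Fréchet differentiability of $\calG$. Justifying the integration by parts for the Hermite coefficients then requires only the finiteness of the norm together with Fubini in the $j$-th variable.
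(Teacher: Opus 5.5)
Your proposal is correct and follows the paper's proof essentially step by step. Like the paper, you transport the problem via $T_s$ with $\calG=u\circ T_s^{-1}$, expand the Hilbert--Schmidt norm in the ONB $(w_j^{r+1/2}\phi_j)_{j\in\N}$ of $\calX^r$, use the chain rule to obtain $w_j^{2r+1}\lambda_j(s)^{-1}\|\partial_{y_j}u\|_\calY^2=w_j^{2(r-s)}\|\partial_{y_j}u\|_\calY^2$, identify $g_{\bsnu,\bslambda(s)}=u_\bsnu$ by change of variables, and invoke Proposition~\ref{prop:weightedParseval}. Two small remarks: your chain rule, differentiating $u=\calG\circ T_s$ along coordinate lines that stay in the convergence set of $T_s$, is if anything slightly more careful than the paper's formal use of $DT_s^{-1}$. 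Also, $T_s^{-1}$ is defined on all of $\calX$, so $\calG$ is defined everywhere and not just $\gamma_s$-a.e.
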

	
	As a first operator surrogate approximation rate result, 
	we give the following best $N$-term rate bound for 
	spectral operator surrogates realized by truncated frame-Hermite expansions.
	
	\begin{corollary}[Spectral Surrogate Convergence Rate]
		\label{cor:convergence-expansion}
		Consider the setting of Proposition \ref{prop:truncate} and Corollary \ref{cor:tailSumBoundL2}.
		Then
		there exist constants $C_1 = C_1(r,s,t,\bseta, \delta) > 0$ and $C_2 = C_2(s,t,\bseta, \delta ) > 0$
		such that for every $N\in\N$, 
		with the $N$-parametric, polynomial approximator maps $g_N :\R^\N\to\R^N$ given by 
		\[
                  g_N(\bsy) :=
                   \Big(\sum_{\bsnu\in \Lambda_j^N} \dup{u_\bsnu}{\tilde\eta_j}_\calY H_\bsnu(\bsy)\Big)_{j=1}^N
		\]
		it holds that
		\begin{equation*}
			\left\|u(\bsy)-\calD_N \circ g_N(\bsy)\right\|_{W_{\bsmu,r,s}^{1,2}(\R^\N, \calY)}
			= 
			\left\|u(\bsy)-\sum_{j=1}^N\eta_j (g_N)_j(\bsy)\right\|_{W_{\bsmu,r,s}^{1,2}(\R^\N, \calY)}
			\le C_1 N^{-\min\{r,s,t\}}
		\end{equation*}
		and
		\begin{equation*}
			\|u(\bsy) - \calD_N \circ g_N(\bsy)\|_{L_{\bsmu}^2(\R^\N,\calY)}
			=
			\left\| u(y) - \sum_{j=1}^N \eta_j (g_N)_j(\bsy) \right\|_{L_{\bsmu}^2(\R^\N,\calY)}
			\leq 
			C_2 \, N^{-\min\{s,t\}}.
		\end{equation*}
	\end{corollary}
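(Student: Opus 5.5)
The plan is to reduce both estimates to the tail bounds of Proposition \ref{prop:truncate} and Corollary \ref{cor:tailSumBoundL2}. The link is the weighted Parseval identity of Lemma \ref{lemma:uGaussSobolevNorm} combined with the frame stability bound \eqref{eq:rmk:FrBd}. Here $u=u^s$ and $u_\bsnu=u^s_\bsnu$. For the Sobolev estimate I use the index sets $\Lambda_j^N$ of Proposition \ref{prop:truncate}; for the $L^2$ estimate I use those of Corollary \ref{cor:tailSumBoundL2}. In both cases $\Lambda_j^N=\emptyset$ for $j>N$ by Remark \ref{remark:index-sets}, so $g_N$ takes values in $\R^N$, and $\calD_N\circ g_N=\sum_{j\in\N}\eta_j\sum_{\bsnu\in\Lambda_j^N}\dup{u_\bsnu}{\tilde\eta_j}_\calY H_\bsnu$.

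\textbf{Step 1: Hermite coefficients of the error.} Subtract this from the frame expansion \eqref{eq:uexpansion}, which converges in $L^2_\bsmu(\R^\N,\calY^t)$ and hence in $L^2_\bsmu(\R^\N,\calY)$ by Theorem \ref{thm:bdXSum} and \eqref{eq:compid}. The error $e:=u-\calD_N\circ g_N$ then has Hermite coefficients
\begin{equation*}
  e_\bsnu=\sum_{j\in\N}\eta_j\,\dup{u_\bsnu}{\tilde\eta_j}_\calY\,\mathbf{1}[\bsnu\notin\Lambda_j^N],\qquad \bsnu\in\calF .
\end{equation*}
Since $\tilde\rF u_\bsnu\in\ell^2(\N)$, \eqref{eq:rmk:FrBd} gives
\begin{equation*}
  \|e_\bsnu\|_\calY^2\le\overline{\kappa}^2\sum_{j\in\N}|\dup{u_\bsnu}{\tilde\eta_j}_\calY|^2\,\mathbf{1}[\bsnu\notin\Lambda_j^N].
\end{equation*}

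\textbf{Step 2: apply the norm identities.} For the $W^{1,2}$ claim, Lemma \ref{lemma:uGaussSobolevNorm} applied to $e$ gives $\|e\|^2_{W^{1,2}_{\bsmu,r,s}}=\sum_\bsnu\Gamma_\bsnu\|e_\bsnu\|_\calY^2$. Exchanging the sums and using \eqref{eq:tailsum} yields
\begin{equation*}
  \|e\|^2_{W^{1,2}_{\bsmu,r,s}}\le\overline{\kappa}^2\sum_{j\in\N}\sum_{\bsnu\notin\Lambda_j^N}\Gamma_\bsnu|\dup{u_\bsnu}{\tilde\eta_j}_\calY|^2\le\overline{\kappa}^2CN^{-2\min\{r,s,t\}}.
\end{equation*}
Taking square roots gives $C_1$. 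For the $L^2$ claim I use Parseval in $L^2_\bsmu(\R^\N,\calY)$ instead, i.e.\ $\Gamma_\bsnu$ replaced by $1$, together with Corollary \ref{cor:tailSumBoundL2}; this gives rate $N^{-\min\{s,t\}}$ and the constant $C_2$.

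\textbf{Main obstacle.} The delicate point is justifying the use of Lemma \ref{lemma:uGaussSobolevNorm} for $e$. That lemma presupposes $\|e\|_{W^{1,2}}<\infty$ and differentiability. I would handle this as follows. The truncation $\calD_N\circ g_N$ is a finite polynomial, so it is smooth with finite norm. For $u$ itself, Theorem \ref{thm:bdXSum} gives $\sum_\bsnu\Gamma_\bsnu\|u_\bsnu\|^2<\infty$, using $\Gamma_\bsnu\le\Gamma_\bsnu c_\bsnu$ up to a constant since $c_\bsnu\ge1$. Derivatives $\partial_{y_j}$ act on Hermite series by the shift $\partial_{y_j}H_\bsnu=\sqrt{\nu_j}H_{\bsnu-\bsdelta_j}$, and the holomorphy of $u$ identifies the classical derivatives with the $L^2$-limits of the differentiated partial sums. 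With this, the identity in the lemma extends to $e$ by a closedness argument. The first equality of the lemma then just requires that this coefficient characterization hold for $u$, which I take from the lemma itself.
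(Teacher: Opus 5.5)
Your proposal is correct and follows essentially the same route as the paper. The paper also writes the error as $\sum_{j\in\N}\eta_j\sum_{\bsnu\in\calF\setminus\Lambda_j^N}\dup{u^s_\bsnu}{\tilde\eta_j}_\calY H_\bsnu$, applies the weighted Parseval identity \eqref{eq:weightedParseval} (resp.\ Parseval \eqref{eq:Parsev}) together with the upper frame bound \eqref{eq:rmk:FrBd}, and concludes with \eqref{eq:tailsum} resp.\ Corollary~\ref{cor:tailSumBoundL2}; your coefficientwise computation of $e_\bsnu$ makes explicit what the paper does in one line, and your finiteness and differentiability check for $e$ goes beyond the paper, which applies the identity without verifying these hypotheses.
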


	\begin{proof}
		Let $N \in \N$ and $\bsy \in \R^\N$.
		Consider the expansion \eqref{eq:uexpansion} of $u^s(\bsy)$, the definition
    of $g_N(\bsy)$ and note that $\Lambda_j^N = \emptyset$ for all $j > N$ since
    $\calF \supseteq \Lambda_1^N \supseteq \Lambda_2^N \supseteq \dots $ and
    $\sum_{j\in\N} |\Lambda_j^N| = N$. 
    Note that the latter property holds true
    for both, the index sets from Proposition \ref{prop:truncate} and those in Corollary \ref{cor:tailSumBoundL2}; 
    in this proof we do not distinguish between these index sets in notation. This implies
		\begin{align*}
			u^s(\bsy) - \calD_N \circ g_N(\bsy)
			&=
			\sum_{j\in\N} \eta_j \sum_{\bsnu\in\calF} \langle u_\bsnu^s, \tilde\eta_j \rangle_\calY H_\bsnu(\bsy)
			-
			\sum_{j=1}^N \eta_j \sum_{\bsnu\in\Lambda_j^N} \langle u_\bsnu^s,\tilde\eta_j \rangle_\calY H_\bsnu(\bsy)	\\
			&=
			\sum_{j=1}^N \eta_j \sum_{\calF\backslash\Lambda_j^N} \langle u_\bsnu^s, \tilde \eta_j \rangle_\calY H_\bsnu(\bsy)
			+ 
			\sum_{j > N} \eta_j \sum_{\bsnu\in\calF} \langle u_\bsnu^s,\tilde\eta_j \rangle_\calY H_\bsnu(\bsy) \\
			&=
			\sum_{j\in\N} \eta_j \sum_{\bsnu\in\calF\backslash\Lambda_j^N} \langle u_\bsnu^s,\tilde\eta_j \rangle_\calY H_\bsnu(\bsy).
		\end{align*}
	In the following, 
let $\overline{\kappa} \coloneqq \overline{\kappa}(\bseta) > 0$ 
be the upper frame constant of the frame $\bseta = (\eta_j)_{j\in\N} \subset \calY$. 
\par
	First consider the error estimate w.r.t.~the norm $\|\cdot\|_{W_{\bsmu,r,s}^{1,2}(\R^\N, \calY)}$. 
The identity \eqref{eq:weightedParseval} and Proposition \ref{prop:truncate} 
with constant $0 < \hat{C} = \hat{C}(r,s,t,\delta ) < \infty$ imply 
		\begin{align*}
			\|u^s(\bsy) - \calD_N \circ g_N(\bsy)\|_{W_{\bsmu,r,s}^{1,2}(\R^\N,\calY)}^2
			&=
			\left\| \sum_{j\in\N}\eta_j\sum_{\bsnu\in\calF\backslash\Lambda_j^N} \langle u_\bsnu^s, \tilde\eta_j \rangle_\calY H_\bsnu(\bsy) \right\|_{W_{\bsmu,r,s}^{1,2}(\R^\N,\calY)}^2 \\
			&\leq
			\overline{\kappa}^2 \sum_{j\in\N}\sum_{\bsnu\in\calF\backslash\Lambda_j^N} \Gamma_\bsnu |\langle u_\bsnu^s,\tilde\eta_j \rangle_\calY|^2 \\
			&\leq
			\overline{\kappa}^2 \hat{C} N^{-2\min\{r,s,t\}}. 
		\end{align*}
		 
We proceed similarly for the error estimate in $L_\bsmu^2(\R^\N,\calY)$ 
and now denote by $\hat{C} = \hat{C}(s,t,\delta)$ 
the constant from Corollary \ref{cor:tailSumBoundL2}. 
Parseval's identity \eqref{eq:Parsev}, 
Corollary \ref{cor:tailSumBoundL2}, 
and the frame bound \eqref{eq:rmk:FrBd} imply
		\begin{align*}
			\|u^s(\bsy) - \calD_N \circ g_N(\bsy)\|_{L_\bsmu^2(\R^\N,\calY)}^2 
			&=
			\left\| \sum_{j\in\N}\eta_j\sum_{\bsnu\in\calF\backslash\Lambda_j^N} 
                        \langle u_\bsnu^s,\tilde\eta_j \rangle_\calY H_\bsnu(\bsy) \right\|_{L_\bsmu^2(\R^\N,\calY)}^2 
                        \\
			&\leq 
			\overline{\kappa}^2 \sum_{j\in\N}\sum_{\bsnu\in\calF\backslash\Lambda_j^N} 
                        |\langle u_\bsnu^s,\tilde\eta_j \rangle_\calY|^2 \\
			&\leq 
			\overline{\kappa}^2 \hat{C} N^{-2\min\{s,t\}},
		\end{align*}
where we used $n=N$ in \eqref{eq:decoder}.
	\end{proof}

	\subsection{DNN Expression Rates}
	\label{sec:PfThm}
	The polynomial parametric operator surrogates $g_N$ in Cor.~\ref{cor:convergence-expansion}
		imply the existence of corresponding neural operator surrogates, 
		also referred to as operator networks, or `ONets'. 
		A first result follows directly from Cor.~\ref{cor:convergence-expansion} 
		with the observation that $\RePU$ activation functions
    allow a mathematically
		exact emulation of tensorized polynomials. 
		This observation was used recently e.g. in \cite{ReQuOnet} in the case of
		bounded parameter domains.
	
	We next address strict $\ReLU$ neural emulators.
	These will be constructed from the spectral surrogates 
	$g_N$ in Cor.~\ref{cor:convergence-expansion}, 
        by expressing univariate Hermite polynomials and their products.
	A key ingredient is the following adaptation of a theorem from
	\cite[Theorem 3.9]{SZ21_982}. 
        It provides an approximation result for univariate Hermite polynomials 
        by ReLU feedforward networks. 
	A proof along the lines of the proof of \cite[Theorem 3.9]{SZ21_982} 
	is detailed in Appendix \ref{app:DNNEmulationHermite}.
        The result shown here is stronger than \cite{SZ21_982} in that we prove 
        the same rates as shown in \cite{SZ21_982} under normalization of NN weights.

	We recall that for $\bsy\in\R^\N$, 
        the multivariate Hermite polynomial 
        $H_\bsnu(\bsy)=\prod_{i\in\N}H_{\nu_i}(y_i)=\prod_{i\in{\rm supp}(\bsnu)}H_{\nu_i}(y_i)$
        since $H_0\equiv 1$. 
        In particular, $H_\bsnu$ only depends on the input variables with indices in $\supp \bsnu$.
    Similarly, with a slight abuse of notation, the network approximations
    $\widetilde{H}_{\varepsilon, \bsnu}$ in the following proposition
    can be interpreted as maps from $\bsy\in \R^\N$ to $\R$, that effectively only depend on $(y_i)_{i\in{\supp}(\bsnu)}$.

	\begin{proposition}\label{prop:reluhermite}
		Let $\Lambda\subseteq\calF$ be finite and downward closed. Then
		for every $\eps\in (0,1/2 )$ there exists a ReLU neural network
		$\Phi=\{\tilde
		H_{\eps,\bsnu}\}_{\bsnu\in\Lambda}:\R^{\left|\supp\Lambda\right|}\to\R^{|\Lambda|}$
		depending solely on the variables $(y_i)_{i\in\supp\Lambda}$,
		such that 
		\begin{equation*}
			\max_{\bsnu\in\Lambda} \;\;
			\norm[H^1(\R^\N;\bsmu)]{H_\bsnu-\tilde H_{\eps,\bsnu}}\le \eps,
		\end{equation*}
		and there exists a positive constant $C$ (independent of
		$m(\Lambda)$, $d(\Lambda)$ and of $\eps\in (0,{1/2})$) such
		that
		\begin{align*}
			\size(\Phi)&\le C |\Lambda|m(\Lambda)^3\log(1+m(\Lambda))^{2}d(\Lambda)^{3}\log(\eps^{-1})^{2}{\log(1+d(\Lambda))^{3}}\log(m(\Lambda)d(\Lambda)),
			\\
			\depth(\Phi)
			&\le C m(\Lambda)^{3}\log(1+m(\Lambda))^{2}d(\Lambda)^{2}\log(1+d(\Lambda))^{3}\log(\eps^{-1})^{2}\log(m(\Lambda)d(\Lambda)),
			\\
			\mpar(\Phi) &\le 1.
		\end{align*}
	\end{proposition}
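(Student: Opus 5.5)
The plan is to follow the architecture of \cite[Theorem 3.9]{SZ21_982}, assembling the network from three components. The first is univariate ReLU emulations $\tilde H_{\eps',n}$ of $H_n$, $n\le m(\Lambda)$, accurate in $H^1(\R;\mu)$. The second is a ReLU product network $\tilde\Pi$ which multiplies up to $d(\Lambda)$ factors. The third is a final linear layer (of identity type) that routes, for each $\bsnu\in\Lambda$, the relevant factors $\tilde H_{\eps',\nu_i}(y_i)$, $i\in\supp\bsnu$, into a copy of $\tilde\Pi$. The new ingredient compared with \cite{SZ21_982} is the constraint $\mpar(\Phi)\le 1$. I will obtain it by systematically replacing every affine map $x\mapsto ax+b$ with $|a|$ or $|b|$ large by a chain of layers that each multiply by at most $2$ (or by $1$ plus ReLU splitting $x=\ReLU(x)-\ReLU(-x)$). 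Such a chain costs depth $O(\log(|a|+|b|))$ and width $O(1)$. Since all magnitudes occurring below are polynomial in $m(\Lambda)$, $d(\Lambda)$ and $\eps^{-1}$, this is responsible for the extra logarithmic factors in size and depth.

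\textbf{Step 1 (univariate Hermite polynomials on a truncated domain).} Fix a cutoff $R\simeq\sqrt{m(\Lambda)+\log(\eps^{-1})}$ such that the Gaussian tail $\int_{|x|>R}(|H_n|^2+|H_n'|^2+\text{(poly growth of the approximant)}^2)\dd\mu$ is $\le(\eps')^2$ for all $n\le m(\Lambda)$; this uses $|H_n(x)|\lesssim (1+|x|)^n/\sqrt{n!}$ and $\mu(|x|>R)\le e^{-R^2/2}$. On $[-R,R]$, approximate the monomials or the three-term recursion $H_{n+1}=(xH_n-\sqrt{n}H_{n-1})/\sqrt{n+1}$ by ReLU multiplication networks (Yarotsky's squaring via sawtooth functions, which has parameters bounded by constants, and hence after rescaling by $R$ only needs the $\log$-depth scaling trick). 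Outside $[-R,R]$, clip the input with $\min(\max(x,-R),R)$, which keeps the network output bounded. This gives $W^{1,\infty}([-R,R])$ error $\eps'$ with size and depth $O(n^{\text{pow}}\log(n)\log(1/\eps'))$. The derivative error in $L^2_\mu$ is controlled because sawtooth approximations of $x^2$ converge in $W^{1,\infty}$.

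\textbf{Step 2 (products and error propagation).} For $\bsnu\in\Lambda$, $H_\bsnu=\prod_{i\in\supp\bsnu}H_{\nu_i}(y_i)$ with at most $d(\Lambda)$ factors. I will multiply the factors along a binary tree of depth $\log d(\Lambda)$ using approximate multiplication networks on boxes $[-M,M]^2$, where $M$ bounds the clipped factors. The $H^1(\R^\N;\bsmu)$ error is then estimated by a telescoping sum over factors. The product structure of $\bsmu$ lets the $L^2_\bsmu$ norms factorise, and the $\partial_{y_j}$ derivative touches only one factor. Together with $\|H_n\|_{L^2_\mu}=1$ and $\|H_n'\|_{L^2_\mu}=\sqrt n$, this gives a total error $\lesssim d(\Lambda)m(\Lambda)\,\mathrm{poly}\,\eps'$. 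I then choose $\eps'=\eps/(\text{poly}(m,d))$, which only affects the logarithms. The univariate subnetworks $\tilde H_{\eps',n}(y_i)$ are shared across all $\bsnu$, so the size is dominated by $|\Lambda|$ product networks of size $O(d(\Lambda)\cdot\text{poly}\log)$ plus the $\le d(\Lambda)\, m(\Lambda)$ univariate networks. Parallelisation requires identity padding to equal depth, which ReLU realises with weights $\pm1$.

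\textbf{Main obstacle.} I expect the hardest step to be the $H^1$ (derivative) control on the unbounded Gaussian domain combined with $\mpar\le1$. The clipped network is only Lipschitz, and its derivative jumps at $\pm R$. Rather than rely on pointwise derivatives outside $[-R,R]$, I would therefore bound the tail contribution via the Gaussian weight and the explicit polynomial growth. The large constants $R^n$ and $\sqrt{n!}$ must be realised through rescaling chains whose length is $O(m\log m+\log\eps^{-1})$; this is where the factor $m(\Lambda)^3\log(1+m(\Lambda))^2$ in depth arises. I would first try to verify that each multiplication level can be preceded by a division by $M^2$ (with weights $\le1$, via halving layers) and followed by a remultiplication through a doubling chain. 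I would also check that the accuracy demand $\eps'M^{-2d}$ remains only logarithmically costly.
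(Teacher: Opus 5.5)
Your overall architecture (shared univariate ReLU emulations of $H_n$, product networks over $\supp\bsnu$, routing) is the one the paper uses. You obtain the truncation, and more importantly the new constraint $\mpar(\Phi)\le 1$, by a genuinely different route.

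\textbf{The paper's route.} It first builds an unnormalized network $\widetilde\Phi$ (Proposition~\ref{prop:ExpressionRatesMultHermite.}). There, each univariate approximant is forced to vanish outside $[-M-1,M+1]$ by the output cutoff $\max\{\min\{q,\phi\},-q\}$ of Lemma~\ref{lemma:truncationNN}. This cutoff carries weights as large as $(1+(3M)^n)M$; the polynomial and product networks of Appendix~\ref{app:NNTheory} already have $\mpar\le 1$. Hence $\log\mpar(\widetilde\Phi)$ is of order $m\log(1+m)\log(1+d)\log(\eps^{-1})$. The paper then applies the black-box normalization Lemma~\ref{lemma:normalizeWeights}. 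This \emph{multiplies} the depth by $\log\mpar(\widetilde\Phi)$ and adds $|\Lambda|(\depth(\widetilde\Phi)+1)\log\mpar(\widetilde\Phi)$ to the size. That product is exactly where $m^3\log(1+m)^2$, $\log(\eps^{-1})^2$ and $\log(1+d)^3$ in the statement come from.

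\textbf{What your route buys.} Your input clipping never creates such weights: only biases of size $R$ appear, and these are removable by feeding $x/R$. It also makes the network constant rather than ramped outside $[-R,R]$. So no derivative estimate on a transition region is needed, only the Gaussian tails of $H_n$ and $H_n'$ and the endpoint value. Your doubling chains add depth rather than multiply it. Carried out, your construction gives a depth bound without the factor $\log\mpar$, hence better than the one stated.

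\textbf{What it costs.} You need bookkeeping that the paper's modular route avoids. Each large-weight layer must be rewritten as $Wz+b=B\,((W/B)z+b/B)$ followed by a vector-valued doubling chain of width $O(p_l)$. Parallel branches must also be padded with identity networks to equal depth.

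\textbf{Details to correct (none fatal).}
\begin{itemize}
\item The magnitudes are not polynomial in $m,d,\eps^{-1}$. $\sup|\tilde H_{\eps',n}|$ is exponential in $n$ (up to $(3R)^n$), and intermediate products reach $M^{d}$. So the chains have length $O(m\,d\log R)$. This still fits the stated bounds, but it is not where the $m^3\log(1+m)^2$ depth factor comes from.
\item The number of univariate subnetworks is $|\supp\Lambda|\,m(\Lambda)\le|\Lambda|\,m(\Lambda)$ (by downward closedness), not $d(\Lambda)m(\Lambda)$.
\item The bound $|H_n(x)|\lesssim(1+|x|)^n/\sqrt{n!}$ is false near the origin; for example $|H_{2k}(0)|\sim(\pi k)^{-1/4}$. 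It does hold up to a factor $C^n$ for $|x|\ge c\sqrt n$, which suffices for the tail. Alternatively, use $|H_n(x)|\le(3\max\{1,|x|\})^n$ as the paper does, at the cost of $R^2\simeq m\log(1+m)+\log(\eps^{-1})$.
\item Your telescoping with factorized $L^2_{\bsmu}$ norms only handles the exact product of the approximate factors. The error of the multiplication network must be bounded separately: its uniform $W^{1,\infty}$ accuracy on the box times $\|\tilde H_{\eps',\nu_i}'\|_{L^2_\mu}\le\sqrt{\nu_i}+\eps'$, as in Proposition~\ref{prop:H1MultWithNNumbers}.
\end{itemize}
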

	
	The next result will imply Theorems \ref{thm:main} and \ref{thm:main2}.
	
	\begin{theorem}
		\label{thm:NN-reapprox}
		Let $r$, $s$, $t > 0$, 
                let $\calG$ satisfy Assumption \ref{ass:G}, and let $q \in \N$.
		Let $u^s$ be 
		as in \eqref{eq:us}.
		
		Let 
                $\delta > 0$ be given.
                Then there exists constants 
                $B_q>0$ and $C=C(r,s,t,\calG,\delta) > 0$, 
		such that 
		for every $N$ there exists a $\sigma_q$-NN 
		$$\tilde{g}_N\in \bmg_{FN}(\sigma_q,\depthdelta_N , \widthdelta_N ,\sizedelta_N , B_q)$$
		with $\depth_N^{{\delta}}, \, \widthdelta_N , \, \size_N^{\delta}$ 
                as defined in \eqref{eq:sizeDepthN}, 
                and 
		such that,
		\begin{equation}\label{eq:us-DGN}
			\norm[{W_{\bsmu, r,s}^{1,2}(\R^\N;\calY)}]{u^s-\calD_N \circ \tilde{g}_N} 
                \le C N^{-\min\{r,s,t\}}
		\end{equation}
		and
		\begin{equation}\label{eq:us-DGN2}
			\norm[L_{\bsmu}^2(\R^\N;\calY)]{u^s-\calD_N \circ \tilde{g}_N}\le C N^{-\min\{s,t\}}.
		\end{equation}
		Furthermore, we have $B_1 = 1$, i.e., for $q=1$ (ReLU)
                the network weights are bounded by $1$.
	\end{theorem}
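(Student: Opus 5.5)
The plan is to start from the spectral surrogate of Corollary~\ref{cor:convergence-expansion} and replace each Hermite polynomial by a network, keeping the neural emulation error below the spectral truncation error. Fix $N$ and take the nested, downward closed sets $\Lambda_1^N\supseteq\Lambda_2^N\supseteq\dots$ from Proposition~\ref{prop:truncate}; for \eqref{eq:us-DGN2} use those of Corollary~\ref{cor:tailSumBoundL2}. Set $\Lambda:=\Lambda_1^N$. By Remark~\ref{remark:index-sets}, $\Lambda$ contains every $\Lambda_j^N$ and $|\supp\Lambda|<N$. By \eqref{eq:md}, $m(\Lambda)=O(N^{\delta'})$ and $d(\Lambda)=o(\log N)$ for an auxiliary $\delta'$ chosen small relative to $\delta$. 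Write
\[
u^s-\calD_N\circ\tilde g_N=(u^s-\calD_N\circ g_N)+\calD_N\circ(g_N-\tilde g_N).
\]
The first term is bounded by Corollary~\ref{cor:convergence-expansion}, so everything reduces to bounding the network error $\calD_N\circ(g_N-\tilde g_N)$.

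\textbf{Case $q=1$.} Apply Proposition~\ref{prop:reluhermite} to $\Lambda$ with tolerance $\eps=N^{-\kappa}$, where $\kappa$ is large (say $\kappa=\min\{r,s,t\}+3/2$). This gives a ReLU subnetwork $\Phi$ that outputs $(\tilde H_{\eps,\bsnu})_{\bsnu\in\Lambda}$ from the first $N$ encoded coordinates; the input dimension is at most $N$ because $\supp\Lambda\subseteq\{1,\dots,N\}$ when $\calE_N$ is used. Append a final linear layer with weights $\dup{u^s_\bsnu}{\tilde\eta_j}$ for $\bsnu\in\Lambda_j^N$ and $j\le N$. This layer has exactly $\sum_j|\Lambda_j^N|=N$ nonzero weights.

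The error is controlled by the triangle inequality. Using \eqref{eq:rmk:FrBd} and $H^1(\bsmu)$-norms, where the $W^{1,2}_{\bsmu,r,s}$-norm carries the weights $w_j^{2(r-s)}$, bounded by a power of $N$ on coordinates $j\le N$ by Assumption~\ref{ass:vw}(ii):
\[
\|\calD_N\circ(g_N-\tilde g_N)\|\le \overline\kappa\, N^{c}\Big(\sum_{j}\sum_{\bsnu\in\Lambda_j^N}|\dup{u^s_\bsnu}{\tilde\eta_j}|\Big)\eps \le \overline\kappa\, N^{c+1/2}\,\|u^s\|_{L^2}\,\eps .
\]
The last step is Cauchy--Schwarz over the $N$ coefficients. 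Choosing $\kappa\ge c+1/2+\min\{r,s,t\}$ makes this error $O(N^{-\min\{r,s,t\}})$.

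The size bound of Proposition~\ref{prop:reluhermite} is $N\cdot N^{3\delta'}\,\mathrm{polylog}(N)\le C_SN^{1+\delta}$, and the depth bound is $N^{3\delta'}\,\mathrm{polylog}(N)\le C_LN^{\delta}$. The width is bounded by the size.

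\textbf{Main obstacle: the weight bound $B_1=1$.} The output coefficients $\dup{u^s_\bsnu}{\tilde\eta_j}$ may exceed $1$. To handle this, I would bound all of them by $A:=\overline\kappa(\tilde\bseta)\|u^s\|_{L^2}$ and realize multiplication by $A$ with weights bounded by $1$. Concretely, I would use $\lceil\log_2 A\rceil$ extra ReLU layers, each computing $x\mapsto 2x$ as $\sigma(x)+\sigma(x)-\sigma(-x)-\sigma(-x)$ with weights $\pm1$. These layers are placed before the final linear layer. This adds $O(1)$ depth (the constant depends on $\calG$) and $O(N)$ size.

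\textbf{Case $q\ge2$.} RePU networks can emulate $x^2$, and hence products and univariate polynomials, exactly. So one can realize each $H_\bsnu$ exactly, for instance by a product tree over the $d(\Lambda)$ active factors, with each $H_{\nu_i}$ built through the three-term recurrence. The cost is $O(m(\Lambda)+\log d(\Lambda))$ depth per polynomial and $O(|\Lambda|\,m(\Lambda)d(\Lambda))$ total size, which is again within the $N^{1+\delta}$ and $N^\delta$ budgets. Since the emulation is exact, only the spectral error of Corollary~\ref{cor:convergence-expansion} remains. The weights are bounded by a constant $B_q$ after the same rescaling device; the recurrence coefficients of the normalized Hermite polynomials grow like $\sqrt n$, and these can be split into doubling layers as above.

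Finally, pad each network with identity layers so that it lies exactly in $\bmg_{FN}(\sigma_q,\depthdelta_N,\widthdelta_N,\sizedelta_N,B_q)$. For ReLU the identity is $x=\sigma(x)-\sigma(-x)$; for RePU it must be emulated, which costs only $O(1)$ per layer.
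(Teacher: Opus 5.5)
Your proposal is correct and follows essentially the same route as the paper. You truncate with the index sets of Proposition~\ref{prop:truncate} and Corollary~\ref{cor:tailSumBoundL2}, replace each $H_\bsnu$ by the ReLU emulation of Proposition~\ref{prop:reluhermite} with a polynomially small tolerance (or by exact $\sigma_q$ emulation for $q\ge 2$), append the coefficient layer, and restore $\mpar\le 1$ with a scalar-multiplication (doubling) subnetwork.

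The only local difference is in the derivative part of the network error. You bound $w_i^{2(r-s)}$ for $i\le N$ by a power of $N$ via Assumption~\ref{ass:vw}(ii) and compensate with a smaller $\eps$, which costs only logarithmic factors in size and depth. The paper instead absorbs these weights into $\Gamma_\bsnu$ and keeps $\eps=N^{-1/2-\min\{r,s,t\}}$.
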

	\begin{proof}
		According to Proposition \ref{prop:holomorphy},
		the function $\bsy\mapsto u^s(\bsy)$ defined in
		\eqref{eq:us} is $(\bslambda^{1/2},\theta,2,\epsilon,\calY_\C^{t})$-holomorphic.
		Hence, there is a parametric map such that 
		\begin{equation}\label{eq:usexpansion}
			u^s(\bsy) = \calD \circ g(\bsy) 
			:= \sum_{j\in\N}\eta_j \sum_{\bsnu\in\calF}\dup{u_\bsnu^{s}}{\tilde\eta_j}_\calY H_\bsnu(\bsy)
			\in \calY
		\end{equation}
		with convergence in $L_\bsmu^2(\R^\N;\calY)$.
		Throughout the remainder of this proof, 
		for any $N\in \bbN$, 
		let for $j=1,2,...$ the sets $\Lambda_j^N\subset \calF$ 
		be as in Proposition \ref{prop:truncate}, so that
		\begin{equation}\label{eq:truncation}
			\sum_{j\in\N}\sum_{\bsnu\notin\Lambda_j^N} {\Gamma_\bsnu} 
                        |\dup{u_\bsnu^s}{\tilde\eta_j}_\calY|^2
			\le 
			C N^{-2\min\{r,s,t\}}.
		\end{equation}
		We use the same notation to denote the index sets from 
                Corollary \ref{cor:tailSumBoundL2} and make use of the property
		\begin{equation}\label{eq:truncationL2}
			\sum_{j\in\N}\sum_{\bsnu \notin \Lambda_j^N} |\dup{u_\bsnu^s}{\tilde \eta_j}_\calY|^2 
                        \leq C N^{-\min(s,t)}.
		\end{equation}
    It will be clear from context which family of index sets 
    is meant since \eqref{eq:truncation} is used to derive the approximation result in 
    $W_{\bsmu,r,s}^{1,2}(\R^\N;\calY)$ whereas \eqref{eq:truncationL2} 
    is used to derive the corresponding result in $L_\bsmu^2(\R^\N;\calY)$.
    Recall from Remark \ref{remark:index-sets} that, 
    for both families of index sets, 
    $\Lambda_j^N = \emptyset$ for $j>N$ and 
    the cardinality of $\supp\Lambda^N_1$ is bounded by $N$. 
     Without loss of generality we assume therefore for the rest of this proof that
		\begin{equation}\label{eq:suppbsnu}
			\supp(\bsnu)\subseteq\{1,\dots,N\}\qquad\forall\,\bsnu\in\Lambda^N_j,~j\in\N.
		\end{equation}
    In what follows we
    prove the claimed result separately for $q = 1$ (ReLU NNs) and $q>1$ (RePU NNs). 
    The first three steps address the former, the last one addresses the latter.
\par
		{\bf Step 1.} 
		We construct the network $\tilde{g}_N$ by emulating with $\ReLU$ NNs 
                the Hermite polynomials $H_\bsnu(\bsy)$ 
		in the spectral operator approximations of Cor.~\ref{cor:convergence-expansion}.
		
		Let $\alpha$ be either $\min\{s,t\}$ or $\min\{r,s,t\}$. Given $N\in \N$, define
		\begin{equation}\label{eq:defeps}
			\eps(N,\alpha) := N^{-\frac{1}{2}-\alpha}
		\end{equation}  
		By Prop.~\ref{prop:reluhermite}, 
		there exists a constant $C>0$ such that, for every $N\in \N$, 
                there exists a ReLU NN
		$\Phi_{\eps(N,\alpha)} \coloneq \{\tilde H_{\eps(N,\alpha),\bsnu}\}_{\bsnu\in\Lambda_{1}^N}$ 
		satisfying the error bound 
		\begin{align}\label{eq:HermReLU}
			\norm[H^1(\R^\N;\bsmu)]{H_\bsnu - \tilde H_{\eps(N,\alpha), \bsnu}}&\le\eps(N,\alpha) \qquad \forall \, \bsnu \in \Lambda_1^N,
		\end{align}
		and the size, depth, and weight bounds
		\begin{align*}
			\size(\Phi_{\eps(N,\alpha)})
                        &\le C Nm(\Lambda_1^N)^3\log(1+m(\Lambda_1^N))^{2}
                        d(\Lambda_1^N)^{3}\log(\eps(N,\alpha)^{-1})^{2}{\log(1+d(\Lambda_1^N))^{3}}\log(m(\Lambda_1^N)d(\Lambda_1^N))
                 \\
			&\le C N^{1+3\delta}\log(N)^{11}
                 \\
			\depth(\Phi_{\eps(N,\alpha)})
			&\le C m(\Lambda_1^N)^{3}\log(1+m(\Lambda_1^N))^{2}d(\Lambda_1^N)^{2}
                          \log(1+d(\Lambda_1^N))^{3}\log(\eps(N,\alpha)^{-1})^{2}\log(m(\Lambda_1^N)d(\Lambda_1^N)) 
                 \\
                         &\le C N^{3\delta}\log(N)^{10}, 
                  \\
			\mpar(\Phi_{\eps(N,\alpha)}) 
                       &\le 1.
		\end{align*}
		Here we have used \eqref{eq:md} and \eqref{eq:defeps}.
                
    Possibly adjusting the value of the constant $\delta>0$
    we obtain the bounds claimed in \eqref{eq:sizeDepthN}.
		
		Using \eqref{eq:suppbsnu}, 
                we now introduce the ReLU surrogate map
		$\tilde{g}_N:\R^N\to\R^N$ such that $\bsy\in\R^\N$ is mapped to
		\begin{equation}\label{eq:defGN}
			\tilde{g}_N(\bsy) 
			= 
			\Big(\sum_{\bsnu\in\Lambda_{j}^N}\dup{u_\bsnu^s}{\tilde\eta_j}_\calY\tilde 
                                                    H_{\eps(N,\alpha),\bsnu}(\bsy)\Big)_{1\le j\le N}.
		\end{equation} 
		
		{\bf Step 2.} We bound the expression error of the NN approximation. 
		
		By definition of $\calD_N:\R^N\to\calY$ in Section~\ref{sec:decoder}
		and of the neural approximation $\tilde{g}_N$ of the spectral surrogate $g_N$, \eqref{eq:defGN},
    we get
    \begin{equation*}
      \calD_N\circ \tilde{g}_N(\bsy) 
      = \sum_{j=1}^N\eta_j \sum_{\bsnu\in\Lambda_j^N}\dup{u_\bsnu^s}{\tilde\eta_j}_\calY\tilde H_{\eps(N,\alpha),\bsnu}(\bsy).
    \end{equation*}
    As $\Lambda_j^N = \emptyset$ for all $j > N$,
    \begin{equation}\label{eq:pointwiseError}
      \begin{aligned}
        u^s(\bsy)-\calD_N\circ \tilde{g}_N(\bsy) 
        &=
          \sum_{j\in\N}\sum_{\bsnu\notin\Lambda_j^N}\dup{u_\bsnu^s}{\tilde\eta_j}\eta_j H_\bsnu(\bsy) \\  
        &\qquad +\sum_{j=1}^N\sum_{\bsnu\in\Lambda_j^N}\dup{u_\bsnu^s}{\tilde\eta_j}\eta_j (H_\bsnu(\bsy)-\tilde H_{\eps(N,\alpha),\bsnu}(\bsy))\;.
      \end{aligned}
    \end{equation}
    We can bound the norm of the first term in \eqref{eq:pointwiseError} 
    using Proposition \ref{prop:truncate}, Corollary \ref{cor:tailSumBoundL2}, 
    and Lemma \ref{lemma:uGaussSobolevNorm},
      \begin{align}
        \norm[W_{\bsmu,r,s}^{1,2}(\R^\N;\calY)]{\sum_{j\in\N}\sum_{\bsnu\notin\Lambda_j^N}\dup{u_\bsnu^s}{\tilde \eta_j}_\calY \eta_j H_\bsnu}^2
        &\le \bar\kappa(\bseta)
          \sum_{j\in\N}\sum_{\bsnu\notin\Lambda_j^N} \Gamma_\bsnu|\dup{u_\bsnu^s}{\tilde \eta_j}_\calY|^2 
          \le
          C N^{-2\min\{r,s,t\}},\label{eq:truncationErrorSobolev} \\
        \norm[L_\bsmu^2(\R^\N;\calY)]{\sum_{j\in\N}\sum_{\bsnu\notin\Lambda_j^N}\dup{u_\bsnu^s}{\tilde \eta_j}_\calY \eta_j H_\bsnu}^2
        &\le \bar\kappa(\bseta)
          \sum_{j\in\N}\sum_{\bsnu\notin\Lambda_j^N} |\dup{u_\bsnu^s}{\tilde \eta_j}_\calY|^2 
          \le
          C N^{-2\min\{s,t\}},\label{eq:truncationErrorL2}
      \end{align}
    where $\bar\kappa(\bseta)$ is the upper frame constant of $\bseta$
    and the sets $\Lambda_j^N$ are not necessarily the same between
    \eqref{eq:truncationErrorSobolev} and \eqref{eq:truncationErrorL2}.
    For ease of notation, define 
    	\begin{equation*}
    	f(\bsy) \coloneqq \sum_{j=1}^{N}\eta_j \sum_{\bsnu\in\Lambda_j^N} 
        \dup{u_\bsnu^s}{\tilde\eta_j}_\calY (H_\bsnu(\bsy) - \tilde H_{\eps(N,\alpha), \bsnu}(\bsy)).
    	\end{equation*}
    	
        Then
    	\begin{align*}
    		\norm[\calY]{f(\bsy)}^2 
    		&\le
    		\overline{\kappa}(\bseta)^2 \sum_{j=1}^N \left|\sum_{\bsnu\in\Lambda_j^N} \dup{u_\bsnu^s}{\tilde\eta_j}_\calY(H_\bsnu(\bsy)-\tilde H_{\eps(N,\alpha),\bsnu}(\bsy))\right|^2 \\
    		&\le
    		\overline{\kappa}(\bseta)^2 \sum_{j=1}^N \left(\sum_{\bsnu\in\Lambda_j^N}|\dup{u_\bsnu^s}{\tilde\eta_j}_\calY|^2\sum_{\bsnu\in\Lambda_j^N}|H_\bsnu(\bsy)-\tilde H_{\eps(N,\alpha),\bsnu}(\bsy)|^2\right).
    	\end{align*}
    	This implies 
    	\begin{equation}\label{eq:L2TermError}
    		\int_{\R^\N} \norm[\calY]{f(\bsy)}^2 \, \dd\bsmu(\bsy)
    		\lesssim
    		\sum_{j=1}^N \sum_{\bsnu\in\Lambda_j^N} |\dup{u_\bsnu^s}{\tilde\eta_j}_\calY|^2 \eps(N,\alpha)^2 |\Lambda_j^N|
    		\lesssim
    		\eps(N,\alpha)^2 N \sum_{j\in\N}\sum_{\bsnu\in\calF} |\dup{u_\bsnu^s}{\tilde\eta_j}_\calY|^2.
    	\end{equation}
    	Note that $\sum_{j\in\N}\sum_{\bsnu\in\calF} |\dup{u_\bsnu^s}{\tilde\eta_j}_\calY|^2 < \infty$. 
        Now \eqref{eq:truncationErrorL2} and $\alpha = \min(s,t)$ imply \eqref{eq:us-DGN2}.\par
   In order to show \eqref{eq:us-DGN} we need to bound a (weighted) $L^2$-norm of derivatives of $f$ 
in addition to the already established bound \eqref{eq:L2TermError} (see Lemma \ref{lemma:uGaussSobolevNorm}). 
In this part of the proof we fix the choice $\alpha = \min(r,s,t)$ and 
shorten notation by $\eps(N) := \eps(N,\min(r,s,t))$.

First we 
observe that $\partial_i f\equiv 0$ whenever $f$ does not depend on $y_i$; 
in particular, $\partial_i f\equiv 0$ for all $i$ not belonging 
to the finite set of ``active dimensions''
  \begin{equation*}
    \set{i\in\N}{\exists j,~\bsnu\in\Lambda^N_j\text{ s.t.\ }\nu_i>0} = \supp(\Lambda_1^N).
  \end{equation*}
For $i$ belonging to this set, we get for all $\bsy\in \R^\N$
    	\begin{align*}
    		\norm[\calY]{\partial_i f(\bsy)}^2
    		&=
    		\normlr[\calY]{\sum_{j=1}^N \eta_j \sum_{\bsnu\in\Lambda_j^N}\dup{u_\bsnu^s}{\tilde\eta_j}_\calY \partial_i [H_\bsnu(\bsy)-\tilde H_{\eps(N),\bsnu}(\bsy)]}^2 \\
    		&\leq 
    		\overline{\kappa}(\bseta)^2 \sum_{j=1}^{N} \left|\sum_{\bsnu\in\Lambda_j^N}\dup{u_\bsnu^s}{\tilde\eta_j}_\calY \partial_i [H_\bsnu(\bsy)-\tilde H_{\eps(N),\bsnu}(\bsy)]\right|^2 \\
    		&\leq 
    		\overline{\kappa}(\bseta)^2 \sum_{j=1}^N \left(\sum_{\bsnu\in\Lambda_j^N}|\dup{u_\bsnu^s}{\tilde\eta_j}_\calY|^2\sum_{\bsnu\in\Lambda_j^N}|\partial_i[H_\bsnu(\bsy)-\tilde H_{\eps(N),\bsnu}(\bsy)]|^2\right).
    	\end{align*}
    	This implies 
    	\begin{align*}
    		\int_{\R^\N} \sum_{i=1}^\infty w_i^{2(r-s)} \norm[\calY]{\partial_i f(\bsy)}^2 \, \dd\bsmu(\bsy) 
    		&\lesssim
    		\sum_{j=1}^N \sum_{\bsnu\in\Lambda_j^N} \sum_{{i\in\supp\bsnu}} w_i^{2(r-s)} |\dup{u_\bsnu^s}{\tilde\eta_j}_\calY|^2 \eps(N)^2 |\Lambda_j^N| \\
    		&\lesssim
    		\sum_{j\in\N} \sum_{\bsnu\in\calF} \sum_{{i\in\supp\bsnu}} w_i^{2(r-s)} |\dup{u_\bsnu^s}{\tilde\eta_j}_\calY|^2 \eps(N)^2 N \\
    		&\lesssim
    		\sum_{j\in\N}\sum_{\bsnu\in\calF} \Gamma_\bsnu(r,s) |\dup{u_\bsnu^s}{\tilde\eta_j}_\calY|^2 \eps(N)^2N.
    	\end{align*}
    	Since $\|u^s\|_{W_{\bsmu,r,s}^{1,2}(\R^\N,\calY)} < \infty$ 
        we have by Lemma \ref{lemma:uGaussSobolevNorm}
    	\begin{align*}
    		\|u^s\|_{W_{\bsmu,r,s}^{1,2}(\R^\N,\calY)}^2 
    		=
    		\sum_{\bsnu\in\calF}\Gamma_\bsnu \|\sum_{j\in\N}\eta_j \langle u_\bsnu^s, \tilde \eta_j \rangle_\calY\|_\calY^2
    		\geq 
    		\underline{\kappa}(\bseta)^2 \sum_{\bsnu\in\calF}\sum_{j\in\N}\Gamma_\bsnu |\langle u_\bsnu^s, \tilde \eta_j \rangle_\calY|^2
    	\end{align*}
    	with lower frame constant $\underline{\kappa}(\eta)$ and hence
        $\sum_{j\in\N}\sum_{\bsnu\in\calF}\Gamma_\bsnu |\dup{u_\bsnu^s}{\tilde\eta_j}_\calY|^2 < \infty$. 
        Therefore, our choice $\eps(N) = N^{-\frac{1}{2}-\min(r,s,t)}$ in the above calculation, 
        together with \eqref{eq:L2TermError} (with $\alpha = \min(r,s,t)$), 
        and \eqref{eq:truncationErrorSobolev} imply \eqref{eq:us-DGN}.

		{\bf Step 3.} 
		We bound the size, depth, and the magnitude of the weights of $\tilde{g}_N$ in \eqref{eq:defGN}. 
		By definition, the ReLU NN $\tilde{g}_N$ is a composition of
		\begin{equation}
			\Phi_{{\eps(N,\alpha)}} : \R^N\to \R^{|\Lambda_1^N|}
		\end{equation}
		with the linear transformation $A:\R^{|\Lambda_1^N|}\to\R^N$
		\begin{equation*}
			A_{\bsnu,j} =\begin{cases}
				\dup{u_\bsnu^s}{\tilde\eta_j}_\calY &\bsnu\in\Lambda_j^N\\
				0 &\text{otherwise}
			\end{cases}\qquad\qquad\forall\,\bsnu\in\Lambda_1^N,~1\le j\le N.
		\end{equation*}
		We derive a possible realization of $\tilde g_N$ as a ReLU NN with
    $\mpar(\tilde g_N) \leq 1$. 
    Let
    \begin{equation*}
     a := \sup\{\dup{u_\bsnu^s}{\tilde\eta_j} : \bsnu\in\calF, \, j\in\N\} < \infty.
    \end{equation*}
    W.l.o.g. assume $a > 1$. 
    We set $\tilde A := a^{-1}A \colon
    \R^{|\Lambda_1^N|} \to \R^N$ and denote by $\phi_a \colon \R^N \to \R^N$ a
    ReLU NN as in Lemma \ref{lemma:scalarMultNetwork} satisfying
    $\phi_a(\bsx) = a\bsx$ for all $\bsx \in \R^N$ and $\mpar(\phi_a) \leq 1$. 
    We can now write $\tilde g_N = \phi_a \bullet \tilde A \bullet \Phi_{\eps(N,\alpha)}$ 
    in the sense of sparse concatenation of ReLU networks (see Definition \ref{def:sparseConc}). 
    From this we obtain a realization 
     $\tilde g_N = \phi_a \bullet \tilde A \bullet \Phi_{\eps(N,\alpha)}$ such that
		\begin{align*}
			\size(\tilde g_N) &\lesssim \size(\phi_a) + \size(\tilde A) + \size(\Phi_{\eps(N,\alpha)}) \lesssim N^{1+3\delta}\log(N)^{11},\\
			\depth(\tilde g_N) &\lesssim 1 + \depth(\phi_a) + \depth(\tilde A) + \depth(\Phi_{\eps(N,\alpha)}) \lesssim N^{3\delta}\log(N)^{10},\\
			\mpar(\tilde g_N) &\leq \max\{\mpar(\phi_a), \mpar(\tilde A), \mpar(\Phi_{\eps(N,\alpha)})\} \leq 1.
		\end{align*}
Up to a change of constants, 
this shows the claimed bounds on size and depth of the theorem. 
This completes the proof for the case of ReLU activation, i.e. $q=1$.

	{\bf Step 4.}
	We next consider $q\in\N, \, q \geq 2$ fixed. 
        Let $\Phi := \{\tilde H_\bsnu\}_{\bsnu \in \Lambda_1^N}$ be a
        neural network with activation $\sigma_q$ as in 
        Proposition \ref{prop:RePUMultHermite} such that
 		\begin{equation}\label{eq:exactRealizationHermite}
 			\tilde H_\bsnu(\bsy) 
                        = H_\bsnu(\bsy) \qquad \forall \, \bsnu \in \Lambda_1^N, \, \bsy\in\R^\N,
 		\end{equation}
 		and that for a constant $C = C(q, \delta) > 0$,
 		\begin{align*}
 			\size(\Phi) &\leq C |\Lambda_1^N| m(\Lambda_1^N)^2 d(\Lambda_1^N)
 			\leq C N^{1+2\delta}\log(N), \\
 			\depth(\Phi) &\leq C m(\Lambda_1^N)\log(d(\Lambda_1^N)+1) \leq C N^{\delta}\log(N), \\
 			\mpar(\Phi) &\leq C. 
 		\end{align*}
 		Next define the neural network 
 		\begin{equation*}
 			\tilde g_N(\bsy) := \left(\sum_{\bsnu\in\Lambda_j^N} \dup{u_\bsnu^s}{\tilde\eta_j}_\calY \tilde H_\bsnu(\bsy)\right)_{1\leq j\leq N}.
 		\end{equation*}
 		Compared to \eqref{eq:defGN},
                we replaced the ReLU neural networks $\tilde H_{\eps(N,\alpha),\bsnu}$ 
                with the neural networks $\tilde H_\bsnu$ with activation $\sigma_q$.
 		Due to \eqref{eq:exactRealizationHermite} 
                and $\Lambda_j^N = \emptyset$ for all $j > N$,
                we have for all $\bsy\in\R^\N$
 		\begin{align*}
 			u^s(\bsy) - \calD_N \circ \tilde g_N(\bsy)
 			&= 
 			\sum_{j=1}^N \eta_j \sum_{\bsnu \notin \Lambda_j^N} \dup{u_\bsnu^s}{\tilde\eta_j}_\calY H_\bsnu(\bsy)
 			+
 			\sum_{j=1}^N \eta_j \sum_{\bsnu\in\Lambda_j^N} \dup{u_\bsnu^s}{\tilde\eta_j}_\calY (H_\bsnu(\bsy) - \tilde H_\bsnu(\bsy)) \\
 			&+
 			\sum_{j>N} \eta_j \sum_{\bsnu\in\calF}\dup{u_\bsnu^s}{\tilde\eta_j}_\calY H_\bsnu(\bsy) \\
 			&=
 			\sum_{j\in\N} \eta_j \sum_{ \bsnu\in\calF \backslash \Lambda_j^N} \dup{u_\bsnu^s}{\tilde\eta_j}_\calY H_\bsnu(\bsy).
 		\end{align*}
 		By Proposition \ref{prop:truncate} and Corollary \ref{cor:tailSumBoundL2} we have
 		\begin{equation*}
 			\norm[L_\bsmu^2(\R^\N;\calY)]{u^s - \calD_N \circ \tilde g_N}^2
 			\lesssim
 			\sum_{j\in\N}\sum_{\bsnu\notin\Lambda_j^N} |\dup{u_\bsnu^s}{\tilde\eta_j}_\calY|^2
 			\lesssim
 			N^{-2\min(s,t)}
 		\end{equation*}
 		and
 		\begin{equation*}
 			\norm[W_{\bsmu,r,s}^{1,2}(\R^\N;\calY)]{u^s - \calD_N \circ \tilde g_N}^2
 			\lesssim
 			\sum_{j\in\N}\sum_{\bsnu\notin\Lambda_j^N}\Gamma_\bsnu |\dup{u_\bsnu^s}{\tilde\eta_j}|^2
 			\lesssim
 			N^{-2\min(r,s,t)}.
 		\end{equation*}
 		We can again realize $\tilde g_N$ as a sparse concatenation (now of
    $\sigma_q$ networks; see Definition \ref{def:qsparseconcatenation}) such
    that $\mpar(\tilde g_N) \leq C$ with a constant $C = C(q) > 0$. Let $a :=
    \sup\{\dup{u_\bsnu^s}{\tilde\eta_j}_\calY \;:\; \bsnu\in\calF, \, j\in\N\} <
    \infty$. Let $\tilde A := a^{-1} A$ with $A$ as in step 3 and let $\phi_a
    \colon \R^N \to \R^N$ such that $\phi_a(\bsx) = a\bsx$ for all
    $x\in\R^N$ and $\mpar(\phi_a) \leq C$ with $C$ only depending on $q$
    (cf.~Lemma \ref{lemma:RePUMultNN}). 
    We now have $\tilde g_N = \phi_a \bullet \tilde A \bullet \Phi$ 
    as the sparse concatenation of $\sigma_q$ networks. 
    Analogously to the argument in step 3 
    (but now  based on Definition \ref{def:qsparseconcatenation}) 
    we obtain
 		\begin{align*}
 			\size(\tilde g_N) &\lesssim  (\size(\phi_a) + \size(\tilde A) + \size(\Phi)) \lesssim N^{1+2\delta}\log(N), \\
 			\depth(\tilde g_N) &\lesssim  1 + \depth(\phi_a) + \depth(\tilde A) + \depth(\Phi) \lesssim N^{\delta} \log(N), \\
 			\mpar(\tilde g_N) &\lesssim \max\{\mpar(\phi_a),\mpar(\tilde A), \mpar(\Phi)\} \lesssim 1.
 		\end{align*}
 	This completes the proof.
	\end{proof}

	We have now all the elements needed to prove the main theorems.
	\begin{proof}[Proof of Theorems \ref{thm:main} and \ref{thm:main2}]
		Recall that $(\phi_j,\lambda_j{(s)})_{j\in\N}$ are the (orthonormal) eigenfunctions
		and eigenvalues of the covariance operator. Since $\calE$ is the encoder
		corresponding to this orthonormal basis, it holds for $a\sim\gamma_s$ that
		\begin{equation*}
			\calE(a)\sim \otimes_{j\in\N}\sqrt{\lambda_j{(s)}}\calN(0,1).
		\end{equation*}
		Hence we can write
		\begin{equation}
			\norm[W_{\gamma_s,r}^{1,2}(\calX,\calY)]{\calG(a)-\calD\circ g_N\circ\calE(a)}
			=
			\norm[W_{\bsmu,r,s}^{1,2}(\R^{\N},\calY)]{\calG(\sum_{j\in\N}\sqrt{\lambda_j(s)}y_j\phi_j)-
				\calD\circ g_N((y_j)_{j\in\N})}
		\end{equation}
		and
		\begin{equation}
			\norm[L_{\gamma_s}^2(\calX,\calY)]{\calG(a)-\calD\circ g_N\circ\calE(a)}
			=
			\norm[L_{\bsmu}^2(\R^{\N},\calY)]{\calG(\sum_{j\in\N}\sqrt{\lambda_j(s)}y_j\phi_j)-
				\calD\circ g_N((y_j)_{j\in\N})}
		\end{equation}
		Using Theorem~\ref{thm:NN-reapprox} concludes the proof.
	\end{proof}
	
	\section{Coefficient-to-solution map for a linear elliptic PDE}
	\label{sec:diffusion}
	In this section we consider as a domain the $d$-dimensional torus $\bbT^d = \bbR^d/\bbZ^d$.
	Let $\zeta_1 \leq \zeta_2\leq \dots$ and $(\xi_j)_{j\in\N}$be the eigenvalues and
	$L^2(\bbT^d)$ orthonormal eigenfunctions of the operator $-\Delta + \Id$, such that
	\begin{equation*}
		(-\Delta +\Id) \xi_j = \zeta_j \xi_j.
	\end{equation*}
	By Weyl's law (and explicit computation), we have
	\begin{equation}
		\label{eq:zeta-torus}
		\zeta_j \sim j^{2/d}, \qquad \forall j\in\bbN.
	\end{equation}
	For $p\in\R$, we define the Sobolev spaces
	\begin{equation}
		\label{eq:Hdef-torus}
		H^p = \{ v\in \calD(\bbT^d) :  \sum_{j\in\N}\zeta^p_j \langle v, \xi_j \rangle^2<\infty\},
	\end{equation}
	where $\langle \cdot, \cdot \rangle$ is the duality pairing that extends the
	$L^2(\bbT^d)$ inner product. 
	On the torus  
	these are the standard periodic Sobolev spaces \cite[Volume 1, Chapter 1, Remark 7.6]{LionsMagenes}.

	\paragraph{\textbf{Exact solution operator}}
	We consider the map
	\begin{equation*}
		\calG :
		\begin{cases}
			L^\infty(\bbT^d) \to \{v\in H^1: \int_{(0,1)^d} v = 0\}\\
			a\mapsto u
		\end{cases},
	\end{equation*}
	where $u$ is the (weak)
	solution to
	\begin{equation}
		\label{eq:diffusion-torus}
		-\nabla\cdot(e^a\nabla u) = f \text{ in }\bbT^d,
	\end{equation}
	for fixed, smooth $f$ with zero average in the unit cell.

  \paragraph{\textbf{Derivative}}
  For $\phi\in L^\infty(\bbT^d)$ with ${\rm essinf}_{x\in\bbT^d}\phi(x)>0$,
  let $L(\phi)$ denote the bounded, linear
  operator $H^1(\bbT^d)/\R\to H^{-1}(\bbT^d)/\R$ 
  such that
  \begin{equation*}
    L(\phi)u = -\nabla\cdot(\phi \nabla u).
  \end{equation*}
  Then $L(\phi)$ is an isomorphism due to the Lax-Milgram lemma. In particular,
  for small enough $h\in L^\infty(\bbT^d)$, also $L(\phi+h)=L(\phi)+L(h):H^1(\bbT^d)/\R\to H^{-1}(\bbT^d)/\R$ is an isomorphism, and
  \begin{equation*}
    (L(\phi)+L(h))^{-1} = \sum_{n\in\N_0} (-1)^n (L(\phi)^{-1}L(h))^n L(\phi)^{-1}
  \end{equation*}
  where we used the Neumann series representation of
  $(L(\phi)+L(h))^{-1} = (L(\phi) (I+L(\phi)^{-1}L(h)))^{-1}$. 
  Inspecting the first order term in $h$
  shows in particular that the first Fr\'echet derivative of the operator 
  $L(\phi)^{-1}:H^{-1}/\R\to H^1/\R$ w.r.t.\ $\phi$ evaluated in direction $h\in L^\infty(\bbT^d)$ 
  equals 
  \begin{equation*}
    D_\phi (L(\phi)^{-1})(h) = - L(\phi)^{-1}L(h) L(\phi)^{-1}:H^{-1}/\R\to H^1/\R.
  \end{equation*}

  Since $\calG(a)=L(e^a)^{-1}f$, and $D(e^a)(h) = he^a$, applying the chain rule we thus find
  \begin{equation}
    \label{eq:DG-example}
    D\calG(a) :
    \begin{cases}
      L^\infty(\bbT^d)\to H^1(\bbT^d)/\R\\
      h\mapsto -L(e^a)^{-1}L(h e^a)L(e^a)^{-1}f.
    \end{cases}
  \end{equation}

  \paragraph{\textbf{Spaces and measure}}
	We suppose in this section
  that we are given $s_0>d/2$ and $s_1>d/2$ such that
	\begin{equation}
    \label{eq:X-example}
		\calX = H^{s_0}, \qquad \calX^r = H^{s_0+s_1},
	\end{equation}
  and, for some $\beta>d/2$,  a centered Gaussian measure
  $\gamma$ on $\calX$ with covariance operator
	\begin{equation*}
		\calC = (-\Delta + \Id)^{-\beta}.
	\end{equation*}
	By the same argument as above, $\calC$ has eigenvalues $(\lambda_j)_{j\in\N}$
	such that
	\begin{equation*}
		\lambda_j \sim j^{-2\beta/d} \quad \mbox{as} \;\; j\to \infty\;.
	\end{equation*}
	It follows that $\bslambda \in \ell^{1/(2s+1)}(\N)$ for any
	\begin{equation}
		\label{eq:s-torus}
		s < \frac{\beta}{d} -\frac{1}{2} \;.
	\end{equation}
    Recall that the scalar product in $\calX = H^{s_0}$ is given by
  \begin{equation*}
    ( v, w )_{H^{s_0}} = \sum_{j\in\N_0} \zeta_j^{s_0} (v,\xi_j)_{L^2}(w,\xi_j)_{L^2}
  \end{equation*}
  and that therefore $(\zeta_j^{-s_0/2}\xi_j)_{j\in\N_0}$ is an orthonormal basis
  for $\calX$.
  By definition, 
  \begin{align*}
    \calX^r &= \left\{ v\in \calX: \sum_{j\in \N^0} \lambda_j^{-(2r+1)/(2s+1)}\zeta_j^{s_0}(v, \xi_j)^2_{L^2}<\infty\right\} \\
    &=
    \left\{v\in\calX : \sum_{j\in\N^0} \lambda_j^{-(2r+1)/(2s+1)-s_0/\beta}(v,\xi_j)_{L^2}^2 < \infty\right\}
  \end{align*}
  since $\lambda_j = \zeta_j^{-\beta}$ for all $j\in\N$. Therefore,
  \begin{equation*}
  \calX^r = H^{\beta(2r+1)/(2s+1) + s_0}.
  \end{equation*}
  It follows that for any $s_1>d/2$, there exist $s>0$ satisfying
  \eqref{eq:s-torus} and $r>0$ satisfying 
  \begin{equation*}
    r < \frac{s_1}{d} - \frac{1}{2}
  \end{equation*}
  so that
  \begin{equation*}
    \calX^r= H^{s_0+s_1}.
  \end{equation*}
	We now fix $\calY = H^1$ and choose, as a frame for $\calY$, 
	the orthornormal (w.r.t.~the $\calY$ scalar product) basis
	\begin{equation*}
		\tilde\eta_j = \eta_j = \zeta_j^{-1/2}\xi_j, \qquad j\in\N.
	\end{equation*}
	We infer from \cite{MSZ25_1135} 
	that Assumption \ref{ass:G} can be satisfied with, 
	for any $\delta_1>0$,
	\begin{equation}
		\label{eq:Yt=H-torus}
		\calY^t = H^{s_0-d/2+1-\delta_1}.
	\end{equation}
	Remark in addition that, by the definitions above,
	\begin{equation*}
		(v, \eta_j)_{\calY} \coloneqq \sum_{k\in\N}\zeta_k(v, \xi_k)_{L^2}(\eta_j, \xi_k)_{L^2} = \zeta_j^{1/2}(v, \xi_j)_{L^2}.
	\end{equation*}
  By definition \eqref{eq:Yt},
	\begin{equation}
		\label{eq:Yt-torus-L2-1}
		\calY^t = \{w\in H^1: \sum_{j\in \N}v_j^{-2t} \zeta_j (w, \xi_j)^{2}_{L^2}\}.
	\end{equation}
  For any $\delta_2>0$, it follows from \eqref{eq:zeta-torus} that $\bszeta^{-d/2-\delta_2}\in \ell^{1}(\N)$; choosing
	$\bsv = \bszeta^{-d/2-\delta_2}$ in \eqref{eq:Yt-torus-L2-1} gives
	\begin{equation}
		\label{eq:Yt-torus-L2-2}
		\calY^t = \{v\in H^1: \sum_{j\in \N}\zeta_j^{td + 1+ 2t\delta_2} (v, \xi_j)^{2}_{L^2}\}.
	\end{equation}
  \paragraph{\textbf{Approximation}}
	Comparing \eqref{eq:Hdef-torus}, \eqref{eq:Yt=H-torus}, and \eqref{eq:Yt-torus-L2-2} gives then that
	Assumption \ref{ass:G} is satisfied for any
	\begin{equation}
		\label{eq:t-torus}
		t < \frac{s_0}{d}-\frac{1}{2}.
	\end{equation}
	Theorems \ref{thm:main} and \ref{thm:main2} then imply that, for all
  \begin{equation*}
   q<\frac{\min\{s_0, s_1, \beta\}}{d}-1/2,
  \end{equation*}
  there exists $C>0$ such that for all $N\in\N$,
	\begin{equation*}
		\norm[L^2(H^{s_0},H^1;\gamma)]{\calG-\calD\circ\Phi_N\circ\calE} +
		\norm[L^2(H^{s_0},\mathrm{HS}(H^{s_0+s_1}, H^1);\gamma)]{D_{H^{s_0+s_1}}\left(\calG-\calD\circ\Phi_N\circ\calE  \right)} \leq C N^{-q},
	\end{equation*}
	where
	\begin{equation*}
		\calE : v\mapsto \big(\zeta_j^{-s_0/2}(v, \xi_j)_{H^{s_0}}\big)_{j\in\N}
		=\big(\zeta_j^{s_0/2}(v, \xi_j)_{L^2}\big)_{j\in\N},
	\end{equation*}
	and
	\begin{equation*}
		\calD : (x_j)_{j\in\N}\mapsto \sum_{j\in\N} x_j \eta_j
		= \sum_{j\in\N} \zeta_j^{-1/2}x_j  \xi_j.
	\end{equation*}
	
	\begin{remark}
		The discussion above can also be used to derive optimal convergence rates if,
		e.g., the measure on the input is given, but the encoder can be chosen. Suppose,
		for example, that $\tilde\gamma$ is a centered Gaussian measure on $L^2$, with
		covariance
		\begin{equation*}
			\tilde\calC = (-\Delta+\Id)^{-\tilde\beta},
		\end{equation*}
		with $\tilde\beta>d$. This implies that the measure is concentrated on
		$H^{\tilde\beta-d/2-\delta}$ for any $\delta>0$, see \cite[Examples 2.3.4 and
		2.3.6]{Bogachev1998}. The optimal encoder from the point of view of the rate of
		convergence proven in Theorem \ref{thm:main} is then 
                given by the choice $s_0 = \tilde\beta/2$, i.e.,
		\begin{equation*}
			\tilde\calX = H^{\tilde\beta/2}.
		\end{equation*}
		Note that the condition $\tilde\beta>d$ implies $s_0 < \tilde\beta-d/2$ (i.e.,
		$\tilde\gamma$ is concentrated on $H^{s_0} =\tilde\calX$). 
		It follows that 
                for any $q<\tilde\beta/(2d)-1/2$ there exists $C>0$ such that
		\begin{equation*}
			\norm[L^2(L^2,H^1;\tilde\gamma)]{\calG-\widetilde\calD\circ\widetilde\Phi_N\circ\widetilde\calE} = 
			\norm[L^2(\tilde\calX,\calY;\tilde\gamma)]{\calG-\widetilde\calD\circ\widetilde\Phi_N\circ\widetilde\calE}
			\leq C N^{-q}, \qquad \forall N\in\N.
		\end{equation*}
    Here $\widetilde\calE$ and $\widetilde\calD$ are, respectively, the encoder
    and decoder on orthonormal bases of $\widetilde\calX$ and $\calY = H^1$.
    We also have, upon possibly adjusting the constant $C$,
      \begin{equation*}
        \norm[L^2(\tilde\calX ,\mathrm{HS}(H^{\tilde\beta}, \calY);\tilde\gamma)]{D_{H^{\tilde\beta}}\left(\calG-\widetilde\calD\circ\widetilde\Phi_N\circ\widetilde\calE  \right)} \leq C N^{-q}.
      \end{equation*}
      Note that $D_{H^{\tilde\beta}}\calG$ is the Malliavin derivative of $\calG$,
      since $H^{\tilde\beta}$ is the Cameron-Martin space of $\tilde\gamma$.
     \end{remark}
     \section{Conclusion} \label{sec:Concl}
        
        We studied the expression rates of several families of operator surrogates
             of holomorphic maps between separable Hilbert spaces $\calX$ and $\calY$.
             As in the recent work \cite{HSZ24} of some of the authors,
             in an encode-approximate-decode framework,
             we considered two broad classes of finite-parametric surrogate operators: 
             spectral and neural surrogate operators.
             The spectral surrogates were obtained by suitable truncation of 
             Wiener-Hermite polynomial chaos expansions, and the neural
             operators
             were obtained by feedforward neural approximator maps. 

             Encoders are based on PCAs for covariance operators of Gaussian measures on the 
             input spaces.
             More general Decoder maps were allowed, in particular frames comprising 
             wavelet-bases or multi-level splittings of Finite Element spaces generated
             by BPX type algorithms on nested, simplicial complexes in polytopes 
             as typically arise in Finite Element discretizations of PDEs.

             In our main results, Theorems~\ref{thm:main} and \ref{thm:main2}, 
	we provided sufficient conditions
	for operator approximation rates \eqref{eq:OpEmRate},
	in the norm $L^2_\gamma(\calX,\calY)$ with respect to a GM $\gamma$
	as in Definition \ref{def:gamma} on the input space $\calX$,
	for parametric surrogate operators with $N$ neurons.
        We also obtained expression rate bounds in Gaussian Sobolev spaces.
        The present approximation rate bounds complement recent works 
         \cite{luo2025dimensionreductionderivativeinformedoperator,yao2025derivativeinformedfourierneuraloperator}
        and \cite{adcock2024learninglipschitzoperatorsrespect} 
        of mathematical analyses of operator learning under Gaussian measures, in that
        a periodic function space setting is not required, and orthogonality of 
        decoders is likewise not necessary.
        The (strong) assumption of holomorphy of the operators to be emulated by the 
        surrogates is, in view of lower bound results in \cite{SLanthPCA,adcock2024learninglipschitzoperatorsrespect}, 
        almost necessary to establish approximation rates that are not subject to the CoD,
        at least when considering function classes defined in terms of classical notions of regularity.
        We conclude by observing that we confined the present work to input/output pairs  that are
            (subset of) separable Hilbert spaces. This admittedly restrictive assumption was 
            adopted for simplicity of exposition; 
            we do expect similar results for (subsets of) suitable pairs of separable Banach spaces. 
            Arguments and technicalities will be more involved then due to the more delicate geometry
            of Banach spaces which may force additional technical assumptions. 
	\bibliographystyle{abbrv}
	\bibliography{library}
	
	\newpage
	
	\appendix
	\section{ReLU/RePU Neural Network Theory}
	\label{app:NNTheory}
	We now review basic operations and approximation results for ReLU neural networks (ReLU NN in short), 
	following \cite[Section 2.2]{OSZ21} and \cite[Section II and Appendix A]{elbrachter2021deep}. 
	These
	theoretical tools are well known but often stated without explicitly mentioning bounds
        on the weights and biases, 
        although some exceptions can be found in \cite{de2021approximation,elbrachter2021deep,schmidt2020supplement}. 
        We also provide the RePU versions of some constructions.
        The activation functions are denoted by $\sigma_q(x) \coloneq \max\{0,x\}^q$ with $q \in \N$.
	This includes the ReLU activation ($q = 1$) and the RePU activation ($q \geq 2$). 
	\begin{definition}\label{def:Parallelization}
	Let $\phi_1$ and $\phi_2$ be two neural networks with the same depth $L$ and activation $\sigma_q$ with $q \in \N$. 
	Furthermore, define the input dimensions of $\phi_1$ and $\phi_2$ as $m_1$ and $m_2$ 
        and the output dimensions as $n_1$ and $n_2$. 
        Then there exists a $\sigma_q$-network $(\phi_1,\phi_2)$, 
        called \emph{parallelization of $\phi_1$ and $\phi_2$},
        which simultaneously realizes $\phi_1$ and $\phi_2$, i.e., 
	\[
	(\phi_1,\phi_2)\colon \R^{m_1} \times \R^{m_2} \to \R^{n_1} \times \R^{n_2}; \quad (x, \widetilde{x}) \mapsto (\phi_1(x), \phi_2(\widetilde{x})).
	\]
	It holds 
	\[
	\size(\phi_1,\phi_2) = \size(\phi_1) + \size(\phi_2), \quad
	\depth(\phi_1,\phi_2) = L, \quad
	\mpar(\phi_1,\phi_2) = \max\{ \mpar(\phi_1), \, \mpar(\phi_2) \}.
	\]
	\end{definition}
	Now Definition \ref{def:Parallelization} 
	can be extended to parallelize $J \in \N,\, J\geq 3$ $\sigma_q$-networks $\phi_i, \, i = 1, \dots, J$ 
	assuming equal depth, given by $L$, for each $\phi_i$. 
	The resulting $\sigma_q$-network is denoted by $(\{\phi_i\}_{i=1}^J)$ and it holds that
	\[
	\size(\{\phi_i\}_{i=1}^J) = \sum_{i=1}^J \size(\phi_i), \quad 
	\depth(\{\phi_i\}_{i=1}^J) = L, \quad
	\mpar(\{\phi_i\}_{i=1}^J) = \max_{i=1,\dots,J} \mpar(\phi_i).
	\]
	
	Recall the concatenation of two neural networks \cite[Definition 2.2]{PtVgtl2018}.
	
	\begin{lemma}
		Let $\phi_1$ and $\phi_2$ be two $\sigma_q$-networks with $q \in \N$. 
		Furthermore, let the output dimension $n_2$ of $\phi_2$ be equal to the input dimension $m_1$ of $\phi_1$. Then there exists a $\sigma_q$-network $\phi_1 \circ \phi_2$ realizing the composition $\phi_1 \circ \phi_2 \colon x \mapsto \phi_1(\phi_2(x))$ of the functions $\phi_1$ and $\phi_2$. 
		
		It holds 
		\[
		\depth(\phi_1 \circ \phi_2) = \depth(\phi_1) + \depth(\phi_2), \quad 
		\width(\phi_1 \circ \phi_2) = \max\{ \width(\phi_1), \width(\phi_2) \}. 
		\]
	\end{lemma}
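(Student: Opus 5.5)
The plan is to realize the composition by the standard "merged" concatenation of \cite[Definition 2.2]{PtVgtl2018}. No extra activation layer is inserted; instead the output affine map of $\phi_2$ is fused with the input affine map of $\phi_1$. Depth and width are then read off directly from Definition~\ref{def:FFNN}, and the width identity is enforced by zero-padding if necessary.

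\textbf{Construction.} Write $\phi_2$ with depth $L_2$, widths $(p^2_l)_{l=0}^{L_2+1}$ and weights $(W^{2,l},b^{2,l})$. Write $\phi_1$ with depth $L_1$, widths $(p^1_l)_{l=0}^{L_1+1}$ and weights $(W^{1,l},b^{1,l})$, so that $p^2_{L_2+1}=n_2=m_1=p^1_0$. The network $\phi_1\circ\phi_2$ has the following layers:
\begin{itemize}
\item the hidden layers $1,\dots,L_2$ of $\phi_2$, unchanged;
\item in layer $L_2+1$, the affine map $z\mapsto W^{1,1}(W^{2,L_2+1}z+b^{2,L_2+1})+b^{1,1}$, followed by $\sigma_q$;
\item the remaining layers $2,\dots,L_1+1$ of $\phi_1$, unchanged.
\end{itemize}
Since the last layer of $\phi_2$ is affine without activation, the merged network realizes $\phi_1(\phi_2(x))$ by Definition~\ref{def:FFNN}.

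\textbf{Depth.} The merged network has exactly $L_2+L_1$ activation layers, so $\depth(\phi_1\circ\phi_2)=\depth(\phi_1)+\depth(\phi_2)$.

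\textbf{Width.} The width sequence of the merged network is
\[
(p^2_0,\dots,p^2_{L_2},\,p^1_1,\dots,p^1_{L_1+1}).
\]
Compared with the union of both width sequences, this omits only the common interface value $n_2=m_1$. Hence the merged width equals $\max\{\width(\phi_1),\width(\phi_2)\}$ unless $n_2$ strictly exceeds every other entry, in which case the merged width would be strictly smaller.

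\textbf{Main obstacle and remedy.} The obstacle is exactly this degenerate case, where the naive merge violates the claimed \emph{equality}. I would handle it by zero-padding: append dummy neurons to hidden layer $L_2$ (or any hidden layer) until its width reaches $n_2$. The dummy neurons receive zero incoming weights and biases, so their output is $\sigma_q(0)=0$, and they have zero outgoing weights. Consequently:
\begin{itemize}
\item the realized function is unchanged;
\item the depth is unchanged;
\item $\size$ is unchanged, since zero parameters are not counted;
\item $\mpar$ is not increased.
\end{itemize}
After padding, the width sequence attains the maximum $n_2$. This gives $\width(\phi_1\circ\phi_2)=\max\{\width(\phi_1),\width(\phi_2)\}$ in all cases, which is the stated identity.
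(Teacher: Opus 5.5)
Your proposal is correct and is essentially the paper's argument: the paper gives no separate proof and simply invokes the concatenation of \cite[Definition 2.2]{PtVgtl2018}, which fuses the output affine map of $\phi_2$ with the input affine map of $\phi_1$, and your count of $L_1+L_2$ activation layers is the correct translation to the paper's depth convention. Your zero-padding step is a useful extra detail that the citation leaves implicit, since the bare merge only gives $\width(\phi_1\circ\phi_2)\le\max\{\width(\phi_1),\width(\phi_2)\}$ when the interface dimension $n_2=m_1$ is strictly larger than every other layer width.
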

	
	In order to define \textit{sparse concatenations}, 
        we need to realize the identity mapping. 
        The proof of the following lemma can be found in \cite[Remark 2.4]{PtVgtl2018}.
		\begin{lemma}
      \label{lemma:Id-ReLU}
		Let $d \in \N$ and $L \in \N$. 
                Then there exists a ReLU NN $\widetilde{\mathrm{Id}}_{\R^d}$ of depth $L$ 
                that exactly realizes the identity mapping $\mathrm{Id}_{\R^d}$ on $\R^d$. 
		Furthermore, it holds that
			\[
			\size(\widetilde{\mathrm{Id}}_{\R^d}) \leq 2d (L+1), \quad 
			\mpar(\widetilde{\mathrm{Id}}_{\R^d}) \leq 1.
			\]
		\end{lemma}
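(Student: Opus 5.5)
The identity on $\R^d$ is realized through the decomposition $x = \sigma_1(x) - \sigma_1(-x)$, which holds for every $x\in\R$. Propagating the pair $(\sigma_1(x),\sigma_1(-x))$ through the hidden layers requires only identity weights, and this uses only weights in $\{-1,0,1\}$. This already gives $\mpar\le 1$.

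\textbf{Construction.} Work coordinatewise and parallelize over the $d$ coordinates, as in Definition~\ref{def:Parallelization}.

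For $d=1$ the network has the following layers.
\begin{itemize}
\item[] \emph{First layer.} It has weight matrix $W^1 = (1,-1)^\top \in \R^{2\times 1}$ and zero bias, so that $z^1 = (\sigma_1(x), \sigma_1(-x))$.
\item[] \emph{Intermediate layers.} For $l=1,\dots,L-1$ we take $W^{l+1} = I_2$ and zero biases. Since $z^l\ge 0$ componentwise, we get $\sigma_1(z^l)=z^l$, hence $z^{L} = (\sigma_1(x),\sigma_1(-x))$.
\item[] \emph{Output layer.} It has $W^{L+1} = (1,-1)$ and zero bias, producing $\sigma_1(x)-\sigma_1(-x) = x$.
\end{itemize}

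The depth is $L$ in the convention of Definition~\ref{def:FFNN}, where there are $L$ activation layers plus an affine output.

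\textbf{Counting.} The nonzero parameters are as follows: $2$ in the first layer, $2$ in each of the $L-1$ identity layers, and $2$ in the output layer. This gives $2(L+1)$ in total, and all biases vanish.

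For general $d$, parallelizing $d$ copies of equal depth $L$ gives $\size = 2d(L+1)$ and $\mpar = 1$, by the size and $\mpar$ formulas for parallelization. The resulting block-diagonal weight matrices realize $x\mapsto x$ on $\R^d$.

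\textbf{Remaining check.} The only point needing care is the depth convention: confirm that $L$ counts the activation layers in Definition~\ref{def:FFNN}, so that the construction indeed has depth exactly $L$ and the parameter count $2d(L+1)$ matches. The case $L=1$ is covered directly, since there are then no intermediate layers. There is no substantive obstacle.
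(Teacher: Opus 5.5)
Your proof is correct and uses essentially the same construction as the paper, which simply defers to \cite[Remark 2.4]{PtVgtl2018}: the identity is realized via $x=\sigma_1(x)-\sigma_1(-x)$, with a $\pm I_d$ first layer, $I_{2d}$ hidden layers and a $(I_d\,|\,-I_d)$ output layer. Under the paper's depth convention ($L$ activation layers, $L+1$ affine maps) this gives exactly your count $2d(L+1)$ and weights in $\{-1,0,1\}$.
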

		The following lemma follows from a construction in 
                \cite[Theorem 2.5 (2)]{betterapproximationsLiTangYu}.
		\begin{lemma}
			Let $d \in \N, \, q \in \N, \, q \geq 2,$ and $L \in \N$. 
                        Then there exists a $\sigma_q$-network $\widetilde{{\rm Id}}_{\R^d}$ of depth $L$
                        that exactly realizes the identity mapping ${\rm Id}_{\R^d}$. 
			Furthermore, 
                        there exists a constant $C = C(q) > 0$ such that 
			\[
			\size(\widetilde{{\rm Id}}_{\R^d}) \leq C d L,
			\quad
			\mpar(\widetilde{{\rm Id}}_{\R^d}) \leq C.
			\]
		\end{lemma}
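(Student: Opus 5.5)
The statement to prove is the last lemma: for $q\ge 2$, the identity on $\R^d$ can be realized exactly by a $\sigma_q$-network of arbitrary prescribed depth $L$, with size at most $CdL$ and weights bounded by $C=C(q)$. Since parallelization (Definition \ref{def:Parallelization}) of $d$ scalar networks of equal depth adds sizes and takes the maximum of weights, I reduce to $d=1$. The scalar case follows from one exact shallow building block for $x\mapsto x$, composed $L$ times.

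\textbf{Step 1: a one-hidden-layer block.} I would first realize $x\mapsto x$ exactly with a single hidden layer of $\sigma_q$ neurons and coefficients depending only on $q$. The tool is the finite-difference identity
\[
\sum_{k=0}^{q}(-1)^{q-k}\binom{q}{k}(x+k)^q = q!,
\]
together with its analogue giving a polynomial of degree one. Using $(x+k)^q=\sigma_q(x+k)+(-1)^q\sigma_q(-x-k)$, every polynomial of degree at most $q$ is an exact linear combination of at most $2(q+1)$ neurons $\sigma_q(\pm x+b_k)$ with integer shifts $b_k\in\{0,\dots,q\}$. I would then solve the Vandermonde-type system for the coefficients $c_k$ in $\sum_k c_k(x+k)^q = x$. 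This system is solvable because $\{(x+k)^q\}_{k=0}^{q}$ spans all polynomials of degree at most $q$, and its solution depends only on $q$. The outer bias can absorb any constant term. This gives a depth-one network $\iota_q$ with $\size(\iota_q)\le C(q)$ and $\mpar(\iota_q)\le C(q)$. This is essentially the construction cited from \cite[Theorem 2.5 (2)]{betterapproximationsLiTangYu}.

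\textbf{Step 2: depth $L$ by composition.} Next I would compose $L$ copies of $\iota_q$ to obtain depth $L$. Composing consecutive blocks merges the affine output map of one block with the affine input map of the next. The merged weights are products of the form $c_k\cdot(\pm 1)$, and the merged biases are $\pm(\text{constant}) + b_k$. All of these stay bounded by a constant depending only on $q$, and they do not grow with $L$ because each merge involves only two adjacent layers. The result has size at most $C(q)L$, since each layer has $O(q)$ neurons with $O(q)$ incoming weights, and its weights are bounded by $C(q)$.

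\textbf{Step 3: parallelize.} Finally I would parallelize $d$ copies of the scalar network via Definition \ref{def:Parallelization}, which gives size at most $C(q)dL$ and $\mpar\le C(q)$.

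The main obstacle is keeping the weights bounded after merging affine maps in Step 2. The coefficients $c_k$ from the Vandermonde inversion can be large, roughly $\sim 2^q/q!$ times binomials, but they are fixed once $q$ is fixed, so they still fit in $C(q)$. Accounting for these is the only delicate bookkeeping.
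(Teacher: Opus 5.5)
Your proposal is correct and takes essentially the same approach as the paper, which only cites the construction of \cite[Theorem 2.5 (2)]{betterapproximationsLiTangYu}: represent $x\mapsto x$ exactly by one hidden layer of $\sigma_q$ units (via $y^q=\sigma_q(y)+(-1)^q\sigma_q(-y)$ and a $q$-dependent Vandermonde system), merge the affine maps of $L$ stacked copies, and parallelize over the $d$ coordinates. Your bookkeeping of size and weight bounds supplies the details that the paper leaves to that citation.
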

		Now, \textit{sparse concatenation} can be defined (see \cite[Definition 2.5]{PtVgtl2018}).
		\begin{definition}\label{def:sparseConc}
			Let $\phi_1$ and $\phi_2$ be two ReLU NNs. 
			Furthermore, let the output dimension $n_2$ of $\phi_2$ equal the input dimension $m_1$ of $\phi_1$. 
			Then there exists a ReLU NN $\phi_1 \bullet \phi_2$
			realizing the composition $\phi_1 \circ \phi_2 : x \mapsto \phi_1(\phi_2(x))$ of the functions $\phi_1$ and $\phi_2$. 
			It holds
			\begin{align*}
			\size(\phi_1 \bullet \phi_2) &\leq 2 (\size(\phi_1) + \size(\phi_2)), \\
			\depth(\phi_1 \bullet \phi_2) &= \depth(\phi_1) + \depth(\phi_2) + 1,\\
			\mpar(\phi_1 \bullet \phi_2) &\leq \max\{ \mpar(\phi_1), \mpar(\phi_2) \}.
			\end{align*}
		\end{definition}
		Next, we consider the sparse concatenation of $\sigma_q$-networks 
		in which we replace the ReLU realization of the identity mapping by its RePU realization.
		
		\begin{definition}\label{def:qsparseconcatenation}
			Let $q \in \N, \, q \geq 2$ and $\phi_1, \, \phi_2$ be two $\sigma_q$-networks. 
			Furthermore, let the output dimension $n_2$ of $\phi_2$ equal 
			the input dimension $m_1$ of $\phi_1$. 
			
			Then, there exists a $\sigma_q$-network $\phi_1 \bullet \phi_2$
		realizing the composition $\phi_1 \circ \phi_2 \colon x \mapsto \phi_1(\phi_2(x))$ 
                of the functions $\phi_1$ and $\phi_2$. 
                There exists a constant $C = C(q) > 0$ such that
			\begin{align*}
				\size(\phi_1 \bullet \phi_2) &\leq C  (\size(\phi_1) + \size(\phi_2)), \\
				\depth(\phi_1 \bullet \phi_2) &\leq \depth(\phi_1) + \depth(\phi_2) + 1, \\
				\mpar(\phi_1 \bullet \phi_2) &\leq C  \max\{\mpar(\phi_1), \mpar(\phi_2)\}.
			\end{align*}
		\end{definition}

	The next two results enable us to normalize the weights of any ReLU NN 
and can be found in \cite[Appendix A]{elbrachter2021deep}. 
In \cite{elbrachter2021deep} the size bounds are not explicitly mentioned but are added here for convenience.
\begin{lemma}[{\cite[Lemma A.1]{elbrachter2021deep}}]\label{lemma:scalarMultNetwork}
	Let $d \in \N$ and $a \in \R$. 
	There exists a ReLU NN $\phi_a \colon \R^d \to \R^d$ satisfying 
	\[ 
	\phi_a(x) = ax 
	\]
	for all $x \in \R^d$. 
	Furthermore, there exists a constant $C > 0$ independent of $d$ and $a$ such that
	\[
	\size(\phi_a) \leq Cd \, (\log(\max\{1, |a|\}) + 1), \quad
	\depth(\phi_a) \leq C \, (\log(\max\{1, |a|\}) + 1), \quad
	\mpar(\phi_a) \leq 1.
	\]
\end{lemma}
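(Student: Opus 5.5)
We have to build a ReLU network with all weights bounded by $1$ that computes $x\mapsto ax$ exactly. The plan is to use repeated doubling, realized through the identity $x=\sigma_1(x)-\sigma_1(-x)$. Two reductions come first. If $|a|\le 1$, the one-layer network $x\mapsto \sigma_1(ax)-\sigma_1(-ax)$, applied componentwise, does the job: it uses weights $\pm a,\pm1$, and its size is at most $4d$. If $a<0$, we write $ax=(-|a|)x$ and absorb the sign into the first-layer weights, which does not change their magnitudes. So it is enough to treat $a>1$.

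For $a>1$, set $k:=\lceil\log(a)\rceil$ (base $2$, as in the paper), so that $a=2^k\cdot(a2^{-k})$ with $a2^{-k}\in(1/2,1]$. We build a network with hidden layers of width $2d$ that carries the pair $(\sigma_1(x_i),\sigma_1(-x_i))$ for each coordinate $i$.

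The first layer maps $x_i\mapsto(\sigma_1(c x_i),\sigma_1(-c x_i))$ with $c:=a2^{-k}\le 1$.

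Each subsequent hidden layer doubles the carried pair, sending $(p,n)\mapsto(\sigma_1(p+p),\sigma_1(n+n))$. The doubling is done by duplicating the connection: we use two neurons, each with weight $1$, fed from the same source, and sum them. More precisely, we realize $\sigma_1(2p)$ as the sum of two neurons $\sigma_1(p)$, both with weight $1$, so that the width temporarily becomes $4d$; the next layer then sums these with weights $1$. Each hidden layer thus doubles the values while keeping every weight in $\{0,1\}$. Since $p,n\ge 0$, applying ReLU to them changes nothing, so exactness is preserved.

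After $k$ doublings, the output layer forms $p-n$ with weights $\pm1$, which equals $2^k c x_i=ax_i$.

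All weights are bounded by $1$, the biases are zero, and so $\mpar(\phi_a)\le 1$. The depth is $k+O(1)\le C(\log\max\{1,|a|\}+1)$. Each layer carries $O(d)$ nonzero weights, which gives $\size(\phi_a)\le Cd(\log\max\{1,|a|\}+1)$ with $C$ independent of $d$ and $a$.

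The only step that needs care is keeping every weight at most $1$ while still growing the values exponentially. The width-duplication trick handles this: multiplication by $2$ becomes a fan-in of two unit weights. We also have to check that the size stays linear in $d$ per layer. It does, because each of the $d$ coordinates contributes a constant number of neurons per layer, and no cross-coordinate connections are ever used.
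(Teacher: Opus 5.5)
Your construction is correct. It is essentially the standard repeated-doubling argument behind \cite[Lemma A.1]{elbrachter2021deep}, which the paper quotes without proof, adding only the size bound; that bound follows directly from your count of $O(d)$ nonzero unit weights per layer.

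One small slip: as you describe it, each doubling uses a duplication layer followed by a summation layer, so the depth is $2k+1$ rather than $k+O(1)$. You can recover $k+O(1)$ by carrying the duplicated copies throughout and letting each layer compute $\sigma_1(p_1+p_2)$ twice. Either way you still get $\depth(\phi_a)\le C(\log\max\{1,|a|\}+1)$.
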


\begin{lemma}[{\cite[Proposition A.3]{elbrachter2021deep}}]\label{lemma:normalizeWeights}
	Let $d, d' \in \N$ and $\phi \colon \R^d \to \R^{d'}$ a ReLU NN with $\mpar(\phi) > 1$. 
	There exists a ReLU NN $\psi \colon \R^d \to \R^{d'}$ satisfying 
	\[
	\psi(x) = \phi(x)
	\]
	for all $x \in \R^d$. 
	In addition, 
	there exists a constant $C > 0$ independent of $d, \, d'$, and $\phi$ such that
	\begin{equation*}
		\begin{aligned}
			\size(\psi) &\leq C \, ( \size(\phi) + d' ( (\depth(\phi)+1)\log(\mpar(\phi)) +1 ) ), \\
			\depth(\psi) &\leq C \, (\log(\mpar(\phi))+1)\,(\depth(\phi)+1), \\
			\mpar(\psi) &\leq 1.
		\end{aligned}
	\end{equation*}
\end{lemma}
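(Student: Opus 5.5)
The plan is to reduce the statement to the two building blocks just recalled: the scalar multiplication network of Lemma~\ref{lemma:scalarMultNetwork} and the sparse concatenation of Definition~\ref{def:sparseConc}. The idea is to push all large weights of $\phi$ into one final rescaling by a power of $\mpar(\phi)$, implemented with weights bounded by $1$. Write $M:=\mpar(\phi)>1$ and let $L:=\depth(\phi)$. Denote the layer maps by $A_l x + b^l$, $l=1,\dots,L+1$.

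\textbf{Step 1 (rescaling).} Use the positive homogeneity $\sigma_1(\lambda x)=\lambda\sigma_1(x)$ for $\lambda>0$. Define a network $\hat\phi$ with weights $\hat A_l:=A_l/M$ and biases $\hat b^l:=b^l/M^{l}$. By induction on $l$, the $l$-th hidden layer of $\hat\phi$ equals $M^{-l}$ times the corresponding layer of $\phi$, so the output of $\hat\phi$ is $M^{-(L+1)}\phi(x)$. Every weight and bias of $\hat\phi$ is bounded by $1$ in modulus, because $M^{-l}\le M^{-1}$. The size is unchanged, since zero entries remain zero, and the depth is unchanged.

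\textbf{Step 2 (undo the scaling).} Take $a:=M^{L+1}$ and the network $\phi_a:\R^{d'}\to\R^{d'}$ from Lemma~\ref{lemma:scalarMultNetwork}. It satisfies $\mpar(\phi_a)\le1$, $\size(\phi_a)\le C d'((L+1)\log M+1)$ and $\depth(\phi_a)\le C((L+1)\log M+1)$. Set $\psi:=\phi_a\bullet\hat\phi$. Definition~\ref{def:sparseConc} gives $\psi=\phi$ pointwise and $\mpar(\psi)\le1$. It also gives $\size(\psi)\le 2(\size(\phi)+Cd'((L+1)\log M+1))$, which matches the claimed size bound. For the depth,
$$\depth(\psi)\le L+C((L+1)\log M+1)+1\lesssim(\log M+1)(L+1).$$

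\textbf{Main obstacle.} The delicate point is Step 2. The depth of $\phi_a$ must grow only logarithmically in $a=M^{L+1}$ while keeping weights bounded by $1$; this is exactly what Lemma~\ref{lemma:scalarMultNetwork} provides, via repeated doubling with identity channels. One also has to check that the identity layers used inside the sparse concatenation have weights bounded by $1$, which Lemma~\ref{lemma:Id-ReLU} guarantees.

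A minor subtlety is the bookkeeping of biases in Step 1. Scaling the bias in layer $l$ by $M^{-l}$ rather than $M^{-1}$ keeps the homogeneity argument exact, and it only makes the biases smaller, so the bound $\mpar(\hat\phi)\le1$ is unaffected.
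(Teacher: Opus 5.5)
Your proof is correct and is essentially the argument behind the cited \cite[Proposition A.3]{elbrachter2021deep}; the paper gives no proof of its own, only this citation. You divide the weights by $M=\mpar(\phi)$ and the layer-$l$ biases by $M^{l}$, use positive homogeneity of $\sigma_1$ to obtain $M^{-(\depth(\phi)+1)}\phi$, and undo this by sparse concatenation with a single weight-bounded scalar multiplication network on the $d'$-dimensional output; this is exactly what produces the size term $d'((\depth(\phi)+1)\log(\mpar(\phi))+1)$ that the authors added to the cited statement (your extra check via Lemma~\ref{lemma:Id-ReLU} is unnecessary, since Definition~\ref{def:sparseConc} already guarantees $\mpar(\phi_1\bullet\phi_2)\le\max\{\mpar(\phi_1),\mpar(\phi_2)\}$).
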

Similar results hold for the normalization of network weights in RePU-networks. 
\begin{lemma}[{\cite[Definition C.8]{reinhardt2024statisticallearningtheoryneural}}]\label{lemma:RePUMultNN}
	Let $d \in \N, \, q \in \N, \, q \geq 2,$ and $a \in \R$. 
	There exists a $\sigma_q$-network $\phi_a \colon \R^d \to \R^d$ satisfying 
	\[
	\phi_a(\bsx) = a \bsx
	\]
	for all $\bsx \in \R^d$. Furthermore, there exists a constant $C = C(q) > 0$, not depending on $d$ and $a$, such that 
	\begin{align*}
		\size(\phi_a) &\leq C d \log(\max\{1, |a|\}), \\
		\depth(\phi_a) &\leq C \log(\max\{1, |a|\}), \\
		\mpar(\phi_a) &\leq C.
	\end{align*}
\end{lemma}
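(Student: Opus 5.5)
The statement is Lemma~\ref{lemma:RePUMultNN}: a $\sigma_q$-network $\phi_a$ realizing $\bsx\mapsto a\bsx$ with size $\lesssim d\log(\max\{1,|a|\})$, depth $\lesssim \log(\max\{1,|a|\})$ and weights bounded by $C(q)$. The plan is the RePU analogue of the ReLU construction in Lemma~\ref{lemma:scalarMultNetwork}. We write $|a|$ in binary-like form and multiply repeatedly by a fixed factor using a network whose weights are bounded by a constant depending only on $q$.

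\textbf{Scalar case, base identity.} The base ingredient is the exact identity network of the Lemma following Lemma~\ref{lemma:Id-ReLU}. Following \cite[Theorem 2.5]{betterapproximationsLiTangYu}, there is a one-hidden-layer $\sigma_q$-network realizing $x\mapsto x$ on $\R$. It uses $O(q)$ neurons with weights bounded by $C(q)$. One way to see this is via the polynomial identity expressing $x$ as a fixed linear combination of $\sigma_q(x+k)$ and $\sigma_q(-x+k)$ for $k=0,\dots,q$. This works because $x^q$ and lower powers can be recovered as finite differences of $(x+k)^q$, and both $x_+^q$ and $(-x)_+^q$ are used to cover the sign.

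\textbf{Scalar case, doubling.} Next I form a layer realizing $x\mapsto 2x$: take the identity block and sum two copies of its output. This keeps the weights at $\le C(q)$ (one could also absorb the factor $2$ into the output weights). Set $k=\lceil\log_2\max\{1,|a|\}\rceil$ and $b=a/2^k\in[-1,1]$. Compose $k$ doubling blocks with a final linear layer multiplying by $b$, so the scalar network computes $ax$. If $|a|\le1$, a single identity block followed by multiplication by $a$ suffices.

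\textbf{Assembling and counting.} For $\R^d$, I parallelize $d$ copies via Definition~\ref{def:Parallelization}. All copies have the same depth, so this is legitimate, and size and weight bound are preserved coordinatewise. Composition of blocks costs no weight growth if each block's affine output layer is merged with the next block's input layer. The product of two matrices with entries $\le C(q)$ and inner dimension $O(q)$ has entries $\le C(q)^2 O(q)$, which is again a $q$-dependent constant. Alternatively, use the sparse concatenation of Definition~\ref{def:qsparseconcatenation}, whose weight bound is $C\max$. Under either, the weight bound stays independent of $k$. Depth is then $O(k)$, size is $O(q\,d\,k)$, and weights are bounded by $C(q)$, which gives the claimed bounds. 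For $|a|\le1$ the size is $O(d)$; the stated $\log(\max\{1,|a|\})$ bound then has to be read with a $+1$, or for $|a|>1$.

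\textbf{Main obstacle.} The delicate point is guaranteeing that the merged weights do not grow with $k$, i.e.\ that each composition step uses the concatenation with a multiplicative constant that is applied once per composition rather than compounding. I would handle this by merging adjacent affine maps explicitly inside a fixed block structure. Then every layer has the identical form ``block output $\to$ next block input'', whose combined weight matrix is a fixed $q$-dependent matrix. With that structure, the uniform bound $C(q)$ holds for all $k$.
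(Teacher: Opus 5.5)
The paper itself gives no proof of this lemma; it only cites \cite[Definition C.8]{reinhardt2024statisticallearningtheoryneural}. Your self-contained construction is correct and is the standard one. It has four parts: an exact one-hidden-layer $\sigma_q$ identity block $x\mapsto\sum_i c_i\sigma_q(s_ix+t_i)$; then $k=\lceil\log_2\max\{1,|a|\}\rceil$ doubling blocks, in which the scalar output layer of each block is merged into the input layer of the next; a final output layer scaled by $a2^{-k}\in[-1,1]$; and $d$-fold parallelization. Because of the merging, every hidden-to-hidden matrix is the same fixed outer product $(2s_ic_j)_{i,j}$ with the fixed biases $t_i$, independent of $k$. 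The merged layers are dense of size $2(q+1)\times 2(q+1)$, so the size is $O(q^2dk)$ rather than $O(qdk)$; this is harmless since $C=C(q)$.

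Three points should be corrected.

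First, the identity formula as you wrote it is false for $q\ge2$. In the span of $\{\sigma_q(x+k),\sigma_q(-x+k)\}_{k=0}^q$, each knot $x=\pm k$ with $k\ge1$ is carried by a single function whose $q$-th derivative jumps there. Those coefficients must therefore vanish for the combination to be a polynomial, and only multiples of $x^q$ survive. You need $\sigma_q(-x-k)$, i.e.\ $(x+k)^q=\sigma_q(x+k)+(-1)^q\sigma_q(-x-k)$, followed by the Vandermonde argument you allude to. Alternatively, simply invoke the paper's RePU identity lemma from \cite{betterapproximationsLiTangYu}.

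Second, the ``alternative'' via Definition~\ref{def:qsparseconcatenation} is not valid. Iterating $\mpar(\phi_1\bullet\phi_2)\le C\max\{\mpar(\phi_1),\mpar(\phi_2)\}$ over $k$ compositions only gives $C^k\approx|a|^{\log C}$. It is precisely the explicit merging in your last paragraph that yields a $k$-independent weight bound, so drop the alternative.

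Third, your network has size $\ge d$ and depth $\ge 1$ for every $a\neq 0$, while $Cd\log(\max\{1,|a|\})\to 0$ as $|a|\downarrow 1$. Hence the bounds must be read with $\log(\max\{1,|a|\})+1$, as in Lemma~\ref{lemma:scalarMultNetwork}, for all $|a|<2$, not only for $|a|\le 1$. For $|a|\ge 2$ your count $k\le 2\log|a|$ does give the stated form.
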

	
Next, we build towards $W^{1, \infty}$ approximation error bounds.
In \cite{Yarotsky2017}, it is shown that deep ReLU NNs allow for an efficient approximation of the square function $x \mapsto x^2$ in the sense that the approximation error decays exponentially w.r.t.~the depth of the network. This leads to an efficient approximation of the product operation with two inputs by considering the identity
\[
a b = 2 D^2 \left(\frac{|a+b|}{2D}\right)^2 - \left(\frac{|a|}{2D}\right)^2 - \left(\frac{|b|}{2D}\right)^2
\]
for some $D \geq 1$ and $|a|, \, |b| \leq D$ and by replacing the square terms by the NN approximation. 
Based on this, efficient approximations of polynomials are possible. 
\par

In \cite{Yarotsky2017} the approximation of $x \mapsto x^2$ in $[0,1]$ is achieved by considering the continuous, piecewise linear spline interpolation $f_n$ of $x^2$ at the uniformly spaced nodes $j 2^{-n}$ for $j = 0, \dots , 2^n$. The functions $f_n$ are exactly expressed by a ReLU NN via 
\begin{align*}
	f_n(x) \coloneqq x - \sum_{k = 1}^n \frac{g_k(x)}{2^{2k}} 
	= f_{n-1}(x) - \frac{g_n(x)}{2^{2n}} 
	= \sigma_1 \left( f_{n-1}(x) - \frac{g_n(x)}{2^{2n}} \right)
\end{align*}
for all $n \geq 1$ and $f_0(x) \coloneqq x = \sigma_1(x)$ for $x \in [0,1]$. 
The functions $g_n \coloneqq g \, \circ \, \dots \, \circ \, g$ (also known as "sawtooth functions") 
are defined through the $n$-fold composition of $g$ which is given by 
\begin{equation*}
	g(x) = \sigma_1(2x) - \sigma_1(4x-2) 
	= 
	\begin{cases}
		2x \quad &\text{if } x \in [0,\frac{1}{2}] \\
		2 - 2x \quad &\text{if } x \in [\frac{1}{2},1].
	\end{cases}
\end{equation*}
An efficient approximation to the product operation with two inputs can be obtained through
\begin{equation}\label{def:2MultNN}
	\widetilde{\times}_{D,\varepsilon}(a,b) \coloneqq 2 D^2 
	\left( f_n\left(\frac{|a+b|}{2D}\right) - f_n\left(\frac{|a|}{2D}\right) - f_n\left(\frac{|b|}{2D}\right) \right).
\end{equation}
Note that the absolute value can be expressed through the ReLU activation function $\sigma_1(x) = \max\{0,x\}$ as $|x| = \sigma_1(x) + \sigma_1(-x)$. 
Here, $\varepsilon > 0$ denotes the desired approximation accuracy. This allows for an efficient approximation of the multiplication of two numbers in $I \coloneqq [-D,D]$ with accuracy $\varepsilon > 0$ by a ReLU NN $\widetilde{\times}_{D,\varepsilon} \colon I \times I \to \R$ of size and depth bounded by $C \, ( \log(\max\{1,D\}/\varepsilon) + 1)$.The following proposition is a restatement of \cite[Proposition 3.1]{SchwabZechAA} with network weights bounded by one.
\begin{proposition}\label{prop:H1Mult}
	Let $D \geq 1$ and $\varepsilon \in (0,{1/2})$. 
        There exists a ReLU NN $\widetilde{\times}_{D,\varepsilon}$ 
        with two-dimensional input and one-dimensional output such that 
	\begin{equation*}
		\begin{aligned}
			&\underset{|a|, \, |b| \leq D}{\sup} \left| ab - \widetilde{\times}_{D,\varepsilon}(a,b) \right| \leq \varepsilon, \\
			&\underset{|a|, \, |b| \leq D}{\esssup} \; \max \left\{ \left|a - \frac{\mathrm{d}}{\mathrm{d}b}\widetilde{\times}_{D,\varepsilon}(a,b)\right|, \left| b - \frac{\mathrm{d}}{\mathrm{d}a} \widetilde{\times}_{D, \varepsilon}(a,b) \right| \right\} \leq \varepsilon.
		\end{aligned}
	\end{equation*}
  Furthermore, there exists a constant $C > 0$ independent of $D$ and $\varepsilon$ such that 
	\begin{equation*}
		\begin{aligned}
			\size(\widetilde{\times}_{D,\varepsilon}) &\leq C \, \left( \log\left(\frac{D}{\varepsilon}\right) + 1 \right), \\
			\depth({\widetilde{\times}_{D,\varepsilon}}) &\leq C \, \left( \log\left(\frac{D}{\varepsilon}\right) + 1 \right), \\
			\mpar(\widetilde{\times}_{D,\varepsilon}) &\leq 1.
		\end{aligned}
	\end{equation*}
\end{proposition}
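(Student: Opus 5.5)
The plan is to start from Yarotsky's product network \eqref{def:2MultNN} and change it in two places. First, the $W^{1,\infty}$ (derivative) error has to be controlled. Second, every weight has to be brought below one in absolute value without losing the logarithmic size and depth bounds. We may take \cite[Proposition 3.1]{SchwabZechAA} as given for the approximation statements, including the derivative bound. That result already yields a ReLU network with both error bounds and with size and depth $O(\log(D/\varepsilon)+1)$. Its weights, however, can be as large as $O(D^2)$, which comes from the prefactor $2D^2$ and the scaling $1/(2D)$, together with constants such as $4$ in $g$ and $2^{-2k}$-type coefficients. The derivative estimate follows from the standard fact that $f_n$ interpolates $x^2$ on a grid of width $2^{-n}$. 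Hence $|f_n'(x)-2x|\le 2^{-n}$ almost everywhere, and after rescaling the derivative error is $O(D\,2^{-n})$. Choosing $n\simeq \log(D/\varepsilon)$ therefore gives accuracy $\varepsilon$ for both the values and the derivatives.

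Next comes the weight normalization. The inner part of the network consists of the map $(a,b)\mapsto(|a+b|,|a|,|b|)/(2D)$, the sawtooth iterates, and the $f_n$ recursion. I would realize this inner part with weights bounded by one. The scaling $1/(2D)$ is already below one. The weights $2$ and $4$ in $g(x)=\sigma_1(2x)-\sigma_1(4x-2)$ can be split across an extra layer, for example by writing $2x = x+x$ and $4x-2 = 2(2x-1)$, with each factor realized by summing neurons carrying unit weights. This increases the width only by a constant factor. The coefficients $2^{-2k}\le 1$ and the linear combinations in the recursion for $f_n$ have small weights already, or can be handled with identity channels (Lemma \ref{lemma:Id-ReLU}, which has $\mpar\le 1$). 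The only remaining large parameter is the final multiplication by $2D^2$. I would implement it by sparse concatenation (Definition \ref{def:sparseConc}) with the scalar network $\phi_{2D^2}$ from Lemma \ref{lemma:scalarMultNetwork}, which has $\mpar\le 1$ and size and depth $O(\log D+1)$. Alternatively, Lemma \ref{lemma:normalizeWeights} could be applied directly to the network from \cite{SchwabZechAA}. That lemma's bounds involve $(\depth+1)\log(\mpar)$, which would give size $O(\log(D/\varepsilon)\log D)$ and spoil the claimed linear bound. So I would use the explicit construction, with the single scalar-multiplication network at the end.

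The main obstacle is exactly this bookkeeping: making sure the normalization does not multiply the size by a further logarithmic factor. The sparse-concatenation bounds add the sizes and depths of the pieces, $O(\log(D/\varepsilon))$ and $O(\log D)$, so the total stays $O(\log(D/\varepsilon)+1)$. Finally, I would check that composing with exact linear maps and exact rescaling leaves the realized function unchanged, so that the error bounds from \cite{SchwabZechAA} carry over without modification.
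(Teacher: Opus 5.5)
Your proposal is correct and follows essentially the same route as the paper. You split off the prefactor $2D^2$ by sparse concatenation with $\phi_{2D^2}$ from Lemma~\ref{lemma:scalarMultNetwork}, keep the weights of the inner network $\psi_{D,\varepsilon}$ bounded, and inherit the value and derivative error bounds from \cite[Proposition 3.1]{SchwabZechAA} because the realized function is unchanged. The one difference is that the paper does not normalize the inner network by hand: it notes $\mpar(\psi_{D,\varepsilon})\le C$ and then applies Lemma~\ref{lemma:normalizeWeights} to $\phi_{2D^2}\bullet\psi_{D,\varepsilon}$. At that stage $\log(\mpar)=O(1)$, so the lemma costs only a constant factor, and your objection to it applies only if it is used before the $2D^2$ factor has been split off.
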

\begin{proof}
	First, we define a ReLU NN via 
	\[
		\psi_{D,\eps}(a,b) \coloneqq f_n\left(\frac{|a+b|}{2D}\right) - f_n\left(\frac{|a|}{2D}\right) - f_n\left(\frac{|b|}{2D}\right)
	\]
	for all $a,b \in [-D,D]$ and for $n \coloneqq \ceil{-\log(\eps/(2D))}$. 
        The proof of \cite[Proposition 3.1]{SchwabZechAA} shows 
	\begin{align*}
		\size(\psi_{D,\eps}) \leq \log(1/\eps) \;, 
                 \quad 
		\depth(\psi_{D,\eps}) \leq \log(1/\eps) \quad\text{ as } \eps \to 0.
	\end{align*}
	Note that \cite[Proposition 3.1]{SchwabZechAA} considers the additional factor $2D^2$ in the output, 
        i.e. $2D^2 \, \psi_{D,\eps}(a,b)$. 
        This however does not change the bounds on size and depth. 
        A direct calculation shows that the weights in the network expressing $f_n$ 
        can be bounded by a constant independent of $n \in \N$. 
        This implies 
	\[
		\mpar(\psi_{D,\eps}) \leq C
	\]
	for some $C > 0$ independent of $n \in \N$.
	With Lemma \ref{lemma:scalarMultNetwork} 
        we define the multiplication network $\phi_{2D^2} \colon \R \to \R$ 
        realizing the multiplication with $2D^2$, i.e. $\phi_{2D^2}(x) = 2D^2 x$ for all $x \in \R$. 
Furthermore, it holds, with a constant $C > 0$ independent of $D$, that
	\begin{align*}
		\size(\phi_{2D^2}) \leq C (\log(D) + 1), \;\;
		\depth(\phi_{2D^2}) \leq C (\log(D) + 1), \;\;
		\mpar(\phi_{2D^2}) \leq 1.
	\end{align*}
        For all $a,b \in [-D,D]$, we define the sparse concatenation (cf. Definition \ref{def:sparseConc}) by
	\[
		\widetilde{\times}_{D,\eps}(a,b) \coloneqq (\phi_{2D^2} \bullet \psi_{D,\eps}) (a,b)
	\] 
        Definition \ref{def:sparseConc} implies 
	\begin{align*}
		\size (\widetilde{\times}_{D,\eps}) &\leq C \left(\log\left(\frac{D}{\eps}\right)+1\right), \\
		\depth (\widetilde{\times}_{D,\eps}) &\leq C \left(\log\left(\frac{D}{\eps}\right)+1\right), \\
		\mpar(\widetilde{\times}_{D,\eps}) &\leq C. 
	\end{align*}
	Lemma \ref{lemma:normalizeWeights} allows for the normalization of the networks weights 
        and only changes the above bounds in the constant factor.
	Let $\hat{\times}_{D,\eps}$ denote the network from \cite[Propostion 3.1]{SchwabZechAA}. 
        We have now constructed a ReLU NN such that 
	\[
		\widetilde{\times}_{D,\eps}(a,b) = \hat{\times}_{D,\eps}(a,b)
	\]
	for all $a,b \in [-D,D]$. 
        This shows the remaining claimed uniform bounds 
        since they already hold for $\hat{\times}_{D,\eps}$ by \cite[Proposition 3.1]{SchwabZechAA}.
\end{proof}
The next proposition is based on \cite[Proposition III.5]{elbrachter2021deep} 
in which an analogous polynomial approximation result in $L^\infty$-norm is established. 
Our version here can be shown in exactly the same way by replacing 
all occurrences of the $L^\infty$-norm by the $W^{1,\infty}$-norm and using 
Proposition \ref{prop:H1Mult}.
\begin{proposition}\label{prop: W1inftyPolynomApprox}
Let $n \in \N$ and $c = (c_i)_{i=0}^n \in \R^{n+1}$. 
In addition, let $M \geq 1$ and $\varepsilon \in (0,{1/2})$. 
Define $c_{\infty} \coloneqq \max\{1, \|c\|_{\infty}\}$. 

Then, for any polynomial $p(x) \coloneqq \sum_{i=0}^n c_i x^i$ 
there exists a ReLU NN $\tilde{p}_{\varepsilon, M}$ satisfying 
	\[
	\|p - \tilde{p}_{\varepsilon, M}\|_{W^{1,\infty}([-M,M])} \leq \varepsilon.
	\]
	Furthermore, there exists a constant $C > 0$ independent of $n,\, c_i,\, M,$ and $\varepsilon$ such that 
	\begin{align*}
		\depth(\tilde{p}_{\varepsilon, M}) &\leq C n \, (n \log(M) + \log(\varepsilon^{-1}) + \log(n) + \log(c_{\infty})), \\
		\size({\tilde{p}_{\varepsilon, M}}) &\leq C n \, (n \log(M) + \log(\varepsilon^{-1}) + \log(n) + \log(c_{\infty})), \\
		\mpar({\tilde{p}_{\varepsilon, M}}) &\leq 1.
	\end{align*}
\end{proposition}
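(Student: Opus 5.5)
The plan is to follow the route of \cite[Proposition III.5]{elbrachter2021deep}, which builds ReLU emulations of monomials and polynomials in $L^\infty$. Every approximate multiplication in that construction is replaced by the network $\widetilde{\times}_{D,\varepsilon'}$ of Proposition \ref{prop:H1Mult}, which controls both values and partial derivatives.

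\textbf{Monomials.} First emulate the monomials $x^i$, $i=1,\dots,n$, on $[-M,M]$ by the recursion $\tilde x^{1}=x$ and $\tilde x^{i+1}=\widetilde{\times}_{D,\varepsilon'}(x,\tilde x^{i})$. We take $D=M^n+1$, so that all exact values $|x^i|\le M^n$ and, inductively, all approximate values stay in $[-D,D]$. The identity $x$ is carried forward through every layer by Lemma \ref{lemma:Id-ReLU}, with weights bounded by $1$.

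\textbf{Error propagation.} Let $e_i:=\|x^i-\tilde x^i\|_{W^{1,\infty}([-M,M])}$. Write
\[
x^{i+1}-\tilde x^{i+1}=x(x^i-\tilde x^i)+\bigl(x\tilde x^i-\widetilde{\times}(x,\tilde x^i)\bigr).
\]
The first term gives the value bound $Me_i$. The second term is at most $\varepsilon'$ in value.

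For the derivative, the chain rule and the derivative estimates of Proposition \ref{prop:H1Mult} give
\[
\bigl|\tfrac{d}{dx}(x\tilde x^i-\widetilde{\times}(x,\tilde x^i))\bigr|\le\varepsilon'(1+|(\tilde x^i)'|).
\]
Hence $e_{i+1}\le (M+1)e_i+\varepsilon'(2+ nM^{n})$, and therefore $e_i\lesssim (M+1)^{n} n M^n\varepsilon'$. Choosing
\[
\varepsilon'=\varepsilon\bigl(n\,c_\infty (2M+2)^{2n}\bigr)^{-1}
\]
makes each monomial error at most $\varepsilon/(n c_\infty)$. Then
\[
\|p-\tilde p\|_{W^{1,\infty}}\le\sum_i|c_i|\,e_i\le\varepsilon.
\]
With this choice $\log(D/\varepsilon')\lesssim n\log M+\log n+\log c_\infty+\log\varepsilon^{-1}$.

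\textbf{Assembly and weight normalization.} We have $n$ sequential multiplications, each of depth and size $O(\log(D/\varepsilon'))$. This gives the depth and size bound $Cn(n\log M+\log\varepsilon^{-1}+\log n+\log c_\infty)$; the identity channels add only the same order. The final linear combination $\sum_i c_i\tilde x^i+c_0$ has coefficients possibly exceeding $1$. To keep weights bounded by $1$, we either write it as $c_\infty\cdot\sum_i (c_i/c_\infty)\tilde x^i$ and realize the scalar multiplication by $\phi_{c_\infty}$ from Lemma \ref{lemma:scalarMultNetwork}, or apply Lemma \ref{lemma:normalizeWeights} once at the end. Either option adds depth and size $O(\log c_\infty)$ times the current depth, so it costs only a constant factor. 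The networks are joined by the sparse concatenation of Definition \ref{def:sparseConc}, which preserves $\mpar\le1$.

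\textbf{Main obstacle.} The hardest part is the derivative error propagation. We must track $|(\tilde x^i)'|$, which requires $\tilde x^i$ to remain in $[-D,D]$ with derivatives bounded by roughly $iM^{i-1}+1$, and we must check that the a.e.\ chain rule applies to the piecewise-linear compositions. I would handle this by an induction that carries both the value bound and the derivative bound as hypotheses at each step.
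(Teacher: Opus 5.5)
Your strategy is the one the paper uses: its proof simply reruns \cite[Proposition III.5]{elbrachter2021deep} with $L^\infty$ replaced by $W^{1,\infty}$ and Proposition~\ref{prop:H1Mult} used for the products. As written, however, your bookkeeping does not deliver the stated bounds, in two places.

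\textbf{Error propagation near $M=1$.} You lump value and derivative errors into one $W^{1,\infty}$ quantity. This gives amplification $M+1$ per step, and your choice $\varepsilon'=\varepsilon\bigl(nc_\infty(2M+2)^{2n}\bigr)^{-1}$ puts $2n\log(2M+2)\ge 4n$ into $\log(1/\varepsilon')$. That term is not $\lesssim n\log M+\log n+\log c_\infty+\log\varepsilon^{-1}$ when $M$ is close to $1$. For example, at $M=1$, $\varepsilon=1/4$, $c_\infty=1$ your network has depth of order $n^2$, whereas the statement claims $O(n\log n)$. (The paper only applies this proposition with $M+1\ge 3$, but the statement covers all $M\ge1$.) The repair is to track the sup-error $v_i$ and the derivative error $d_i$ separately:
\[
v_{i+1}\le Mv_i+\varepsilon',\qquad d_{i+1}\le Md_i+v_i+\varepsilon'\bigl(2+iM^{i-1}\bigr).
\]
This gives $v_i,d_i\lesssim n^2M^{n}\varepsilon'$, so $\varepsilon'=\varepsilon/(Cn^3c_\infty M^n)$ suffices. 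Then $\log(D/\varepsilon')\lesssim n\log M+\log n+\log c_\infty+\log\varepsilon^{-1}$ holds uniformly in $M\ge1$; additive constants are absorbed because $\log\varepsilon^{-1}>1$.

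\textbf{Assembly and weight normalization.} You produce $\tilde x^i$ at depth about $iL$, with $L\sim\log(D/\varepsilon')$, but combine all monomials only in a final linear layer. Each $\tilde x^i$ must therefore ride an identity channel over $(n-i)L$ layers. That costs about $n^2L$ nonzero weights, a factor $n$ above the claimed size; only the channel carrying $x$ is ``of the same order''. Instead, follow the constant-width construction of Elbr\"achter et al.: carry a single running sum $s_i=\sum_{k\le i}(c_k/c_\infty)\tilde x^k$ (with $c_0/c_\infty$ as a bias) alongside $x$ and $\tilde x^i$. At the end, multiply by $c_\infty$ with $\phi_{c_\infty}$ from Lemma~\ref{lemma:scalarMultNetwork}, which adds only $O(1+\log c_\infty)$ to depth and size. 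Your alternative of applying Lemma~\ref{lemma:normalizeWeights} at the end does not work. It multiplies the depth by $C(1+\log c_\infty)$, which is not a constant factor; it produces cross terms such as $n^2\log M\cdot\log c_\infty$ that violate the claimed bound.
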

We now extend Proposition \ref{prop:H1Mult} to the multiplication of $J$ numbers. 
The proof of the following result is similar to that of \cite[Proposition 2.6]{OSZ21}.

\begin{proposition}\label{prop:H1MultWithNNumbers}
Let $J, D \in \N, \, J \geq 2$, $D \geq 1$ and $\varepsilon \in (0, {1/2})$. 

Then there exists a ReLU NN $\widetilde{\prod}_{J,D,\varepsilon} \colon [-D,D]^J \to \R$ such that 
	\[
	\underset{(y_i)_{i=1}^J \in [-D,D]^J}{\sup} \left|\prod_{i=1}^J y_i - \widetilde{\prod}_{J,D,\varepsilon}(y_1,\dots,y_J)\right| \leq \varepsilon
	\]
	and 
	\[
	\underset{(y_i)_{i=1}^J \in [-D,D]^J}{\esssup} \, \underset{i = 1, \dots,
    J}{\max} \left\{\left|\prod_{\substack{j=1 \\ j \neq i}}^Jy_j - \frac{\partial}{\partial y_i}\widetilde{\prod}_{J,D,\varepsilon}(y_1,\dots,y_J)\right|\right\} \leq \varepsilon,
	\]
	where $\partial/\partial y_i$ denote the weak derivatives. 
        Furthermore, there exists a constant $C > 0$ independent of $J, D$ and $\varepsilon$ such that 
	\begin{align*}
		\size\left(\widetilde{\prod}_{J,D,\varepsilon}\right) & \leq C J\, (\log(J) + J\log(D) + \log(\varepsilon^{-1})) \leq C \, \left(1 + J \log\left(\frac{JD^J}{\varepsilon}\right)\right), \\
		\depth\left(\widetilde{\prod}_{J,D,\varepsilon}\right) &\leq C \log(J) \, (\log(J) + J\log(D) + \log(\varepsilon^{-1})) \leq C \, \left(1 + \log(J) \log\left(\frac{JD^J}{\varepsilon}\right)\right), \\
		\mpar\left(\widetilde{\prod}_{J,D,\varepsilon}\right) &\leq 1.
	\end{align*}
\end{proposition}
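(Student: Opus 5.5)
We prove Proposition~\ref{prop:H1MultWithNNumbers} by a balanced binary-tree product of pairwise multiplication networks from Proposition~\ref{prop:H1Mult}, in the spirit of \cite[Proposition 2.6]{OSZ21}. First we pad the input with ones so that $J$ becomes a power of two, $J' = 2^k$ with $k=\ceil{\log J}$. The constant $1$ is produced as a bias, and all biases and weights are $\le 1$. The tree has $k$ levels. At level $\ell$ the intermediate products are products of $2^\ell$ factors in $[-D,D]$, hence bounded by $D^{2^\ell}$. At level $\ell$ we therefore use $\widetilde{\times}_{D_\ell,\eps'}$ with $D_\ell := D^{2^{\ell-1}}+1$, which leaves room for the approximation error, and with a uniform accuracy $\eps' := \eps/(c\,J^{2} D^{2J})$ still to be tuned. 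Networks at the same level are parallelized as in Definition~\ref{def:Parallelization}. Levels are joined by sparse concatenation (Definition~\ref{def:sparseConc}), with identity networks (Lemma~\ref{lemma:Id-ReLU}) equalizing depths. Every building block has $\mpar\le 1$, so $\mpar\le1$ holds for the whole network.

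The error analysis is an induction over the levels. Let $P_\ell$ denote an exact product at level $\ell$ and $\tilde P_\ell$ its network value. Then
\[
|P_{\ell+1}-\tilde P_{\ell+1}|\le |\tilde P_\ell^{(1)}\tilde P_\ell^{(2)}-\widetilde{\times}(\tilde P_\ell^{(1)},\tilde P_\ell^{(2)})|+|P^{(1)}_\ell P^{(2)}_\ell-\tilde P^{(1)}_\ell\tilde P^{(2)}_\ell|\le \eps' + 2D^{2^\ell}e_\ell,
\]
where $e_\ell$ is the level-$\ell$ error. As long as $e_\ell\le 1$, the perturbed inputs stay inside $[-D_{\ell+1},D_{\ell+1}]$. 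Unrolling the recursion gives $e_k\lesssim \eps' J D^{J}\cdot 2^{k}$, which is at most $\eps$ for the chosen $\eps'$.

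For the derivative bound we use the chain rule for Lipschitz compositions, valid almost everywhere; this is where the weak derivatives enter. The derivative $\partial_{y_i}$ of the network is the product, along the root-to-leaf path of $y_i$, of partial derivatives of the pairwise networks. Each such partial derivative equals the sibling's network value up to $\eps'$ by Proposition~\ref{prop:H1Mult}, and each sibling value equals the exact sibling product up to $e_\ell$. The target $\prod_{j\ne i}y_j$ is the product of the exact sibling values along that path. A telescoping argument over the $k$ factors, each bounded by $D^{J}+1$, gives a bound $\lesssim k(D^J+1)^{k}\,(\eps'+\max_\ell e_\ell)$.

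This derivative estimate is the main obstacle. The crude telescoping bound carries the factor $(D^J)^{k}$, which is too large. We handle it by noting that the product of the sibling bounds along one path is at most $\prod_{j\ne i}D = D^{J-1}$, not $(D^J)^k$. So the telescoping bound is in fact $\lesssim k D^{J}(\eps' + \max_\ell e_\ell)$, which the choice of $\eps'$ absorbs. Proving this requires care with the a.e. chain rule on the piecewise linear networks, and we take it as in \cite{OSZ21,SchwabZechAA}.

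Finally, the complexity count. Proposition~\ref{prop:H1Mult} bounds each multiplication network by $C\log(D_\ell/\eps')\lesssim \log J + J\log D+\log\eps^{-1}$. There are $J'-1\le 2J$ such networks, giving a total size $\le CJ(\log J+J\log D+\log\eps^{-1})$; the identity padding adds at most a comparable amount per level. The depth is $k$ levels times the per-level depth, giving $C\log J(\log J+J\log D+\log\eps^{-1})$.
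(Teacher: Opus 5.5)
Your proposal is correct and is essentially the binary-tree argument of \cite[Proposition 2.6]{OSZ21} to which the paper defers: you pad to $2^{\lceil \log J\rceil}$ inputs, build a balanced tree of the pairwise networks $\widetilde{\times}_{D_\ell,\varepsilon'}$ from Proposition~\ref{prop:H1Mult} joined by parallelization and sparse concatenation, and use the key observation that the sibling factors along a root-to-leaf path multiply to $\prod_{j\neq i} y_j$, so derivative errors are amplified by about $D^{J-1}$ rather than $(D^J)^k$. Two bookkeeping points need tightening but do not affect the result: inserting your unrolled bound $e_\ell\lesssim \varepsilon' J D^J 2^k$ into $kD^J(\varepsilon'+\max_\ell e_\ell)$ gives $O(\varepsilon\log J)$ rather than $O(\varepsilon)$, which is fixed by shrinking $\varepsilon'$ by one more factor of $J$ (changing $\log(1/\varepsilon')$ only by $O(\log J)$); and the linear-in-$J$ size count should use the sharper sparse-concatenation estimate in which only the two layers at each junction are doubled, since naively iterating the factor $2$ of Definition~\ref{def:sparseConc} over $\lceil\log J\rceil$ levels loses a factor of order $\log J$.
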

	
	We follow now with exact realizations of products and polynomials in the RePU-case. 
A proof of the following result can, for example, 
be found in \cite[Appendix C]{reinhardt2024statisticallearningtheoryneural}.
	\begin{proposition}\label{prop:RePUMultOp}
	Let $J, q \in \N$ with $J, q \geq 2$. 
         Then there exists a $\sigma_q$-network $\widetilde{\prod}_J \colon \R^J \to \R$ such that 
		\[
		\widetilde{\prod}_J(y_1,\dots,y_J) = \prod_{j=1}^{J}y_j.
		\]
		Furthermore, there exists a constant $C = C(q) > 0$ independent of $J$ such that 
		\[
		\depth\left(\widetilde{\prod}_J\right) \leq C \log(J), \;
		\size\left(\widetilde{\prod}_J\right) \leq C J, \;
		\mpar\left(\widetilde{\prod}_J\right) \leq C. 
		\]
	\end{proposition}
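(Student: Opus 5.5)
The statement to prove is Proposition~\ref{prop:RePUMultOp}: exact realization of the $J$-fold product by a $\sigma_q$-network of depth $O(\log J)$, size $O(J)$ and weights bounded by a constant $C(q)$.

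\textbf{Step 1: squaring and multiplication gates.} I would first realize $x\mapsto x^2$ exactly with one hidden layer of bounded size and weights. For $q=2$ this is immediate from $x^2=\sigma_2(x)+\sigma_2(-x)$. For general $q\ge 2$, use that the span of the translates $x\mapsto (x+b_k)^q$ at $q+1$ distinct shifts $b_k\in\{0,1,\dots,q\}$ contains all polynomials of degree at most $q$ (Vandermonde argument). Combining $(x+b)^q=\sigma_q(x+b)+(-1)^q\sigma_q(-x-b)$ then gives $x^2=\sum_k \alpha_k\bigl(\sigma_q(x+b_k)+(-1)^q\sigma_q(-x-b_k)\bigr)$. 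The coefficients $\alpha_k$ depend only on $q$, so size and $\mpar$ are bounded by $C(q)$. The polarization identity $ab=\tfrac14\bigl((a+b)^2-(a-b)^2\bigr)$ then yields an exact two-input multiplication network $\widetilde\times$ with one hidden layer, $O(1)$ size, and weights bounded by $C(q)$. The factor $\tfrac14$ is only a bounded output weight.

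\textbf{Step 2: binary tree.} Pad $J$ to $2^{\lceil\log_2 J\rceil}$ inputs using the constant $1$; a bias of $1$ is allowed since weights stay bounded. Arrange the factors in a balanced binary tree of depth $\lceil\log_2 J\rceil$. Each level is a parallelization (Definition~\ref{def:Parallelization}) of $\widetilde\times$ gates, and levels are joined by concatenation. Because each $\widetilde\times$ gate ends with an affine output layer, the output layer of one level can be merged with the input affine map of the next. The total depth is then $O(\log J)$, the size is $\sum_\ell 2^{\lceil\log_2 J\rceil-\ell}\cdot O(1)=O(J)$, and $\mpar\le C(q)$.

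\textbf{Main obstacle.} The delicate point is the merging of affine layers: products of weights could grow in magnitude, and a careless composition could also produce redundant parameters. I would control this by keeping the gate's output weights at $\pm\alpha_k/4$ and its input weights at $\pm1$, so that the merged weights are bounded by $2\max_k|\alpha_k|$. If needed, I would instead insert a RePU identity (the RePU identity lemma above) between levels. This costs size $O(J)$ and depth $O(\log J)$ overall, which keeps the claimed bounds.
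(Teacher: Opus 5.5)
Your proposal is correct and follows essentially the same route as the construction the paper defers to (the cited appendix of Reinhardt et al., building on Li--Tang--Yu): exact one-hidden-layer realization of $x\mapsto x^2$ by $\sigma_q$ via shifted powers $(x+b_k)^q$, polarization to get an exact two-input product with $O(1)$ size and $C(q)$-bounded weights, and a balanced binary tree of depth $\lceil\log_2 J\rceil$ with adjacent affine layers merged. Your direct merging argument already keeps the weights bounded, since each hidden neuron of one level feeds exactly one gate output and the merged weights are therefore $\pm\alpha_k/4$; the fallback via inserted identity networks is unnecessary.
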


	In the RePU-case, 
	polynomials can be realized without error (see \cite{PowerNetLiTangYu2019}).
	\begin{proposition}\label{prop:RePUPolynomialEmulation}
		Let $n \in \N, \, c = (c_i)_{i=0}^n \in \R^{n+1},$ and $q \in \N, \, q \geq 2$. 
		Define $c_\infty \coloneq \max\{1, \|c\|_{\infty}\}$. 
		
		Then, for any polynomial $p(x) \coloneq \sum_{i=0}^n c_i x^i$, 
		there exists a $\sigma_q$-network $\tilde p \colon \R \to \R$ satisfying 
		\[
		\tilde p(x) = p(x) 
		\]
		for all $x \in \R$. 
		Furthermore, there exists a constant $C = C(q) > 0$ independent of $n$ such that 
		\[
		\depth(\tilde p) \leq C (\log(c_\infty)+n), \quad 
		\size(\tilde p) \leq C (\log(c_\infty)+n), \quad
		\mpar(\tilde p) \leq C.
		\]
	\end{proposition}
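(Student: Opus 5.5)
The statement to prove is Proposition \ref{prop:RePUPolynomialEmulation}: exact emulation of a univariate polynomial of degree $n$ by a $\sigma_q$-network, $q\ge2$, with depth and size $O(\log(c_\infty)+n)$ and weights bounded by $C(q)$.

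\textbf{Horner scheme.} We evaluate $p$ by Horner's rule: $p_n := c_n$ and $p_{k} := x\,p_{k+1} + c_k$ for $k=n-1,\dots,0$, so that $p_0=p(x)$. Each step needs an exact multiplication of two real numbers plus an affine shift. We therefore build a network of depth $O(n)$ consisting of $n$ blocks. Each block carries the pair $(x, p_{k+1})$ forward and outputs $(x, p_k)$.

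\textbf{Exact square and product.} For $q=2$ we use $\sigma_2(t)+\sigma_2(-t)=t^2$. Polarization then gives $xy=\tfrac14((x+y)^2-(x-y)^2)$, and the identity is exact: $x = \tfrac14(\sigma_2(x+1)... )$; more precisely $x=\tfrac14((x+1)^2-(x-1)^2)$. For general $q\ge2$ we invoke Proposition \ref{prop:RePUMultOp} with $J=2$ for exact products, and the RePU identity network from the appendix lemma (based on \cite{betterapproximationsLiTangYu}) to transport $x$ through the layers. All of these have constant size, constant depth and weights bounded by $C(q)$. So each Horner block is a sparse concatenation (Definition \ref{def:qsparseconcatenation}) of a product network, run in parallel with an identity on $x$, followed by an affine map adding $c_k$.

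\textbf{Main obstacle: the coefficients.} A bias $c_k$ may be large, while weights must stay bounded by $C(q)$. We handle this by factoring out the coefficients. Set $a:=c_\infty$ and $\hat c_k:=c_k/a$, so that $|\hat c_k|\le1$, and run Horner with $\hat c_k$ to obtain $\hat p = p/a$ exactly with bounded weights. At the end we compose with the scalar multiplication network $\phi_a$ of Lemma \ref{lemma:RePUMultNN}. This network has depth and size $O(\log(\max\{1,a\}))$ and weights at most $C$.

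\textbf{Bookkeeping.} Concatenating $n$ blocks of constant cost and then $\phi_a$ gives depth and size at most $C(n+\log c_\infty)$. By Definition \ref{def:qsparseconcatenation}, the weight bound is multiplied by $C$ per concatenation. That would compound over $n$ steps, so we must avoid iterating the bound. We do this by assembling all layers directly: every layer's weights come from the fixed constant-size gadgets or from the $\hat c_k$, hence are bounded by $C(q)$ uniformly in $n$.
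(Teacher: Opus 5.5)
Your proof is correct and takes essentially the approach the paper relies on; the paper itself gives no argument beyond citing the exact RePU polynomial representations of \cite{PowerNetLiTangYu2019}. Your Horner scheme with coefficients normalized by $c_\infty$, followed by the scalar network $\phi_{c_\infty}$ of Lemma~\ref{lemma:RePUMultNN}, is exactly what yields the stated bounds, including the $\log(c_\infty)$ terms and $\mpar(\tilde p)\le C(q)$; merging consecutive constant-size gadget layers directly is also the right way to avoid the $C^n$ compounding suggested by Definition~\ref{def:qsparseconcatenation}. The only blemish is the truncated identity formula, which should read $x=\tfrac14\bigl(\sigma_2(x+1)+\sigma_2(-x-1)-\sigma_2(x-1)-\sigma_2(1-x)\bigr)$.
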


		\section{DNN Emulation of Hermite Polynomials with Weight Bounds}
		\label{app:DNNEmulationHermite}
		In this section, we follow a similar line of reasoning as in \cite[Section 3]{SZ21_982} 
		in order to derive expression rates for NN approximation of Hermite polynomials, additionally
    bounding the weights and biases of the NNs.
	\subsection{Univariate Hermite Polynomials}
	The following two lemmas yield a construction of the restriction of ReLU NNs
  to compact supports, with quantitative control on the Lipschitz constant.
	\begin{lemma}\label{lemma:upperBoundNN}
		Let $R, M, a, b > 0$. 
                Then there exists a ReLU NN $q \colon \R \to \R$ 
                given by 
		\[
		q(x) \coloneqq \sigma_1(R - \sigma_1(a(x-M))-\sigma_1(-b(x+M)))
		\]
		such that 
		\begin{equation*}
			q(x) = 
			\begin{cases}
				R, \qquad  &x \in [-M,M] \\
				-a(x-M)+R, &x \in [M,\frac{R}{a}+M] \\
				b(x+M)+R, &x \in [-\frac{R}{b}-M, -M] \\
				0, &x \in \R\backslash[-\frac{R}{b}-M,\frac{R}{a}+M]
			\end{cases}.
		\end{equation*}
		Furthermore, it holds that 
		\begin{align*}
			\size(q) &= 8, \\
			\depth(q) &= 2, \\
			\width(q) &= 2, \\
			\mpar(q) &= \max\{1, R, a, b, aM, bM\}.
		\end{align*}
	\end{lemma}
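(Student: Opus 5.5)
The claim is a direct computation on the explicit formula for $q$, organized by the sign of the two inner ReLU arguments. I would first evaluate the inner layer $h(x) := R - \sigma_1(a(x-M)) - \sigma_1(-b(x+M))$ piecewise.

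For $x\in[-M,M]$, both $a(x-M)\le 0$ and $-b(x+M)\le 0$, so both inner ReLUs vanish and $h(x)=R>0$. Hence $q(x)=R$.

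For $x\ge M$, only the first inner term is active, so $h(x)=R-a(x-M)$. This is nonnegative exactly when $x\le R/a+M$. Hence $q(x)=-a(x-M)+R$ on $[M,R/a+M]$ and $q(x)=0$ beyond. The case $x\le -M$ is symmetric: $h(x)=R+b(x+M)$, which is nonnegative exactly for $x\ge -R/b-M$. This gives the remaining two lines of the case formula. Since $a,b,R>0$, the pieces cover $\R$ and the formula is consistent at the breakpoints.

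For the architecture, I would read off the network in the form of Definition~\ref{def:FFNN}.
\begin{itemize}
\item The first hidden layer has width $2$. Its pre-activations are $ax-aM$ and $-bx-bM$, which use weights $a,-b$ and biases $-aM,-bM$; that is $4$ nonzero parameters.
\item The second hidden layer has one neuron with pre-activation $R - z_1 - z_2$. This uses weights $-1,-1$ and bias $R$; that is $3$ nonzero parameters.
\item The affine output layer is the identity, with weight $1$ and bias $0$; that is $1$ parameter.
\end{itemize}
This gives depth $2$, width $\max\{1,2,1,1\}=2$, and size $4+3+1=8$. The maximum absolute parameter is $\max\{1,R,a,b,aM,bM\}$.

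The only point needing care is the bookkeeping convention for size and depth. The outer $\sigma_1$ must count as a hidden layer, followed by a trivial output layer whose weight $1$ contributes to the size count. I would state this convention explicitly so that the exact values $8$ and $2$ match Definition~\ref{def:FFNN}.
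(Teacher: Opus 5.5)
Your proposal is correct and takes the same route as the paper, whose proof just says the lemma ``follows from direct calculations.'' Your piecewise evaluation of the inner layer and your layer-by-layer count ($4+3+1$ nonzero parameters, two activation layers followed by an affine output layer, exactly as in Definition~\ref{def:FFNN}) are those calculations written out.
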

	\begin{proof}
		This follows from direct calculations.
	\end{proof}
	\begin{lemma}\label{lemma:truncationNN}
		Let $\phi \colon \R \to \R$ be a ReLU NN and 
                let $q\colon\R\to\R$ be the ReLU NN from Lemma \ref{lemma:upperBoundNN} 
                with $M>0$ fixed, $R \coloneqq \sup_{x\in[-M,M]}|\phi(x)|$ and $a=b=R$. Define 
                the ReLU NN 
		\begin{equation}\label{eq:maxminqNN}
			\psi(x) \coloneqq \max\{\min\{q(x), \phi(x)\},-q(x)\}.
		\end{equation}
		Then, it holds that 
		\begin{enumerate}[label=(\roman*)]
			\item $\psi(x) = \phi(x)$ for all $x \in [-M,M]$
			\item $\psi(x) = 0$ for all $x \in \R\backslash[-M-1, M+1]$
			\item $\sup_{x \in \R} |\psi(x)| \leq R$
			\item ${\rm Lip}(\psi) \leq \max\{R, {\rm Lip}(\phi_{|[-M-1,M+1]})\}$ with ${\rm Lip}(\cdot)$ denoting the Lipschitz constant.
		\end{enumerate}
		Furthermore, there exists a constant $C > 0$, independent of $\phi$ and $M$, such that
		\begin{align*}
			\size(\psi) &\leq C \, (1 + \size(\phi)), \\
			\depth(\psi) &\leq C \, (1 + \depth(\phi)), \\
			\mpar(\psi) &\leq \max\{1, R, RM, \mpar(\phi)\}. 
		\end{align*}
	\end{lemma}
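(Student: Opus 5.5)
We verify the four properties of $\psi$ in \eqref{eq:maxminqNN} directly from the explicit form of $q$ in Lemma \ref{lemma:upperBoundNN}, with $a=b=R$. In that case $q\equiv R$ on $[-M,M]$. Outside this interval $q$ decays linearly with slope $R$ and vanishes outside $[-M-1,M+1]$, since $R/a=R/b=1$. Moreover $0\le q\le R$ everywhere. The operation $x\mapsto\max\{\min\{q,\phi\},-q\}$ is the clipping of $\phi$ to the band $[-q,q]$. We may assume $R>0$; if $R=0$, then $\psi\equiv 0$ and everything is trivial.

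The four items follow in order.

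\textbf{Item (i).} On $[-M,M]$ we have $|\phi|\le R=q$, so the clipping is inactive and $\psi=\phi$.

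\textbf{Item (ii).} Outside $[-M-1,M+1]$ we have $q=0$, so $\psi=\max\{\min\{0,\phi\},0\}=0$.

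\textbf{Item (iii).} Since $-q\le\psi\le q\le R$, we get $|\psi|\le R$.

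\textbf{Item (iv).} Note that $\psi=\mathrm{med}(-q,\phi,q)$, and $\min$ and $\max$ of Lipschitz functions are Lipschitz with constant at most the maximum of the individual constants. Hence on $[-M-1,M+1]$,
\[
\Lip(\psi)\le\max\{\Lip(q),\Lip(\phi|_{[-M-1,M+1]})\}=\max\{R,\Lip(\phi|_{[-M-1,M+1]})\}.
\]
Outside this interval $\psi\equiv0$, and $\psi$ is continuous. Therefore the global Lipschitz constant is the same, because $\psi$ is piecewise Lipschitz on consecutive intervals glued continuously.

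For the network realization we use the identities
\[
\min\{u,v\}=u-\sigma_1(u-v),\qquad \max\{u,v\}=u+\sigma_1(v-u).
\]
Both are one ReLU layer with weights $\pm1$, except that the affine part $u$ must be carried through the layer by the identity $u=\sigma_1(u)-\sigma_1(-u)$. We proceed as follows:
\begin{enumerate}
\item Parallelize $q$ and $\phi$ (Definition \ref{def:Parallelization}). To equalize depths, pad the shorter network with the identity network of Lemma \ref{lemma:Id-ReLU}, which has weights bounded by $1$ and size linear in the depth. This keeps the size and depth bounded by $C(1+\size(\phi))$ and $C(1+\depth(\phi))$.
\item Also carry a copy of $q$ forward with an identity network, so that $-q$ is available for the final max.
\item Append two constant-size layers that realize the min and then the max.
\end{enumerate}
All new weights are $\pm1$. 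The only weights exceeding $1$ are those inside $q$, namely $\mpar(q)=\max\{1,R,RM\}$ by Lemma \ref{lemma:upperBoundNN}, together with $\mpar(\phi)$. This gives $\mpar(\psi)\le\max\{1,R,RM,\mpar(\phi)\}$.

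The main point requiring care is the depth bookkeeping in this last construction. The identity padding contributes size proportional to $\depth(\phi)$, which is not a priori bounded by $\size(\phi)$. However, every layer of a network with nontrivial output must contain at least one nonzero weight, so $\depth(\phi)\lesssim 1+\size(\phi)$. This gives the claimed size bound.
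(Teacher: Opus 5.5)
Your proposal is correct and follows the same route as the paper. Items (i)--(iv) are read off from the explicit form of $q$ with $a=b=R$ (so $0\le q\le R$, $q\equiv R$ on $[-M,M]$ and $q\equiv 0$ off $[-M-1,M+1]$), and $\psi$ is realized by combining $q$ and $\phi$ with one-layer, $\pm1$-weight ReLU realizations of $\min$ and $\max$. Your extra bookkeeping fills in details the paper leaves implicit: the case $R=0$, the gluing of Lipschitz bounds across $\pm(M+1)$, the depth padding needed for parallelization, and the bound $\depth(\phi)\le\size(\phi)$ for non-constant $\phi$.
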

	\begin{proof}
		The items $(i), (ii)$ and $(iii)$ follow directly from the definition of
    $\psi$. Since $\psi$ is continuous and piecewise linear, item $(iv)$ holds
    true. In addition, $\psi$ can be represented as a ReLU NN because $\max\{\cdot,\cdot\}$ and $\min\{\cdot,\cdot\}$ can be realized as ReLU NN, which implies that $\psi$ can be constructed as the parallelization and (sparse) concatenation of ReLU NN $\max\{\cdot,\cdot\}, \, \min\{\cdot,\cdot\},\,q$ and $\phi$. More precisely, both $\max$ and $\min$ can be constructed as ReLU NN of size 7, depth 1, width 3 and weights and biases bounded by 1, since for all $x,y\in\R$
		\[
		\max\{x,y\} = \sigma_1(y) - \sigma_1(-y) + \sigma_1(x-y) \;, \quad 
		\min\{x,y\} = \sigma_1(y) - \sigma_1(-y) - \sigma_1(y-x) \;.
		\]
		This implies the claimed bounds on 
    the size, depth, and weights of $\psi$,
    obtained using Definition \ref{def:Parallelization} and Definition \ref{def:sparseConc}.
	\end{proof}
	We summarize some basic results on Hermite polynomials for later use in the proof of corresponding expression rates.
	Some of these results are taken from \cite[Section 2.2.]{SZ21_982}. \par
	
	We use the following representation of Hermite polynomials; 
	see, e.g., \cite[equation (5.5.4)]{Szego3rdOrthPol}:
	\begin{equation}\label{eq:HermitExplicit}
		H_n(x) = \sum_{j = 0}^{\lfloor n/2 \rfloor} \frac{\sqrt{n !}(-1)^j}{j!(n-2j)! 2^j} x^{n-2j}.
	\end{equation}
	The factor $\sqrt{n!}$ is due to a different scaling 
         (cf. \cite[equation (5.5.3)]{Szego3rdOrthPol} with \eqref{eq:Hermite}). 
	We then introduce the coefficients $c_{n, j}$ such that
  \begin{equation}
    \label{eq:cnj-def}
  H_n(x) = \sum_{j=0}^n c_{n,j} x^n. 
  \end{equation}
	The following lemma yields a bound on the $\ell^1$-norm of the vector of the
  coefficients of $H_n$ in the monomial basis.
	\begin{lemma}[{\cite[Lemma 2.3]{SZ21_982}}]\label{lemma:CoeffSummability}
		For all $n \in \N_0$,
		\begin{equation}
			\sum_{j = 0}^n |c_{n,j}| \leq 6^{n/2} \leq 3^n.
		\end{equation}
	\end{lemma}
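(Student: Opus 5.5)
The sum $\sum_j|c_{n,j}|$ is the value at $x=1$ of $H_n$ with all signs made positive, and this turns out to be $T_n/\sqrt{n!}$, where $T_n$ is a classical integer sequence with a two-term recurrence. The bound then follows by a short induction. (In \eqref{eq:cnj-def} the monomial should read $x^j$, with $c_{n,j}$ the coefficient of $x^j$; only the absolute values matter here.)

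\textbf{Step 1: reduction to an integer sequence.} By \eqref{eq:HermitExplicit}, the nonzero coefficients of $H_n$ are $\sqrt{n!}\,(-1)^j/(j!(n-2j)!2^j)$ for $0\le j\le\lfloor n/2\rfloor$. Hence
\begin{equation*}
\sum_{j=0}^n |c_{n,j}| = \frac{T_n}{\sqrt{n!}},\qquad T_n:=\sum_{j=0}^{\lfloor n/2\rfloor}\frac{n!}{j!\,(n-2j)!\,2^j}.
\end{equation*}
Here $T_n$ counts the involutions of an $n$-element set: choose $j$ disjoint transpositions, and there are $n!/(j!(n-2j)!2^j)$ ways to do so. So the claim $\sum_j|c_{n,j}|\le 6^{n/2}$ is equivalent to
\begin{equation*}
T_n\le B_n:=\sqrt{n!\,6^n}.
\end{equation*}

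\textbf{Step 2: recurrence.} Classifying involutions by the image of the element $n$ gives
\begin{equation*}
T_n=T_{n-1}+(n-1)T_{n-2},\qquad n\ge 2,\qquad T_0=T_1=1.
\end{equation*}
Indeed, either $n$ is a fixed point, giving $T_{n-1}$ involutions, or $n$ is paired with one of the other $n-1$ elements, giving $(n-1)T_{n-2}$. Alternatively, one can verify this algebraically from the explicit sum via Pascal-type manipulations, if one prefers to avoid the combinatorial interpretation.

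\textbf{Step 3: induction.} The base cases hold since $T_0=1=B_0$ and $T_1=1\le\sqrt6=B_1$. For $n\ge2$, assume the bound for $n-1$ and $n-2$. Using $B_{n-1}=B_n/\sqrt{6n}$ and $B_{n-2}=B_n/\sqrt{36\,n(n-1)}$, we get
\begin{equation*}
T_n\le B_{n-1}+(n-1)B_{n-2}
= B_n\Big(\frac{1}{\sqrt{6n}}+\frac{1}{6}\sqrt{\tfrac{n-1}{n}}\Big)
\le B_n\Big(\frac{1}{\sqrt{6}}+\frac16\Big)<B_n.
\end{equation*}
This closes the induction. Finally, $6^{n/2}=(\sqrt6)^n\le 3^n$ because $\sqrt6\le3$.

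The only genuinely nontrivial point is spotting the reduction in Step 1 together with the recurrence in Step 2. The induction itself is routine, because the constant $6$ leaves ample slack: $1/\sqrt6+1/6\approx0.575<1$.
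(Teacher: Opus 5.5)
Your proof is correct and complete. The identification $\sum_j|c_{n,j}|=T_n/\sqrt{n!}$ follows directly from \eqref{eq:HermitExplicit}: distinct $j$ produce distinct monomials $x^{n-2j}$, so nothing cancels. You are also right that \eqref{eq:cnj-def} should read $x^j$. The involution count and the recurrence $T_n=T_{n-1}+(n-1)T_{n-2}$ are correctly justified. The induction step $\frac{1}{\sqrt{6n}}+\frac16\sqrt{(n-1)/n}\le\frac{1}{\sqrt6}+\frac16<1$ for $n\ge2$, together with your two base cases, closes the argument.

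The paper does not prove this lemma itself; it only quotes \cite[Lemma 2.3]{SZ21_982}, so there is no in-paper argument to compare against. Your route, which reduces to the involution (telephone) numbers and runs a two-term induction against $\sqrt{n!\,a^n}$, is a short, self-contained replacement.

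It also makes the slack explicit. The same induction works with $a=2$, because $\frac{1}{\sqrt{2n}}+\frac12\sqrt{(n-1)/n}\le\frac12+\frac12=1$ for $n\ge2$, and $T_0=1$, $T_1=1\le\sqrt2$. So in fact $\sum_j|c_{n,j}|\le 2^{n/2}$. The paper only uses the cruder bound $3^n$ later (for $\|c\|_\infty$ and in \eqref{eq:HermitePointwiseBound}), so this sharpening would affect constants only.
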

	It follows from Lemma \ref{lemma:CoeffSummability} that 
	\begin{equation}\label{eq:HermitePointwiseBound}
		|H_n(x)| \leq (3 \, \max\{ 1, |x| \})^n \qquad \forall \, x \in \R.
	\end{equation}
	For $n \in \N_0 \cup \{-1\}$, let $n !!$ denote the double factorial, such that $-1 !! = 0 !! = 1$ and $n !! = n \cdot (n-2)!!$ for $n \geq 2$.
	\begin{lemma}[{\cite[Lemma 2.4]{SZ21_982}}]\label{lemma:IntegralBoundHermite}
		Let $M \geq 2$ and $n \in \N_0$. Then 
		\begin{equation}
			\int_{|x| > M} e^{-\frac{x^2}{2}} |x|^n \, {\rm d}x \leq n!! M^n e^{-\frac{M^2}{2}}.
		\end{equation}
	\end{lemma}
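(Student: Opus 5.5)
The bound comes from a substitution that reduces the claim to a moment of a half-line Gaussian tail, followed by an inductive recursion for the truncated moments. Set
\[
I_n(M):=\int_M^\infty e^{-x^2/2}x^n\,{\rm d}x .
\]
By symmetry of the integrand, the left-hand side equals $2I_n(M)$, so it suffices to show $2I_n(M)\le n!!\,M^n e^{-M^2/2}$ for $M\ge 2$.

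We integrate by parts, writing $x^n e^{-x^2/2}=x^{n-1}\cdot x e^{-x^2/2}$, which gives for $n\ge 2$ the recursion
\[
I_n(M)=M^{n-1}e^{-M^2/2}+(n-1)I_{n-2}(M).
\]
The base cases are the following.
\begin{itemize}
\item For $n=1$ we have $I_1(M)=e^{-M^2/2}$, so $2I_1(M)=2e^{-M^2/2}\le M e^{-M^2/2}$ since $M\ge2$.
\item For $n=0$ the Mills-ratio bound gives $I_0(M)\le e^{-M^2/2}/M$, hence $2I_0(M)\le e^{-M^2/2}\cdot 2/M\le e^{-M^2/2}$, again using $M\ge 2$.
\end{itemize}

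For the induction step we assume $2I_{n-2}(M)\le (n-2)!!\,M^{n-2}e^{-M^2/2}$. The recursion then yields
\[
2I_n(M)\le e^{-M^2/2}M^{n-2}\bigl(2M+(n-1)(n-2)!!\bigr).
\]
We must show this is at most $n!!\,M^n e^{-M^2/2}$, i.e.
\[
2M+(n-1)(n-2)!!\le n(n-2)!!\,M^2 .
\]
Since $M\ge 2$ we have $2M\le M^2$ and $n-1\le (n-1)M^2$, so the left side is at most $M^2\bigl(1+(n-1)(n-2)!!\bigr)\le M^2\,n\,(n-2)!!$, using $(n-2)!!\ge1$. This closes the induction; as a result it covers $n=2$ with $0!!=1$ and $n=3$ with $1!!=1$.

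The only genuinely non-elementary input is the Mills-ratio bound for $n=0$, and even that follows from $\int_M^\infty e^{-x^2/2}\,{\rm d}x\le\int_M^\infty (x/M)e^{-x^2/2}\,{\rm d}x$. No real obstacle is expected; the only care needed is tracking the factor $2$ coming from the two tails, which is absorbed by $M\ge 2$.
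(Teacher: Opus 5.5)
Your proof is correct: the reduction to one tail by symmetry, the recursion $I_n(M)=M^{n-1}e^{-M^2/2}+(n-1)I_{n-2}(M)$, the base cases $n=0,1$ (where $M\ge 2$ absorbs the factor $2$), and the closing inequality $2M+(n-1)(n-2)!!\le n(n-2)!!\,M^2$ all check out. The paper gives no proof of its own and only cites \cite[Lemma 2.4]{SZ21_982}; your self-contained integration-by-parts induction, which is the standard argument for such Gaussian tail moments, fully supplies one.
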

	 Lemma \ref{lemma:IntegralBoundHermite} and \eqref{eq:HermitePointwiseBound}
   imply that, for every $M \geq 2$, $p \geq 1$, and $n \in \N_0$,
	\begin{equation}\label{eq:LpBoundComplement}
		\int_{|x| > M} |H_n(x)|^p \, {\rm d}\mu(x) = \frac{1}{\sqrt{2\pi}} \int_{|x| > M} |H_n(x)|^p e^{-\frac{x^2}{2}} \, {\rm d}x \leq \frac{1}{\sqrt{2\pi}} (pn)!! (3M)^{pn} e^{-\frac{M^2}{2}}.
	\end{equation}
	This result can be found in \cite[Section 2.2. Equation (2.11)]{SZ21_982}.
	
	We finally recall the following relation for the derivative of Hermite polynomials (see, e.g., \cite[Lemma 2.4 $(ii)$]{DNSZ23_2957}):
	\begin{equation}\label{eq:recursiveDerivativeHermite}
		H_n'(x) = \sqrt{n} \, H_{n-1}(x) \qquad \text{for all } n \in \N.
	\end{equation}
	
		\begin{proposition}\label{prop:ExpressionRateUnivariateHermite}
			Let $n \in \N_0, \, M \geq 2$, and $\varepsilon \in (0, {1/2})$. 
                Then there exists a ReLU NN $\widetilde{H}_{n, M, \varepsilon} \colon \R \to \R$ such that 
			\begin{enumerate}[label=(\roman*)]
				\item $\|H_n - \widetilde{H}_{n,M,\varepsilon}\|_{{H^1(\R;\mu)}} 
                                 \leq \varepsilon + {6}\sqrt{{2n}(2n)!!} (3{(M+1)})^n e^{-\frac{M^2}{4}}$;
				\item $\sup_{x\in\R} |\widetilde{H}_{n,M,\varepsilon}(x)| \leq 1 + (3M)^n$;
				\item for a constant $C > 0$ independent of $n, M, \varepsilon$
				\begin{align*}
					\size(\widetilde{H}_{n,M,\varepsilon}) 
					&\leq C \, (1 + n^2\log(M) + n\log(\varepsilon^{-1}))\\
					\depth(\widetilde{H}_{n,M,\varepsilon}) 
					&\leq C \, (1 + n^2\log(M) + n\log(\varepsilon^{-1})),\\
					\mpar(\widetilde{H}_{n, M, \varepsilon}) 
					&\leq {(1+3^n)M^{n+1}} .
				\end{align*}
			\end{enumerate}
		\end{proposition}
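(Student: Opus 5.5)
The plan is to approximate $H_n$ on $[-M,M]$ by the polynomial-emulation network of Proposition~\ref{prop: W1inftyPolynomApprox}, cut it off outside $[-M-1,M+1]$ with Lemma~\ref{lemma:truncationNN}, and bound the tail in $H^1(\R;\mu)$ by the Gaussian estimate \eqref{eq:LpBoundComplement}.

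\textbf{Construction on $[-M,M]$.} Write $H_n(x)=\sum_j c_{n,j}x^j$ as in \eqref{eq:cnj-def}. Lemma~\ref{lemma:CoeffSummability} gives $c_\infty\le 3^n$. Apply Proposition~\ref{prop: W1inftyPolynomApprox} with accuracy $\eps/2$ on $[-M-1,M+1]$. This yields a ReLU NN $\phi$ with
\[
\|H_n-\phi\|_{W^{1,\infty}([-M-1,M+1])}\le \eps/2,
\]
with weights bounded by $1$, and with depth and size $\le C n(n\log M+\log\eps^{-1}+\log n+n)$. This is $\le C(1+n^2\log M+n\log\eps^{-1})$, since $M\ge2$ absorbs $\log n$ and $n$ into $n^2\log M$.

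\textbf{Truncation.} Let $\widetilde H_{n,M,\eps}:=\psi$ be the network of Lemma~\ref{lemma:truncationNN} built from $\phi$ with $R=\sup_{[-M,M]}|\phi|$. By \eqref{eq:HermitePointwiseBound}, $R\le (3M)^n+\eps/2\le 1+(3M)^n$, which gives item (ii). The Lemma also gives:
\begin{itemize}
\item $\psi=\phi$ on $[-M,M]$;
\item $\psi=0$ outside $[-M-1,M+1]$;
\item $\mathrm{Lip}(\psi)\le\max\{R,\mathrm{Lip}(\phi|_{[-M-1,M+1]})\}$;
\item $\mpar(\psi)\le \max\{1,R,RM\}$.
\end{itemize}
The last bound is at most $(1+(3M)^n)M\le(1+3^n)M^{n+1}$, which is the claimed weight bound. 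Size and depth change only by constant factors, which gives item (iii).

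\textbf{Error estimate.} Split $\|H_n-\psi\|_{H^1(\mu)}^2$ into the integral over $[-M,M]$ and over $|x|>M$.
\begin{itemize}
\item On $[-M,M]$, both the function and derivative errors are $\le\eps/2$ pointwise, and $\mu$ is a probability measure. This part contributes at most $\eps/\sqrt2\cdot$const, which is $\le\eps$ after adjusting the constant $\eps/2$ to $\eps/2$ per component.
\item On $|x|>M$, use $|H_n-\psi|^2\le 2|H_n|^2+2|\psi|^2$, and likewise for derivatives. For $H_n$ and $H_n'=\sqrt n H_{n-1}$, \eqref{eq:LpBoundComplement} with $p=2$ gives bounds $\lesssim n\,(2n)!!(3M)^{2n}e^{-M^2/2}$. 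For $\psi$, the support lies in $[-M-1,M+1]$ and $|\psi|,|\psi'|\le \max\{R,\|H_n'\|_{L^\infty([-M-1,M+1])}+1\}\lesssim \sqrt n\,(3(M+1))^n$. Integrating against $\mu$ over $M<|x|<M+1$ yields a factor $e^{-M^2/2}$.
\end{itemize}
Taking square roots turns $e^{-M^2/2}$ into $e^{-M^2/4}$. Collecting constants gives $6\sqrt{2n(2n)!!}(3(M+1))^ne^{-M^2/4}$, and the triangle inequality gives item (i).

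\textbf{Main obstacle.} The delicate step is bounding $\psi'$ on the transition region $M<|x|<M+1$, where $\psi$ is the clipped $\phi$. Item (iv) of Lemma~\ref{lemma:truncationNN} is what makes this work: $\phi'$ is within $\eps$ of $H_n'$ on the enlarged interval, so $|\phi'|\le \sqrt n\,(3(M+1))^{n-1}+1$. This is why $M+1$ rather than $M$ appears in the stated bound.
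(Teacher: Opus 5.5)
Your proposal is correct for $n\ge 1$ and follows essentially the paper's own proof. Both use $W^{1,\infty}$ polynomial emulation on $[-M-1,M+1]$, the cutoff of Lemma~\ref{lemma:truncationNN}, and tail control via \eqref{eq:LpBoundComplement} plus the Lipschitz bound (iv) on the transition layer. The paper differs only cosmetically, using accuracy $\eps/\sqrt{2}$ and a three-region triangle-inequality split where you use two regions and $|a-b|^2\le 2|a|^2+2|b|^2$.

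The one omission is $n=0$. There the tail term in (i) vanishes, but your truncated network is $0$ on $|x|>M+1$, where $H_0=1$. Its error is therefore at least $\mu(|x|>M+1)^{1/2}$, which exceeds $\eps$ for small $\eps$. You must instead take $\widetilde H_{0,M,\eps}\equiv 1$, as the paper does implicitly by assuming $n\ge 1$ without loss of generality.
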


\begin{proof}
	Without loss of generality assume $n \geq 1, \, n \in \N$.
	By \eqref{eq:HermitExplicit} and Proposition \ref{prop: W1inftyPolynomApprox} there exists a ReLU NN $\hat{H}_{n,M,\varepsilon}$ such that 
	\begin{equation}\label{eq: worst case H1 error on compact interval}
		\|H_n - \hat{H}_{n,M,\varepsilon}\|_{W^{1,\infty}([-M-1,M+1])} \leq \frac{\varepsilon}{\sqrt{2}}.
	\end{equation}
	Let $c = (c_{n, j})_{j=0}^n \in \R^{n+1}$ be the vector containing the coefficients of $H_n$ in the
  monomial basis (see \eqref{eq:cnj-def}) and define $c_{\infty} \coloneqq \max\{1, \|c\|_{\infty}\}$. Due to Lemma \ref{lemma:CoeffSummability} it holds that $\|c\|_{\infty} \leq \|c\|_{\ell^1} \leq 3^n$. This implies the size and depth bounds
	\begin{align*}
		\size(\hat{H}_{n,M,\varepsilon}) 
		&\leq C  \, (n^2 \log(M) + n\log(\varepsilon^{-1}) + n\log(n) + n^2), \\
		\depth(\hat{H}_{n,M,\varepsilon}) 
		&\leq C  \, (n^2 \log(M) + n\log(\varepsilon^{-1}) + n\log(n) + n^2),
	\end{align*}
	based on Proposition \ref{prop: W1inftyPolynomApprox}.
	Remark that $n\log(n) + n^2\leq 2 n^2\log(M)$ since $\log(M) \geq 1$ due to $M \geq 2$.
	We additionally have
	\begin{equation}
		\mpar(\hat{H}_{n.M,\varepsilon}) \leq 1
	\end{equation}
	from Proposition \ref{prop: W1inftyPolynomApprox}.
	We now restrict the network $\hat{H}_{n,M,\varepsilon}$ to the interval
  $[-M-1, M+1]$ using Lemma \ref{lemma:truncationNN} and denote $R \coloneqq
  \sup_{x\in[-M,M]}|\hat{H}_{n,M,\varepsilon}(x)|$. From this, we define the
  ReLU NN $\widetilde{H}_{n,M,\varepsilon} \colon \R \to \R$ such that
	\begin{equation}\label{eq:defTildeH}
		\widetilde{H}_{n,M,\varepsilon}(x) =
		\begin{cases}
			\hat{H}_{n,M,\varepsilon}(x), \qquad &x \in [-M, M], \\
			0, &x \in \R\backslash[-M-1,M+1]
		\end{cases}
	\end{equation}
	with $\sup_{x\in\R}|\widetilde{H}_{n,M,\varepsilon}(x)| \leq R$ and ${\rm Lip}(\widetilde{H}_{n,M,\varepsilon}) \leq \max\{R, {\rm Lip(\hat{H}_{n,M,\varepsilon})}\}$. From this and \eqref{eq:HermitePointwiseBound} we obtain 
	\begin{align}\label{eq: H tilde uniform bound}
		\underset{x \in \R}{\sup} |\widetilde{H}_{n,M,\varepsilon}(x)| 
		&\leq \underset{x \in [-M,M]}{\sup} |\hat{H}_{n,M,\varepsilon}(x)| \leq \frac{\varepsilon}{\sqrt{2}} + \underset{x \in [-M,M]}{\sup} |H_n(x)| \leq 1 + (3M)^n,
	\end{align}
	which shows item $(ii)$.
	By Lemma \ref{lemma:truncationNN}, we also obtain
	\begin{align*}
		\size(\widetilde{H}_{n,M,\varepsilon}) &\leq C \, \left(1 + \size(\hat{H}_{n,M,\varepsilon})\right) 
		\leq C \, (1 + n^2\log(M) + n\log(\varepsilon^{-1})),\\
		\depth(\widetilde{H}_{n,M,\varepsilon}) &\leq C \, \left(1 + \depth(\hat{H}_{n,M,\varepsilon})\right) 
		\leq C \, (1 + n^2\log(M) + n\log(\varepsilon^{-1})), \\
		\mpar(\widetilde{H}_{n,M,\varepsilon}) &\leq \max\{1, RM\} \leq (1 + 3^n)M^{n+1},
	\end{align*}
	since $R \leq 1 + (3M)^n$.
	Next, we consider the approximation error
	\begin{equation}\label{eq:approxErrorSplitting}
		\begin{aligned}
			\|H_n - \widetilde{H}_{n,M,\varepsilon}\|_{H^1(\R;\, \mu)} 
			&\leq 
			\|H_n - \widetilde{H}_{n,M,\varepsilon}\|_{H^1([-M,M];\, \mu)} \\
			&+ 
			\|H_n - \widetilde{H}_{n,M,\varepsilon}\|_{H^1([-M-1,M+1]\backslash[-M,M];\, \mu)} \\
			&+ 
			\|H_n - \widetilde{H}_{n,M,\varepsilon}\|_{H^1(\R\backslash[-M-1,M+1];\, \mu)}.
		\end{aligned}
	\end{equation}
Note that here derivatives of functions realized by ReLU NN are to be understood in the sense of weak derivatives. 

For $x \in [-M,M]$ it holds that $\widetilde{H}_{n.M,\varepsilon}(x) = \hat{H}_{n,M,\varepsilon}(x)$ 
by construction of $\widetilde{H}_{n,M,\varepsilon}$. 
Then, \eqref{eq: worst case H1 error on compact interval} implies  
	\begin{equation*}
		\begin{aligned}
			\|H_n - \widetilde{H}_{n,M,\varepsilon}\|_{H^1([-M,M];\, \mu)}^2 
			&= 
			\|H_n - \widetilde{H}_{n,M,\varepsilon}\|_{L^2([-M,M];\,\mu)}^2
			+ 
			\|H_n' - \widetilde{H}_{n,M,\varepsilon}'\|_{L^2([-M,M];\,\mu)}^2 \\
			&= 
			\|H_n - \hat{H}_{n,M,\varepsilon}\|_{L^2([-M,M];\,\mu)}^2
			+ 
			\|H_n' - \hat{H}_{n,M,\varepsilon}'\|_{L^2([-M,M];\,\mu)}^2 \\
			&\leq 
			\frac{\varepsilon^2}{2} + \frac{\varepsilon^2}{2} = \varepsilon^2.
		\end{aligned}
	\end{equation*}
	Next, let $x \in \R\backslash[-M-1,M+1]$. By \eqref{eq:defTildeH} it holds that $\widetilde{H}_{n,M,\varepsilon}(x) = 0$ for $x \in \R\backslash[-M-1,M+1]$. Thus we obtain from \eqref{eq:recursiveDerivativeHermite} and \eqref{eq:LpBoundComplement}:
	\begin{equation*}
		\begin{aligned}
			\|H_n - \widetilde{H}_{n,M,\varepsilon}\|_{H^1(\R\backslash[-M-1,M+1];\,\mu)}^2
			&= \|H_n\|_{H^1(\R\backslash[-M-1,M+1];\,\mu)}^2 \\
			&= \|H_n\|_{L^2(\R\backslash[-M-1,M+1];\,\mu)}^2
			+ \|H_n'\|_{L^2(\R\backslash[-M-1,M+1];\,\mu)}^2 \\
			&= \|H_n\|_{L^2(\R\backslash[-M-1,M+1];\,\mu)}^2
			+ n\,\|H_{n-1}\|_{L^2(\R\backslash[-M-1,M+1];\,\mu)}^2 \\
			&\leq \frac{1}{\sqrt{2\pi}} (2n)!! (3M)^{2n} e^{-\frac{M^2}{2}}
			+ \frac{n}{\sqrt{2\pi}} (2(n-1))!! (3M)^{2(n-1)} e^{-\frac{M^2}{2}} \\
			&\leq 2n\,(2n)!!(3M)^{2n}e^{-\frac{M^2}{2}}.
		\end{aligned}
	\end{equation*}
	Finally, consider $x \in I \coloneqq [-M-1,M+1]\backslash[-M, M]$. For the error in this region it holds that
	\begin{equation*}
		\begin{aligned}
			\|H_n - \widetilde{H}_{n,M,\varepsilon}\|_{H^1(I;\,\mu)}^2 
			&= 
			\|H_n - \widetilde{H}_{n,M,\varepsilon}\|_{L^2(I;\,\mu)}^2 
			+
			\|H_n' - \widetilde{H}_{n,M,\varepsilon}'\|_{L^2(I;\,\mu)}^2.
		\end{aligned}
	\end{equation*}
	From \eqref{eq:LpBoundComplement} and \eqref{eq: H tilde uniform bound}, we obtain
	\begin{equation*}
		\begin{aligned}
			\|H_n - \widetilde{H}_{n,M,\varepsilon}\|_{L^2(I;\,\mu)}
			&\leq  
			\|H_n\|_{L^2(I;\,\mu)} + \|\widetilde{H}_{n,M,\varepsilon}\|_{L^2(I;\,\mu)} \\
			&\leq \sqrt{(2n)!!}(3M)^n e^{-\frac{M^2}{4}} + R e^{-\frac{M^2}{4}} \\
			&\leq 
			\sqrt{(2n)!!}(3M)^n e^{-\frac{M^2}{4}} + (1 + (3M)^n) e^{-\frac{M^2}{4}} \\
			&\leq 
			3 \sqrt{(2n)!!}(3M)^ne^{-\frac{M^2}{4}}.
		\end{aligned}
	\end{equation*}
	We proceed similarly with the derivative of $H_n$ and
  $\widetilde{H}_{n,M,\varepsilon}$. Because of item $(iv)$ of \Cref{lemma:truncationNN}, we have 
	\[
	\underset{x \in I}{{\esssup}}|\widetilde{H}'_{n,M,\varepsilon}(x)| 
        \leq \max\left\{R, \underset{x \in [-M-1,M+1]}{\sup}|\hat{H}'_{n,M,\varepsilon}(x)|\right\}.
	\]
	Due to \eqref{eq: worst case H1 error on compact interval}, we have 
	\begin{align*}
		\esssup_{x\in [-M-1,M+1]} |\hat{H}'_{n,M,\varepsilon}(x)| 
		&\leq \frac{\varepsilon}{2} + \sup_{x \in [-M-1,M+1]} |H'_n(x)| 
                \\
		&\leq 1 + \sqrt{n}\underset{x \in [-M-1,M+1]}{\sup} |H_{n-1}(x)| 
                \\
		&\leq 1 + \sqrt{n} (3(M+1))^{n-1}.
	\end{align*}
	Since $R \leq 1 + (3M)^n$, we obtain the bound 
	\begin{equation}\label{eq: uniform bound H tilde prime}
		\esssup_{x \in I} |\widetilde{H}'_{n,M,\varepsilon}(x)| \leq 1 + \sqrt{n}\,(3(M+1))^n.
	\end{equation}
	Furthermore, 
	\begin{align*}
		\|H_n' - \widetilde{H}_{n,M,\varepsilon}'\|_{L^2(I;\,\mu)} 
		&\leq 
		\|H_n'\|_{L^2(I;\,\mu)} + \|\widetilde{H}_{n,M,\varepsilon}'\|_{L^2(I;\,\mu)} \\
		&\leq 
		\sqrt{n} \|H_{n-1}\|_{L^2(I;\,\mu)} + \|\widetilde{H}_{n,M,\varepsilon}'\|_{L^2(I;\,\mu)} \\
		&\leq 
		\sqrt{n} \sqrt{(2n)!!} (3M)^n e^{-\frac{M^2}{4}} + \|\widetilde{H}_{n,M,\varepsilon}'\|_{L^2(I;\,\mu)}.
	\end{align*}
	Due to \eqref{eq: uniform bound H tilde prime} and \eqref{eq:LpBoundComplement}, we have 
	\[
	\|\widetilde{H}_{n,M,\varepsilon}'\|_{L^2(I;\,\mu)} \leq (1 + \sqrt{n} (3(M+1))^n)e^{-\frac{M^2}{4}}.
	\]
	This implies 
	\begin{align*}
		\|H_n' - \widetilde{H}_{n,M,\varepsilon}'\|_{L^2(I;\,\mu)} 
		&\leq 
		\sqrt{n} \sqrt{(2n)!!} (3M)^n e^{-\frac{M^2}{4}} + e^{-\frac{M^2}{4}} + \sqrt{n}(3(M+1))^n e^{-\frac{M^2}{4}} \\
		&\leq 
		3\sqrt{n}\sqrt{(2n)!!}(3(M+1))^n e^{-\frac{M^2}{4}}.
	\end{align*}
	This shows
	\begin{align*}
		\|H_n - \widetilde{H}_{n,M,\varepsilon}\|_{H^1(I;\,\mu)}^2
		&\leq
		\|H_n - \widetilde{H}_{n,M,\varepsilon}\|_{L^2(I;\,\mu)}^2 + \|H_n' - \widetilde{H}_{n,M,\varepsilon}'\|_{L^2(I;\,\mu)}^2 \\
		&\leq 
		9 (2n)!! (3M)^{2n} e^{-\frac{M^2}{2}} + 9n (2n)!! (3(M+1))^{2n} e^{-\frac{M^2}{2}} \\
		&\leq 
		18 n (2n)!! (3(M+1))^{2n} e^{-\frac{M^2}{2}}.
	\end{align*}
	The overall approximation error is therefore bounded by 
	\begin{equation*}
		\begin{aligned}
			\|H_n - \widetilde{H}_{n,M,\varepsilon}\|_{H^1(\R;\,\mu)}
			&\leq 
			\varepsilon
			+ 
			\sqrt{2n}\sqrt{(2n)!!} (3M)^n e^{-\frac{M^2}{4}}
			+ 
			\sqrt{18n} \sqrt{(2n)!!} (3(M+1))^n e^{-\frac{M^2}{4}} \\
			&\leq 
			\varepsilon + 2\sqrt{18n}\sqrt{(2n)!!} (3(M+1))^n e^{-\frac{M^2}{4}} \\
			&\leq 
			\varepsilon + 6\sqrt{2} \sqrt{n} \sqrt{(2n)!!} (3(M+1))^n e^{-\frac{M^2}{4}}.
		\end{aligned}
	\end{equation*}
\end{proof}

		\begin{corollary}\label{cor:UnivariateBoundsSimplyfied}
			Consider the setting of Proposition
      \ref{prop:ExpressionRateUnivariateHermite} and set, for $\varepsilon \in
      (0, {1/2})$ and $n\in\N$
			\begin{equation}\label{eq:M=M(n,eps)}
				M(n, \varepsilon) \coloneqq \sqrt{24 ((n+1) \log(2n)- \log(\varepsilon))} .
			\end{equation}
			With this choice of $M$, 
define the ReLU NN $\widetilde{H}_{n,\varepsilon} \coloneqq \widetilde{H}_{n,M,\varepsilon} \colon \R \to \R$. 
This satisfies
			\begin{enumerate}[label=(\roman*)]
				\item $\|H_n - \widetilde{H}_{n,\varepsilon}\|_{{H^1(\R;\mu)}} \leq {7} \varepsilon$; 
				\item $\sup_{x \in \R} |\widetilde{H}_{n, \varepsilon}(x)| \leq 1 + (3M)^n = 1 + 3^n \left( 24 ({(n+1)} \log(2n)-\log(\varepsilon)) \right)^{\frac{n}{2}}$;
				\item for some $C > 0$ independent of $n$ and $\varepsilon$,
				\begin{align*}
					\size(\widetilde{H}_{n,\varepsilon}) &\leq C \, (1 + n^2 (\log(n) + \log(-\log(\varepsilon)))-n\log(\varepsilon)), \\
					\depth(\widetilde{H}_{n,\varepsilon}) &\leq C \, (1 + n^2 (\log(n) + \log(-\log(\varepsilon)))-n\log(\varepsilon)), \\
					\mpar(\widetilde{H}_{n,\varepsilon})  
					&\leq {(1+3^n)(24((n+1)\log(2n)-\log(\varepsilon)))^{\frac{n+1}{2}}}.
				\end{align*}
			\end{enumerate}
		\end{corollary}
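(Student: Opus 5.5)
This is a direct specialization of Proposition~\ref{prop:ExpressionRateUnivariateHermite} with $M=M(n,\varepsilon)$ from \eqref{eq:M=M(n,eps)}. Items (ii) and (iii) come from substituting $M$ into the corresponding bounds there. The real work is item (i): showing that the tail term $6\sqrt{2n\,(2n)!!}\,(3(M+1))^n e^{-M^2/4}$ is at most $6\varepsilon$. Adding the $\varepsilon$ from the compact part then gives $7\varepsilon$.

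Before applying the proposition I would confirm $M\ge 2$. This holds because $-\log\varepsilon\ge 1$ (we use $\log=\log_2$ and $\varepsilon<1/2$), so $M^2\ge 24$.

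\textbf{Item (i).} I would write $L:=(n+1)\log(2n)-\log\varepsilon$, so that $M^2=24L$ and
\[
e^{-M^2/4}=e^{-6L}\le 2^{-6L}=\varepsilon^{6}(2n)^{-6(n+1)} .
\]
The prefactor I bound crudely. From $(2n)!!\le (2n)^n$ we get $\sqrt{2n\,(2n)!!}\le (2n)^{(n+1)/2}$. Using $M+1\le 2M$, we have $(3(M+1))^n\le 6^n M^n=6^n(24L)^{n/2}$. It therefore suffices to show
\[
(2n)^{(n+1)/2}\,6^n\,(24L)^{n/2}\,\varepsilon^{6}(2n)^{-6(n+1)}\le\varepsilon .
\]
Since $\varepsilon^5\le 1$, I only need $6^n(24L)^{n/2}\le (2n)^{5.5(n+1)}\varepsilon^{-5}$. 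Split $L\le 2\max\{(n+1)\log(2n),-\log\varepsilon\}$ and treat the two cases separately.
\begin{itemize}
\item If the $n$-term dominates, $(48(n+1)\log(2n))^{n/2}$ is polynomial-type in $n$ to the power $n/2$. Together with $6^n$ it is absorbed by $(2n)^{5.5(n+1)}$.
\item If $-\log\varepsilon$ dominates, I would use $\ell^{n/2}\le C_n 2^{\ell}$ with $\ell=-\log\varepsilon$, so $(48\ell)^{n/2}\le C'^{\,n} n^{n/2}\varepsilon^{-1}$. The factor $\varepsilon^{-1}$ is covered by $\varepsilon^{-5}$, and $C'^{\,n}n^{n/2}$ by $(2n)^{5.5(n+1)}$.
\end{itemize}
Small $n$, in particular $n=1$ where $\log(2n)=1$, I would check directly. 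Case $n=0$ is trivial, since $H_0$ is realized exactly. This constant-chasing is the main obstacle. The generous constant $24$ gives slack, so if the exponent bookkeeping is tight I would use $e^{-6L}$ rather than $2^{-6L}$, since $e>2$ supplies extra room.

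\textbf{Item (ii).} This is immediate from item (ii) of Proposition~\ref{prop:ExpressionRateUnivariateHermite} by substituting $M$.

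\textbf{Item (iii).} For size and depth, insert $\log M=\tfrac12\log(24L)$. Since $\log L\lesssim \log n+\log\log(2n)+\log(-\log\varepsilon)\lesssim \log n+\log(-\log\varepsilon)+1$, the term $n^2\log M$ is $\lesssim n^2(\log n+\log(-\log\varepsilon))+n^2$. The $n^2$ is absorbed into the constant times $1+n^2\log n$ for $n\ge2$ (and $n=1$ is handled trivially). The $n\log(\varepsilon^{-1})$ term carries over unchanged. The $\mpar$ bound is the proposition's bound $(1+3^n)M^{n+1}$ written out with $M^{n+1}=(24L)^{(n+1)/2}$.
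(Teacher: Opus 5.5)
Your proposal is correct, and its skeleton matches the paper's. You specialize Proposition~\ref{prop:ExpressionRateUnivariateHermite}, reduce (i) to $\sqrt{2n\,(2n)!!}\,(3(M+1))^n e^{-M^2/4}\le\varepsilon$, and obtain (ii) and (iii) by substitution. Your handling of the leftover $n^2$ term, via $n^2\le n^2\log n$ for $n\ge 2$, is exactly the absorption the paper performs implicitly.

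Where you differ is in verifying the key inequality of (i). The paper stays in logarithmic form and splits $M^2/4\ge M^2/24+M^2/6$. The first piece cancels $(n+1)\log(2n)-\log\varepsilon$ by the definition of $M$. The second dominates $n\log(3(M+1))$ because $x\mapsto x^2/\log(3(x+1))$ is increasing. Since the polynomial factor is compared with $M$ itself, the paper needs no distinction between an $n$-dominated and an $\varepsilon$-dominated regime.

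You instead substitute $M^2=24L$, convert the Gaussian factor into $\varepsilon^{6}(2n)^{-6(n+1)}$, and then must control the residual $(24L)^{n/2}$, which mixes $n$ and $\varepsilon$. Hence your case split and the bound $\ell^{n/2}\le C_n2^{\ell}$. This costs more bookkeeping, but the slack is large and both cases close. With $C_n=(n/(2e\ln 2))^{n/2}$, the second case reduces to $21.4^n n^{n/2}\le (2n)^{5.5(n+1)}$, and the first to a similar elementary comparison.

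Your insistence on checking $n=1$ separately is well placed. The paper's last reduction discards the factor $(n+1)/n$, and the sufficient condition it then checks, $4\log(2n)\ge\log\bigl(3\sqrt{24(n+1)\log(2n)}+1\bigr)$, actually fails at $n=1$ (the ratio is about $0.9$). The inequality $M^2\ge 6n\log(3(M+1))$ it is meant to deliver does still hold there.
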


	{
	\begin{proof}
		We begin by showing item $(i)$. By Proposition
    \ref{prop:ExpressionRateUnivariateHermite} $(i)$, we only need to show that
    definition \eqref{eq:M=M(n,eps)} of $M(n,\varepsilon)$ implies $\sqrt{2n(2n)!!}(3(M+1))^n e^{-\frac{M^2}{4}} \leq \varepsilon$. Notice that $\sqrt{(2n)!!} \leq (2n)^n$ and therefore $\sqrt{2n(2n)!!} \leq (2n)^{n+1}$, which means it is sufficient to show that 
		\begin{equation}\label{eq:condition on M}
			- \frac{M^2}{4} + (n+1) \log(2n) + n \log(3(M+1)) - \log(\varepsilon) \leq 0.
		\end{equation}
		The definition of $M$ implies $\frac{M^2}{24} \geq (n+1) \log(2n) - \log(\varepsilon)$, which yields
		\begin{equation}\label{eq: condition on M (2)}
			-\frac{M^2}{24} + (n+1) \log(2n) - \log(\varepsilon) \leq 0.
		\end{equation}
		Now, we show 
		\[
		-\frac{M^2}{6} + n \log(3(M+1)) \leq 0,
		\]
		which then implies \eqref{eq:condition on M}, since $\frac{1}{24} + \frac{1}{6} \leq \frac{1}{4}$. The last inequality can equivalently be written as $\frac{M^2}{\log(3(M+1))} \geq 6n$. The function $x \mapsto \frac{x^2}{\log(3(x+1))}$ is monotonically increasing for $x \geq 1$ and by \eqref{eq:M=M(n,eps)} it holds that $M \geq \sqrt{24 (n+1) \log(2n)} \eqqcolon x$. Therefore
		\[
		\frac{M^2}{\log(3(M+1))} \geq \frac{x^2}{\log(3(x+1))} \geq 6n \frac{4 \log(2n)}{\log(3 \sqrt{24 (n+1) \log(2n)}+1)}.
		\]
		Now, it is sufficient to show that $\frac{4 \log(2n)}{\log(3 \sqrt{24 (n+1) \log(2n)}+1)} \geq 1$ for all $n \in \N$. Since this term is monotonically increasing in $n \geq 1$, it suffices to check the case $n = 1$, which can be verified directly.
Combining this with \eqref{eq: condition on M (2)}, 
we obtain \eqref{eq:condition on M}. 
Applying this result to Proposition \ref{prop:ExpressionRateUnivariateHermite} $(i)$ 
yields $\|H_n - \widetilde{H}_{n,\varepsilon}\|_{H^1(\R, \mu)} \leq 7 \varepsilon$. 

		We now continue with the bounds on the size and depth of $\widetilde{H}_{n,\varepsilon}$. 
The following calculation is performed for $\size(\widetilde{H}_{n,\varepsilon})$. 
The bound on $\depth(\widetilde{H}_{n,\eps})$ follows completely analogously 
(compare Proposition \ref{prop:ExpressionRateUnivariateHermite} item $(iii)$). 
Now, inserting the choice of $M(n,\varepsilon)$ from \eqref{eq:M=M(n,eps)} 
into the size bound from Proposition \ref{prop:ExpressionRateUnivariateHermite} $(iii)$,
we get
		\[
		\size(\widetilde{H}_{n,\varepsilon}) 
		\leq
		C \, \left( 1 + \frac{1}{2}n^2 \log(24 ((n+1) \log(2n)-\log(\varepsilon))) + n \log(\varepsilon^{-1}) \right).
		\]
For $a,b\geq 1$ it holds
\[
	\log(a+b) \leq 1 + \log(a) + \log(b)	
\]
due to $a+b \leq 2ab$ for $a \geq 1$ and $b\geq 1$ and the monotonicity of the logarithm.
		
		Now set $a \coloneqq 24 (n+1) \log(2n)$ and $b \coloneqq -24 \log(\varepsilon)$, which yields
		\begin{align*}
			\size(\widetilde{H}_{n,\varepsilon})
			&\leq 
			C \, \left( 1 + \frac{1}{2}n^2 (1 + \log(24(n+1)) + \log(\log(2n)) + \log(-24\log(\varepsilon))) + n \log(\varepsilon^{-1}) \right) \\
			&\leq C \, \left( 1 + n^2 ( \log(n) + \log(-\log(\varepsilon))) + n \log(\varepsilon^{-1})  \right)
		\end{align*}
		with a constant $C$ independent of $n$ and $\varepsilon$. 
    This shows the bound on the size (and the depth) of $\widetilde{H}_{n, \epsilon}$from item $(iii)$. 
		Inserting the definition of $M(n,\varepsilon)$ into item $(ii)$ of
    Proposition \ref{prop:ExpressionRateUnivariateHermite},
    and the weight bound from 
    item $(iii)$ Proposition \ref{prop:ExpressionRateUnivariateHermite}
    finishes the proof.
	\end{proof}
}
		Emulating univariate Hermite polynomials using RePU-NN is significantly less involved compared to the ReLU-case.
		\begin{proposition}\label{prop:RePUHermiteUnivariate}
			Let $n \in \N_0, \, q \in \N, \, q \geq 2$. 
                        Then there exists a $\sigma_q$-network $\widetilde{H}_n \colon \R \to \R$ such that 
			\[
			\widetilde{H}_n (x) = H_n (x)
			\]
			for all $x \in \R$.  
			Furthermore, there exists a constant $C = C(q) > 0$ only dependent on $q$ such that 
			\begin{equation}
				\depth(\widetilde{H}_n) \leq C n, \quad 
				\size(\widetilde{H}_n) \leq C n, \quad 
				\mpar(\widetilde{H}_n) \leq C.
			\end{equation}
		\end{proposition}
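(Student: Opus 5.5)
The plan is to apply Proposition \ref{prop:RePUPolynomialEmulation} directly to $H_n$, written in the monomial basis, and read off all the required bounds from the coefficient estimate of Lemma \ref{lemma:CoeffSummability}.

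\textbf{Step 1 (trivial case).} For $n=0$ we have $H_0\equiv 1$. This constant is realized exactly by a $\sigma_q$-network whose only nonzero parameter is the output bias, equal to $1$. It has $O(1)$ size, depth, and weights, so all claimed bounds hold once $C\ge 1$ (reading $Cn$ as $C\max\{n,1\}$, or absorbing this case into the constant).

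\textbf{Step 2 (the case $n\ge 1$).} Write $H_n(x)=\sum_{i=0}^n c_{n,i}x^i$ with the coefficients $c_{n,i}$ given explicitly by \eqref{eq:HermitExplicit}. Proposition \ref{prop:RePUPolynomialEmulation} applies to $p=H_n$ and gives a $\sigma_q$-network $\tilde p$ with $\tilde p(x)=H_n(x)$ for all $x\in\R$. Its bounds are
\[
\depth(\tilde p)\le C(\log(c_\infty)+n),\qquad \size(\tilde p)\le C(\log(c_\infty)+n),\qquad \mpar(\tilde p)\le C,
\]
where $c_\infty=\max\{1,\max_i|c_{n,i}|\}$ and $C=C(q)$.

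\textbf{Step 3 (controlling $\log c_\infty$).} By Lemma \ref{lemma:CoeffSummability},
\[
c_\infty\le \max\Bigl\{1,\sum_i |c_{n,i}|\Bigr\}\le 3^n,
\]
so $\log(c_\infty)\le n\log 3$. Substituting this into the bounds of Step 2 gives
\[
\depth(\tilde p)\le C(1+\log 3)\,n,\qquad \size(\tilde p)\le C(1+\log 3)\,n.
\]
After renaming the constant, these are the claimed linear bounds, and the weight bound $\mpar\le C(q)$ is inherited directly. Setting $\widetilde H_n:=\tilde p$ completes the construction.

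No step is a genuine obstacle, because everything reduces to the exact polynomial emulation result. The point that needs care is Step 3: the coefficients of $H_n$ must grow only exponentially in $n$, so that the $\log(c_\infty)$ term in Proposition \ref{prop:RePUPolynomialEmulation} stays $O(n)$ and the bounds remain linear. This is exactly what Lemma \ref{lemma:CoeffSummability} supplies.
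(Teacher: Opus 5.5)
Your proposal is correct and follows the paper's proof: apply Proposition~\ref{prop:RePUPolynomialEmulation} to $H_n$, and use Lemma~\ref{lemma:CoeffSummability} to get $c_\infty\le 3^n$, hence $\log(c_\infty)\le n\log 3$, which gives the linear bounds. Your separate handling of $n=0$ is a small point the paper's proof glosses over. That case lies outside the hypotheses of Proposition~\ref{prop:RePUPolynomialEmulation}, and there the bound $Cn$ must be read as $C\max\{n,1\}$.
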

		
		\begin{proof}
		Let $c \in \R^{n+1}$ be the vector containing the coefficients from \eqref{eq:HermitExplicit} 
                and define $c_\infty \coloneq \max\{1, \|c\|_\infty\}$. 
                Lemma \ref{lemma:CoeffSummability} shows that $\|c\|_\infty \leq \|c\|_{\ell^1} \leq 3^n$. 
                Now Proposition \ref{prop:RePUPolynomialEmulation} implies the desired result.
		\end{proof}

	\subsection{Multivariate Hermite Polynomials} 
		We proceed with ReLU NN expression bounds for multivariate 
        tensorized Hermite polynomials.  
        We follow the proof of \cite[Theorem 3.9]{SZ21_982} with minor adaptations.
        Recall that the maximal polynomial degree $m(\Lambda)$ and the effective
        dimension $d(\Lambda)$ of a multiindex $\Lambda$ are introduced in
        Definition~\ref{def:dmLambda}. 
        As we did previously, in what follows we identify the NNs
          $\widetilde{H}_{\varepsilon, \mathbf{\bsnu}}$ with functions 
          $\R^\N \to \R$ that only depend on the variables with indices in $\supp\bsnu$.
		\begin{proposition}\label{prop:ExpressionRatesMultHermite.}
			Let $\Lambda \subseteq \calF$ be finite and downward closed. 
                 Then, for every $\varepsilon \in (0, {1/2})$ 
                 there exists a ReLU NN 
                 $\Phi = \{\widetilde{H}_{\varepsilon, \mathbf{\bsnu}}\}_{\mathbf{\bsnu} \in \Lambda} \colon \R^{\left|\supp{\Lambda}\right|} 
                   \to \R^{|\Lambda|}$ depending solely on the variables $(y_i)_{i \in \supp{\Lambda}}$, 
                   such that
		\begin{equation*}
				\underset{\mathbf{\bsnu} \in \Lambda}{\max} \|H_{\mathbf{\bsnu}} - \widetilde{H}_{\varepsilon, \mathbf{\bsnu}}\|_{{H^1(\R^\N; \bsmu)}} \leq \varepsilon,
		\end{equation*}
		and there exists a positive constant $C$ (independent of $m(\Lambda), d(\Lambda)$, 
                and of $\varepsilon \in (0, {1/2})$) 
                such that 
			\begin{align*}
				\size(\Phi) &\leq C (1 + |\Lambda|m(\Lambda)^3 \log(1+m(\Lambda)){\log(1+d(\Lambda))}d(\Lambda)^{{3}}\log(\varepsilon^{-1})), \\
				\depth(\Phi) &\leq C (1 + d(\Lambda)^{{2}} \log(1+d(\Lambda))^{{2}}m(\Lambda)^2\log(1+m(\Lambda))\log(\varepsilon^{-1})), \\
				\mpar(\Phi) &\leq {C \, 3^{m(\Lambda)} (m(\Lambda)\log(1+m(\Lambda))d(\Lambda)\log(\varepsilon^{-1}))^{\frac{m(\Lambda)+1}{2}}} .
			\end{align*}
		\end{proposition}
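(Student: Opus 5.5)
We follow the proof of \cite[Theorem 3.9]{SZ21_982}. Each multivariate Hermite polynomial is a product of at most $d(\Lambda)$ univariate factors, so we emulate all univariate polynomials $H_n(y_i)$, $i\in\supp\Lambda$, $n\le m(\Lambda)$, in parallel, and then multiply the relevant factors with the product network of Proposition \ref{prop:H1MultWithNNumbers}. Concretely, set $d:=d(\Lambda)$, $m:=m(\Lambda)$, and let $\varepsilon'$ be a univariate accuracy to be fixed below. For each pair $(i,n)$ with $i\in\supp\Lambda$ and $1\le n\le m$ that actually occurs as some $\nu_i$ with $\bsnu\in\Lambda$, take $\widetilde H_{n,\varepsilon'}(y_i)$ from Corollary \ref{cor:UnivariateBoundsSimplyfied}. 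By downward closedness, the number of such pairs is at most $|\Lambda|$. These subnetworks have size and depth $O(m^2(\log m+\log\log(1/\varepsilon'))+m\log(1/\varepsilon'))$, and they are uniformly bounded by $D:=1+(3M(m,\varepsilon'))^m$. We pad them to equal depth with identity networks (Lemma \ref{lemma:Id-ReLU}) and parallelize (Definition \ref{def:Parallelization}). Then, for each $\bsnu\in\Lambda$, we apply $\widetilde{\prod}_{|\bsnu|_0,D,\varepsilon'}$ to the outputs $\{\widetilde H_{\nu_i,\varepsilon'}(y_i)\}_{i\in\supp\bsnu}$ and combine everything by sparse concatenation (Definition \ref{def:sparseConc}).

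For the error, we telescope
\[
\prod_i H_{\nu_i}-\widetilde{\prod}\big(\widetilde H_{\nu_i}\big)=\Big(\prod_i H_{\nu_i}-\prod_i\widetilde H_{\nu_i}\Big)+\Big(\prod_i\widetilde H_{\nu_i}-\widetilde{\prod}(\widetilde H_{\nu_i})\Big).
\]
The second term is bounded pointwise, together with its partial derivatives via the chain rule, using Proposition \ref{prop:H1MultWithNNumbers}. The partial derivatives pick up a factor $\sup|\widetilde H'_{\nu_i}|$, which is controlled by the Lipschitz bound coming from Lemma \ref{lemma:truncationNN}; this gives a factor polynomial in $D$. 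For the first term we use the product structure of $\bsmu$: $L^2_\bsmu$ norms of tensor products factorize, so each summand of the telescoping sum is bounded by $7\varepsilon'$ times products of $\|H_{\nu_i}\|_{H^1}$ and $\|\widetilde H_{\nu_i}\|_{H^1}$. These are at most $(1+\sqrt m)(1+7\varepsilon')$, giving a growth factor $\lesssim(2\sqrt m)^{d}$. The derivative $\partial_{y_i}$ acts on a single factor only, so the same factorization applies. Choosing
\[
\varepsilon'=\varepsilon\,\big(C d\,(2\sqrt m)^{d}D^{d}\big)^{-1}
\]
gives total error at most $\varepsilon$. The resulting logarithm is $\log(1/\varepsilon')\lesssim \log(1/\varepsilon)+d\,m\log(m\log(1/\varepsilon))$. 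Since $D$ itself depends on $\varepsilon'$, this is an implicit equation, but only doubly logarithmically, so we resolve it by taking the bound with an enlarged constant.

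The size count is then routine. It consists of $|\Lambda|$ univariate networks, each of size $\lesssim m^2\log(1/\varepsilon')$, plus $|\Lambda|$ product networks, each of size $\lesssim d(\log d+d\log D+\log(1/\varepsilon'))$, where $\log D\lesssim m\log(m\log(1/\varepsilon))$. Substituting $\log(1/\varepsilon')$ yields at most $|\Lambda|\,m^3 d^3\log(1+m)\log(1+d)\log(1/\varepsilon)$ up to constants. The depth is the maximal univariate depth plus the product depth $\log(d)\log(D^d/\varepsilon')$, which gives the stated depth bound.

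The main obstacle is the weight bound. Proposition \ref{prop:H1MultWithNNumbers} and the identity networks have $\mpar\le1$, so $\mpar(\Phi)$ is dominated by $\mpar(\widetilde H_{n,\varepsilon'})$ from Corollary \ref{cor:UnivariateBoundsSimplyfied}, namely $(1+3^m)(24((m+1)\log(2m)-\log\varepsilon'))^{(m+1)/2}$. Inserting the above bound on $\log(1/\varepsilon')$ gives $C\,3^m(m\log(1+m)\,d\log(1/\varepsilon))^{(m+1)/2}$. This matches the claim, but we must check that the absorption of the $\log\log$ terms into $m\log(1+m)d\log(1/\varepsilon)$ uses $\varepsilon<1/2$ consistently.
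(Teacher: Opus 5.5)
Your proposal is correct and is essentially the paper's proof. It uses the same parallel univariate approximants from Corollary~\ref{cor:UnivariateBoundsSimplyfied} feeding the product networks of Proposition~\ref{prop:H1MultWithNNumbers}, the same two-term error split with a telescoping/tensor-factorization bound for the first term, the same size/depth bookkeeping, and $\mpar(\Phi)$ dominated by the univariate truncation networks; constants are absorbed inside the power $(m+1)/2$ exactly as loosely as in the paper. The differences are bookkeeping only: the paper decouples the tolerances, with $\eps_1=\eps/\sqrt{md}$ for the products (consistent with controlling the chain-rule factor through $\|\widetilde H'_{\nu_i}\|_{L^2(\mu)}\le\frac12+\sqrt{1+m}$ rather than a supremum) and an explicit $\eps_2$ for the univariate factors, so $D$ never feeds back and no implicit equation arises; it also uses $m|\supp\Lambda|$ univariate networks instead of your $\le|\Lambda|$; and your coupled choice with $D^d$ is wasteful, since resolving it actually yields a $dm\log(dm)$ term in $\log(1/\eps')$, which is harmless only because $d(\Lambda)\le m(\Lambda)$.
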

		
		\begin{proof}
			Fix $\varepsilon \in (0, {1/2})$ and set $m \coloneqq m(\Lambda)$ and $d \coloneqq d(\Lambda)$.
Without loss of generality, assume that $m \geq 1$ and $d \geq 1$. 
Otherwise, we have $\Lambda = \emptyset$ or $\Lambda = \{\mathbf{0}\}$ and the proof concludes immediately. 
Take $M$ as defined in \eqref{eq:M=M(n,eps)} and set 
			\begin{equation*}
				D \coloneqq 1 + (3M)^m.
			\end{equation*}
	Now let $\eps_1 := \eps/\sqrt{md}$ and choose $\eps_2 > 0$ 
such that $d (\sqrt{1 + m})^{d-1} \eps_2 (\frac{1}{2} + \sqrt{1+m})^{d-1} \leq \varepsilon.$
	We start by defining $\widetilde{H}_{\varepsilon, \mathbf{\nu}}$ and showing that $\|H_{\mathbf{\nu}} - \widetilde{H}_{\varepsilon, \mathbf{\nu}}\|_{H^1(\R^\N;\, \bsmu)} \leq \varepsilon$. 
	Define $H_{\mathbf{0}} \coloneqq 1$, and for $\mathbf{0} \neq \mathbf{\nu} \in \Lambda$, 
		\begin{equation*}
			\widetilde{H}_{\eps_2, \mathbf{\nu}}(x) \coloneqq \widetilde{\prod}_{|\mathbf{\nu}|_0, D, \eps_1} ((\widetilde{H}_{\eps_2, \nu_j}(x_j))_{j \in \supp{\mathbf{\nu}}}).
		\end{equation*}
		We can now decompose the overall approximation error into two contributions. 
For $\mathbf{0} \neq \mathbf{\nu} \in \Lambda$, we have
			\begin{equation}\label{eq: error decomposition}
				\begin{aligned}
					\|H_{\mathbf{\nu}} - \widetilde{H}_{\eps_2, \mathbf{\nu}}\|_{H^1(\R^\N;\, \bsmu)} 
					&\leq \left\| H_{\mathbf{\nu}} - \prod_{j \in \supp{\mathbf{\nu}}} \widetilde{H}_{\eps_2, \nu_j} \right\|_{H^1(\R^\N;\, \bsmu)} \\
					&+ \left\| \prod_{j \in \supp{\mathbf{\nu}}} \widetilde{H}_{\eps_2, \nu_j} - \widetilde{\prod}_{|\mathbf{\nu}|_0, D, \varepsilon_1}((\widetilde{H}_{\eps_2, \nu_j})_{j \in \supp{\mathbf{\nu}}}) \right\|_{H^1(\R^\N;\, \bsmu)}.
				\end{aligned}
			\end{equation}
			By Corollary \ref{cor:UnivariateBoundsSimplyfied} $(ii)$ we have $\sup_{x \in \R}|\widetilde{H}_{\eps_2, \nu_j}(x)| \leq 1 + (3M)^{\nu_j} \leq D$ for all $j \in \supp{\mathbf{\nu}}$. Now, Proposition \ref{prop:H1MultWithNNumbers} implies 
			\[
			\left\|\prod_{j \in \supp{\mathbf{\nu}}} \widetilde{H}_{\eps_2,\nu_j} - \widetilde{H}_{\eps_2, \mathbf{\nu}}\right\|_{H^1(\R^\N;\, \bsmu)} \leq \varepsilon.
			\]
			Next, we bound the first term in \eqref{eq: error decomposition} and compute
			\begin{align*}
				\left\| \prod_{j \in \supp{\mathbf{\nu}}} H_{\nu_j} - \prod_{j \in \supp{\mathbf{\nu}}} \widetilde{H}_{\eps_2, \nu_j} \right\|_{H^1(\R^\N;\, \bsmu)} 
				&\leq \sum_{j \in \supp{\mathbf{\nu}}} \, \prod_{\substack{i\in\supp{\mathbf{\nu}}\\{i < j}}} \|H_{\nu_i}\|_{H^1(\R;\, \mu)} \\
				&\cdot \|H_{\nu_j} - \widetilde{H}_{\eps_2,\nu_j}\|_{H^1(\R;\,\mu)} 
				\cdot \prod_{\substack{i \in \supp{\mathbf{\nu}}\\{i > j}}} \|\widetilde{H}_{\eps_2,\nu_i}\|_{H^1(\R;\,\mu)}.
			\end{align*}
			For all $i$ it holds that $\|H_{\nu_i}\|_{L^2(\R,\mu)} = 1$ and therefore
      $\|H_{\nu_i}\|_{H^1(\R;\,\mu)} = \sqrt{1 + \nu_i} \leq \sqrt{1+m}$. It
      also holds that $\|H_{\nu_i} - \widetilde{H}_{\eps_2, \nu_i}\|_{H^1(\R;\,\mu)} \leq \eps_2$ by Corollary \ref{cor:UnivariateBoundsSimplyfied}, which implies $\|\widetilde{H}_{\eps_2, \nu_i}\|_{H^1(\R;\,\mu)} \leq \eps_2 + \sqrt{1+m} \leq \frac{1}{2} + \sqrt{1+m}$, since $\eps_2 \leq \varepsilon < \frac{1}{2}$. This leads to 
			\begin{align*}
				\left\| \prod_{j \in \supp{\mathbf{\nu}}} H_{\nu_j} - \prod_{j \in \supp{\nu}} \widetilde{H}_{\eps_2, \nu_j} \right\|_{H^1(\R^\N;\, \bsmu)} 
				&\leq
				|\mathbf{\nu}|_0\left(\sqrt{1+m}\right)^{|\mathbf{\nu}|_0-1} \eps_2 \left(\frac{1}{2} + \sqrt{1+m}\right)^{|\mathbf{\nu}|_0-1} \\
				&\leq d \left(\sqrt{1+m}\right)^{d-1} \eps_2 \left(\frac{1}{2} + \sqrt{1 + m}\right)^{d-1} \\
				&\leq \varepsilon.
			\end{align*}
			Now, we construct $\Phi = \{\widetilde{H}_{\varepsilon, \mathbf{\nu}}\}_{\mathbf{\nu}\in\Lambda}$ and derive bounds on the size, depth and weights of $\Phi$. Let $\Phi_1 \colon \R^{\left|\supp{\Lambda}\right|} \to \R^{m \left|\supp{\Lambda}\right|}$, with output 
			\begin{equation}\label{def of Psi_1}
				\Phi_1(\mathbf{y}) \coloneqq \left\{ \widetilde{H}_{\eps_2, j}(y_i) \right\}_{i \in \supp{\Lambda}, j \in \{1,\dots,m\}}.
			\end{equation}
			By the definition of $\eps_2$ we have 
			\begin{align*}
				\log(\eps_2) 
				&\leq 
				\log(\varepsilon) - \log(d) + \frac{1-d}{2}\log(1+m) + (1-d) \log\left(\frac{1}{2} + \sqrt{1+m}\right) \\
				&\leq 
				C \, ( \log(\varepsilon) - \log(d) + (1-d)\log(1+m) + (1-d) )
			\end{align*}
			with a constant $C > 0$ independent of $d, m$ and $\varepsilon$.
			Because of Corollary \ref{cor:UnivariateBoundsSimplyfied}, for each $j \leq m$,
			\begin{equation}\label{size bound for tilde H}
				\begin{aligned}
					\size(\widetilde{H}_{\eps_2, j}) 
					&\leq
					C \, ( 1 + j^2 (\log(j) + \log(-\log(\eps_2))) - j \log(\eps_2) ) \\
					&\leq 
					C \, ( 1 + j^2 ( \log(j) + \log(-\log(\varepsilon)) + \log(d) + \log(1+m) ) \\
					&\qquad + j\log(\varepsilon^{-1}) + j\log(d) + jd\log(1+m) + jd ) \\
					&\leq 
					C \, ( 1 + m^2\log(m) + m^2\log(-\log(\varepsilon)) + m^2\log(d) + m^2\log(1+m) + m\log(\varepsilon^{-1}) \\
					&\qquad + m\log(d) + md\log(1+m) + md ) \\
					&\leq 
					C \, ( 1 + m^2\log(1+m) + m^2\log(1+d) + m^2\log(-\log(\varepsilon)) + md - m\log(\varepsilon) ) \\
					&\eqqcolon C \, C_0(m,d,\varepsilon).
				\end{aligned}
			\end{equation}
			We also obtain 
			\begin{equation}\label{depth bound tilde H}
				\depth(\widetilde{H}_{\eps_2, j}) 
				\leq C C_0(m,d,\varepsilon), 
			\end{equation}
			since the size and depth bounds in Corollary
                        \ref{cor:UnivariateBoundsSimplyfied} are the same 
                        (with possibly different constant $C$). 
                        By invoking Lemma \ref{lemma:Id-ReLU} to extend the depth if necessary,
      we may assume that each
      $\widetilde{H}_{\tilde\varepsilon, j}(y_j)$ in the definition of
      $\Phi_1(\mathbf{y})$ has the same depth $\lceil C C_0(m,d,\varepsilon)\rceil$, 
      and the size is bounded by $C C_0(m,d,\varepsilon)$
      with
      a suitable constant $C > 0$ independent of $m, d$, and $\varepsilon$. 
			Now denote by $\Phi_2 \colon \R^{\sum_{\mathbf{\nu}\in\Lambda}|\mathbf{\nu}|_0} \to \R^{|\Lambda|}$ the network 
			\begin{equation}\label{def of Psi_2}
				\Phi_2 \coloneqq \left\{ \widetilde{\prod}_{|\mathbf{\nu}|_0, D, \varepsilon_1} \right\}_{\mathbf{\nu}\in\Lambda}.
			\end{equation}
			We now concatenate the networks $\Phi_2$ and $\Phi_1$ such that each subnetwork $\widetilde{\prod}_{|\mathbf{\nu}|_0, D,\varepsilon_1}$ obtains the inputs $(\widetilde{H}_{\eps_2,\nu_j}(y_j))_{j \in \supp{\mathbf{\nu}}} \in \R^{|\mathbf{\nu}|_0}$.
			This leads to the concatenated network 
			\begin{equation*}
				\Phi \coloneqq \left\{ \widetilde{\prod}_{|\mathbf{\nu}|_0, D, \varepsilon_1} ((\widetilde{H}_{\eps_2,\nu_j})_{j \in \supp{\mathbf{\nu}}}) \right\}_{\mathbf{\nu}\in\Lambda}.
			\end{equation*}
			The size, depth, and weight bounds can be obtained from Proposition \ref{prop:H1MultWithNNumbers}. We have 
			\begin{align*}
				\size(\widetilde{\prod}_{|\mathbf{\nu}|_0, D,\varepsilon_1}) 
				&\leq 
				C \, (1 + |\mathbf{\nu}|_0 \log(|\mathbf{\nu}|_0) + |\mathbf{\nu}|_0^2 \log(D) - |\mathbf{\nu}|_0\log(\varepsilon_1)) \\
				&\leq C \, (1 + d \log(d) + d^2\log(D) - d\log(\varepsilon_1)) \\
				&\leq 
				C \, (1 + d\log(d) + d^2\log(D) -d\log(\eps) + d\log(md)).
			\end{align*}
			By definition of $D$ and $M$, using $\log(1 + x) \leq 1 + \log(x)$ for $x \geq 1$, 
			\begin{align*}
				\log(D) 
				&\leq
				1 + m \log(3 \sqrt{24((m+1)\log(2m)-\log(\tilde\varepsilon))}) \\
				&\leq
				C \, (1 + m\log(m)+m\log(-\log(\tilde \varepsilon))) \\
				&\leq 
				C \, (1 + m\log(m)+m\log(-\log(\varepsilon))+md\log(1+m)).
			\end{align*}
			Thus
			\begin{multline}\label{size bound prod operator}
					\size(\widetilde{\prod}_{|\mathbf{\nu}|_0, D,\varepsilon_1})
					\leq C (1 + d\log(d) + d^2m\log(m) + d^2m\log(-\log(\varepsilon)) \\
					+ md^3\log(1+m) - d\log(\varepsilon) + d\log(md)) \eqqcolon C B_0(m,d,\varepsilon).
			\end{multline}
			Additionally, by Proposition \ref{prop:H1MultWithNNumbers}, we have
			\begin{equation}\label{depth bound prod operator}
				\begin{aligned}
					\depth(\widetilde{\prod}_{|\mathbf{\nu}|_0, D,\varepsilon_1})
					&\leq 
					C (1 + \log(d)(\log(d)+d\log(D)-\log(\varepsilon_1))) \\
					&\leq C (1 + \log(d)^2 + d\log(d)m\log(m) + d\log(d)m\log(-\log(\varepsilon)) \\
					&\qquad + \log(d)d^2m\log(1+m) -\log(d)\log(\varepsilon)+\log(d)\log(md)) \\
					&\eqqcolon C B_1(m,d,\varepsilon). 
				\end{aligned}
			\end{equation}
			As before, we assume that all networks $\left(
        \widetilde{\prod}_{|\mathbf{\nu}|_0, D,\varepsilon_1}
      \right)_{\mathbf{\nu}\in\Lambda}$ have the same depth by concatenating
      $\widetilde{\prod}_{|\mathbf{\nu}|_0, D,\varepsilon_1}$ with identity
      networks.
      This, again, yields a uniform (w.r.t. $\bsnu \in \Lambda$)
      bound on the size of $\tilde \prod_{|\bsnu|_0, D, \eps_1}$
      given by $C B_0(m,d,\varepsilon)$.
	We are now in a position to bound the size and depth of $\Phi$. 
      Downward closedness of $\Lambda$ implies $\left|\supp{\Lambda}\right| \leq |\Lambda|$. 
      Moreover, the number of required connections (i.e., nonvanishing network weights) between $\Phi_1$ and $\Phi_2$
      can be bounded by $O(m |\Lambda|)$. 
      By \eqref{def of Psi_1}, \eqref{size bound for tilde H} and \eqref{def of Psi_2}, 
      \eqref{size bound prod operator} there exists a constant $C > 0$ 
      such that for all $\varepsilon \in (0, 1/2)$ it holds that 
			\begin{align*}
				\size(\Phi)
				&\leq
				C (1 + \size(\Phi_1) + \size(\Phi_2) + m |\Lambda|) \\
				&\leq C (1 + (|\supp{\Lambda}|m)C_0(m,d,\varepsilon)+|\Lambda|B_0(m,d,\varepsilon)+m|\Lambda|) \\
				&\leq C\, |\Lambda| \, ( 1 + m^3\log(1+m) + m^3\log(1+d) + m^3\log(-\log(\varepsilon)) + m^2d \\
				&\qquad  -m^2\log(\varepsilon) + d\log(d) + d^2m\log(m) d^2m\log(-\log(\varepsilon)) + md^3\log(1+m) - d\log(\varepsilon)+d\log(md) ) \\
				&\leq C |\Lambda| \, ( 1 + m^3 \log(1+m)\log(1+d)\log(\varepsilon^{-1})d^3\log(md) ).
			\end{align*}
      In a similar way, by \eqref{depth bound tilde H} and \eqref{depth bound prod operator}
			\begin{align*}
				\depth(\Phi)
				&\leq 
				C (1 + \depth(\Phi_1) + \depth(\Phi_2) \\
				&\leq 
				C (1 + C_0(m,d,\varepsilon) + B_1(m,d,\varepsilon)) \\
				&\leq C \, ( 1 + m^2\log(1+m) m^2\log(1+d) + m^2\log(-\log(\varepsilon)) + md \\
				&\qquad - \log(\varepsilon) + \log(d)^2 + d\log(d)m\log(m) + d\log(d)m\log(-\log(\varepsilon)) \\
				&\qquad + \log(d)d^2m\log(1+m) -\log(d)\log(\varepsilon)+\log(d)\log(md) ) \\
				&\leq C \, (1 + m^2\log(1+m)\log(1+d)^2\log(\varepsilon^{-1})d^2\log(md))
			\end{align*}
	Since the approximation to the product operator can be realized with weights and biases bounded by $1$, 
	we obtain $\mpar(\Phi_2) \leq 1$. 
	From Corollary \ref{cor:UnivariateBoundsSimplyfied} we obtain 
			\begin{align*}
				\mpar(\widetilde{H}_{\eps_2, j})
				&\leq (1 + 3^{j}) \left( 24 ((j+1) \log(2j)-\log(\eps_2)) \right)^{\frac{j+1}{2}} \\
				&\leq C \, (1 + 3^j)(24 (j+1)\log(2j) + \log(\varepsilon^{-1}) + d\log(1+m))^{\frac{j+1}{2}} \\
				&\leq C \, 3^m (m \log(1+m) + \log(\varepsilon^{-1}) + d \log(1+m))^{\frac{m+1}{2}}
			\end{align*}
	for all $j \in \{1,\dots,m\}$. 
	This implies $\mpar(\Phi_1) \leq C \, 3^m (m \log(1+m) + \log(\varepsilon^{-1}) + d \log(1+m))^{\frac{m+1}{2}}$ 
        and finally
			\begin{align*}
				\mpar(\Phi) 
				&\leq \max\{\mpar(\Phi_1),\mpar(\Phi_2)\} \\
				&\leq C \, 3^m (m \log(1+m) + \log(\varepsilon^{-1}) + d \log(1+m))^{\frac{m+1}{2}} \\
				&\leq C \, 3^m (m\log(1+m)d\log(\varepsilon^{-1}))^{\frac{m+1}{2}}.
			\end{align*}
		\end{proof}
Now, Lemma \ref{lemma:normalizeWeights} implies Proposition \ref{prop:reluhermite}.

	\begin{proof}[Proof of Proposition \ref{prop:reluhermite}]
		Assume, as in the proof of Proposition \ref{prop:ExpressionRatesMultHermite.}, without loss of generality $m \coloneqq m(\Lambda)$ and $d \coloneqq d(\Lambda)$ to be at least one.
 Denote by $\widetilde{\Phi}$ a ReLU NN satisfying Proposition \ref{prop:ExpressionRatesMultHermite.}. 
 The logarithm of the maximum absolute value of its weights can be bounded by
		\begin{align*}
			\log\left(\mpar(\widetilde{\Phi})\right) &\leq C \, (1 + m\log(1+m) + m\log(1+d) + m\log(\varepsilon^{-1})) \\
			&\leq C \, (1 + m\log(1+m)\log(1+d)\log(\varepsilon^{-1})).
		\end{align*}
		Lemma \ref{lemma:normalizeWeights} implies that there exists a ReLU NN
    $\Phi$ such that $\Phi(x) = \widetilde\Phi (x)$ everywhere, with ${\rm
      mpar}(\Phi)\leq 1$ and
		\begin{align*}
			\depth(\Phi) &\leq C \, \left(\log\left(\mpar(\widetilde{\Phi})\right)+1\right) \, \left(\depth(\widetilde{\Phi})+1\right) \\
			&\leq 
			C \, (1 + m\log(1+m)\log(1+d)\log(\varepsilon^{-1})) \, (1 + d^2\log(1+d)^2m^2\log(1+m)\log(\varepsilon^{-1})\log(md)) \\
			&\leq 
			C \, (1 + m^3 \log(1+m)^2\log(1+d)^3\log(\varepsilon^{-1})^2d^2\log(md)).
		\end{align*}
    Finally,
		\begin{align*}
			\size(\Phi) &\leq C \, \left( \size(\widetilde{\Phi}) + |\Lambda| \left(\left(\depth(\widetilde{\Phi})+1\right)\log\left(\mpar(\widetilde{\Phi})\right) + 1\right) \right) \\
			&\leq 
			C \, (1 + |\Lambda|m^3\log(1+m)\log(1+d)d^3\log(\varepsilon^{-1})\log(md) \\
			&\qquad + |\Lambda|(1 + m^3\log(1+m)^2\log(1+d)^3\log(\varepsilon^{-1})^2d^2\log(md))) \\
			&\leq C \, (1 + |\Lambda|m^3\log(1+m)^2\log(1+d)^3d^3\log(\varepsilon^{-1})^2\log(md)),
		\end{align*}
    which concludes the proof.
	\end{proof}
{In the following statement, we consider the RePU analog to Proposition~\ref{prop:ExpressionRatesMultHermite.}.
	\begin{proposition}\label{prop:RePUMultHermite}
			Let $\Lambda \subseteq \calF$ be finite and downward closed and let $q \in \N, \, q \geq 2$. 
			Then there exists a $\sigma_q$-network 
			$\Phi \coloneq \{\widetilde{H}_\bsnu\} \colon \R^{\left|\supp \Lambda\right|} \to \R^{|\Lambda|}$ 
			(dependent solely on the variables $(y_i)_{i \in \supp \Lambda}$) 
			such that 
			\[
			\widetilde{H}_\bsnu (\bsy) = H_\bsnu (\bsy) \qquad \forall \, \bsnu \in \Lambda, \, \bsy \in \R^\N,
			\] 
			and there exists a constant $C = C(q) > 0$ 
			such that 
			\[
			\depth(\Phi) \leq C m(\Lambda)\log(d(\Lambda)+1), \quad
			\size(\Phi) \leq C |\Lambda| m(\Lambda)^2 d(\Lambda), \quad
			\mpar(\Phi) \leq C.
			\]
	\end{proposition}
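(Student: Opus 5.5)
The plan is to mirror the ReLU construction of Proposition~\ref{prop:ExpressionRatesMultHermite.}, replacing approximate emulation by exact emulation. For $q\ge2$ no truncation or $H^1$ error analysis is needed. Write $m\coloneqq m(\Lambda)$, $d\coloneqq d(\Lambda)$ and assume $m,d\ge1$; otherwise $\Lambda\subseteq\{\bszero\}$ and $\Phi$ is the constant network with output $1$.

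\textbf{First layer block $\Phi_1$.} For every $i\in\supp\Lambda$ and every $1\le n\le m$, Proposition~\ref{prop:RePUHermiteUnivariate} gives a $\sigma_q$-network $\widetilde H_n$ with $\widetilde H_n(y_i)=H_n(y_i)$. Each has depth and size $\le Cn\le Cm$ and $\mpar\le C$. Pad every $\widetilde H_n$ to the common depth $\lceil Cm\rceil$ by concatenating with the RePU identity networks (the lemma after Lemma~\ref{lemma:Id-ReLU}); this costs size $O(m)$ per network, uses weights bounded by $C(q)$, and the sparse concatenation of Definition~\ref{def:qsparseconcatenation} only inflates constants. Then parallelize (Definition~\ref{def:Parallelization}) to obtain
\[
\Phi_1\colon\R^{|\supp\Lambda|}\to\R^{m|\supp\Lambda|},\qquad \Phi_1(\bsy)=\bigl(H_n(y_i)\bigr)_{i\in\supp\Lambda,\,1\le n\le m}.
\]
By downward closedness $|\supp\Lambda|\le|\Lambda|$, so $\size(\Phi_1)\le C|\Lambda|m^2$, $\depth(\Phi_1)\le Cm$ and $\mpar(\Phi_1)\le C$.

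\textbf{Second block $\Phi_2$.} For each $\bszero\ne\bsnu\in\Lambda$ take the exact product network $\widetilde{\prod}_{|\bsnu|_0}$ from Proposition~\ref{prop:RePUMultOp}, since $|\bsnu|_0\le d$. Its depth is $\le C\log(d+1)$ and its size $\le Cd$. The case $|\bsnu|_0=1$ is an identity network, and $\bsnu=\bszero$ is handled by a bias, keeping $\mpar\le C$. Pad all of these to common depth with identity networks and parallelize, which gives size $\le C|\Lambda| d\log(d+1)$. Connect $\Phi_1$ to $\Phi_2$ by a selection matrix with entries in $\{0,1\}$ that routes $(H_{\nu_j}(y_j))_{j\in\supp\bsnu}$ to the subnetwork for $\bsnu$; it has at most $\sum_\bsnu|\bsnu|_0\le d|\Lambda|$ nonzero weights. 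Composing via Definition~\ref{def:qsparseconcatenation} gives $\Phi$ with $\widetilde H_\bsnu=H_\bsnu$ exactly. The resulting bounds are $\depth(\Phi)\le C(m+\log(d+1))\le Cm\log(d+1)$ (up to adjusting $C$ for $d=1$, where one uses $\log 2>0$), $\mpar(\Phi)\le C$, and
\[
\size(\Phi)\le C|\Lambda|\bigl(m^2+d\log(d+1)+d\bigr).
\]

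\textbf{Main obstacle.} The size bound must come out as $C|\Lambda|m^2d$. The $d\log(d+1)$ term from padding the product networks is not obviously dominated by $m^2d$. To fix this, I would pad the product networks more cleverly: all of them have depth $\le C\log(d+1)$, so pad each only to depth $\lceil C\log(d+1)\rceil$, at identity cost $\le C|\bsnu|_0\log(d+1)$. Then use $\log(d+1)\le d\le m^2d$, or better, $|\bsnu|_0\le|\bsnu|\le m$, so that $d\log(d+1)\le d\cdot m\le m^2 d$, which suffices. The padding of $\Phi_1$ is handled the same way: $m\cdot m$ per variable gives $m^2|\supp\Lambda|\le m^2d|\Lambda|$. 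The remaining steps are routine bookkeeping.
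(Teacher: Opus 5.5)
Your proposal is correct and follows essentially the same construction as the paper: a block $\Phi_1$ of exact univariate Hermite networks $\widetilde H_n(y_i)$ for $i\in\supp\Lambda$, $1\le n\le m$, is fed into parallel exact product networks $\widetilde{\prod}_{|\bsnu|_0}$, with the same size and depth accounting. Your treatment of depth padding and of the cases $|\bsnu|_0\le 1$ is more careful than the paper's. The padding cost $|\Lambda|d\log(d+1)$ you flagged as an obstacle is absorbed directly because $\log(d+1)\le d\le m$, so no cleverer padding is needed.
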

	\begin{proof}
	The network construction in this proof is analogous to the one in Proposition \ref{prop:ExpressionRatesMultHermite.}.
        We detail the construction for the convenience of the reader. 
        Since the constitution parts can now be realized exactly using RePU-networks, 
        we emulate the multivariate Hermite polynomials without error. \par
	Set $m \coloneq m(\Lambda)$ and $d \coloneq d(\Lambda)$. 
        Without loss of generality, assume that $m \geq 1$ and $d \geq 1$. 
        In this proof $C = C(q)> 0$ denotes a generic constant only dependent on $q$.
	We begin by defining $\widetilde{H}_\bsnu$. Set $H_\bszero \coloneq 1$, and for $\bszero \neq \bsnu \in \Lambda$, 
			\[
			\widetilde{H}_\bsnu(y) \coloneq \widetilde{\prod}_{|\bsnu|_0}((\widetilde{H}_{\nu_j}(y_j))_{j\in\supp \bsnu}).
			\]
			Due to Proposition \ref{prop:RePUHermiteUnivariate} and Proposition \ref{prop:RePUMultOp} we have
			\[
			H_\bsnu (\bsy) = \widetilde{H}_\bsnu (\bsy) \qquad \forall \, \bsnu \in \Lambda, \, \bsy \in \R^\N.	
			\]
			Next, we construct $\Phi = \{\widetilde{H}_\bsnu\}_{\bsnu \in \Lambda}$
      and derive upper bounds on its size, depth, and weight magnitude. 
      Set $\Phi_1 \colon \R^{\left|\supp \Lambda\right|} \to \R^{m \left|\supp \Lambda\right|}$ with output 
			\[
			\Phi_1(\bsy) = \left\{\widetilde{H}_j(y_i)\right\}_{i \in \supp \Lambda,\, j \in \{1,\dots,m\}}.
			\]
		Because of Proposition \ref{prop:RePUHermiteUnivariate}, for each $j \leq m$, we have 
		\begin{align*}
			\size(\widetilde{H}_j) &\leq C m.
		\end{align*}
		Now denote by $\Phi_2 \colon \R^{\sum_{\bsnu \in \Lambda}|\bsnu|_0} \to \R^{|\Lambda|}$ the network 
		\[
		\Phi_2 \coloneq \left\{\widetilde{\prod}_{|\bsnu|_0}\right\}_{\bsnu \in \Lambda}.
		\]
	We concatenate the networks $\Phi_2$ and $\Phi_1$ 
	such that each subnetwork  $\widetilde{\prod}_{|\bsnu|_0}$ 
	obtains the inputs $(\widetilde{H}_{\nu_j}(y_j))_{j \in \supp \bsnu} \in \R^{|\bsnu|_0}$.  
	From this we obtain the concatenated network
			\[
			\Phi \coloneq \left\{\widetilde{\prod}_{|\bsnu|_0}((\widetilde{H}_{\nu_j}))_{j \in \supp \bsnu}\right\}_{\bsnu \in \Lambda}.
			\]
	Next we estimate the size and depth of $\Phi_2$. 
	Proposition \ref{prop:RePUMultOp} implies that there is a constant $C>0$ such that
			\[
			\size\left(\widetilde{\prod}_{|\bsnu|_0}\right)  \leq C |\bsnu|_0 \leq C d,  \;\;
			\depth\left(\widetilde{\prod}_{|\bsnu|_0}\right) \leq C \log(|\bsnu|_0) \leq C \log(d) 
			\;.
			\]
			Now the size and depth of $\Phi$ can be bounded. 
			Downward closedness of $\Lambda$ implies $\left|\supp \Lambda\right| \leq |\Lambda|$. 
			Moreover, the number of required network connections
      (i.e., nonvanishing network weights) between $\Phi_1$ and $\Phi_2$ 
      can be bounded by $O(m|\Lambda|)$. 
			This implies 
			\begin{align*}
				\size(\Phi) &\leq C (1 + \size(\Phi_1) + \size(\Phi_2) + m |\Lambda|) \\
				&\leq C (m^2 |\supp \Lambda| + |\Lambda|d + m |\Lambda|) \\
				&\leq C (m^2 |\Lambda| + |\Lambda|d + m|\Lambda|) \\
				&\leq C |\Lambda|m^2d.
			\end{align*}
	For the depth we obtain
			\begin{align*}
				\depth(\Phi) &\leq C (1 + \depth(\Phi_1) + \depth(\Phi_2)) \leq C m\log(d+1).
			\end{align*}
	Since all the subnetworks in the construction of $\Phi$ have weights bounded by a constant $C = C(q) > 0$,
        the same holds true for $\Phi$ itself. 
	\end{proof}
	}

	\section{Auxiliary Results}
	\label{app:AuxiliaryResults/Definitions}

	We start with an integration by parts formula in the Gaussian setting which is 
        similar to what can be found in \cite[Lemma 10.1]{DPIntro06} and \cite[Lemma 9.1.1.]{lunardi2015}.
	\begin{lemma}\label{lemma:finiteIBP}
		Let $f \in C^1(\R) \cap H^1(\R; \mu_{0,\lambda})$ with $\mu_{0,\lambda} \coloneqq \calN(0,\lambda)$. Then it holds that
		\[
			\int_{\R} f'(x) \, {\rm d}\mu_{0,\lambda}(x) 
			=
			\int_{\R} \frac{x}{\lambda} f(x) \, {\rm d}\mu_{0,\lambda}(x).
		\]
	\end{lemma}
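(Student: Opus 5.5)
The plan is to reduce the identity to the Gaussian density and apply classical integration by parts on $\R$. Write $\rho_\lambda(x) = (2\pi\lambda)^{-1/2}e^{-x^2/(2\lambda)}$, so that $\dd\mu_{0,\lambda}(x) = \rho_\lambda(x)\dd x$ and $\rho_\lambda'(x) = -\frac{x}{\lambda}\rho_\lambda(x)$. The claim then reads $\int_\R f'\rho_\lambda \dd x = -\int_\R f\rho_\lambda'\dd x$. This is integration by parts with vanishing boundary terms, provided all integrals are finite and $f\rho_\lambda \to 0$ along suitable sequences $x\to\pm\infty$.

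The first step is integrability. Since $f, f'\in L^2(\R;\mu_{0,\lambda})$ and $\mu_{0,\lambda}$ is a probability measure, Cauchy--Schwarz gives $f'\in L^1(\mu_{0,\lambda})$. Likewise $\int |x f|\dd\mu_{0,\lambda} \le \|x\|_{L^2(\mu_{0,\lambda})}\|f\|_{L^2(\mu_{0,\lambda})} = \sqrt{\lambda}\,\|f\|_{L^2(\mu_{0,\lambda})} <\infty$. Hence both sides are well defined as Lebesgue integrals.

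The second step is integration by parts on $[-R,R]$. Because $f\in C^1$, we have
\[
\int_{-R}^R f'\rho_\lambda\dd x = f(R)\rho_\lambda(R) - f(-R)\rho_\lambda(-R) + \int_{-R}^R \frac{x}{\lambda}f\rho_\lambda\dd x .
\]
By dominated convergence, using the integrability from the first step, the two integrals converge to the full-line integrals as $R\to\infty$. Consequently the boundary term $f(R)\rho_\lambda(R)-f(-R)\rho_\lambda(-R)$ has a limit $L$ as $R\to\infty$.

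The main obstacle is showing $L=0$, since pointwise decay of $f\rho_\lambda$ is not given a priori. I would handle it via integrability. The function $g := f\rho_\lambda$ satisfies $g' = f'\rho_\lambda - \frac{x}{\lambda}f\rho_\lambda \in L^1(\R,\dd x)$ by the first step, so $g$ has finite limits at $\pm\infty$. Moreover $g \in L^1(\R,\dd x)$, because $\int|f|\rho_\lambda\dd x \le \|f\|_{L^2(\mu_{0,\lambda})}$. An integrable function with limits at $\pm\infty$ must have both limits equal to $0$. Hence $L = 0$, and letting $R\to\infty$ yields the asserted identity.
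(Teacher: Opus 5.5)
Your proof is correct and follows the same route as the paper: integration by parts against the density $\rho_\lambda$, using $\rho_\lambda' = -\frac{x}{\lambda}\rho_\lambda$. The paper only asserts that the boundary term vanishes because $f \in C^1(\R)\cap H^1(\R;\mu_{0,\lambda})$, whereas your observation that $f\rho_\lambda \in L^1(\R)$ and $(f\rho_\lambda)' \in L^1(\R)$ actually proves this.
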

	\begin{proof}
	Let $f \in C^1(\R) \cap H^1(\R; \mu_{0,\lambda})$. 
        By the usual integration by parts formula,
		\begin{align*}
			\int_\R f'(x) \, {\rm d}\mu_{0,\lambda}(x)
			=
			\frac{1}{\sqrt{2\pi\lambda}}\int_\R f'(x) e^{-\frac{x^2}{2\lambda}} \, {\rm d}x	
			=
			\frac{1}{\sqrt{2\pi\lambda}} \int_\R \frac{x}{\lambda}f(x)e^{-\frac{x^2}{2\lambda}} \, {\rm d}x 
			=
			\int_\R \frac{x}{\lambda} f(x) \, {\rm d}\mu_{0,\lambda}(x).
		\end{align*}
	Note that the boundary term vanishes since $f \in C^1(\R) \cap H^1(\R;\mu_{0,\lambda})$.
	\end{proof}
	\begin{lemma}\label{lemma:infiniteIBP}
Let $Q \in \calL(\calX)$ denote the covariance operator of the centered Gaussian measure $\gamma$
        on $(\calX, \calB(\calX))$ with eigenvalues and eigenfunctions $(\lambda_j,\phi_j)$.
	Let $f \in C^1(\calX) \cap L_\gamma^2(\calX)$ be such that
  $\frac{\partial}{\partial \phi_i}f \in L_\gamma^2(\calX)$ for all $i\in \N$.
        Then 
		\begin{equation*}
			\int_{\calX} \frac{\partial}{\partial \phi_i}f(X) \, {\rm d}\gamma(X) 
			=
			\int_\calX \frac{\langle X, \phi_i \rangle}{\lambda_i} f(X) \, {\rm d}\gamma(X)
		\end{equation*}
	for all $i \in \N$.
	\end{lemma}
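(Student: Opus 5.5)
The plan is to reduce the claim to the one-dimensional Gaussian integration by parts in Lemma~\ref{lemma:finiteIBP}. We do this by splitting $\gamma$ into a product of its marginal along $\phi_i$ and the remaining Gaussian measure, and then applying Fubini.

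Fix $i\in\N$ and write $\calX = \mathrm{span}\{\phi_i\}\oplus \calX_i'$, where $\calX_i' := \{\phi_i\}^\perp$. Let $P_i X := X - \langle X,\phi_i\rangle \phi_i$ and consider the map $T: X\mapsto (\langle X,\phi_i\rangle, P_iX)\in \R\times\calX_i'$. This map is a linear isometric isomorphism, so it is bimeasurable.

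\textbf{Step 1 (product structure).} We show $T_\#\gamma = \mu_{0,\lambda_i}\otimes\gamma_i'$, where $\gamma_i'$ is the centered Gaussian measure on $\calX_i'$ with covariance $Q|_{\calX_i'}$ (eigenpairs $(\lambda_j,\phi_j)_{j\neq i}$).
\begin{itemize}
\item This follows from the Karhunen--Lo\`eve representation in Remark~\ref{rmk:KL}: $X=\sum_j \xi_j\sqrt{\lambda_j}\phi_j$ with i.i.d.\ standard normal $\xi_j$.
\item Hence $\langle X,\phi_i\rangle=\sqrt{\lambda_i}\xi_i$, while $P_iX$ depends only on $(\xi_j)_{j\ne i}$.
\item The two components are therefore independent, with the stated laws.
\end{itemize}
We also note $\lambda_i>0$, since $w_i>0$.

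\textbf{Step 2 (fiberwise one-dimensional IBP).} For $X'\in\calX_i'$, set $g_{X'}(t):=f(t\phi_i+X')$.
\begin{itemize}
\item Since $f\in C^1(\calX)$, we have $g_{X'}\in C^1(\R)$ with $g_{X'}'(t)=\frac{\partial}{\partial\phi_i}f(t\phi_i+X')$.
\item By Step 1 and Tonelli,
\begin{equation*}
\int_{\calX_i'}\int_\R \bigl(|g_{X'}|^2+|g_{X'}'|^2\bigr)\,\dd\mu_{0,\lambda_i}\,\dd\gamma_i'(X') = \|f\|^2_{L^2_\gamma}+\Bigl\|\tfrac{\partial}{\partial\phi_i}f\Bigr\|^2_{L^2_\gamma}<\infty .
\end{equation*}
\item Consequently $g_{X'}\in C^1(\R)\cap H^1(\R;\mu_{0,\lambda_i})$ for $\gamma_i'$-a.e.\ $X'$.
\item For those $X'$, Lemma~\ref{lemma:finiteIBP} gives $\int_\R g_{X'}'\,\dd\mu_{0,\lambda_i}=\int_\R \frac{t}{\lambda_i}g_{X'}(t)\,\dd\mu_{0,\lambda_i}(t)$.
\end{itemize}

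\textbf{Step 3 (integrate over the fibers).} Integrate the identity from Step 2 in $X'$ against $\gamma_i'$, then apply Fubini and Step 1 to return to $\calX$. This requires both integrands to lie in $L^1(\gamma)$.
\begin{itemize}
\item $\frac{\partial}{\partial\phi_i}f\in L^2_\gamma\subset L^1_\gamma$, because $\gamma$ is a probability measure.
\item $X\mapsto \langle X,\phi_i\rangle f(X)\in L^1_\gamma$ by Cauchy--Schwarz, since $\langle\cdot,\phi_i\rangle\in L^2_\gamma$ with second moment $\lambda_i$, and $f\in L^2_\gamma$.
\end{itemize}
Fubini then yields exactly the claimed identity.

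The main point requiring care is Step 2. We must ensure that the hypotheses of Lemma~\ref{lemma:finiteIBP} hold fiberwise for almost every $X'$, which is handled by the Tonelli computation above. We also rely on that lemma's vanishing of the boundary terms for $C^1\cap H^1(\mu_{0,\lambda})$ functions, which we take as given. The measure-theoretic product decomposition in Step 1 is routine given Remark~\ref{rmk:KL}.
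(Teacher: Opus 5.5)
Your proof is correct and follows essentially the same route as the paper. Both arguments split $\gamma$ into a product with the one-dimensional marginal $\mu_{0,\lambda_i}$ along $\phi_i$, apply Lemma~\ref{lemma:finiteIBP} fiberwise, and reassemble via Fubini. The only difference in setup is that the paper works in sequence space via $\calJ_\calX$ with the three-factor splitting $\bsmu_\bslambda^{<i}\otimes\mu_{0,\lambda_i}\otimes\bsmu_\bslambda^{>i}$, whereas you use $\R\times\{\phi_i\}^\perp$. Your Tonelli argument (that almost every fiber lies in $C^1(\R)\cap H^1(\R;\mu_{0,\lambda_i})$) and your $L^1_\gamma$ checks justifying Fubini fill in details the paper leaves implicit.
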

	\begin{proof}
	First, define the coordinate maps $X \mapsto \xi_j(X) \coloneqq \langle X, \phi_j \rangle$ and set $\bsxi \coloneqq (\xi_j)_{j\in\N}$.
	Denote by $\calJ_\calX \colon \calX \to \ell^2(\N), \; X \mapsto (\langle X,
    \phi_j \rangle)_{j\in\N}$ the coordinate isomorphism for the ONB
    $\{\phi_j\}_{j\in\N} \subset \calX$. 
    Let $\bslambda \coloneqq (\lambda_j)_{j\in\N}$ be 
    the sequence of covariance eigenvalues and set $\bsmu_\bslambda \coloneqq
    \bigotimes_{j\in\N} \mu_{0,\lambda_j}$, $\bsmu_\bslambda^{<i} \coloneqq
    \bigotimes_{j=1}^{i-1}\mu_{0,\lambda_j}$, and $\bsmu_\bslambda^{>i} \coloneqq
    \bigotimes_{j=i+1}^{\infty} \mu_{0,\lambda_j}$, i.e., $\bsmu_\bslambda = \bsmu_\bslambda^{<i}
    \otimes \mu_{0,\lambda_i} \otimes \bsmu_\bslambda^{>i}$ and $\bsmu_\bslambda =
    (\calJ_\calX)_{\#}\gamma$.
    Furthermore, define $\bsxi_{<i}, \bsxi_{>i}$ such
    that $\bsxi = \bsxi_{<i} \times \xi_i \times \bsxi_{>i}$ and similarly
    consider the splitting $\R^\N = \R^{\N_{<i}} \times \R \times \R^{\N_{>i}}$.
    Finally, define $\tilde f  \coloneqq
    f\circ\calJ_\calX^{-1}:\ell^2(\N)\to \R$. 
A change of variables, Fubini's theorem and Lemma \ref{lemma:finiteIBP} imply
		\begin{align*}
			\int_\calX \frac{\partial}{\partial \phi_i} f(X) \, {\rm d}\gamma(X)
			&=
			\int_{\R^\N} \frac{\partial}{\partial \xi_i} \tilde f(\bsxi) \, {\rm d}\bsmu_\bslambda(\bsxi) \\
			&= 
			\int_{\R^{\N_{<i}}} \int_\R \int_{\R^{\N_{>i}}} \frac{\partial}{\partial \xi_i} \tilde f(\bsxi_{<i},\xi_i,\bsxi_{>i}) \, {\rm d}\bsmu_\bslambda^{>i}(\bsxi_{>i}) {\rm d}\mu_{0,\lambda_i}(\xi_i){\rm d}\bsmu_\bslambda^{<i}(\bsxi_{<i}) \\
			&=
			\int_{\R^{\N_{<i}}} \int_\R \int_{\R^{\N_{>i}}} 
                         \frac{\xi_i}{\lambda_i} \tilde f(\bsxi_{<i},\xi_i,\bsxi_{>i}) \, 
                        {\rm d}\bsmu_\bslambda^{>i}(\bsxi_{>i}) {\rm d}\mu_{0,\lambda_i}(\xi_i){\rm d}\bsmu_\bslambda^{<i}(\bsxi_{<i})  
                        \\
			&=
			\int_{\R^\N} \frac{\xi_i}{\lambda_i} \tilde f(\bsxi) \, {\rm d}\bsmu_\bslambda(\bsxi) \\
			&=
			\int_{\calX} \frac{\langle X, \phi_i \rangle}{\lambda_i} f(X) \, {\rm d}\gamma(X).
		\end{align*}
	\end{proof}
	Applying Lemma \ref{lemma:infiniteIBP} to the product $fg$ we get
	\begin{equation}\label{eq:GaussIBP}
		\int_\calX f(X) \frac{\partial}{\partial \phi_i}g(X) \, {\rm d}\gamma(X) 
		= 
		-\int_\calX g \frac{\partial}{\partial \phi_i}f(X) \, {\rm d}\gamma(X)
		+ 
		\int_\calX \frac{\langle X, \phi_i \rangle}{\lambda_i} f(X)g(X) \, {\rm d}\gamma(X), 
	\end{equation}
	for $f,g$ satisfying the above assumptions.
        
	We now prove Proposition \ref{prop:weightedParseval}. 
This result is a direct adaptation of \cite[Proposition
C.7.]{adcock2024learninglipschitzoperatorsrespect}, where a weighted
$\ell^2$-characterization of
weighted Sobolev spaces for
operators $\calG \colon \calX \to \calY$ is given.
We however assume the existence of (Fr\'{e}chet) derivatives in the strong sense.
	\begin{proof}[Proof of Proposition \ref{prop:weightedParseval}]
		Throughout the proof, fix the Gaussian measure $\gamma_s$ with $s \geq 0$ as in Definition \ref{def:GM} and denote for ease of notation the eigenvalues of its covariance by $\lambda_i \coloneqq \lambda_i(s)$.
		For $\bsnu \in \calF$ and $i\in\N$ define $\bsnu^{(i)} \coloneqq (\nu_k^{(i)})_{k\in\N} \in \calF$ as follows: If $\nu_i = 0$, set $\bsnu^{(i)} \coloneqq 0$ and if $\nu_i > 0$, set 
		\[
		\nu_k^{(i)} \coloneqq 
		\begin{cases}
			\nu_k - 1 \qquad &\text{if } k = i, \\
			\nu_k &\text{if } k \neq i.
		\end{cases}
		\]
		We make use of the ONB $\bsphi^{(r)} = \{\phi^{(r)}_i\}_{i\in\N}$ of $\calX^r$ 
                given by $\phi^{(r)}_i \coloneqq w_i^{r+1/2}\phi_i$
		and first aim at showing that
      \begin{equation}
        \label{eq:dphi-G}
		\frac{\partial}{\partial \phi^{(r)}_i} \calG(X) = \sum_{\bsnu\in\calF} w_i^{r+1/2} \sqrt{\frac{\nu_i}{\lambda_i}} g_{\bsnu,\bslambda} H_{\bsnu^{(i)},\bslambda}(X) \qquad \text{for all } i \in \N \text{ and } X \in \calX.
      \end{equation}
		Note that $\frac{\partial}{\partial \phi^{(r)}_i} \calG \in
    L_{\gamma_s}^2(\calX,\calY)$ since
    \begin{equation*}
    \|\frac{\partial}{\partial
      \phi^{(r)}_i}\calG(X)\|_\calY^2 \leq \sum_{i=1}^{\infty}
    \|\frac{\partial}{\partial \phi^{(r)}_i}\calG(X)\|_\calY^2 =
    \|D_{\calX^r}\calG(X)\|_{{\rm HS}(\calX^r,\calY)}^2
    \end{equation*}
     for all $X \in \calX$.
    Therefore, in order to prove \eqref{eq:dphi-G}, 
    it suffices to compute
    the Wiener-Hermite coefficients of $\frac{ \partial }{\partial \phi_i^{(r)}}\calG$ 
    and to show that
		\begin{equation}\label{eq:HermiteCoeffDerivative}
		\int_\calX \left(\frac{\partial}{\partial \phi^{(r)}_i}\calG(X)\right) H_{\bsnu^{(i)},\bslambda}(X) \, {\rm d}\gamma_s(X) = w_i^{r + 1/2} \sqrt{\frac{\nu_i}{\lambda_i}} g_{\bsnu,\bslambda}. 
		\end{equation}
Let $\ell \in \calY^*$ be arbitrary. 
We show that applying $\ell$ to the left-hand side and to the right-hand side of \eqref{eq:HermiteCoeffDerivative} 
yields the same result. We have
		\begin{align*}
			\ell \left(\int_\calX \left(\frac{\partial}{\partial \phi^{(r)}_i}\calG(X)\right) H_{\bsnu^{(i)},\bslambda}(X) \, {\rm d}\gamma_s(X) \right) 
			&=
			\int_\calX \ell\left(\frac{\partial}{\partial \phi^{(r)}_i}\calG(X)\right) H_{\bsnu^{(i)},\bslambda}(X) \, {\rm d}\gamma_s(X) \\
			&=
			\int_\calX \frac{\partial}{\partial \phi^{(r)}_i}(\ell \circ \calG)(X) H_{\bsnu^{(i)},\bslambda}(X) \, {\rm d}\gamma_s(X).
		\end{align*}
		The last equality holds because $\ell$ is linear and bounded. Next note that $\frac{\partial}{\partial \phi^{(r)}_i} = w_i^{r+1/2} \frac{\partial}{\partial \phi_i}$ and use the integration by parts formula from Equation \eqref{eq:GaussIBP}. For ease of notation define $f \coloneqq \ell \,\circ\, \calG \colon \calX \to \R$. Then,
		\begin{align*}
			&\int_\calX \left(\frac{\partial}{\partial \phi^{(r)}_i} f(X)\right) H_{\bsnu^{(i)},\bslambda}(X) \, {\rm d}\gamma_s(X) \\
      &\quad =
			w_i^{r+1/2} \int_\calX \left(\frac{\partial}{\partial \phi_i}f(X)\right) H_{\bsnu^{(i)},\bslambda}(X) \, {\rm d}\gamma_s(X) \\
			&\quad =
			-w_i^{r+1/2} \int_\calX f(X)\frac{\partial}{\partial \phi_i}
        H_{\bsnu^{(i)},\bslambda}(X) \, {\rm d}\gamma_s(X)
 + w_i^{r+1/2} \int_\calX \frac{\langle X,\phi_i \rangle}{\lambda_i} f(X) H_{\bsnu^{(i)},\bslambda}(X) \, {\rm d}\gamma_s(X) \\
			&\quad =
			w_i^{r+1/2} \int_\calX f(X) \prod_{j\in\N,j\neq i} H_{\nu_j} \left(\frac{\langle X,\phi_j \rangle}{\sqrt{\lambda_j}}\right) \\
			&\quad\qquad  \times 
			\left[ \frac{\langle X, \phi_i \rangle}{\lambda_i} H_{\nu_i - 1}\left(\frac{\langle X, \phi_i \rangle}{\sqrt{\lambda_i}}\right) - \sqrt{\frac{\nu_i-1}{\lambda_i}} H_{\nu_i-2}\left(\frac{\langle X, \phi_i \rangle}{\sqrt{\lambda_i}}\right)\right] {\rm d}\gamma_s(X).
		\end{align*}
		Now, \cite[Eq. (9.7)]{DPIntro06} implies 
		\begin{equation*}
			\frac{\langle X, \phi_i \rangle}{\lambda_i} H_{\nu_i-1}\left(\frac{\langle X, \phi_i \rangle}{\sqrt{\lambda_i}}\right) 
			= 
			\sqrt{\frac{\nu_i}{\lambda_i}} H_{\nu_i}\left(\frac{\langle X, \phi_i \rangle}{\sqrt{\lambda_i}}\right)
			+
			\sqrt{\frac{\nu_i-1}{\lambda_i}} H_{\nu_i-2}\left(\frac{\langle X, \phi_i \rangle}{\sqrt{\lambda_i}}\right),
		\end{equation*}
		which shows 
		\begin{equation*}
			\int_\calX \left(\frac{\partial}{\partial \phi^{(r)}_i}f(X)\right) H_{\bsnu^{(i)},\bslambda}(X) \, {\rm d}\gamma_s(X) 
			=
			w_i^{r+1/2} \sqrt{\frac{\nu_i}{\lambda_i}} \int_\calX f(X) H_{\bsnu,\bslambda}(X) \, {\rm d}\gamma_s(X).
		\end{equation*}
		From this we obtain
		\begin{align*}
			\ell\left( \int_\calX \left(\frac{\partial}{\partial \phi^{(r)}_i}\calG(X)\right)H_{\bsnu^{(i)},\bslambda}(X) \, {\rm d}\gamma_s(X) \right)
			&=
			w_i^{r+1/2} \sqrt{\frac{\nu_i}{\lambda_i}} \int_\calX (\ell \circ \calG)(X) H_{\bsnu,\bslambda}(X) \, {\rm d}\gamma_s(X) \\
			&= w_i^{r+1/2} \sqrt{\frac{\nu_i}{\lambda_i}} \ell\left(\int_\calX \calG(X) H_{\bsnu,\bslambda}(X)\,{\rm d}\gamma_s(X)\right) \\
			&= 
			w_i^{r+1/2} \sqrt{\frac{\nu_i}{\lambda_i}} \ell(g_{\bsnu,\bslambda}).
		\end{align*}
		Since $\ell \in \calY^*$ is arbitrary this implies \eqref{eq:HermiteCoeffDerivative} as a consequence of the Hahn-Banach theorem.
		Since in addition $\sum_{i=1}^n\|\frac{\partial}{\partial \phi^{(r)}_i}\calG(X)\|_\calY^2 \leq \|D_{\calX^r}\calG(X)\|_{{\rm HS}(\calX^r,\calY)}^2$ for all $n\in\N$ and $X \in \calX$, the dominated convergence theorem and Parseval's identity \eqref{eq:Parsev} imply
		\begin{align*}
			\int_\calX \|D_{\calX^r}\calG(X)\|_{{\rm HS}(\calX^r,\calY)}^2 \, {\rm d}\gamma_s(X)
			&=
			\int_\calX \sum_{i=1}^{\infty} \left\|\frac{\partial}{\partial \phi^{(r)}_i}\calG(X)\right\|_\calY^2 \, {\rm d}\gamma_s(X) \\
			&=
			\sum_{i=1}^{\infty} \int_\calX \left\|\frac{\partial}{\partial \phi^{(r)}_i} \calG(X)\right\|_\calY^2 \, {\rm d}\gamma_s(X) \\
			&=
			\sum_{i=1}^{\infty} \sum_{\bsnu\in\calF} w_i^{2r+1} \nu_i \lambda_i^{-1} \|g_{\bsnu,\bslambda}\|_\calY^2. 
		\end{align*}
		Finally, Parseval's identity \eqref{eq:Parsev} implies 
		\[
		\int_\calX \|\calG(X)\|_\calY^2 \, {\rm d}\gamma_s(X)
		+
		\int_\calX \|D_{\calX^r}\calG(X)\|_{{\rm HS}(\calX^r,\calY)}^2 \, {\rm d}\gamma_s(X) = \sum_{\bsnu\in\calF} \left(1+\sum_{i=1}^{\infty} w_i^{2r+1} \nu_i \lambda_i^{-1}\right) \|g_{\bsnu,\bslambda}\|_\calY^2.
		\]
	\end{proof}
	
We conclude with the proof of Lemma \ref{lemma:uGaussSobolevNorm}.
	\begin{proof}[Proof of Lemma \ref{lemma:uGaussSobolevNorm}]
	We fix the Gaussian measure $\gamma_s$ with $s \geq 0$ as in Definition \ref{def:GM} 
and denote by $\bsmu$ the infinite product Gaussian measure on $(\R^\N,\calB(\R^\N))$ as introduced in Section \ref{sec:Notat}. 
Furthermore, let $r \geq 0$ and let $T_s \colon \R^\N \to \calX$ with $T_s(\bsy) = \sum_{j=1}^\infty y_j \sqrt{\lambda_j(s)} \phi_j$. 
Note that $\gamma_s = (T_s)_\# \bsmu$ and $T_s^{-1}(X) = (\lambda_j(s)^{-1/2}\langle X,\phi_j \rangle_\calX)_{j\in\N}$. 
For $u \colon \R^\N \to \calY$ such that $\|u\|_{W_{\bsmu,r,s}^{1,2}(\R^\N,\calY)} < \infty$ 
we set $\calG \coloneqq u \circ T_s^{-1} \colon \calX \to \calY$. 
Through a change of variables and the definition of $\calG$ we obtain 
		\begin{equation*}
			\int_\calX \|\calG(X)\|_\calY^2 \, \dd\gamma_s(X) 
			=
			\int_{\R^\N} \|\calG(T_s(\bsy))\|_\calY^2 \, \dd\bsmu(\bsy)
			=
			\int_{\R^\N} \|u(\bsy)\|_\calY^2 \, \dd\bsmu(\bsy).
		\end{equation*}
	Next we consider the Hilbert-Schmidt norm $\|D_{\calX^r}\calG(X)\|_{{\rm HS}(\calX^r,\calY)}^2$ for $X \in \calX$ 
and the Fr\'{e}chet derivative restricted to $\calX^r \subset \calX$. 
Let $\phi_j^{(r)} \coloneqq w_j^{r+1/2}\phi_j$ for all $j\in\N$. 
The collection $\{\phi_j^{(r)}\}_{j\in\N}$ then defines an ONB of $\calX^r$. 
From this we obtain for all $X\in \calX$
		\begin{equation*}
			\|D_{\calX^r}\calG(X)\|_{{\rm HS}(\calX^r,\calY)}^2
			= 
			\sum_{j=1}^\infty \|D\calG(X)\phi_j^{(r)}\|_\calY^2
			=
			\sum_{j=1}^\infty w_j^{2r+1}\|D\calG(X) \phi_j\|_\calY^2.
		\end{equation*}
		Applying the chain rule we obtain
		\begin{equation*}
			D\calG(X) = Du(T_s^{-1}X) \circ DT_s^{-1}(X).
		\end{equation*}
		At $X = T_s(\bsy),$ with $\bsy \in \R^\N$, it holds that
		\begin{equation*}
			D\calG(T_s(\bsy)) = Du(\bsy) \circ DT_s^{-1}(T_s(\bsy)).
		\end{equation*}
		Next we compute the derivative of $T_s^{-1}$. 
    Since $T_s(\bsy) = \sum_{j=1}^\infty y_j
    \sqrt{\lambda_j(s)}\phi_j$ we have $DT_s(\bsy) \bse_j =
    \sqrt{\lambda_j(s)}\phi_j$, with
    $\bse_j \coloneqq (\delta_{ij})_{i\in\N} \in \R^\N$. 
    Therefore $DT_s^{-1}(T_s(\bsy)) \phi_j = \lambda_j(s)^{-1/2} \bse_j$. 
    From this we obtain 
		\begin{equation*}
			D\calG(T_s(\bsy))\phi_j = \lambda_j(s)^{-1/2} Du(\bsy)\bse_j = \lambda_j(s)^{-1/2}\, \partial_{y_j}u(\bsy).
		\end{equation*}
		Hence
		\begin{equation*}
			\|D_{\calX^r}\calG(X)\|_{{\rm HS}(\calX^r,\calY)}^2
			= 
			\sum_{j=1}^\infty w_j^{2r+1} \lambda_j(s)^{-1} \left\|\partial_{y_j}u(\bsy)\right\|_\calY^2.
		\end{equation*}
		This now finally shows that
		\begin{equation*}
			\|u\|_{W_{\bsmu,r,s}^{1,2}(\R^\N,\calY)}^2
			=
			\int_{\R^\N} \|u(\bsy)\|_\calY^2 \, \dd\bsmu(\bsy)
			+
			\int_{\R^\N} \sum_{j=1}^\infty w_j^{2r+1}\lambda_j(s)^{-1} \left\|\partial_{y_j} u(\bsy)\right\|_\calY^2 \, \dd\bsmu(\bsy).
		\end{equation*}
		
	To complete the proof, we consider the Wiener-Hermite pc coefficients 
        $g_{\bsnu,\bslambda(s)} \in \calY$ of $\calG = u \circ T_s^{-1}$ 
        and apply a change of variables:
		\begin{align*}
			g_{\bsnu,\bslambda(s)}
			&=
			\int_\calX \calG(X) H_{\bsnu, \bslambda(s)}(X) \, \dd\gamma_s(X) \\
			&=
			\int_{\R^\N} \calG(T_s(\bsy)) H_{\bsnu,\bslambda(s)}(T_s(\bsy)) \, \dd\bsmu(\bsy) \\
			&= 
			\int_{\R^\N} u(\bsy) H_\bsnu(\bsy) \, \dd\bsmu(\bsy) \\
			&= u_\bsnu.
		\end{align*}
		Note that $H_{\bsnu,\bslambda(s)}(X) = H_\bsnu(T_s^{-1}(X))$ for all $X \in \calX$.
		Therefore, applying Proposition \ref{prop:weightedParseval} to $\calG = u \circ T_s^{-1}$ we obtain
		\begin{equation*}
			\|u\|_{W_{\bsmu,r,s}^{1,2}(\R^\N,\calY)}^2 
			=
			\|\calG\|_{W_{\gamma_s,r}^{1,2}(\calX,\calY)}^2 
			=
			\sum_{\bsnu\in\calF} \Gamma_\bsnu(r,s) \|g_{\bsnu,\bslambda(s)}\|_\calY^2
			=
			\sum_{\bsnu\in\calF} \Gamma_\bsnu(r,s) \|u_\bsnu\|_\calY^2.
		\end{equation*}
	\end{proof}
\end{document}